
\documentclass[reqno,twoside,11pt]{amsart}

\setlength{\hoffset}{-1.7cm}
\setlength{\voffset}{0cm}
\setlength{\textwidth}{15.8cm}
\setlength{\textheight}{22cm}

\usepackage{amsmath, amsfonts, amssymb, esint, amsthm, nicefrac, bbm, dsfont, enumitem}
\usepackage[foot]{amsaddr}
\usepackage{booktabs}
\usepackage{epsfig, graphicx}

\setlength{\parskip}{0.25\baselineskip}

\usepackage{hyperref, cite}
\hypersetup{colorlinks, breaklinks, linkcolor=blue, urlcolor=blue, anchorcolor=blue, citecolor=blue}
\usepackage{xcolor}

\def\sideremark#1{\ifvmode\leavevmode\fi\vadjust{\vbox to0pt{\vss
 \hbox to 0pt{\hskip\hsize\hskip1em
\vbox{\hsize2cm\small\raggedright\pretolerance10000
 \noindent #1\hfill}\hss}\vbox to8pt{\vfil}\vss}}}

\setlist[itemize]{leftmargin=*}
\DeclareMathAlphabet{\mathpzc}{OT1}{pzc}{m}{it}

\newcommand{\dy}{\;\mathrm{d}y}

\newcommand{\dt}{\;\mathrm{d}t}
\newcommand{\dbw}{\;\mathrm{d}\bar w}
\newcommand{\db}{\;\mathrm{d}b}
\newcommand{\dw}{\;\mathrm{d}w}

\newcommand{\dVol}{\;\mathrm{d{{Vol}}}_{\mathcal{M}}}

\newcommand{\R}{\mathbb{R}}
\newcommand{\N}{\mathbb{N}}
\newcommand{\E}{\mathbb{E}}
\renewcommand{\P}{\mathbb{P}}
\newcommand{\M}{\mathcal{M}}
\newcommand{\A}{\mathcal{A}}
\newcommand{\F}{\mathcal{F}}
\newcommand{\X}{\mathcal{X}}
\def\endproof{\hspace*{\fill}\mbox{\ \rule{.1in}{.1in}}\medskip }

\newcommand{\iid}{\emph{i.i.d.}}

\renewcommand{\O}{{\mathcal O}}
\newcommand{\eps}{\varepsilon}
\newcommand{\veps}{\varepsilon}
\newcommand{\one}{\mathds{1}}
\newcommand{\TL}{\mathrm{TL}}
\newcommand{\TLp}{\TL^p}

\newcommand{\Ec}{\mathcal{I}}
\newcommand{\Ext}{\mathcal{I}_{\epsilon,\mathcal{X}_n}}
\newcommand{\GL}{\Delta_{\epsilon,\mathcal{X}_n}}
\newcommand{\Deg}{d_{\epsilon,\mathcal{X}_n}}
\newcommand{\NL}{\Delta_{\epsilon}}
\DeclareMathOperator*{\osc}{osc}
\renewcommand{\epsilon}{\varepsilon}

\numberwithin{equation}{section}

\theoremstyle{plain}
\newtheorem{theorem}{Theorem}[section]
\newtheorem{lemma}[theorem]{Lemma}
\newtheorem{corollary}[theorem]{Corollary}
\newtheorem{proposition}[theorem]{Proposition}

\theoremstyle{definition}
\newtheorem{remark}[theorem]{Remark}

\begin{document}
\title[Lipschitz regularity of graph Laplacians on random data clouds]
{Lipschitz regularity of graph Laplacians on random data clouds} 
\author{Jeff Calder, Nicol\'as Garc\'ia Trillos, and Marta Lewicka}
\address{J. Calder: University of Minnesota, School of Mathematics,
  127 Vincent Hall, 206 Church St. S.E., Minneapolis, MN 55455}
\address{N. Garc\'ia Trillos: University of Wisconsin-Madison,
  Department of Statistics, 1300 University Avenue, Madison, WI 53706}  
\address{M. Lewicka: University of Pittsburgh, Department of
  Mathematics, 139 University Place, Pittsburgh, PA 15260} 
\email{jcalder@umn.edu, nicolasgarcia@stat.wisc.edu, lewicka@pitt.edu} 
	
\thanks{{\bf Acknowledgement:} Jeff Calder was supported by NSF-DMS grant
  1713691. Nicol\'as Garc\'ia Trillos was supported by NSF-DMS grant
  1912802. Marta Lewicka was supported by NSF-DMS grant 1613153. We
  thank the anonymous referee for careful reading of the manuscript
  and for many useful comments. In particular, we thank the referee
  for pointing out an alternative proof for some of our estimates (see
  Remark \ref{rem_reduction}) and for providing a sketch of a proof to
  relax some assumptions made in the paper (see Remark \ref{rem:91}).} 

\begin{abstract}
In this paper we study Lipschitz regularity of elliptic PDEs on
geometric graphs, constructed from random data points. The data points
are sampled from a distribution supported on a smooth manifold. The family of equations
that we study arises in data analysis in the context of graph-based
learning and contains, as important examples, the equations satisfied
by graph Laplacian eigenvectors. In particular, we prove high
probability interior and global Lipschitz estimates for solutions of
graph Poisson equations. Our results can be used to show that graph
Laplacian eigenvectors are, with high probability, essentially
Lipschitz regular with constants depending explicitly on their
corresponding eigenvalues. Our analysis relies on a probabilistic
coupling argument of suitable random walks at the continuum level, and
an interpolation method for extending functions on random point
clouds to the continuum manifold. As a byproduct of our general
regularity results, we obtain high probability $L^\infty$ and
approximate $\mathcal{C}^{0,1}$ convergence rates for the convergence of graph
Laplacian eigenvectors towards eigenfunctions of the corresponding
weighted Laplace-Beltrami operators. The convergence rates we obtain
scale like the $L^2$-convergence rates established in \cite{calder2019improved}. 
\end{abstract}

\maketitle
\tableofcontents

\section{Introduction}

        With the aim of expanding the theoretical understanding of graph-based
	methodologies in data analysis tasks, in this paper we study the
	Lipschitz regularity of graph Poisson equations. Of particular
	interest to us are fine properties of the spectra of graph Laplacians
	built from random data sets. Our main results are used to study the
	regularity of graph Laplacian eigenvectors and their strong
        convergence in the large data limit. Several authors have
        proposed the use of graphs to 
	endow data sets with geometric structure, and in particular have
	utilized graph Laplacians to understand how information propagates on
	the graph representing the data. Spectra of graph Laplacians are
	fundamental geometric descriptors that can be used to extract
	meaningful local and global summarized information from data sets. Graph Laplacians and
	their spectra form the basis of popular algorithms for supervised
	learning \cite{zhu2003semi,smola2003kernels,Belkin-Niyogi-Sindhwani,Ando},
	clustering \cite{ng2002spectral,vonLux_tutorial}, construction of embeddings and dimensionality
	reduction~\cite{belkin2002laplacian,Coifman7426,Portegies}. They are used in
	non-parametric statistics as local regularizers
	\cite{rosasco2013nonparametric,tibshirani2014adaptive,wang2016trend},
	as well as in Bayesian settings, to define covariance matrices for Gaussian priors
	\cite{zhu2003semi,kirichenko2017,BertoStuart}.  
	
	While a graph Laplacian can be associated to any arbitrary graph, many
	authors in the learning community have given particular attention to
	the important family of \textit{geometric} graphs since the
        2000's.  Since then, the following theoretical question has been
	explored from different points of view: if a data set $\X_n=\{ x_1,
	\dots, x_n \}$ is obtained by sampling a distribution supported on
	some manifold $\M$, and a graph representing the similarity between
	data points is built based on distance proximity, what can we learn
	about the manifold $\M$ from the data set? The term \textit{manifold
	learning} was coined to capture this general question, which was
	typically associated to the study of Laplacians. From a statistical
	perspective, manifold learning can be rephrased slightly differently:
	if an algorithm depends on the input of random data sampled from a
	distribution supported on a manifold, what can we say about the
	outcomes of said algorithm?, are outcomes consistent in the large data
	limit, and if so, how many data points are needed to reach a certain
	level of accuracy for the approximation of a ground truth defined at
	the continuum (manifold) level? Ultimately, an attempt to study a
	manifold learning question is an attempt to develop mathematical
	theory with the hope of providing a better understanding of a given
	learning algorithm. 
	
	In this paper we study regularity properties of a class of graph PDEs
	on geometric graphs, a manifold learning question that has not been
	studied in the already large literature on graph Laplacians from
	random samples. By studying said regularity properties we are able to
	investigate strong notions of convergence of graph Laplacian
	eigenvectors towards eigenvectors of Laplace-Beltrami operators (or
	weighted versions thereof) and provide high probability rates
	characterizing this convergence. Several other ramifications will be explored elsewhere.  
	
	The general type of results that we obtain can be
	described as follows. Suppose that associated to our dataset $\X_n$,
	we have weights $\omega_{ij}$ which represent the similarity between
	points $x_i$ and $x_j$. In the manifold learning setting, where
	$\X_n=\{ x_1, \dots, x_n \}$ are thought of as samples from a
	$m$-dimensional manifold $\M$ embedded in a possibly high dimensional
	ambient space $\R^d$ (i.e., the \emph{manifold
	assumption}~\cite{ssl}), the weights are typically determined by
	proximity between points. Here, we focus our attention on
	the special class of $\veps$-graphs whose weights (up to
	appropriate rescaling) are given by: 
	\[ \omega_{ij}= \eta \left(\frac{|x_i-x_j|}{\veps} \right),\]
	and in particular depend on a user-chosen connectivity length-scale
	$\veps>0$ as well as on a decreasing kernel $\eta$ (typically chosen
	to be a Gaussian in applications).  The edge weights $\omega_{ij}$
	induce a graph structure on $\X_n$ as well as an associated graph Laplacian:
	\begin{equation}\label{eq:GL}
	\GL f(x_i) = \frac{1}{n\eps^{m+2}}\sum_{j=1}^n \eta \left(\frac{|x_i-x_j|}{\veps} \right)\big(f(x_i) - f(x_j)\big)
	\end{equation}
	which acts on functions $f:\X_n\to \R$. 
	
	If we think of the data points in $\X_n$ as samples from
	some ground-truth distribution, the question of regularity of solutions to graph Poisson equations: 
	$$\GL f=g,$$ 
	becomes a probabilistic question, as the graph Laplacian depends on the random points used to
	build it. The main result in our paper essentially says that, provided
	$\veps$ is not too small (i.e., it satisfies \eqref{eqn:RegimeEps} below),
	then with very high probability solutions to graph Poisson
        equations satisfy an \emph{approximate} Lipschitz estimate: 
	\begin{equation}
	|f(x_i)- f(x_j)| \leq C (\lVert \GL f \rVert_{L^\infty(\X_n)} +
	\lVert f \rVert_{L^\infty(\X_n)}  )\cdot\left( d_\M(x_i,x_j) + \veps
	\right) \quad \mbox{ for all}\; x_i , x_j \in \X_n, 
	\label{eqn:RegularityInformal}
	\end{equation}
	where $d_\M$ is the geodesic distance on $\M$. We say
	\emph{approximate} Lipschitz estimate because, while at length-scales
	larger than $\veps$ (where we recall $\veps$ determines the
	connectivity of the graph), $f$ indeed behaves like a Lipschitz
	function, it is actually not possible to resolve its level of
	regularity within $\veps$-neighborhoods, a fact that should not be
	surprising as geometric graphs behave like complete graphs at
	length-scale $\veps$. We remark that the constant $C$ that appears in
	(\ref{eqn:RegularityInformal}) is independent of $f$, $\veps$ or $n$, and only depends
	on the underlying distribution (in particular also on the manifold
	$\M$) data points are drawn from. We also prove a local version of
	\eqref{eqn:RegularityInformal}. 

\subsection{Estimates for eigenvectors}

	The regularity estimates that we obtain in this paper are quite general and can
	be used to deduce a variety of results. Here we only use them
	towards regularity  estimates of eigenvectors of $\GL$, as well as
	to obtain uniform and approximate $\mathcal{C}^{0,1}$ convergence rates of said
	eigenvectors to continuum (manifold) counterparts. To see how our
	results apply to the eigenvector problem:
	\begin{equation}\label{eigen_prob}
	\GL f = \lambda f,
	\end{equation}
	we first directly apply \eqref{eqn:RegularityInformal} to obtain
	an estimate of the form:
	\[ |f(x_i)- f(x_j)| \leq C (\lambda+1) \lVert f
	\rVert_{L^\infty(\X_n)} \cdot\big( d_\M(x_i,x_j) + \veps \big) \quad \mbox{ for all } \;  x_i , x_j \in \X_n. \] 
	Although at first sight it would seem as if the above estimate was
	only meaningful if \emph{a priori} estimates on the $L^\infty$-norm of
	$f$ were available, in fact, we show that it follows from the
	Lipschitz estimate above that $\|f\|_{L^\infty(\X_n)}\leq
	C(\lambda+1)^m \|f\|_{L^1(\X_n)}$ and so if the eigenvector is
	properly normalized, namely: $\|f\|_{L^2(\X_n)}=1$, then we have: 
	\[ |f(x_i)- f(x_j)| \leq C (\lambda+1)^m \left( d_\M(x_i,x_j) + \veps
	\right) \quad \mbox{ for all } \;  x_i , x_j \in \X_n.\] 
	Besides obtaining regularity estimates for eigenvectors of $\GL$ with
	constants depending explicitly on $\lambda$, we also use our general
	regularity estimates \eqref{eqn:RegularityInformal} to bootstrap the
	$L^2$-convergence rates of graph Laplacian eigenvectors towards their
	continuum counterparts (as studied in \cite{calder2019improved}) and
	upgrade them to $L^\infty$ and approximate $\mathcal{C}^{0,1}$ convergence
	rates.  That is, suppose that $f$ is a (properly
	normalized) solution to (\ref{eigen_prob}), and let $\tilde{f}$ be an eigenfunction of the continuum
	(local) Laplacian counterpart of $\GL$ for which we have error
	estimates on $\lVert f -\tilde f\rVert_{L^2(\X_n)}$ (see
	\cite{calder2019improved}). Applying \eqref{eqn:RegularityInformal} to
	the difference $f - \tilde f$ (where $\tilde f$ is interpreted
	as the restriction of $\tilde f$ to $\X_n$) we are able to upgrade the high
	probability $L^2$-convergence rates of eigenvectors to $L^\infty$ and
	approximate $\mathcal{C}^{0,1}$ convergence  rates. It is interesting to notice
	that the rates that we obtain using  \eqref{eqn:RegularityInformal} are much better than the ones we would
	have obtained if we had used the fact that both $f$ and
	$\tilde{f}$ are Lipschitz, together with an interpolation
	inequality. Indeed, the standard interpolation inequality
	$\|f\|_{L^\infty(\X_n)}\leq C[f]_1^{m/(m+2)}\|f\|_{L^2(\X_n)}^{2/(m+2)}$, where
	$[f]_1$ is the Lipschitz seminorm of $f$, suffers from the curse of
	dimensionality in its dependence on $\|f\|_{L^2(\X_n)}$, while our results
	show that the $L^\infty$ rates are the same as the $L^2$
        rates. We remark that all the above discussion is meaningful  
	as long as $\veps$ is in the regime: 
	\begin{equation}\label{eqn:RegimeEps}
	\left(\frac{\log(n)}{n} \right)^{\frac{1}{m+4}} \lesssim \veps \lesssim 1.
	\end{equation}

\subsection{Literature perspective}

        Thus far, our discussion suggests that by studying regularity
	properties of graph Laplacians, we are able to deduce a variety of
	novel results that are of substantial theoretical importance for the
	graph-based learning community. To provide some perspective and
	to highlight our contributions, it is worth mentioning some of the
	related existing literature. Early work on consistency of graph
	Laplacians focused on pointwise consistency results for $\veps$-graphs
	(see, for example \cite{singer2006graph,hein2005graphs,hein2007graph,
		belkin2005towards,ting2010analysis,GK}). There,
	as in here, the data is assumed to be an \emph{i.i.d.}~sample of size
	$n$ from a ground truth measure, supported on a submanifold $\M$
	embedded in a high dimensional Euclidean space $\R^d$, where pairs of
	points that are within distance $\veps$ of each other are given high
	weights. Pointwise consistency results show that as $n\to \infty$ and
	the connectivity parameter $\veps \to 0$ (at a slow enough rate), the
	graph Laplacian applied to a fixed smooth test function converges to
	the value of a continuum operator, such as the Laplace-Beltrami.
	In the past few years, the literature has moved beyond
	pointwise consistency and started studying the sequence of solutions to
	graph-based learning problems and their continuum limits, using tools
	like $\Gamma$-convergence
	\cite{slepcev19,dunlop2019large,trillos2016consistency,calder18bAAA}, tools from PDE theory
	\cite{calder2018game,calder18AAA,calder18bAAA,RyanNicolas2019,flores2019algorithms}
	including the maximum principle and viscosity solutions, and
        martingale methods \cite{calder2020rates}, which are related
        to the techniques used in this paper. 
	
	Regarding spectral convergence of graph Laplacians, the regime
	$n \rightarrow \infty$ and $\veps\equiv const$ was studied in
	\cite{vLBeBo08}, and  in \cite{SinWu13} which analyzed connection Laplacians. Works that
	have studied regimes where $\veps\to 0$ include
	\cite{trillos2018variational}, \cite{Shi2015}, \cite{BIK}, and
	\cite{trillos2018spectral}. In \cite{calder2019improved} convergence
	rates for eigenfunctions under $L^2$-type distances are
	deduced, in the same regime for $\eps$
	given in \eqref{eqn:RegimeEps}. Namely, it has been proved that the rate of
	convergence of eigenvectors scales linearly in $\veps$, matching the
	convergence rate of eigenvalues as well as the pointwise convergence
	rates; these results are to the best of our
	knowledge state of the art. As shown in Theorem
        \ref{thm:eigenrate}, in this paper we are able to upgrade the
        results from \cite{calder2019improved} to $L^\infty$ and
        to almost $\mathcal{C}^{0,1}$ convergence. 
        
We point out that our work is one of only three very recent papers  that obtain $L^\infty$
convergence rates for graph Laplacian eigenvectors (see
\cite{dunson2019diffusion} and \cite{WormellReich}); these three works
use very different approaches. Uniform convergence is an important
notion for settings in machine learning such as semi-supervised
learning where it is key to formulate algorithms for which pointwise
evaluations are well posed asymptotically. We notice that 
convergence rates from \cite{dunson2019diffusion} are looser than
ours. On the other hand, the rates 
obtained in \cite{WormellReich} hold under restrictive assumptions:
$\M$ is assumed to be a flat torus, the probability density is
constant, the kernel used to build the graph is Gaussian. Also, no
regularity estimates are deduced from the analysis in \cite{WormellReich}.   

\subsection{New tools and arguments}

        The fact that our paper studies a manifold learning question that has
	not been studied in the past, suggests that the methods and techniques
	employed here are also novel in the analysis of graph-based learning, 
	and thus of interest on their own right. Our proofs
	contrast with those traditionally used to analyze graph Laplacians, that
	mostly rely on spectral and variational techniques. We now outline
	some of these new arguments. 
	
	First, in order to study the regularity of graph Laplacians, we
	analyze a closely related continuum \textit{non-local} Laplacian of the form: 
	\begin{equation}\label{eq:NL}
	\NL  f(x)\doteq \frac{1}{\eps^{m+2}}\int_\M \eta\left(
	\frac{d_\M(x,y)}{\veps}\right)\big( f(x)- f(y)\big ) \rho(y)\, \dVol (y)
	\quad \mbox{ for all } \; x \in \M, 
	\end{equation}
	which acts on functions $f : \M \rightarrow \R$, where $d_\M$
	represents the geodesic distance on $\M$, and $\rho$ is the density of
	the point cloud $\X_n$. The non-local Laplacian $\Delta_\eps$ can be
	thought of intuitively as the $n \rightarrow \infty$, $\veps>0$ fixed
	counterpart of the graph Laplacian $\GL$. Of possibly independent
	interest, we prove a continuum Lipschitz estimate of the form: 
	\begin{equation}\label{eq:nonlocalLip}
	|f(x)-f(y)|\leq C\big(\|\Delta_\veps f\|_{L^\infty(\M)} + \|f\|_{L^\infty(\M)}\big)\cdot (d_{\M}(x,y) + \eps)
	\end{equation}
	for functions $f:\M\to \R$. Our main Lipschitz estimates in the graph
	setting are proved by using a novel interpolation map that extends
	functions on $\X_n$ to functions on $\M$ in
	such a way that the non-local Laplacian $\Delta_\eps$ is controlled in
	an  appropriate norm by the graph Laplacian $\GL$, and then applying the
	Lipschitz estimate \eqref{eq:nonlocalLip} to the interpolated function. 
	
	Second, the Lipschitz estimate \eqref{eq:nonlocalLip} is proved with a
	probabilistic argument (not related to the randomness of the data points!:
	notice that $\Delta_\eps$ is deterministic) which is based on a 
	coupling for a suitable random walk. The argument goes as follows. For
	an arbitrary pair of points $x, y \in \M$ we consider discrete time
	random walks $\{ X_k \}_{k\in \N}$ and $\{ Y_k \}_{k \in \N}$   with
	state space $\M$ starting at $x$ and $y$ respectively, both of which
	have as generator an operator closely related to
        $\Delta_{\veps}$. These walks are coupled to encourage
        coalescence; we consider a stopping time $\tau$, essentially defined as the
	first time at which either the walks have gotten sufficiently close to
	each other or have drifted apart a certain order-one distance. For the
	appropriately coupled walks, we are able to provide basic
	quantitative estimates for $\tau$, show that $\tau$ is not
	expected to be too large, and also that the
	probability of the walks being close to each other at time $\tau$ is
	close to one (i.e. the walks do coalesce). We then use martingale
	techniques to bound the difference $|f(x) - f(y)|$ in terms of the
	difference $|f(X_\tau) - f(Y_\tau)|$, the point being that while $x,y$
	may be of order-one apart, the points $X_\tau$ and $Y_\tau$  will be
	closer together (with high probability), thus allowing to estimate
	$|f(x) - f(y)|$ in terms of $|f(\tilde{x})- f(\tilde{y})|$ for 
        $\tilde x , \tilde y$ that are closer together than the
	original $x,y$. From there, we follow an iteration argument to
	eventually obtain the desired regularity estimates.  All
        details of the idea outlined above will be given in
        section  \ref{sec:NonLocalLipschitz}. 
	
	The type of argument described above follows a line of work that has
	developed probabilistic techniques to study regularity properties of
	(continuum) PDEs. In particular, the reflection coupling
        method dates back to the work by Lindvall and Rogers on coupling
        of diffusion processes \cite{lindvall1986coupling}, where the
        Brownian parts of were coupled via a time-dependent field of orthogonal matrices. The
        Lindvall-Rogers coupling was used by Cranston
        \cite{cranston1991gradient} to prove gradient estimates for
        equations involving the Laplace-Beltrami operator on
        manifolds. The method has been significantly generalized and
        applied to parabolic and elliptic equations
        \cite{kusuoka2017holder,kusuoka2015holder,wang1994gradient,wang2004gradient,priola2006gradient}. 
        Coupling  methods in the discrete setting have also been used to
        establish H\"older and Lipschitz regularity in nonlinear
        potential theory, and in particular, for the $p$-Laplacian via
        the connection to stochastic two player tug-of-war games
        \cite{luiro2018regularity,parviainen2016local,arroyo2019asymptotic,arroyo2016tug,han2018local}. There
        are also recent application to H\"older regularity for the
        Robin problem \cite{lewicka2019robin, lewicka2019robin2}. 
	
	In an independent thread, the viscosity solutions community
        developed methods for proving H\"older regularity of
        degenerate elliptic equations via doubling the variables and
        utilizing the comparison principle for semi-continuous
        functions with an appropriately constructed supersolution
        (see, e.g., \cite[Section VII]{ishii1990viscosity}). It was
        later realized that, at a high level, the analytic techniques
        using the comparison principle are roughly equivalent to
        probabilistic coupling, with the doubling variables
        playing the role of coupling of diffusion processes. We refer to the appendix of
        \cite{porretta2013global} for a detailed discussion of the
        analytic versus probabilistic methods. 
	
	When the data density is constant, the coupling used in our paper can be viewed as a discrete analog of the
        Lindvall-Rogers coupling \cite{lindvall1986coupling}, adapted
        to a smooth manifold in \cite{cranston1991gradient}. When the data density is not
        constant, the corresponding random walk has a small drift
        component along the gradient of the density. The drift appears
        through a lack of symmetry in the random walk increment, and is
        not a simple additive drift, as in the Lindvall-Rogers
        framework. In order to couple the random walks with drift, we
        construct the random walks increments by probabilistically
        mixing a symmetric random walk step, to which the reflection
        coupling is applied, with a pure drift step, to which a parallel coupling is applied.  

\section{Setup and main results} \label{sec:Setup}

Let $\M$ be a compact, connected, orientable, smooth, $m$-dimensional
manifold embedded in $\R^d$. We give to $\M$ the Riemannian structure
induced by the ambient space $\R^d$. The geodesic distance between
$x,y\in\M$ is denoted $d_\M(x,y)$. We write $B_\M (x,r)$ for the
geodesic ball in $\M$ of radius $r$ centered at $x$, while 
$B(x,r)$ is used to denote a Euclidean ball in $\R^m$ or in $\R^d$,
depending on context.  By $\dVol$ we denote the volume form on
$\M$. Other tools and notation from Riemannian geometry  will be
introduced as needed in the sequel. We have compiled a list of
definitions and auxiliary geometric estimates in the Appendix \ref{appendixA}.

Let $\mu$ be a probability measure supported on $\M$, with density
$\rho : \M \rightarrow (0,\infty)$ (with respect to $\dVol$),
which we assume is bounded, bounded away from zero,  and at least $\mathcal{C}^2(\M)$. 
Let $\X_n= \{ x_1, \dots, x_n \}$ be a set
of $\iid$ samples from $\mu$. We denote by $L^2(\X_n)$ the space of functions
$f: \X_n \rightarrow \R$ endowed with the inner product: 
\[ \langle f , g \rangle_{L^2(\X_n)} \doteq \frac{1}{n}\sum_{i=1}^n f(x_i) g(x_i).\] 
This induces a norm $\|f\|_{L^2(\X_n)} = \langle f , f
\rangle_{L^2(\X_n)}^{1/2}$. We also define the $L^1$ and $L^\infty$ norms:
\[ \|f\|_{L^1(\X_n)} \doteq \frac{1}{n}\sum_{i=1}^n |f(x_i)|, \qquad \|f\|_{L^\infty(\X_n)} \doteq \max_{1\leq i \leq n} |f(x_i)|.\]
	
Let $\eta\colon [0,\infty) \rightarrow [0,\infty)$ be a
non-increasing function with support on the interval $[0,1]$, whose
restriction to $[0,1]$ is Lipschitz continuous.  Note that $\eta$ may
be discontinuous on $[0,\infty)$, and that we allow functions such as
$\eta(t) = \one_{[0,1]}$. We assume that: 
\begin{equation*}
\int_{\R^m} \eta(|w|) \dw =1,
\end{equation*}
and we define the constant:
\begin{equation} \label{def:sigma}
\sigma_\eta \doteq \int_{\R^m} \langle w, e_1\rangle^2 \eta(|w|) \dw.
\end{equation}
Let $\eps>0$. A weight between two points $x,y\in \M$ is defined by:
\begin{equation*}
w_{xy} \doteq \eta\left(\frac{|x-y|}{\eps}\right),
\end{equation*}
where $|x-y|$ is the Euclidean distance from $x$ to $y$ in $\R^m$.
The weights $w_{xy}$ endow $\X_n$ with the structure of a graph called
the \emph{random geometric graph}. We define the associated graph
Laplacian $\GL:L^2(\X_n)\to L^2(\X_n)$ by the expression in
\eqref{eq:GL}. We also define the nonlocal Laplacian $\NL:L^2(\M)\to
L^2(\M)$ by \eqref{eq:NL}, understood as the $n\to
\infty$ and $\eps=const$ continuum limit of the graph Laplacian $\GL$.  
In the continuum limit as $n\to \infty$ and $\eps\to 0$, the graph
Laplacian $\GL$ is consistent (see \cite{calder2019improved,
  hein2007graph}) with the weighted Laplace-Beltrami operator: 
\begin{equation}\label{eqn:LaplaceBeltrami}
\Delta_\M f \doteq - \frac{\sigma_\eta}{2\rho}\text{div}(\rho^2 \nabla f). 
\end{equation}
	
\subsection{Main results}
	
Unless otherwise noted, the
constants $C,c>0$ in the theorems depend only on $\M$, $\rho$ and
$\eta$. Also, in the statements of our results, as well as in the
remainder of the paper, whenever we write $a\ll 1$, we mean that the
positive quantity $a$ is assumed to be less than or equal to a
sufficiently small constant that may depend on $\M, \rho$ or $\eta$. 
	
\begin{theorem}[Global Lipschitz regularity] 
\label{thm:globalLip}
Let $\eps\ll 1$. Then, with probability at least $1-C\eps^{-6m}\exp\left( -c n\eps^{m+4} \right)$ we have:
\begin{equation*}
\begin{split}
|f(x_i) - f(x_j)|\leq & \; C\big( \|f\|_{L^\infty(\X_n)}+ \|\GL f\|_{L^\infty(\X_n)}\big)\cdot \big(d_\M(x_i, x_j)+\eps\big), \\
\end{split}
\end{equation*}
for all $f\in L^2(\X_n)$ and all $x_i,x_j\in \X_n$.
\end{theorem}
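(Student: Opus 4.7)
The plan is to reduce the graph Lipschitz estimate to the nonlocal (continuum) Lipschitz estimate \eqref{eq:nonlocalLip} via an interpolation bridge. Concretely, I want a linear extension operator $\Ext : L^2(\X_n) \to L^2(\M)$ with two properties: (i) it is essentially an $L^\infty$ isometry, so that $\|\Ext f\|_{L^\infty(\M)} \lesssim \|f\|_{L^\infty(\X_n)}$ and $\Ext f(x_i)$ is close to $f(x_i)$ at each sample point; and (ii) it intertwines the two Laplacians in the sense that $\|\NL(\Ext f)\|_{L^\infty(\M)} \lesssim \|\GL f\|_{L^\infty(\X_n)} + \|f\|_{L^\infty(\X_n)}$, up to an additive $\eps$-correction. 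Given such an $\Ext$, applying \eqref{eq:nonlocalLip} to $\Ext f$ and evaluating at points $x_i,x_j \in \X_n$ yields the statement of the theorem after absorbing the interpolation errors into the $d_\M(x_i,x_j)+\eps$ term.

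The natural candidate for $\Ext f$ is a kernel-averaged extension of the form
\begin{equation*}
\Ext f(x) \;\doteq\; \frac{1}{n\eps^m} \sum_{j=1}^n \psi\!\left(\frac{|x-x_j|}{\eps}\right) f(x_j),
\end{equation*}
or a slight variant using a normalized (Nadaraya--Watson type) weighting with a smooth compactly supported kernel $\psi$. The point is that $\Ext$ is deterministic as a map on a realization of $\X_n$, and a direct computation shows that $\NL(\Ext f)(x)$ differs from a convex combination of values of $\GL f$ near $x$ by terms measuring how well the empirical averages $\frac{1}{n}\sum_j K(x,x_j)$ approximate the integrals $\int_\M K(x,y)\rho(y)\dVol(y)$ for the two kernels $K$ arising from $\eta$ and $\psi$. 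Controlling these discrepancies is a standard kernel density estimation task.

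This is where the randomness in $\X_n$ enters, and it constitutes the main technical obstacle. For each fixed $x \in \M$, Bernstein's inequality on the $\O(n\eps^m)$-summands controls the relevant empirical average to within relative error $\eps$ (producing the $n\eps^{m+2}$ scale through a variance bound and an additional $\eps^2$ from second-moment cancellation; the needed regime is encoded in $n\eps^{m+4}$ appearing in the exponent). To pass from a single $x$ to all $x \in \M$ one takes a net of spacing $\eps^3$ (giving $\sim \eps^{-3m}$ points, thence the $\eps^{-6m}$ factor after combining estimates for two kernels or a product structure) and uses the Lipschitz regularity of $\eta|_{[0,1]}$ and $\psi$ to transfer the bound off the net with error $\O(\eps)$. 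A union bound then yields the probability $1 - C\eps^{-6m}\exp(-cn\eps^{m+4})$.

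With the discretization/concentration step carried out on this high-probability event, the chain is: $|f(x_i)-f(x_j)| \leq |\Ext f(x_i)-\Ext f(x_j)| + \text{(interpolation error)}$, then by \eqref{eq:nonlocalLip} the first term is at most $C(\|\NL \Ext f\|_\infty + \|\Ext f\|_\infty)(d_\M(x_i,x_j)+\eps)$, and finally properties (i)--(ii) of $\Ext$ bound this by $C(\|f\|_{L^\infty(\X_n)} + \|\GL f\|_{L^\infty(\X_n)})(d_\M(x_i,x_j)+\eps)$. The interpolation error at sample points is itself $\O(\eps)\cdot(\|f\|_{L^\infty(\X_n)} + \|\GL f\|_{L^\infty(\X_n)})$, which is absorbed into the $\eps$ term on the right-hand side. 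The delicate point throughout is making sure that the operator-norm estimate in (ii) is uniform in $x \in \M$ and not just pointwise almost-sure, which is exactly what the $\eps^3$-net together with Bernstein provides.
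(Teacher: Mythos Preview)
Your proposal is correct and follows essentially the same approach as the paper: lift $f$ to $\M$ via a normalized kernel-average extension, apply the continuum nonlocal Lipschitz estimate (Theorem~\ref{th_main1}), and control the interpolation error at sample points. Two refinements worth noting: the paper uses the \emph{same} kernel $\eta$ in $\Ext$ as in $\GL$, which yields the exact algebraic identity $f(x_i)-\Ext f(x_i)=\frac{\eps^2}{\Deg(x_i)}\GL f(x_i)$ and thus an $O(\eps^2)\|\GL f\|_\infty$ interpolation error (cleaner than the $O(\eps)$ you state); and the intertwining bound (your property~(ii)) is obtained not by directly comparing $\NL(\Ext f)$ to a convex combination of $\GL f$ values, but by first rewriting $\Ext f$ through a double-convolution identity (see \eqref{eq:conv}) that isolates a data-dependent kernel $\frac{1}{n}\sum_j \eta_\eps(|x-x_j|)\eta_\eps(|x_j-y|)/\rho(x_j)$ whose concentration over an $\eps^3$-net in $\M\times\M$ produces the $\eps^{-6m}$ prefactor.
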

Theorem \ref{thm:globalLip} shows that the Lipschitz regularity of
$f\in L^2(\X_n)$ is controlled by the size of $f$ and $\GL f$ in
$L^\infty$. In particular, solutions of graph Poisson equations $\GL f
=g$ with bounded $f$ are Lipschitz continuous on the graph $\X_n$, at
length scales larger than $\eps$.  
	
We also prove a parallel interior estimate:	

\begin{theorem}[Interior Lipschitz regularity] \label{thm:interiorLip}
Let $\epsilon\ll r \ll 1$. Then, with probability at least 
$1-C\eps^{-6m}\exp\left( -c n\eps^{m+4} \right)$ we have: 
\begin{align*}\label{eq:interiorLip}
|f(x_i)-f(x_j)| \leq & \; C\Big(\frac{d_\M(x_i,x_j)}{r} + 
  \frac{\epsilon|\log(\veps)|}{r} \Big)  \cdot \|f\|_{L^\infty(\X_n\cap B_\M(x,7r))}
\\ & + C\Big(r d_\M(x_i, x_j) + \frac{\epsilon r}{|\log \epsilon|} \Big) \cdot \|\GL f\|_{L^\infty(\X_n\cap B_\M(x,7r))},
\end{align*}
for all $f\in L^2(\X_n)$, $x\in \M$, $r>0$, and $x_i,x_j\in B_\M(x,r)\cap \X_n$.
\end{theorem}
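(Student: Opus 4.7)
The strategy mirrors the proof of the global estimate Theorem \ref{thm:globalLip}, which itself rests on an interpolation map from $\X_n$ to $\M$ combined with the continuum nonlocal Lipschitz bound \eqref{eq:nonlocalLip} for $\NL$. The plan is to localize each ingredient on the ball $B_\M(x,7r)$: first establish the continuum interior analogue for $\NL$, then transfer it to the graph by the same interpolation argument used for Theorem \ref{thm:globalLip}.

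The continuum interior estimate to prove reads: for $y_1,y_2 \in B_\M(x,r)$,
\begin{equation*}
|f(y_1) - f(y_2)| \le C\Big(\tfrac{d_\M(y_1,y_2)}{r} + \tfrac{\eps|\log\eps|}{r}\Big)\|f\|_{L^\infty(B_\M(x,7r))} + C\Big(r\,d_\M(y_1,y_2) + \tfrac{\eps r}{|\log\eps|}\Big)\|\NL f\|_{L^\infty(B_\M(x,7r))}.
\end{equation*}
I would prove this by adapting the reflection-coupling argument outlined in Section \ref{sec:NonLocalLipschitz}: run two coupled discrete-time random walks $\{Y_k^1\},\{Y_k^2\}$ on $\M$, whose generator is closely related to $\NL$, started at $y_1$ and $y_2$, and define a stopping time $\tau$ as the minimum of the coalescence time (walks within $O(\eps)$) and the first time either walk exits $B_\M(x,7r)$. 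A Dynkin-type identity then expresses $f(y_1)-f(y_2)$ as $\E[f(Y_\tau^1)-f(Y_\tau^2)]$ plus a telescoping correction involving $\NL f$ evaluated along the walks.

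The main obstacle is the exit-versus-coalescence analysis that must furnish the asymmetric $|\log\eps|$ factors. The right-hand side of the identity decomposes into three pieces: a coalescence contribution of size $O(\eps)$ times a local Lipschitz seminorm bootstrapped through the coupling itself, an exit contribution bounded by $2\|f\|_{L^\infty(B_\M(x,7r))}\cdot\P(\text{exit before coalesce})$, and an $\NL f$ contribution of order $\E[\tau]\cdot\|\NL f\|_{L^\infty(B_\M(x,7r))}$. The exit probability and expected stopping time need to be estimated sharply: because the coupled walks experience a drift coming from $\nabla\rho$ (handled via the mixing with a parallel-coupled drift step described in the introduction) and live on a curved manifold, the hitting-time analysis between the two absorbing scales $\eps$ and $r$ naturally produces logarithmic factors, and a careful tuning of the buffer width (which is why the ball $B_\M(x,7r)$ rather than just $B_\M(x,2r)$ appears) will be needed to extract precisely the asymmetric $|\log\eps|$ and $1/|\log\eps|$ dependencies. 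Accommodating both the curvature of $\M$ and the nonsymmetric drift-mixing step simultaneously in the coupling bookkeeping is the delicate technical heart of the argument.

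Once the continuum interior bound is in place, the graph statement follows by applying the interpolation map that extends $f\colon\X_n\to\R$ to $\tilde f\colon\M\to\R$ with $\tilde f(x_i)=f(x_i)$ on $\X_n$ and $\|\NL\tilde f\|_{L^\infty(B_\M(x,7r))}$ controlled by $\|\GL f\|_{L^\infty(\X_n\cap B_\M(x,7r))}$ plus small perturbative terms, valid on the same high-probability event of measure at least $1-C\eps^{-6m}\exp(-cn\eps^{m+4})$ as in Theorem \ref{thm:globalLip}. The locality of the interpolation ensures that the ball restriction $B_\M(x,7r)$ in the continuum estimate transfers to the graph estimate, yielding the claimed bound.
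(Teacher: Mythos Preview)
Your high-level plan---establish the continuum interior estimate (Theorem \ref{th_main2}) and then transfer it to the graph via the almost-interpolation map, exactly as in the proof of Theorem \ref{thm:globalLip}---is precisely the route the paper takes, and the localization of the interpolation works as you describe. However, your account of \emph{how} the continuum interior estimate is proved, and in particular where the asymmetric $|\log\eps|$ factors come from, does not match the paper and would not succeed as stated.

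You attribute the $|\log\eps|$ and $1/|\log\eps|$ factors to ``hitting-time analysis between the two absorbing scales $\eps$ and $r$'' in the coupling. But the coupling argument in the paper (Theorem \ref{Thestimate1}) produces no logarithms: the stopping-time estimates \eqref{quattordici}--\eqref{quindici} give $\E[\tau]\le \tfrac{2r}{\sigma_\eta\eps^2}d_\M(x_0,y_0)$ and $\P(\text{exit before coalesce})\le \tfrac{C}{r}d_\M(x_0,y_0)$, yielding a bound of the shape $[\text{$3\eps$-oscillation}]+\big(\tfrac{C}{r}\|f\|+Cr\|\NL f\|\big)d_\M(x_0,y_0)$. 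The logarithms enter later, in the proof of Theorem \ref{Thlipschitz3_2}: one combines this with the short-range contraction estimate (Theorem \ref{th2}) and then \emph{iterates} the resulting inequality roughly $k=\lceil \log\eps/\log\theta\rceil\sim C|\log\eps|$ times at the intermediate radius $R=\tfrac{\log\theta}{\log\eps}\cdot\tfrac{r}{2}\sim r/|\log\eps|$. The geometric series in $\theta$ converges, while the accumulated error terms pick up factors of $1/R\sim |\log\eps|/r$ (for the $\|f\|$ part) and $R\sim r/|\log\eps|$ (for the $\|\NL f\|$ part). This scale-selection-and-iteration step is the actual source of the log factors and is the technical heart of the local estimate; it is entirely absent from your sketch.

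Two minor corrections on the interpolation side: the map $\Ext$ does \emph{not} satisfy $\Ext f(x_i)=f(x_i)$; rather $f(x_i)-\Ext f(x_i)=\tfrac{\eps^2}{\Deg(x_i)}\GL f(x_i)$ (see \eqref{m12}), which is why the paper calls it an ``almost-interpolation''. And Theorem \ref{thm:DiscreteToNonlocal} bounds $|\NL(\Ext f)|$ by $\|\GL f\|+\osc f$, not just $\|\GL f\|$; one handles the oscillation term by the trivial bound $\osc_{\X_n\cap B(x,2\eps)}f\le 2\|f\|_{L^\infty(\X_n\cap B(x,2\eps))}$, which is local and absorbs into the $\|f\|_{L^\infty(\X_n\cap B_\M(x,7r))}$ term.
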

	
As a direct application of Theorem \ref{thm:globalLip}, we observe the following:

\begin{theorem}[Lipschitz regularity of graph Laplacian eigenvectors] \label{thm:eigenLip}
Let $\Lambda>0$, $\veps \ll 1$, such that $\eps \leq
\frac{c}{\Lambda+1}$. Then, with probability at least
$1-C\eps^{-6m}\exp\left( -c n\eps^{m+4} \right)-2n\exp\left(-cn(\Lambda+1)^{-m} \right)$ we have: 
\begin{equation*}
\begin{split}
 |f(x_i) - f(x_j)| \leq & \; C(\Lambda + 1)^{m+1}\lVert f \rVert_{L^1(\X_n)} (d_\M(x_i,x_j) + \veps),
\end{split}
\end{equation*}
valid for all non-identically zero $f\in L^2(\X_n)$ with
$\lambda_f<\Lambda$ and all $x_i,x_j\in \X_n$. Here:
\begin{equation*}
\lambda_f \doteq \frac{\|\GL f\|_{L^\infty(\X_n)}}{\|f\|_{L^\infty(\X_n)}}.
\end{equation*}
\end{theorem}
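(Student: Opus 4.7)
The plan is to combine Theorem \ref{thm:globalLip} with a reverse $L^\infty$-to-$L^1$ inequality for functions that are almost-Lipschitz at the relevant scale. The strategy is exactly the one outlined in the introduction: first use the global Lipschitz estimate together with the eigenvalue bound to control oscillations, then show that the function cannot have a sharp peak, which upgrades the $L^\infty$ norm to an $L^1$ norm at the cost of a factor $(\Lambda+1)^m$.

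The first step is to apply Theorem \ref{thm:globalLip} to $f$. Since by definition of $\lambda_f$ and the assumption $\lambda_f<\Lambda$ we have $\|\GL f\|_{L^\infty(\X_n)}\le \Lambda\|f\|_{L^\infty(\X_n)}$, the theorem yields, on an event of probability at least $1-C\eps^{-6m}\exp(-cn\eps^{m+4})$, that
\begin{equation*}
|f(x_i)-f(x_j)|\le C(\Lambda+1)\|f\|_{L^\infty(\X_n)}\bigl(d_\M(x_i,x_j)+\eps\bigr)
\end{equation*}
for all $x_i,x_j\in\X_n$. If we can also show that $\|f\|_{L^\infty(\X_n)}\le C(\Lambda+1)^m\|f\|_{L^1(\X_n)}$ on a suitable event, substitution gives the claimed bound.

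For the reverse inequality, pick $x^\star\in\X_n$ realizing $|f(x^\star)|=\|f\|_{L^\infty(\X_n)}$ and set $r=c_0/(\Lambda+1)$ with $c_0$ small enough that $C(\Lambda+1)(r+\eps)\le 1/2$; this is possible because the hypothesis $\eps\le c/(\Lambda+1)$ makes $\eps$ comparable to or smaller than $r$. The Lipschitz estimate above then forces $|f(x_j)|\ge \tfrac12\|f\|_{L^\infty(\X_n)}$ for every $x_j\in\X_n\cap B_\M(x^\star,r)$. Consequently
\begin{equation*}
\|f\|_{L^1(\X_n)}\ge \frac{1}{n}\cdot\#\bigl(\X_n\cap B_\M(x^\star,r)\bigr)\cdot \tfrac12\|f\|_{L^\infty(\X_n)}.
\end{equation*}
Since $\rho$ is bounded below and $\M$ is smooth, $\mu(B_\M(x^\star,r))\ge c r^m$ for all $r\ll 1$, and a Bernstein/Chernoff bound for the binomial random variable $\#(\X_n\cap B_\M(x^\star,r))$ shows this count exceeds $\tfrac12 c n r^m\sim c n(\Lambda+1)^{-m}$ with probability at least $1-2\exp(-cn(\Lambda+1)^{-m})$ for any fixed $x^\star$. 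Union-bounding over the at most $n$ possible choices of $x^\star\in\X_n$ yields the extra failure probability $2n\exp(-cn(\Lambda+1)^{-m})$ listed in the statement. On the intersection of the two good events we then obtain $\|f\|_{L^\infty(\X_n)}\le C(\Lambda+1)^m\|f\|_{L^1(\X_n)}$, and combining with the earlier Lipschitz bound yields the theorem.

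The step I expect to need the most care is making the reverse inequality argument independent of which $x^\star\in\X_n$ attains the maximum: one must perform the concentration bound for the ball counts uniformly over all $n$ points, which is exactly what produces the prefactor $n$ in the probability term (rather than a union over a grid, which would be dimension-dependent). Everything else is a clean substitution and absorption of constants, relying on the already-proved Theorem \ref{thm:globalLip} as a black box.
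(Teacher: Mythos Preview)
Your proposal is correct and follows essentially the same route as the paper's proof: apply Theorem \ref{thm:globalLip} to get the Lipschitz bound with factor $(\Lambda+1)\|f\|_{L^\infty(\X_n)}$, pick a maximizer $x^\star$, use the Lipschitz bound on a ball of radius $r\sim (\Lambda+1)^{-1}$ to show $|f|\ge \tfrac12\|f\|_{L^\infty(\X_n)}$ there, and then use a Chernoff bound on the ball count (union-bounded over the $n$ data points) to convert to $\|f\|_{L^1(\X_n)}$. The paper carries this out via Corollary \ref{prop:ball} applied to the Euclidean ball $B(x_i,r/2)\subset B_\M(x_i,r)$, but this is only a cosmetic difference from your direct use of $\mu(B_\M(x^\star,r))\ge cr^m$.
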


\begin{remark}
In Theorem \ref{thm:eigenLip}, if we take $f\in L^2(\X_n)$ to be a
normalized eigenvector of $\GL$ with eigenvalue $\lambda$, satisfying
(\ref{eigen_prob}) and $\|f\|_{L^2(\X_n)}=1$, then the result implies:
\begin{align}\label{eqn:aux09}
|f(x_i) - f(x_j)|&\leq C(\lambda + 1)^{m+1}(d_\M(x_i,x_j) + \veps),
\end{align}
since $\|f\|_{L^1(\X_n)}\leq \|f\|_{L^2(\X_n)}=1$, and
$\lambda_f=\lambda$. We also note that in the smallness condition for
$\veps$ and in the right hand side of the inequality, we can take $\Lambda$ as a small constant multiple
of a corresponding eigenvalue $\tilde{\lambda}$ of the continuum local
Laplacian $\Delta_\M$:  if $\lambda$ is the $k$-th
eigenvalue of $\GL$, then we can let
$\tilde{\lambda}$ be the $k$-th eigenvalue of the weighted
Laplace-Beltrami operator $\Delta_\M$. This is due to the consistency results
(with rates) for the eigenvalues of $\GL$; see \cite{calder2019improved}. 
\end{remark}

Theorem \ref{thm:eigenLip} allows us to estimate the $L^\infty$ norm of eigenvectors by their $L^1$ norms:

\begin{corollary}\label{cor:eigenLip}
Under the same conditions as in Theorem \ref{thm:eigenLip} and in the
same event where inequality \eqref{eqn:aux09} holds, we have:
\begin{equation*}
\|f\|_{L^\infty(\X_n)}\leq C(\Lambda+ 1)^{m+1}\|f\|_{L^1(\X_n)}.
\end{equation*}
\end{corollary}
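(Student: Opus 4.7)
The plan is to combine \eqref{eqn:aux09} with a volume pigeonhole argument carried out at the length-scale $r := c/(\Lambda+1)$. Let $M := \|f\|_{L^\infty(\X_n)}$ and pick $x_\ast \in \X_n$ at which $|f|$ attains $M$; after possibly replacing $f$ by $-f$ we may assume $f(x_\ast) = M$. Applying \eqref{eqn:aux09} between $x_\ast$ and an arbitrary $x_j \in B_\M(x_\ast,r) \cap \X_n$ gives
\begin{equation*}
f(x_j) \;\geq\; M \,-\, C(\Lambda+1)^{m+1}\|f\|_{L^1(\X_n)}\bigl(r + \veps\bigr) \;\geq\; M \,-\, C'(\Lambda+1)^{m}\|f\|_{L^1(\X_n)},
\end{equation*}
where the last step uses the smallness assumption $\veps \leq c/(\Lambda+1)$ to absorb $\veps$ into $r$.

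I would then argue by contradiction. Suppose $M > 2C'(\Lambda+1)^{m}\|f\|_{L^1(\X_n)}$. Then $f(x_j) \geq M/2$ for all $x_j \in B_\M(x_\ast,r)\cap \X_n$, and summing over these points gives
\begin{equation*}
\|f\|_{L^1(\X_n)} \;\geq\; \frac{M}{2}\cdot\frac{|\X_n \cap B_\M(x_\ast, r)|}{n} \;\geq\; c\,M\,r^{m} \;=\; \frac{c\,M}{(\Lambda+1)^m},
\end{equation*}
where the middle inequality is a lower bound on the empirical measure of a geodesic ball of radius $r$ (see below). Rearranging produces $M \leq C''(\Lambda+1)^m\|f\|_{L^1(\X_n)}$, which—after fixing the constants—contradicts the starting assumption. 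Hence $M \leq C(\Lambda+1)^m\|f\|_{L^1(\X_n)} \leq C(\Lambda+1)^{m+1}\|f\|_{L^1(\X_n)}$, as claimed.

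The one ingredient that needs justification is the empirical-ball bound $|\X_n \cap B_\M(x_\ast,r)|/n \geq c\,r^m$. Since $r = c/(\Lambda+1)$, the manifold volume and the lower bound on $\rho$ give $\mu(B_\M(x_\ast,r)) \geq c\,r^m = c/(\Lambda+1)^m$, and a Bernstein/Chernoff estimate for $\mathrm{Binomial}\bigl(n,\mu(B_\M(x_\ast,r))\bigr)$, together with a union bound over the $n$ possible choices of the maximizing point $x_\ast \in \X_n$, produces exactly the factor $2n\exp\bigl(-cn(\Lambda+1)^{-m}\bigr)$ appearing in the probability estimate of Theorem \ref{thm:eigenLip}. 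Thus the corollary holds in the same favorable event in which \eqref{eqn:aux09} does, and no new probabilistic event needs to be introduced—the main (minor) obstacle is simply making the constants in the Bernstein bound and in \eqref{eqn:aux09} compatible so that the contradiction above closes.
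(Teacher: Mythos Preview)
Your proof is correct and follows the same strategy as the paper: locate a maximizer $x_\ast$ of $|f|$, use the Lipschitz estimate to show $|f|\geq M/2$ on a geodesic ball of radius $r\sim(\Lambda+1)^{-1}$, then invoke the empirical ball-count (Corollary~\ref{prop:ball} union-bounded over $\X_n$, which is exactly the source of the $2n\exp(-cn(\Lambda+1)^{-m})$ term) to bound $\|f\|_{L^1}$ from below by $cM(\Lambda+1)^{-m}$.

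The only difference worth flagging is which Lipschitz estimate you invoke. You cite the conclusion of Theorem~\ref{thm:eigenLip} (the bound with $\|f\|_{L^1}$ and the factor $(\Lambda+1)^{m+1}$), whereas the paper uses the more primitive estimate \eqref{eqn:aux1} coming directly from Theorem~\ref{thm:globalLip} (with $\|f\|_{L^\infty}$ and a single factor $\lambda_f+1$). This matters because in the paper's internal logic the corollary is actually established \emph{first} and then fed back into \eqref{eqn:aux1} to obtain Theorem~\ref{thm:eigenLip}; your ordering is logically fine once Theorem~\ref{thm:eigenLip} is taken as a black box, but be aware that if you were asked to also prove Theorem~\ref{thm:eigenLip} your argument would become circular. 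Also, your ``contradiction'' is really a case split: either $M\leq 2C'(\Lambda+1)^m\|f\|_{L^1}$ already, or the ball argument gives $M\leq C''(\Lambda+1)^m\|f\|_{L^1}$; either way you win, so no constant-matching is actually needed to close the argument.
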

 
Finally, we use our general regularity estimates from Theorem
\ref{thm:globalLip} to obtain the following uniform and approximate
$\mathcal{C}^{0,1}$ convergence rates for the eigenvectors of the graph
Laplacian towards eigenfunctions of the weighted Laplace-Beltrami
operator $\Delta_\M$. To make our statement precise, for $\delta>0$ we define the
$\delta$-approximate Lipschitz seminorm of $f\in L^2(\X_n)$ by: 
\begin{equation*}
[f]_{\delta,\X_n} \doteq \max_{x,y\in \X_n}\frac{|f(x)-f(y)|}{d_\M(x,y) + \delta}.
\end{equation*}

\begin{theorem}[Convergence rates for eigenvectors of the graph Laplacian]
Let $\eps \ll 1$ and suppose that $f$ is a normalized eigenvector of $\GL$ with eigenvalue
$\lambda$, i.e. it satisfies (\ref{eigen_prob}) and $\lVert f
\rVert_{L^2(\X_n)}=1$. Then, with probability at least
$1-C(n+\eps^{-6m})\exp\left( -c n\eps^{m+4} \right)$  there exists a
normalised eigenfunction $\tilde{f}$ of $\Delta_\M$ defined in \eqref{eqn:LaplaceBeltrami},
i.e. $\Delta_\M\tilde f = \tilde\lambda \tilde f$ and $\|\tilde f\|_{L^2(\M)}=1$, with:
\begin{equation*}
\|f - \tilde{f}\|_{L^\infty(\X_n)} + [f-\tilde{f}]_{\epsilon,\X_n} \leq C\veps.
\end{equation*}
where the constant $C$in the right hand side above depends additionally on ${\lambda}$. 
\label{thm:eigenrate}
\end{theorem}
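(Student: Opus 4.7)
The plan is to prove Theorem~\ref{thm:eigenrate} by bootstrapping the $L^2$-convergence rates for graph Laplacian eigenvectors established in~\cite{calder2019improved} via the approximate Lipschitz estimate of Theorem~\ref{thm:globalLip}. Fix a normalised graph eigenvector $f$ with eigenvalue $\lambda$. The results of~\cite{calder2019improved} provide, with high probability, a normalised eigenfunction $\tilde f$ of $\Delta_\M$ with eigenvalue $\tilde\lambda$ satisfying $|\lambda-\tilde\lambda|\leq C\eps$ and $\|f-\tilde f\|_{L^2(\X_n)}\leq C\eps$, where we identify $\tilde f$ with its restriction to $\X_n$. Set $g\doteq f-\tilde f$; the goal is to upgrade the $L^2$-rate to $\|g\|_{L^\infty(\X_n)}+[g]_{\eps,\X_n}\leq C\eps$.

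The first step is to compute the graph Laplacian of $g$. Using $\GL f=\lambda f$ and $\Delta_\M \tilde f=\tilde\lambda \tilde f$, one has
\begin{equation*}
\GL g = \lambda g + (\lambda-\tilde\lambda)\tilde f + \bigl(\Delta_\M \tilde f - \GL \tilde f\bigr),
\end{equation*}
so that controlling $\|\GL g\|_{L^\infty(\X_n)}$ reduces to a pointwise consistency estimate for $\GL$ acting on the smooth function $\tilde f$. Such an estimate $\|\Delta_\M \tilde f - \GL \tilde f\|_{L^\infty(\X_n)} \leq C\eps$ follows from a standard two-step argument: a deterministic Taylor expansion comparing $\Delta_\M$ with the non-local operator $\NL$ on $\mathcal{C}^3$ test functions, followed by a Bernstein-type concentration bound for $\GL \tilde f(x_i)-\NL \tilde f(x_i)$ combined with a union bound over $\X_n$; the regime~\eqref{eqn:RegimeEps} is exactly what is needed for the concentration error to be of order $\eps$. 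Combined with $|\lambda-\tilde\lambda|\leq C\eps$ and the standard uniform bound $\|\tilde f\|_\infty\leq C$ from elliptic regularity of $\Delta_\M$, this gives $\|\GL g\|_{L^\infty(\X_n)}\leq \lambda\|g\|_{L^\infty(\X_n)} + C\eps$, whence Theorem~\ref{thm:globalLip} applied to $g$ yields
\begin{equation*}
[g]_{\eps,\X_n} \leq C(1+\lambda)\|g\|_{L^\infty(\X_n)} + C\eps.
\end{equation*}

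It remains to convert the $L^2$ bound on $g$ into an $L^\infty$ bound of the same order, which is achieved by an interpolation inequality tailored to the approximate Lipschitz seminorm. For each $x\in \X_n$ and each scale $r\in[\eps,c_0]$, a Cauchy--Schwarz estimate on the average of $|g|$ over $B_\M(x,r)\cap \X_n$ gives
\begin{equation*}
|g(x)| \leq \frac{1}{\#(\X_n\cap B_\M(x,r))}\sum_{y\in \X_n\cap B_\M(x,r)}|g(y)| + C\,[g]_{\eps,\X_n}\,r \leq C r^{-m/2}\|g\|_{L^2(\X_n)} + C r\,[g]_{\eps,\X_n},
\end{equation*}
where the second inequality invokes a uniform lower bound $\#(\X_n\cap B_\M(x,r))\geq c n r^m$, which itself holds simultaneously for all $x\in \X_n$ and $r\geq\eps$ with the probability already guaranteed by~\eqref{eqn:RegimeEps}. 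Inserting the bounds of the previous step and choosing $r$ to be a small enough fixed constant so that the $Cr\,[g]_{\eps,\X_n}$ contribution can be absorbed to the left produces $\|g\|_{L^\infty(\X_n)}\leq C\eps$; feeding this back into the Lipschitz inequality gives $[g]_{\eps,\X_n}\leq C\eps$ and completes the proof.

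The main obstacle is precisely this interpolation step. The classical inequality $\|g\|_\infty\lesssim [g]_1^{m/(m+2)}\|g\|_2^{2/(m+2)}$ would yield a rate strictly worse than $\eps$ because of its sublinear dependence on $\|g\|_{L^2}$, a manifestation of the curse of dimensionality. The gain here comes from the \emph{approximate} nature of the Lipschitz seminorm $[g]_{\eps,\X_n}$, which only resolves oscillations at scales $\geq \eps$; because $\eps$ is also the precise scale at which the lower bound on $\#(\X_n\cap B_\M(x,r))$ starts to fail, this is exactly the budget needed for $\|g\|_{L^\infty(\X_n)}$ and $\|g\|_{L^2(\X_n)}$ to enjoy the same convergence rate. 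A secondary (but routine) difficulty is that both the pointwise consistency of $\GL$ and the ball-cardinality estimate must hold uniformly over all $x\in \X_n$, so the union-bound losses have to be absorbed into the probability budget $(n+\eps^{-6m})\exp(-cn\eps^{m+4})$ of the stated theorem.
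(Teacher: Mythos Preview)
Your proof is correct and follows essentially the same route as the paper: both invoke the $L^2$ rate and pointwise consistency from \cite{calder2019improved}, compute $\GL g$ for $g=f-\tilde f$, apply Theorem~\ref{thm:globalLip}, and upgrade $\|g\|_{L^2(\X_n)}$ to $\|g\|_{L^\infty(\X_n)}$ by averaging over a geodesic ball of fixed radius depending on $\lambda$. The only cosmetic difference is that the paper packages this last step as Corollary~\ref{cor:eigenLip} (an $L^\infty$--$L^1$ bound under the hypothesis $\lambda_g\leq\Lambda$, which forces a brief case split on whether $\lambda_g\leq\lambda+1$), whereas you inline the same ball-averaging argument with Cauchy--Schwarz and absorb directly, sidestepping the case split.
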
 

\begin{remark}
Compared to the results reported in \cite{dunson2019diffusion}
(e.g. Theorem 2) which state uniform convergence of eigenvectors with
high probability at the rate $n^{-1/(8m+30)}$ when picking the
connectivity parameter as $\veps \sim n^{-1/(4m+15)}$, our results
imply uniform convergence with rates scaling linearly in $\veps$ for
all $ \veps\gg \left(\frac{\log(n)}{n}\right)^{1/(m+4)}  $. In
particular, choosing $\veps \sim
\left(\frac{\log(n)}{n}\right)^{1/(m+4)}  $ we obtain uniform
convergence of eigenvectors at the rate
$\left(\frac{\log(n)}{n}\right)^{1/(m+4)} $. We highlight that our
results hold for a stronger almost $\mathcal{C}^{0,1}$ notion of
convergence. 

Finally, let us recall that the asymptotic almost sure spectral
convergence of graph Laplacians towards Laplace-Beltrami
operators (with eigenvector convergence understood in an
$L^2$-sense) can be guaranteed in the regime: 
\[  \left( \frac{\log(n)}{n} \right)^{1/m} \lesssim \veps \lesssim 1 \]
(see e.g. \cite{calder2019improved}), and  it is not unreasonable to expect that
similar $L^\infty$  consistency results can be obtained in
the same regime as well. However, we believe that new ideas are
actually needed in order to enlarge the regimes for $\veps$
and provide corresponding \textit{quantitative high
probability} error estimates. Our work, in particular our regularity
estimates in terms of suitable non-local operators, may be used directly in
a future analysis, and the focus for improvement can be put on
the probabilistic estimates relating graph Laplacians with these
non-local operators.
\end{remark}

\begin{remark}
If we take the length scale required in our main theorems $\veps \sim
\left( \frac{\log(n)}{n}\right)^{1/(m+4)}$, then the number of edges
in the resulting graph is
$n^{\frac{m+8}{m+4}}\log(n)^{\frac{m}{m+4}}$. Notice that as the
manifold dimension $m$ increases, the graph has fewer edges and better
sparsity properties. For larger $m$ the number of edges in the graph
is slightly larger than linear in the number of nodes.  
\end{remark}

The proofs of theorems listed so far rely strongly on the intermediate
results that are also of independent interest. We present them in the
next sections.

\subsection{Almost-interpolation maps: lifting the discrete to the continuum}
	\label{sec:ResultsInterp}
	
Several techniques have been developed recently for performing an
interpolation of functions defined on $\X_n$, in order to extend them
to the whole $\M$. For example, in 
the context of variational techniques and $\Gamma$-convergence, the
$\TLp$ topology was developed in \cite{garciatrillos16} for precisely
this purpose, and has been used numerous times since, for studying
discrete to continuum
convergence~\cite{slepcev19,garciatrillos18,trillos2018spectral,garciatrillos16a,garciatrillos17,dunlop2019large, 
garciatrillos17cAAA,garciatrillos18a,thorpe19,osting17,calder18bAAA}. The 
$\TLp$ topology involves defining measure preserving transportation
maps pushing the discrete probability measure $\mu_n\doteq \frac{1}{n} \sum_{i=1}^n
\delta_{x_i}$ onto the continuum probability distribution $\mu$, and is useful for
controlling energies in a $\Gamma$-convergence framework.  For
problems where a maximum principle is available, which gives very
strong discrete stability, interpolation maps are not needed and it is
sufficient to consider the restriction of smooth functions to the graph. We refer to
\cite{calder2018game,calder18AAA,yuan2020continuum,trillos2019maximum,shi18AAA}
for applications of the maximum principle to discrete to continuum convergence. 
	
Here, we are concerned with passing from a discrete graph problem to a
nonlocal equation, while controlling the values of
the graph and nonlocal Laplacians. For this purpose, we
develop a new technique for extending discrete functions on the graph
to functions on $\M$. We define the almost-interpolation operator
$\Ext:L^2(\X_n) \to L^2(\M)$ and the rescaled degree $\Deg(x)$:
\begin{equation}\label{eq:extension}
\begin{split}
& \Ec_{\epsilon,\X_n} f(x)  \doteq
\frac{1}{d_{\epsilon,\X_n}(x)}\cdot\frac{1}{n}\sum_{i=1}^n\frac{1}{\epsilon^m}\eta\Big(\frac{|x-x_i|)}{\epsilon}\Big) f(x_i)\\  
& d_{\epsilon,\X_n}(x) \doteq \frac{1}{n}\sum_{i=1}^n\frac{1}{\epsilon^m}\eta\Big(\frac{|x-x_i|}{\epsilon}\Big)
\quad\mbox{ for all } x\in\M,
\end{split}
\end{equation}
where in both formulas above $\eta$ is applied to the scaled Euclidean
distance between $x, x_i\in\R^d$. At this stage, for any $x\in \M$ for
which $\Deg(x) = 0$, we take $\Ext f(x)=0$. Later on, we will show
that with very high probability $d_{\veps,\X_n}(x)>0 $ for all $x \in
\M$ for appropriate scalings of $\veps$ and $n$. Also, notice that
$d_{\veps, \X_n}$ is nothing but a kernel density estimator for $\rho$.

The following result establishes discrete to nonlocal control for
$\Ext$. In order to make the statements precise we introduce the
oscillation of a function $u$ over a set $A$: 
\[\osc_A f \doteq \sup_{A }f - \inf_A f.\]

\begin{theorem}[Discrete to Nonlocal]\label{thm:DiscreteToNonlocal}
Let $\eps \ll 1$. With probability at least $1 - C\eps^{-6m} \exp\big( -cn\eps^{m+4} \big)$ we have:
\begin{equation*}
\left|\NL (\Ext f)(x)\right| \leq C\big( \|\GL f\|_{L^\infty(\X_n\cap B(x,\eps))} +\osc_{\X_n\cap B(x,2\eps)}f\big)
\end{equation*}
for all $f\in L^2(\X_n)$ and all $x\in \M$.
\end{theorem}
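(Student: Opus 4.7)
The cornerstone of my approach is the elementary identity, immediate from the definitions of $\Ext$ and $\GL$,
\[
\Ext f(x_k) - f(x_k) \;=\; -\,\frac{\eps^2}{d_{\eps,\X_n}(x_k)}\,\GL f(x_k), \quad x_k\in\X_n,
\]
which expresses the discretization-to-continuum gap of the almost-interpolation exactly as an $\eps^2$-rescaled graph Laplacian. This makes $\|\GL f\|_{L^\infty}$ the natural quantity for controlling $|\NL(\Ext f)(x)|$, the only remaining source of error being a Monte-Carlo-type discrepancy between integrals against $\rho\dVol$ and empirical sums over $\X_n$, which should absorb into the oscillation term.

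The plan begins by establishing, with the stated probability, uniform concentration of $d_{\eps,\X_n}(y)$ to its mean $\rho_\eps(y)\doteq \E[d_{\eps,\X_n}(y)]$ over all $y\in\M$, together with concentration for empirical sums of the other localised kernels appearing in the argument. Both are consequences of Bernstein's inequality applied on an $\eps$-net of $\M$ of cardinality $O(\eps^{-m})$, combined with Lipschitz control on the net and a union bound accounting for the $\eps^{-6m}$ prefactor. On the resulting nice event, $\Ext f$ is well-defined, the weights $\phi_j(x)=\eps^{-m}\eta(|x-x_j|/\eps)/(n\, d_{\eps,\X_n}(x))$ form a non-negative partition of unity supported on $\{x_j:|x-x_j|\leq\eps\}$, and $d_{\eps,\X_n}\geq c>0$ uniformly.

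I would then introduce the Monte-Carlo proxy
\[
R(x)\doteq \frac{1}{n\eps^{m+2}}\sum_k \eta\Big(\frac{|x-x_k|}{\eps}\Big)\bigl(\Ext f(x)-\Ext f(x_k)\bigr)
\]
and perform two comparisons: $\NL(\Ext f)(x)$ versus $R(x)$, and $R(x)$ versus $\sum_i\phi_i(x)\GL f(x_i)$. The second comparison is purely algebraic: writing $\Ext f(x)-f(x_k)=\sum_i\phi_i(x)(f(x_i)-f(x_k))$ and inserting $\eta(|x_i-x_k|/\eps)$ next to $\eta(|x-x_k|/\eps)$, a direct computation causes the $\sum_i\phi_i(x)\GL f(x_i)$ contributions to cancel identically (using $\sum_i\phi_i(x)=1$ and the defining relation $\Ext f(x)\sum_k\eta(|x-x_k|/\eps) = \sum_k\eta(|x-x_k|/\eps)f(x_k)$), leaving $R(x)$ expressed solely in terms of the discrepancies $f(x_k)-\Ext f(x_k)$. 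Substituting the central identity and using $d_{\eps,\X_n}(x_k)\geq c$ yields $|R(x)|\leq C\|\GL f\|_{L^\infty(\X_n\cap B(x,\eps))}$.

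The main obstacle is the first comparison $|\NL(\Ext f)(x) - R(x)|$: the integrand $g_x(y)=\eta(d_\M(x,y)/\eps)(\Ext f(x)-\Ext f(y))$ depends on the full sample through $\Ext f$, so Bernstein cannot be applied to $g_x$ directly. I would handle this by expanding $\Ext f(y)=\sum_j\phi_j(y)f(x_j)$ and, after replacing $d_{\eps,\X_n}(y)$ by $\rho_\eps(y)$ at the price of the concentration error from the first step, reducing the estimate to empirical-process bounds on a class of sample-free integrands localised to $B(x,2\eps)$. Bernstein on the $\eps$-net then bounds the resulting error by $C\osc_{\X_n\cap B(x,2\eps)}f$, the oscillation scale being natural since each integrand's range is bounded by $\osc f$ times a factor of $\|\eta\|_\infty$. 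The small mismatch between $d_\M$ and Euclidean distance in the kernels is of order $\eps^3$ on the relevant length-scales and contributes only an $O(\osc_{\X_n\cap B(x,2\eps)}f)$ correction, and the possible jump of $\eta$ at $|\cdot|=\eps$ is handled by a further concentration estimate on empirical mass in annuli of width $O(\eps)$, whose expected volume is $O(\eps^{m+1})$; this, too, absorbs into the same oscillation term. Combining both comparisons gives the desired inequality.
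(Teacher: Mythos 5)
Your proposal follows the same core strategy as the paper --- the identity $f(x_k) - \Ext f(x_k) = \frac{\eps^2}{\Deg(x_k)}\GL f(x_k)$ combined with Bernstein-type concentration over a covering of $\M$ --- but organised through a slightly different and arguably cleaner decomposition. Where the paper compares $\Ext f(x)$ with $\A_\eps(\Ext f)(x)$ through a chain of intermediate integrals and invokes the double-convolution expansion (\ref{eq:conv}) separately, you separate the algebra from the probability by introducing the empirical proxy $R(x)$; your observation that the $\Ext f(x)-f(x_k)$ contributions cancel identically against $\eta_\eps(|x-x_k|)$ makes $R(x)=d_{\eps,\X_n}(x)\,\Ext\big(\GL f/\Deg\big)(x)$ \emph{exactly}, so the $\|\GL f\|_{L^\infty}$ term emerges with no probabilistic work. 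The remaining Monte-Carlo discrepancy $\NL(\Ext f)-R$ is controlled, as in the paper, by concentration of the double-convolution kernel; what the paper calls Lemma \ref{lem:CM2} is precisely the ``class of sample-free integrands'' you invoke.

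Three details need tightening before the argument closes. First, once you expand $\Ext f(y)=\sum_j\phi_j(y)f(x_j)$ the individual summands are controlled only by $\|f\|_{L^\infty}$, not by $\osc f$; the claim that ``each integrand's range is bounded by $\osc f$'' does not survive the expansion. To recover the oscillation scale you must first replace $f$ by $f-f(x_{j_0})$ for some $x_{j_0}\in\X_n\cap B(x,2\eps)$, which leaves $\NL(\Ext f)$, $R(x)$, $\GL f$, and $\osc_{\X_n\cap B(x,2\eps)}$ unchanged and gives $\|f\|_{L^\infty(\X_n\cap B(x,2\eps))}\leq\osc_{\X_n\cap B(x,2\eps)}f$; this is exactly the last step of the paper's proof. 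Second, an $\eps$-net of $\M$ has $O(\eps^{-m})$ points and cannot produce the stated $\eps^{-6m}$ prefactor; the concentration for the double-convolution kernel must be carried out at resolution $t\eps=\eps^3$ over pairs $(x,y)\in\M^2$, giving $(\eps^3)^{-2m}=\eps^{-6m}$ and, via the $t^2$ in the Bernstein exponent with $t=\eps^2$, the factor $n\eps^{m+4}$. Third, the annuli produced by the geodesic-to-Euclidean kernel mismatch have width $O(\eps^3)$ and empirical count $O(n\eps^{m+2})$, not width $O(\eps)$ and volume $O(\eps^{m+1})$ as written. None of these is fatal, but all must be fixed to land on the stated probability and the oscillation bound.
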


Notice that Theorem \ref{thm:globalLip} allows to control the
oscillation term in Theorem \ref{thm:DiscreteToNonlocal}, which leads
to the following improved discrete to nonlocal result:

\begin{corollary}[Improved Discrete to Nonlocal]\label{cor:ImpDiscreteToNonlocal}
Let $\eps \ll 1$. With probability at least $1-C\eps^{-6m}\exp\left( -cn\eps^{m+4} \right)$ we have:
\begin{equation*}
\|\NL (\Ext f)\|_{L^\infty(\M)} \leq C\left( \|\GL f\|_{L^\infty(\X_n)} +\eps\|f\|_{L^\infty(\X_n)}\right)
\end{equation*}
for all $f\in L^2(\X_n)$.
\end{corollary}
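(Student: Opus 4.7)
The plan is to obtain the improved estimate by combining Theorem \ref{thm:DiscreteToNonlocal} with the global Lipschitz estimate of Theorem \ref{thm:globalLip}, using the latter to control the oscillation term that appears in the former. Since both theorems hold on events of probability at least $1 - C\eps^{-6m}\exp(-cn\eps^{m+4})$, intersecting them only changes the constants $C,c$, so the final high-probability event retains the claimed form.

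On the intersection of these two events, fix any $x\in\M$. Theorem \ref{thm:DiscreteToNonlocal} gives
\[
|\NL(\Ext f)(x)| \leq C\bigl(\|\GL f\|_{L^\infty(\X_n\cap B(x,\eps))} + \osc_{\X_n\cap B(x,2\eps)} f\bigr) \leq C\bigl(\|\GL f\|_{L^\infty(\X_n)} + \osc_{\X_n\cap B(x,2\eps)} f\bigr).
\]
For any $x_i,x_j\in\X_n\cap B(x,2\eps)$, the Euclidean distance $|x_i-x_j|\leq 4\eps$, and since $\eps\ll 1$ and $\M$ is compact smooth, a standard comparability estimate between Euclidean and geodesic distance on $\M$ (from Appendix \ref{appendixA}) yields $d_\M(x_i,x_j)\leq C\eps$. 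Applying Theorem \ref{thm:globalLip} to this pair, we obtain
\[
|f(x_i)-f(x_j)| \leq C\bigl(\|f\|_{L^\infty(\X_n)} + \|\GL f\|_{L^\infty(\X_n)}\bigr)\cdot\bigl(d_\M(x_i,x_j)+\eps\bigr) \leq C\eps\bigl(\|f\|_{L^\infty(\X_n)} + \|\GL f\|_{L^\infty(\X_n)}\bigr).
\]
Taking the supremum over such pairs gives the oscillation bound $\osc_{\X_n\cap B(x,2\eps)} f \leq C\eps\bigl(\|f\|_{L^\infty(\X_n)} + \|\GL f\|_{L^\infty(\X_n)}\bigr)$.

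Substituting back and absorbing the $\eps\|\GL f\|_{L^\infty(\X_n)}$ term into $\|\GL f\|_{L^\infty(\X_n)}$ (since $\eps\leq 1$), I get
\[
|\NL(\Ext f)(x)| \leq C\bigl(\|\GL f\|_{L^\infty(\X_n)} + \eps\|f\|_{L^\infty(\X_n)}\bigr),
\]
uniformly in $x\in\M$. Taking the $L^\infty(\M)$ norm on the left yields the claim. There is no genuine obstacle here: the only substantive points are the consistency of the two high-probability events (which is immediate, as the exceptional sets can be unioned) and the Euclidean-to-geodesic comparability in small balls, which is recorded in the Appendix; the main work is already contained in Theorems \ref{thm:DiscreteToNonlocal} and \ref{thm:globalLip}.
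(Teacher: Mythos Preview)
Your proof is correct and follows essentially the same approach as the paper: intersect the high-probability events from Theorems \ref{thm:DiscreteToNonlocal} and \ref{thm:globalLip}, use the latter to bound the oscillation term by $C\eps(\|f\|_{L^\infty(\X_n)}+\|\GL f\|_{L^\infty(\X_n)})$, and substitute back into the former. The paper's proof is terser but identical in substance.
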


The proof of Theorem \ref{thm:interiorLip} combines the discrete to
nonlocal control from Theorem \ref{thm:DiscreteToNonlocal} with the 
nonlocal Lipschitz regularity estimates that we discuss next.

\subsection{Lipschitz regularity for nonlocal Laplacian} \label{sec:ResultsNonlocal} 
	
The results presented in this section are of
independent interest, but here we use them to prove our main
results. First we establish the following global nonlocal
Lipschitz regularity estimates towards the proof of Theorem \ref{thm:globalLip}.
	
\begin{theorem}[Global regularity]\label{th_main1}
Let $\epsilon\ll 1$. Then, for every bounded, Borel function
$f:\M\to\R$ and every $x, y\in\M$ there holds:
\begin{align*}
|f(x) - f(y)| \leq & \; C\big(\|f\|_{L^\infty(\M)} +\|\NL
f\|_{L^\infty(\M)}\big)\cdot \big(d_\M(x, y)+\eps\big).
\end{align*}
\end{theorem}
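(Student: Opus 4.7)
The plan is to execute the probabilistic coupling strategy outlined in the introduction. First, I would realise $\NL$ as (a scalar multiple of) the generator of a natural Markov chain $\{X_k\}_{k\geq 0}$ on $\M$ with one-step transition density $p_\eps(x,y) = \eta(d_\M(x,y)/\eps)\rho(y)/D_\eps(x)$ (with respect to $\dVol$), where $D_\eps(x) \doteq \int_\M \eta(d_\M(x,y)/\eps)\rho(y)\dVol(y)$ is a kernel density estimator satisfying $c\eps^m \leq D_\eps(x) \leq C\eps^m$. A direct computation yields $\E[f(X_{k+1}) - f(X_k)\mid X_k = x] = Lf(x) \doteq -\eps^{m+2}\NL f(x)/D_\eps(x)$, so $|Lf(x)| \leq C\eps^2\,\|\NL f\|_{L^\infty(\M)}$, and $M_k \doteq f(X_k) - \sum_{j<k} Lf(X_j)$ is a martingale.

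Second, for the given pair $x, y \in \M$ I would construct coupled random walks $(X_k, Y_k)$ starting at $(x, y)$ via a reflection coupling adapted to $\M$. In a geodesic normal frame centred at the midpoint of the minimising geodesic from $X_k$ to $Y_k$, the symmetric part of the step from $Y_k$ is obtained from the corresponding step from $X_k$ by reflecting across the hyperplane perpendicular to that geodesic, transported via the exponential map and parallel transport. To accommodate the drift induced by the non-constant density $\rho$, each step is split probabilistically into a symmetric piece, to which the reflection coupling is applied, and a pure drift piece, to which a parallel-transport coupling is applied, as described at the end of the introduction.

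Third, introduce the stopping time $\tau \doteq \inf\{k : d_\M(X_k, Y_k) \leq C_0\eps \text{ or } d_\M(X_k, Y_k) \geq R_0\}$ for a large constant $C_0$ and an order-one constant $R_0$ (bounded by the injectivity radius of $\M$). Standard one-dimensional martingale and gambler's-ruin estimates, applied to the approximate martingale $d_\M(X_k, Y_k)$, would yield (i) $\Prob(d_\M(X_\tau, Y_\tau) \geq R_0) \leq C d_\M(x,y)/R_0$ and (ii) $\E\tau \leq C/\eps^2$. Applying optional stopping to $M^X$ and $M^Y$ and subtracting yields
\[
|f(x) - f(y)| \leq \E\bigl|f(X_\tau) - f(Y_\tau)\bigr| + 2\,\E\tau \cdot C\eps^2\,\|\NL f\|_{L^\infty(\M)},
\]
in which the second summand is $O(\|\NL f\|_{L^\infty(\M)})$ and the first is controlled on the ``drift'' event by $2\|f\|_{L^\infty(\M)}$ weighted by $C d_\M(x,y)/R_0$.

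The final step is to close the argument on the ``coalescence'' event, where $d_\M(X_\tau, Y_\tau) \leq C_0\eps$, which requires establishing an approximate Lipschitz estimate at the scale $\eps$ itself. I would formalise this through a modulus of continuity $\omega(r) \doteq \sup\{|f(x) - f(y)| : d_\M(x,y) \leq r\}$, turning the displayed inequality into a recursive bound, and iterate the coupling at a sequence of decreasing scales $R_0 = R_0(r)$ chosen to extract a geometric contraction, until the scale drops to $O(\eps)$, at which point the trivial bound $\omega(C_0\eps) \leq 2\|f\|_{L^\infty(\M)}$ is absorbed into the $\eps$ term of the desired estimate $|f(x)-f(y)| \leq C(\|f\|_{L^\infty(\M)}+\|\NL f\|_{L^\infty(\M)})(d_\M(x,y)+\eps)$. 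The main obstacle I anticipate is precisely this iteration, together with verifying that the distance process $d_\M(X_k, Y_k)$ is an approximate (super)martingale with quantitatively controlled errors on a curved manifold with non-constant density: curvature of $\M$ and the gradient of $\rho$ perturb the would-be reflection-coupled martingale property, and one must verify that the mixture of reflection and parallel couplings produces error terms of the correct order so that both (i) and (ii) above hold uniformly in $x, y$ with an $\eps$-independent constant that closes the iteration with the desired $r + \eps$ dependence.
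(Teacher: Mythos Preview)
Your coupling architecture matches the paper's, but there are two gaps. The minor one: your bound $\E\tau\leq C/\eps^2$ is too weak; the paper obtains $\E\tau\leq Cr\,d_\M(x,y)/\eps^2$ via a supermartingale argument applied to $d_\M(X_n,Y_n)-\tfrac{1}{2r}d_\M(X_n,Y_n)^2$, and the extra factor $d_\M(x,y)$ is essential so that the running-cost term $\E\tau\cdot C\eps^2\|\NL f\|_{L^\infty}$ scales linearly in $d_\M(x,y)$ rather than contributing an unabsorbable $O(\|\NL f\|_{L^\infty})$.

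The major gap is the closing step. The coupling produces
\[
|f(x)-f(y)|\;\leq\;\omega(C_0\eps)\;+\;C\bigl(\|f\|_{L^\infty}+\|\NL f\|_{L^\infty}\bigr)\,d_\M(x,y),
\]
with the coefficient on $\omega(C_0\eps)$ equal to the coalescence probability, which tends to $1$; moreover the reflection coupling is not even well-posed once $d_\M(x,y)\lesssim\eps$, so iterating the coupling at smaller $R_0$ cannot manufacture a contraction. Inserting the trivial bound $\omega(C_0\eps)\leq 2\|f\|_{L^\infty}$ therefore leaves a term $2\|f\|_{L^\infty}$ that is not multiplied by $d_\M(x,y)+\eps$. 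The paper supplies the missing contraction by a \emph{separate, non-coupling} argument at scale $\eps$ (Theorem~\ref{th2}): for $d_\M(x_0,y_0)\leq 3\eps$ one compares $f(x_0),f(y_0)$ to their $\bar\A_\eps$-averages and exploits that the two averaging balls overlap on a uniformly positive volume fraction, yielding $\omega(3\eps)\leq\theta\,\omega(5\eps)+C\eps\|f\|_{L^\infty}+C\|\bar\A_\eps f-f\|_{L^\infty}$ with a fixed $\theta<1$. Taking the supremum over $d_\M\leq 5\eps$ in the combined inequality then gives $(1-\theta)\,\omega(5\eps)\leq C\eps(\|f\|_{L^\infty}+\|\NL f\|_{L^\infty})$, which is precisely what closes the bound. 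Your proposal needs an analogue of this small-scale overlap/contraction step; the coupling alone cannot provide it.
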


We also have the following interior estimate, used in the proof of Theorem \ref{thm:interiorLip}. 
\begin{theorem}[Interior estimate]\label{th_main2}
Let $\epsilon\ll r \ll 1$. Then, for every bounded Borel
function $f:\M\to\R$, every $x_0\in \M$ and $x,y\in B_\M(x_0, r) \subset \M$, we have: 
\begin{align*}
|f(x) - f(y)| \leq &\; C \Big (\frac{d_\M(x,y)}{r} + \frac{\epsilon |
  \log \veps|}{r}\Big) \cdot \lVert f \rVert_{L^\infty(B_\M(x_0,7r)} 
 \\ & + C \Big( r d_\M(x,y) + \frac{\veps r}{|\log\epsilon|}\Big)\cdot \|\Delta_\veps f\|_{L^\infty(B_\M(x_0, 7r))}.
\end{align*}
\end{theorem}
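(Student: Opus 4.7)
I would adapt the coupled-random-walk argument behind the global Theorem~\ref{th_main1} so that neither walk ever feels values of $f$ outside the reference ball $B_\M(x_0,7r)$. Concretely, let $\{X_k\}$ and $\{Y_k\}$ be Markov chains on $\M$ started at $x$ and $y$, with one-step transition kernel proportional to $\eta(d_\M(\cdot,\cdot)/\veps)\rho(\cdot)$, coupled via the reflection-plus-drift-mixing construction described in Section~\ref{sec:NonLocalLipschitz}. Introduce the stopping time
\[
\tau \;=\; \tau_{\mathrm{coal}}\wedge\tau_{\mathrm{sep}}\wedge\tau_{\mathrm{exit}},
\]
where $\tau_{\mathrm{coal}}=\inf\{k:\ d_\M(X_k,Y_k)\leq\veps\}$, $\tau_{\mathrm{sep}}$ is the first time the pair separates by an order-one distance, and $\tau_{\mathrm{exit}}=\inf\{k:\ X_k\notin B_\M(x_0,6r)\ \text{or}\ Y_k\notin B_\M(x_0,6r)\}$. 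The buffer between $B_\M(x_0,6r)$ and $B_\M(x_0,7r)$ absorbs the single-step overshoot of size $\veps$, so both walks lie in $B_\M(x_0,7r)$ throughout $[0,\tau]$.

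The discrete Dynkin identity applied to each chain (whose generator is a rescaling of $\Delta_\veps$) and subtracted yields
\[
|f(x)-f(y)| \;\leq\; \mathbb{E}\bigl|f(X_\tau)-f(Y_\tau)\bigr| + C\veps^2\,\mathbb{E}[\tau]\cdot\|\Delta_\veps f\|_{L^\infty(B_\M(x_0,7r))}.
\]
Gambler's-ruin type sub/super-martingale estimates for the separation process, constructed as in the proof of Theorem~\ref{th_main1} but carried out conditionally on staying inside $B_\M(x_0,6r)$, should yield
\[
\Prob\bigl(\tau=\tau_{\mathrm{exit}}\bigr)\;\leq\; C\,\tfrac{d_\M(x,y)}{r}, \qquad \veps^2\,\mathbb{E}[\tau]\;\leq\; C\,r\,d_\M(x,y),
\]
the latter bound exploiting the fact that typical paths stay near the diagonal and only contribute their large durations on the small-probability exit event. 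Combining these with the crude bound $|f(X_\tau)-f(Y_\tau)|\leq 2\|f\|_{L^\infty(B_\M(x_0,7r))}$ on $\{\tau=\tau_{\mathrm{exit}}\}\cup\{\tau=\tau_{\mathrm{sep}}\}$ already reproduces the two non-logarithmic pieces $Cd_\M(x,y)r^{-1}\|f\|_{L^\infty(B_\M(x_0,7r))}$ and $Crd_\M(x,y)\|\Delta_\veps f\|_{L^\infty(B_\M(x_0,7r))}$ of the claimed inequality.

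The remaining and \emph{main} difficulty is to control $|f(X_\tau)-f(Y_\tau)|$ on the coalescence event $\{\tau=\tau_{\mathrm{coal}}\}$: the pair is only known to lie within distance $\veps$, and there is no a~priori regularity of $f$ to exploit. I would handle this by iterating -- reapply the same two-point estimate to the pair $(X_\tau,Y_\tau)$, which still lies in $B_\M(x_0,7r)$, with its (smaller) separation and a correspondingly reduced outer radius now playing the roles of $d_\M(x,y)$ and $r$, and then recurse on each successive coalescence event. Each round multiplies the residual separation by a definite contraction factor, adds $O(\veps/r)$ to the coefficient of $\|f\|_{L^\infty(B_\M(x_0,7r))}$ and $O(\veps r)$ to the coefficient of $\|\Delta_\veps f\|_{L^\infty(B_\M(x_0,7r))}$. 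Performing $k\sim|\log\veps|$ rounds drives the residual pair-distance below $\veps^2$, and a geometric-sum accounting of the per-round errors -- with $k$ chosen to balance the two families of contributions -- recovers the logarithmic factors $\tfrac{\veps|\log\veps|}{r}$ and $\tfrac{\veps r}{|\log\veps|}$ appearing in the statement. The principal obstacle is precisely the bookkeeping in this iteration: confirming that every recursive step remains inside $B_\M(x_0,7r)$, that the contraction constant is uniform across scales, and that no exit errors are re-introduced at the inner rounds.
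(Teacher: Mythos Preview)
Your outline correctly reproduces the coupling part of the argument (what the paper packages as Theorem~\ref{Thestimate1}): the stopping time, the Dynkin/optional-stopping identity, and the gambler's-ruin estimates $\Prob(\tau=\tau_{\mathrm{exit}})\leq C\,d_\M(x,y)/r$ and $\veps^2\,\E[\tau]\leq C\,r\,d_\M(x,y)$ all appear in the paper with essentially the constants you write. You also correctly anticipate that an iteration of order $|\log\veps|$ produces the logarithmic factors.

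The gap is in the mechanism you invoke for the iteration. You write that on the coalescence event you ``reapply the same two-point estimate'' and that ``each round multiplies the residual separation by a definite contraction factor.'' But the reflection coupling does \emph{not} contract the separation once $d_\M(X_\tau,Y_\tau)\leq\veps$: the step size is $\veps$, the curvature errors in the Levi--Civit\`a quadrilateral are $O(\veps^2)$, and the stopping rule $d_\M\leq\veps$ (or $3\veps$) is hit again with no improvement. Iterating the coupling alone therefore does not drive the pair distance below $\veps$, let alone below $\veps^2$.

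What the paper actually does is insert a second, independent ingredient (Theorem~\ref{th2}): for pairs with $d_\M(x,y)\leq 3\veps$, a direct overlap computation on the averaging kernels shows
\[
|f(x)-f(y)|\;\leq\;\theta\cdot\sup\bigl\{|f(\bar x)-f(\bar y)|:\ \bar x\in B_\M(x,\bar N\veps),\ d_\M(\bar x,\bar y)\leq 5\veps\bigr\}\;+\;\text{(errors)}
\]
with a fixed $\theta<1$ depending only on $\eta$. This is not a coupling argument; it comes from the fact that a fixed fraction of the mass in $\bar\A_\veps f(x)$ and $\bar\A_\veps f(y)$ lands on the same region of $\M$ and cancels. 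Combining this with the coupling estimate gives a single inequality (the paper's display (bd7)) that has a genuine contraction factor $\theta$ in front of the $\veps$-scale oscillation. The iteration is then performed on \emph{this} combined inequality, with the inner radius chosen as $R=\tfrac{\log\theta}{\log\veps}\cdot\tfrac{r}{2}$ so that $k=\lceil\log\veps/\log\theta\rceil$ rounds stay inside $B_\M(x_0,3r)$ while $\theta^k<\veps$. Your bookkeeping instincts about the geometric sums are right, but they need to be fed by this kernel-overlap contraction, not by the coupling.
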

	
\begin{remark}
We note that Theorems \ref{th_main1} and \ref{th_main2} do not require
the manifold $\M$ to be embedded in Euclidean space $\R^d$, and they hold
for an abstract Riemannian manifold.  
\label{rem:embedded}
\end{remark}

\subsection{Outline} The rest of the paper is organized as follows. In
section \ref{sec33} we discuss properties of the discrete degree
$d_{\epsilon, \X_n}$ while in section \ref{sec_interpol} we deal with 
the almost-interpolation operator $\Ec_{\epsilon, \X_n}$ which allows us to
relate  discrete with continuum functions, and graph with nonlocal
Laplacians, as stated  in Theorem \ref{thm:DiscreteToNonlocal}. 
Towards further applications, in Section \ref{LCQ_sec} we present a result of independent interest,
namely a curvature-driven error estimate on geodesic distances in
the Levi-Civit\`a quadrilateral, frequently used in the following
analysis on manifolds.

Section \ref{secAver} discusses the two averaging operators in
connection with the weighted Laplace-Beltrami operator in (\ref{eqn:LaplaceBeltrami}).
Further, in section \ref{sec_77} we introduce a biased random walk,
which is a discrete process modelled on one of the averaging operators.
Sections \ref{sec:NonLocalLipschitz}, \ref{sec_second} and \ref{sec_closebound}
are directed towards the proofs of Theorems \ref{th_main1} and \ref{th_main2}
characterizing the regularity of nonlocal Poisson equations at the
continuum level. We wrap up the paper with the proofs of our main theorems in
section \ref{sec:Spectrum} where we use directly the results announced
in sections \ref{sec:ResultsInterp} and \ref{sec:ResultsNonlocal}.

We emphasize that sections \ref{sec33}-\ref{sec_interpol}
and sections \ref{secAver}-\ref{sec_closebound} are independent of each other. Readers
who decide to skip one of these sections may be able to do so and jump
directly to section \ref{sec:Spectrum} without missing the general
structure of the proofs of our main results.  Throughout the paper we will use several notions from Riemannian 
geometry; these are gathered in Appendix \ref{appendixA}.

\bigskip

\begin{center}
{\bf \large PART 1}
\end{center}

\section{Concentration of measure and the discrete degree \texorpdfstring{$d_{\epsilon, \X_n}$}{d}}\label{sec33}

In this section we work under the following hypotheses:\vspace{-2mm}

\begin{equation*}\label{H1}\tag{{\bf H1}}
\left[\quad \;\; \mbox{\begin{minipage}{13.1cm}\vspace{1mm}
\begin{itemize}[leftmargin=*]
\item[(i)] $(\M,g)$ is a smooth, compact, boundaryless, connected
and orientable  manifold of dimension $m$, embedded in $\R^d$,\vspace{1mm}
\item[(ii)] $\rho\in\mathcal{C}^2(\M)$ is a positive scalar field, normalised to:
$\int_\M\rho(x)\dVol(x) = 1$,\vspace{1mm}
\item[(iii)] $\eta:[0,\infty)\to\R$ is a nonnegative, nonincreasing density
function, which is Lipschitz continuous on $[0,1]$ and satisfies:
$\int_{B(0,1)\subset\R^m}\eta(|w|)\dw = 1.$ We then denote:
$\eta_\epsilon(s) = \frac{1}{\epsilon^m}\eta\big(\frac{s}{\epsilon}\big)$. \vspace{1mm}
\end{itemize}
\end{minipage}}\;\;\, \right]
\end{equation*}

\smallskip

\begin{remark}
The orientability assumption will not be used until Section
\ref{sec:Spectrum}, which builds on several previous results in the literature
invoking this condition explicitly. We believe that the
orientability assumption can be removed from those previous results as
well. For the sake of transparency we have decided to keep this assumption, which, we
should say, is quite mild for most applications in machine learning. 
\end{remark}

We denote $\mu=\rho(x)\dVol(x)$ the probability measure on the Borel subsets of $\M$. For each $n\geq
1$ we consider the following product space, with elements $\X_n$:
\begin{equation*}
\begin{split}
(\M^n, \mbox{Borel}, \P_n) &\doteq (\M, \mbox{Borel}, \mu)^n\quad
\\ & \mbox{with } \; \M^n = \big\{\X_n=\{x_1, \ldots, x_n\}; ~ x_i\in\M \;\mbox{
  for } i=1\ldots n\big\}.
\end{split}
\end{equation*}
Equivalently, $\X_n$ is simply described as a set of $\iid$ samples
from $\mu=\P_1$, while the subscript $n$ in $\P_n$ emphasizes the
dependence of events on the first $n$ data points $\{x_1, \dots, x_n \}$.  

Recall that to any discrete function $f:\X_n\to\R$ we associate
the continuum function $\Ec_{\epsilon,\X_n} f:\M\to \R$, defined in (\ref{eq:extension}).
As we shall see, the event $\{\X_n; ~d_{\epsilon, \X_n}(x)>0 \mbox{ for
  all } x\in\M\}\subset\M^n$, resulting in the corresponding operator $\Ec_{\epsilon, \X_n}$ being well
defined and returning a Borel, bounded function $\Ec_{\epsilon, \X_n}f$
for every discrete $f$, occurs with high probability $\P_n$ (see Corollary \ref{good_def}). 

In this section, the main point is to focus on $d_{\epsilon, \X_n}$
versus $d_\epsilon$ and develop a series of technical
tools towards the main result in Theorem \ref{thm:DiscreteToNonlocal}
that will be given in section \ref{sec_interpol}.
There are two facts from differential geometry that we will 
frequently use. These facts, related to the manifold $\M$ being embedded
in $\R^d$ can be found in section \ref{sec2.4}, but we also state
them presently, for the sake of clarity. First, there exists a constant
$C>0$ depending only on $\M$ such that, denoting 
$|B(0,1)|$ the volume of the unit ball in $\R^m$, we have for all $r\ll 1$:
\begin{equation}\label{eq:vol1}
\big|{Vol}_\M(B_\M(x,r))- |B(0,1)| r^m\big|\leq Cr^{m+2} \qquad\mbox{ for all } x\in\M.
\end{equation}
Second, for all $x,y\in\M$ such that $|x-y|\leq
\frac{R}{2}$ with a sufficiently small $R$, there holds:
\begin{equation}\label{eq:d1}
|x-y|\leq  d_\M (x,y)\leq |x-y|+ \frac{8}{R^2}|x-y|^3.
\end{equation}

\subsection{Concentration of measure lemmas}

We first state a basic concentration of measure result,
including its self-contained proof that has been sketched in \cite[Lemma 3.1]{calder2019improved}.

\begin{lemma}\label{lem:ch}
Given a bounded, Borel function $\psi:\M\to \R$ and $\epsilon>0$, for
each $x\in \M$ we consider the following random variable:
\begin{equation*}
\Psi^{\epsilon,x}(\X_n) \doteq \sum_{i=1}^n \mathds{1}_{\{|x_i-x|\leq \epsilon\}}
\psi(x_i) \qquad \mbox{for all } \; \X_n=\{x_1,\ldots, x_n\}\in\M^n.
\end{equation*}
Then, there exists a positive constant $C$, depending only on $\M$, such that
for all $\epsilon\ll 1$ and $t$ satisfying $\epsilon^2\leq t\leq 1$,
and all $x\in\M$, there holds:
\begin{equation}\label{eq:con}
\begin{split}
\P_n\Big( \Big| \Psi^{\epsilon, x} - n\int_{B_\M(x,\epsilon)}\psi(y)
    \rho(y)\dVol (y)\Big|\geq C t n\epsilon^m &\|\rho\|_{\mathcal{C}^0}
  \|\psi\|_{L^\infty(B_\M(x,2\epsilon))} \Big) \\ & \leq
2\exp\Big( -\frac{C}{8} t^2 n\epsilon^m \|\rho\|_{\mathcal{C}^0} \Big).
\end{split}
\end{equation}
\end{lemma}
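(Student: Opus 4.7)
The plan is to view $\Psi^{\epsilon,x}$ as a sum of i.i.d.\ bounded random variables $Z_i \doteq \mathds{1}_{\{|x_i-x|\le\epsilon\}}\psi(x_i)$ and apply a Bernstein-type concentration inequality, with the two geometric facts \eqref{eq:vol1} and \eqref{eq:d1} supplying the required moment bounds. First, by \eqref{eq:d1}, provided $\epsilon \ll 1$, the Euclidean condition $|x_i-x|\le\epsilon$ implies the geodesic bound $d_\M(x,x_i)\le 2\epsilon$, so $|Z_i|\le M\doteq \|\psi\|_{L^\infty(B_\M(x,2\epsilon))}$ almost surely. Second, using \eqref{eq:vol1} together with the boundedness of $\rho$,
\[
\operatorname{Var}(Z_i)\le \E[Z_i^2]\le M^2\|\rho\|_{\mathcal{C}^0}\,{Vol}_\M\big(\{y\in\M:|y-x|\le\epsilon\}\big)\le C M^2\|\rho\|_{\mathcal{C}^0}\epsilon^m.
\]

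Before invoking concentration, I would reconcile the Euclidean indicator defining $\Psi^{\epsilon,x}$ with the geodesic ball appearing in the integral on the right-hand side of \eqref{eq:con}. Since $|y-x|\le d_\M(x,y)$, one has $B_\M(x,\epsilon)\subset\{y\in\M:|y-x|\le\epsilon\}$, and by \eqref{eq:d1} combined with \eqref{eq:vol1} the symmetric difference of these two sets has volume at most $C\epsilon^{m+2}$. Hence
\[
\Big|n\E[Z_1] - n\int_{B_\M(x,\epsilon)}\psi(y)\rho(y)\,\dVol(y)\Big|\le CnM\|\rho\|_{\mathcal{C}^0}\epsilon^{m+2}\le CtnM\|\rho\|_{\mathcal{C}^0}\epsilon^m,
\]
where the last inequality is exactly where the hypothesis $t\ge\epsilon^2$ is used. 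This is the geometric reason for that lower bound: without it, the curvature-driven discrepancy between the Euclidean and geodesic balls would swamp the Bernstein deviation and the target estimate would fail.

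Finally, I would apply Bernstein's inequality to $\sum_{i=1}^n (Z_i - \E Z_i)$ with deviation parameter $s = tnM\|\rho\|_{\mathcal{C}^0}\epsilon^m$. With uniform bound $M$ on $|Z_i|$ and aggregate variance at most $CnM^2\|\rho\|_{\mathcal{C}^0}\epsilon^m$, the Bernstein exponent reduces, after using $t\le 1$ to absorb the sub-exponential term $Ms/3$ into the Gaussian term $n\operatorname{Var}(Z_1)$, to a quantity of order $ct^2 n\epsilon^m\|\rho\|_{\mathcal{C}^0}$, yielding the desired tail $2\exp(-c t^2 n\epsilon^m\|\rho\|_{\mathcal{C}^0})$. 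Combining this with the displayed bound from the previous paragraph via the triangle inequality closes the argument. The only substantive step is the symmetric-difference volume estimate—this is where the curvature of the embedded manifold enters, and it is what fixes the threshold $t\ge\epsilon^2$; the probabilistic half of the proof is entirely routine.
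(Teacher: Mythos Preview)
Your proposal is correct and follows essentially the same route as the paper: define $Z_i=\mathds{1}_{\{|x_i-x|\le\epsilon\}}\psi(x_i)$, bound $|Z_i|$ and $\operatorname{Var}(Z_i)$ via \eqref{eq:d1} and \eqref{eq:vol1}, apply Bernstein with deviation $\sim tn\epsilon^m\|\rho\|_{\mathcal{C}^0}\|\psi\|_{L^\infty(B_\M(x,2\epsilon))}$, and then use the $O(\epsilon^{m+2})$ volume of the discrepancy between the Euclidean and geodesic $\epsilon$-balls together with $t\ge\epsilon^2$ to swap $n\E[Z_1]$ for the geodesic integral. The only cosmetic difference is that the paper writes the latter as a one-sided set difference (since $B_\M(x,\epsilon)\subset\{|y-x|\le\epsilon\}\cap\M$) rather than a symmetric difference.
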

\begin{proof}
{\bf 1.} By comparison of the Euclidean and geodesic distances in
(\ref{eq:d1}), one observes:
\begin{equation}\label{m1}
B_\M(x,\epsilon)\subset B(x,\epsilon)\cap\M\subset B_\M\big(x,
\epsilon+\frac{8}{R^2}\epsilon^3\big)\subset B_\M(x, 2\epsilon)
\end{equation}
for all $x\in\M$ and all $\epsilon\ll 1$. Further, in view of
(\ref{eq:vol1}) and denoting by $C$ a sufficiently large constant that
depends only on $\M$, it follows that:
\begin{equation}\label{m2}
\begin{split}
Vol_\M\big(B(x,\epsilon)\cap&\M\big)-Vol_\M(B_\M(x,\epsilon)) \leq 
Vol_\M\big(B_\M\big(x, \epsilon+\frac{8}{R^2}\epsilon^3\big)\big) - Vol_\M(B_\M(x,\epsilon)) 
\\ & \leq |B(0,1)| \big((\epsilon+ C\epsilon^3)^m - \epsilon^m\big) + C\epsilon^{m+2}
\leq C\epsilon^{m+2}.
\end{split}
\end{equation}

\smallskip

{\bf 2.} The argument towards (\ref{eq:con}) relies on applying Bernstein's inequality \cite{boucheron2013concentration}
to the independent random variables $\big\{Y_i\doteq \mathds{1}_{\{|x_i-x|\leq \epsilon\}}
\psi(x_i) \big\}_{i=1}^n$ on $\M^n$.  By (\ref{m1}), these obey the pointwise
bound: $|Y_i| \leq \|\psi\|_{L^\infty(B_\M(x,2\epsilon))}$ and the bound on the variance:
\begin{equation*}
\begin{split}
\E\big[(Y_i - &\E[Y_i])^2\big]  = \E[Y_i^2]-\E[Y_i]^2\leq
\E[Y_i^2]=\int_{B(x,\epsilon)\cap\M}\psi(y)^2\rho(y)\dVol (y) \\ & \leq 
\|\rho\|_{L^\infty(\M)} \|\psi\|^2_{L^\infty(B_\M(x,2\epsilon))} Vol_\M\big(B_\M(x,2\epsilon)\big)
\leq C \epsilon^m \|\rho\|_{L^\infty(\M)} \|\psi\|^2_{L^\infty(B_\M(x,2\epsilon))},
\end{split}
\end{equation*}
where in the last step we again used (\ref{eq:vol1}). We conclude that, for every $\delta>0$:
\begin{equation*}
\begin{split}
\P_n\Big( &\big| \Psi^{\epsilon, x} - \E[\Psi^{\epsilon, x}]\big|\geq \delta \Big) \leq
2\exp\Big( -\frac{\delta^2}{2Cn\epsilon^m\|\rho\|_{L^\infty(\M)}
  \|\psi\|^2_{L^\infty(B_\M(x,2\epsilon))} + \frac{4}{3}\delta \|\psi\|_{L^\infty(B_\M(x,2\epsilon))}}\Big),
\end{split}
\end{equation*}
which upon taking $\delta = C tn \epsilon^m \|\rho\|_{L^\infty(\M)}
  \|\psi\|_{L^\infty(B_\M(x,2\epsilon))}$ yields:
\begin{equation}\label{eq:con0}
\begin{split}
\P_n\Big( \big| \Psi^{\epsilon, x} - & \; \E[\Psi^{\epsilon, x}]\big|\geq
C tn \epsilon^m \|\rho\|_{L^\infty(\M)}
\|\psi\|_{L^\infty(B_\M(x,2\epsilon))} \Big) \\ & \leq
2\exp\Big( -\frac{C\epsilon^m n t^2 \|\rho\|_{L^\infty(\M)}}{2+
  \frac{4}{3}t}\Big) \leq 2\exp\Big( -\frac{C}{4} t^2n\epsilon^m \|\rho\|_{L^\infty(\M)}\Big).
\end{split}
\end{equation}
Recall now (\ref{m2}) and (\ref{m1}) to get, in view of $\epsilon^2\leq t$:
\begin{equation*}
\begin{split} 
\Big|\E[\Psi^{\epsilon, x}] - n&\int_{B_\M(x,\epsilon)}\psi(y) \rho(y)\dVol (y) \Big|  = 
n \Big|\int_{(B(x,\epsilon)\cap\M)\setminus B_\M(x,\epsilon)}\psi(y) \rho(y)\dVol (y) \Big|
\\ & \leq Cn \epsilon^{m+2}\|\rho\|_{L^\infty(\M)} \|\psi\|^2_{L^\infty(B_\M(x,2\epsilon))}
\leq Ctn \epsilon^m \|\rho\|_{L^\infty(\M)} \|\psi\|^2_{L^\infty(B_\M(x,2\epsilon))}.
\end{split}
\end{equation*}
Together with (\ref{eq:con0}), the above implies (\ref{eq:con}) with
constants $2C$ and $\frac{C}{4}$, instead of $C$ and $\frac{C}{8}$ in
the left and right hand sides, respectively. The result follows by
rescaling the constant $C$.
\end{proof}

\begin{corollary}\label{prop:ball}
There exist constants $C,C',c>0$, depending on $\M$ and $\rho$, such that for all $\epsilon\ll 1$ and all
$x\in \M$ there holds:
\begin{equation*}
\P_n\Big( C' n \epsilon^m \leq \sum_{i=1}^n \mathds{1}_{\{|x_i-x|\leq
  \epsilon\}} \leq C n \epsilon^m\Big) \geq 1-2\exp\left( -c n\epsilon^m \right).
\end{equation*}
\end{corollary}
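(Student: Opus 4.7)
The plan is to derive Corollary \ref{prop:ball} as a direct consequence of the concentration Lemma \ref{lem:ch} specialized to the constant function $\psi\equiv 1$. With this choice, the random variable becomes $\Psi^{\epsilon,x}(\X_n)=\sum_{i=1}^n\mathds{1}_{\{|x_i-x|\leq\epsilon\}}$, which is precisely the count appearing in the corollary.

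First, I would estimate the mean $n\int_{B_\M(x,\epsilon)}\rho(y)\dVol(y)$ from above and below. Using the volume estimate (\ref{eq:vol1}), namely $\big|Vol_\M(B_\M(x,r))-|B(0,1)|r^m\big|\leq Cr^{m+2}$, together with the hypothesis (H1)(ii) that $\rho$ is bounded above and bounded away from zero on $\M$, I get constants $c_1,c_2>0$ depending only on $\M$ and $\rho$ such that, for all $\epsilon\ll 1$ and all $x\in\M$,
\begin{equation*}
c_1 n\epsilon^m \;\leq\; n\int_{B_\M(x,\epsilon)}\rho(y)\dVol(y) \;\leq\; c_2 n\epsilon^m.
\end{equation*}

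Next, I would apply Lemma \ref{lem:ch} with $\psi\equiv 1$ and with $t$ chosen to be a sufficiently small positive constant (independent of $n,\epsilon$), small enough that the deviation term $Ctn\epsilon^m\|\rho\|_{\mathcal{C}^0}$ is bounded by, say, $\tfrac{c_1}{2}n\epsilon^m$. The requirement $\epsilon^2\leq t\leq 1$ in the hypotheses of Lemma \ref{lem:ch} is trivially satisfied for $\epsilon\ll 1$, since $t$ is a fixed small constant. This yields
\begin{equation*}
\P_n\Big(\big|\Psi^{\epsilon,x}-n\!\!\int_{B_\M(x,\epsilon)}\!\!\rho\,\dVol\big|\geq \tfrac{c_1}{2}n\epsilon^m\Big)\leq 2\exp\big(-c\,n\epsilon^m\big),
\end{equation*}
for some $c>0$ absorbing $t^2$ and the other constants.

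Finally, combining this deviation bound with the two-sided mean estimate in a straightforward triangle-inequality step produces the desired lower bound $\tfrac{c_1}{2}n\epsilon^m$ and upper bound $(c_2+\tfrac{c_1}{2})n\epsilon^m$ on $\Psi^{\epsilon,x}$ on the complementary event, giving Corollary \ref{prop:ball} with $C'=\tfrac{c_1}{2}$ and $C=c_2+\tfrac{c_1}{2}$. There is no real obstacle here: the corollary is essentially a qualitative restatement of Lemma \ref{lem:ch} for $\psi\equiv 1$, once the mean is controlled by the volume-of-ball estimate. The only point requiring a little care is to ensure that the constants produced are uniform in $x\in\M$, which follows because both (\ref{eq:vol1}) and the bounds on $\rho$ are uniform over the compact manifold $\M$.
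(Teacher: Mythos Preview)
Your proof is correct and follows essentially the same approach as the paper: apply Lemma \ref{lem:ch} with $\psi\equiv 1$, control the mean $n\int_{B_\M(x,\epsilon)}\rho\,\dVol$ via the volume estimate (\ref{eq:vol1}) and the uniform bounds on $\rho$, and then fix $t$ as a sufficiently small constant to absorb the deviation. The paper's write-up is organized slightly differently but the argument is the same.
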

\begin{proof}
Applying Lemma \ref{lem:ch} to $\psi\equiv 1$, yields for all
$\epsilon^2\leq t\leq 1$ with $\epsilon\ll 1$, and all $x\in\M$:
\begin{equation*}
\begin{split}
\P_n\Big( \Big| \sum_{i=1}^n \mathds{1}_{\{|x_i-x|\leq \epsilon\}} - n\int_{B_\M(x,\epsilon)}
    \rho(y)&\dVol (y)\Big|\leq C t n\epsilon^m \|\rho\|_{L^\infty(\M)} \Big) \\ & \geq
1- 2\exp\Big( -\frac{C}{8} t^2 n\epsilon^m \|\rho\|_{L^\infty(\M)} \Big).
\end{split}
\end{equation*}
In view of  (\ref{eq:vol1}) we note that for $\epsilon\ll 1$:
\begin{equation*}
\begin{split}
 n\int_{B_\M(x,\epsilon)}\rho(y)\dVol (y) \in & \; n
Vol(B_\M(x,\epsilon))\cdot \big[\min\rho, \max\rho\,\big]\\ &
=  n \Big(|B(0,1)|\epsilon^m + \mathcal{O}(\epsilon^{m+2})\Big)\cdot \big[\min\rho, \max\rho\,\big]
\\ & \subset n |B(0,1)|\epsilon^m \cdot \Big[\frac{1}{2}\min\rho, 2\max\rho\,\Big].
\end{split}
\end{equation*}
It follows that with probability at least $1-2\exp\left(
  -\frac{C}{8}t^2n\epsilon^m \|\rho\|_{\mathcal{C}^0} \right)$ there holds:
\begin{equation*}
\begin{split}
\sum_{i=1}^n \mathds{1}_{\{|x_i-x|\leq
  \epsilon\}} \in \Big[ & \frac{1}{2} n |B(0,1)|\epsilon^m \min\rho
- C t n\epsilon^m \|\rho\|_{L^\infty(\M)} , \\ & \quad 2 n |B(0,1)|\epsilon^m \max\rho
+ C t n\epsilon^m \|\rho\|_{L^\infty(\M)}\Big].
\end{split}
\end{equation*}
Taking $t$ appropriately small, in function of $m$ and $\rho$,
concludes the proof.
\end{proof}

\medskip

The next result provides another basic property of the random
geometric graph, with balls $\{|x_i-x|\leq \epsilon \}$
in Corollary \ref{prop:ball} replaced by the annuli $\{(1-t)\epsilon\leq |y-x|\leq (1+t)\epsilon\}$.

\begin{proposition}\label{prop:annulus}
There exist constants $C>c>0$, depending on $\M$ and $\rho$, such that
for all $\epsilon\ll 1$ and $t$ satisfying $\epsilon^2\leq t\leq 1$,
and all $x\in \M$ we have:
\begin{itemize}
\item[(i)] $Vol_\M\Big( \{y\in\M; ~ (1-t)\epsilon\leq |y-x|\leq
  (1+t)\epsilon\}\Big)\leq Ct\epsilon^m$,
\item[(ii)]
$ \displaystyle \P_n\Big( \sum_{i=1}^n \mathds{1}_{\{(1-t)\epsilon\leq |x_i-x|\leq
(1+t) \epsilon\}} \leq C tn \epsilon^m\Big) \geq 1-\exp\left( -c tn\epsilon^m \right).$
\end{itemize}
\end{proposition}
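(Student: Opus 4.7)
The plan is to prove (i) by a direct geometric argument using the Euclidean-to-geodesic distance comparison and the local volume estimate, then use (i) to supply the variance bound needed for a Bernstein-type concentration argument in (ii). Both parts follow closely the template already established in Lemma \ref{lem:ch} and Corollary \ref{prop:ball}.

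For (i), fix $x \in \M$ and set $A = \{y \in \M:\, (1-t)\epsilon \leq |y-x| \leq (1+t)\epsilon\}$. Using (\ref{eq:d1}) with $|y-x| \leq (1+t)\epsilon \leq 2\epsilon \ll R/2$, every $y \in A$ satisfies $(1-t)\epsilon \leq |y-x| \leq d_\M(x,y) \leq |y-x| + C\epsilon^3 \leq (1+t)\epsilon + C\epsilon^3$, hence
\begin{equation*}
A \,\subset\, B_\M\big(x, (1+t)\epsilon + C\epsilon^3\big) \setminus B_\M\big(x, (1-t)\epsilon\big).
\end{equation*}
Applying (\ref{eq:vol1}) to each geodesic ball yields
\begin{equation*}
Vol_\M(A) \,\leq\, |B(0,1)|\Big[\big((1+t)\epsilon + C\epsilon^3\big)^m - \big((1-t)\epsilon\big)^m\Big] + C\epsilon^{m+2}.
\end{equation*}
Factoring out $\epsilon^m$ and applying the mean value theorem to $r \mapsto r^m$ on the interval $[1-t, \, 1+t+C\epsilon^2] \subset [0,2]$, the bracket is bounded by $m\cdot 2^{m-1}(2t+C\epsilon^2)\epsilon^m$, which is $\leq Ct\epsilon^m$ after invoking the standing hypothesis $\epsilon^2 \leq t$. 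The residual $C\epsilon^{m+2}$ is likewise absorbed into $Ct\epsilon^m$ using $\epsilon^2 \leq t$, completing (i).

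For (ii), apply Bernstein's inequality to the i.i.d. Bernoulli variables $Y_i(\X_n) = \mathds{1}_{\{(1-t)\epsilon \leq |x_i-x| \leq (1+t)\epsilon\}}$. One has $|Y_i| \leq 1$ and, by (i),
\begin{equation*}
\E[Y_i] \,=\, \int_A \rho(y) \dVol(y) \,\leq\, \|\rho\|_{L^\infty(\M)} \cdot C t \epsilon^m,
\end{equation*}
so $\mathrm{Var}(Y_i) \leq \E[Y_i^2] = \E[Y_i] \leq Ct\epsilon^m$. Setting $S_n = \sum_{i=1}^n Y_i$, Bernstein's inequality gives
\begin{equation*}
\P_n\big(S_n \geq \E[S_n] + \delta\big) \,\leq\, \exp\Big(-\frac{\delta^2/2}{nCt\epsilon^m + \delta/3}\Big).
\end{equation*}
Choosing $\delta = Ctn\epsilon^m$ (comparable to the expectation bound), the right-hand side becomes $\exp(-ctn\epsilon^m)$, and on the complementary event $S_n \leq 2Ctn\epsilon^m$. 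Relabeling constants yields (ii).

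No substantial obstacle arises: part (i) is bookkeeping with the two available geometric estimates (\ref{eq:d1}) and (\ref{eq:vol1}), and part (ii) is essentially identical to the concentration template used in the proof of Corollary \ref{prop:ball}, with the single ball replaced by an annulus and the variance estimate furnished by (i). The only delicate point is ensuring that the $\mathcal{O}(\epsilon^3)$ thickness correction from the geodesic comparison and the $\mathcal{O}(\epsilon^{m+2})$ error from the volume formula are both dominated by $t\epsilon^m$, which the hypothesis $\epsilon^2 \leq t$ guarantees automatically.
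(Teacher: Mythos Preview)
Your proof is correct. Part (i) follows the paper's argument essentially verbatim. In part (ii) you take a slightly different route: the paper applies a multiplicative Chernoff bound $\P_n(S_n \geq 2np) \leq \exp(-\tfrac{3}{8}np)$, which requires a two-sided estimate $p \in [ct\epsilon^m, Ct\epsilon^m]$ on the Bernoulli parameter; this is why the paper also proves a \emph{lower} volume bound in step~1 and then has to treat the regime $\epsilon^2 \leq t < \tfrac{2C}{c}\epsilon^2$ separately via the inclusion $A_t \subset A_{\bar t}$. Your use of Bernstein's inequality needs only the upper bound on $\E[Y_i]$ (hence on the variance), so the lower volume bound and the case split become unnecessary. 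Both approaches give the same conclusion with the same probability scale; yours is marginally more economical here.
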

\begin{proof}
{\bf 1.} Given $x$ and $\epsilon$, denote the closed annulus, where $t\in [\epsilon^2, 1]$:
$$A_t = \big\{y\in\M; ~ (1-t)\epsilon\leq |y-x|\leq (1+t)\epsilon\big\}.$$
Using (\ref{eq:d1}) and similarly as in (\ref{m1}), we obtain for $\epsilon\ll 1$:
\begin{equation*}
B_\M\big(x,(1+t)\epsilon\big)\setminus B_\M\big(x,(1-t)\epsilon + C\epsilon^3\big) \subset A_t
\subset B_\M\big(x,(1+t)\epsilon + C\epsilon^3\big)\setminus B_\M\big(x,(1-t)\epsilon\big). 
\end{equation*}
Hence, for $t\in [\frac{1}{2},1]$ there follows in virtue of (\ref{eq:d1}):
\begin{equation*}
\begin{split}
& Vol_\M(A_t)\leq Vol_\M\big(B_\M(x,2\epsilon +C \epsilon^3)\big)\leq
C\epsilon^m\leq Ct\epsilon^m
\\ & Vol_\M(A_t)\geq Vol_\M\big(B_\M(x,\frac{3}{2}\epsilon)\big) -
Vol_M\big(B_\M(x,\epsilon)\big)\geq c\epsilon^m\geq ct\epsilon^m,
\end{split}
\end{equation*}

For $t<\frac{1}{2}$ we estimate more precisely, using the mean value property of the $m$-th power:
\begin{align*}
Vol_\M(A_t) & \geq |B(0,1)|\cdot\Big( (1+t)^m\epsilon^m -
\big((1-t)\epsilon + C\epsilon^3\big)^m \Big) + \mathcal{O}(\epsilon^{m+2}) \\
& = |B(0,1)|\epsilon^m \cdot\Big( (1+t)^m -
\big(1-t + C\epsilon^2\big)^m \Big) + \mathcal{O}(\epsilon^{m+2}) \\
& \geq ct\epsilon^m -C\epsilon^{m+2},
\end{align*}
valid when $\epsilon\ll 1$. Similarly, we get the upper bound: 
\begin{align*}
Vol_\M(A_t)&\leq |B(0,1)|\cdot\Big(
\big((1+t)\epsilon+C\epsilon^3\big)^m - (1-t)^m\epsilon^m\Big) +
\mathcal{O}(\epsilon^{m+2})\\ 
&\leq C\epsilon^{m}\Big(\big(1+t+ C\epsilon^2\big)^m - (1-t)^m\Big) +
C\epsilon^{m+2}\leq  Ct\epsilon^m.
\end{align*}
Both bounds allow to conclude the following estimate:
\begin{equation}\label{m2.5}
ct\epsilon^m\leq Vol_\M(A_t)\leq Ct\epsilon^m \qquad \mbox{ when }\; t\in\Big[
\frac{2C}{c}\epsilon^2, 1\Big].
\end{equation}

Finally, when $\epsilon^2\leq t<\frac{2C}{c}\epsilon^2$, we simply
note that with $\bar t\doteq \frac{2C}{c}\epsilon^2>t$ there holds the inclusion:
$A_t\subset A_{\bar t}$ and hence:
$$Vol_\M(A_t)\leq Vol_\M(A_{\bar t})\leq C\bar t \epsilon^m \leq C
\frac{\bar t}{\epsilon^2} \cdot t\epsilon^m = \frac{4C^2}{c} t\epsilon^m, $$
proving (i) in this last case.

\smallskip

{\bf 2.} To show (ii), we use Chernoff's bound
\cite{boucheron2013concentration} to the $\iid$ (Bernoulli) random variables $\{Y^i=\mathds{1}_{\{x_i\in
  A_t\}}\}_{i=1}^n$. We first consider the case
$t\in\big[\frac{2C}{c}\epsilon^2,1\big]$, indicated in (\ref{m2.5}). Denote:
$$p\doteq \E[Y^i] =\int_{A_t}\rho(y)\dVol (y)\in
Vol_\M(A_t)\cdot[\min\rho, \max\rho]\subset[ct\epsilon^m, Ct\epsilon^m],$$
so that the version of additive Chernoff's bound in \cite{calder2020Calculus}[Theorem 5.7] yields:
$$\P_n\Big( \sum_{i=1}^n Y^i\geq 2n p\Big)\leq \exp\Big( -\frac{3}{8}np\Big)\leq \exp\big(-ctn\epsilon^m\big).$$
We thus obtain (ii) in this particular case:
$$\P_n\Big( \sum_{i=1}^n Y^i\leq Ctn\epsilon^m \Big)\geq 1-
\exp\big(-ctn\epsilon^m\big) \qquad \mbox{ when }\; t\in\Big[
\frac{2C}{c}\epsilon^2, 1\Big].$$

To complete the argument when $\epsilon^2\leq t < \frac{2C}{c}\epsilon^2$,
we set  $\bar t =\frac{2C}{c}\epsilon^2$ as in step 1 and note that:
\begin{equation*}
\begin{split}
\P_n\Big( \sum_{i=1}^n \mathds{1}_{\{x_i\in A_t\}}\leq \frac{4C^2}{c}tn\epsilon^m \Big)&\geq 
\P_n\Big( \sum_{i=1}^n \mathds{1}_{\{x_i\in A_{ t}\}}\leq C\bar t n\epsilon^m \Big)\geq 
\P_n\Big( \sum_{i=1}^n \mathds{1}_{\{x_i\in A_{\bar t}\}}\leq C\bar t
n\epsilon^m \Big)\\ & \geq 
1 - \exp\big(-c\bar t n\epsilon^m\big) \geq 1- \exp\big(-ctn\epsilon^m\big).
\end{split}
\end{equation*}
The proof is done.
\end{proof}

\subsection{Comparison of the scaled degrees
  \texorpdfstring{$d_{\epsilon, \X_n}$}{d} and \texorpdfstring{$d_\epsilon$}{d}} 
We now check that $d_{\epsilon, \X_n}$ approximates, with high probability,
its continuum version, given by:
\begin{equation}\label{degi}
d_\epsilon(x) = \int_{B_\M(x,\epsilon)}\frac{1}{\epsilon^m}\eta\Big(\frac{ d_\M (x,y)}{\epsilon}\Big)
\rho(y)\;\dVol(y) \qquad\mbox{ for all } x\in\M.
\end{equation}
Using normal coordinates centered at the point $x$ (see
Appendix \ref{appendixA} and the proof of Theorem \ref{consistency} in
section \ref{secAver}), together with the Taylor expansion of $\rho$ around $x$, we may rewrite $d_\veps(x)$ as:
\begin{equation}\label{dep1}
\begin{split}
d_\epsilon (x) & = \int_{B(0,1) \subset T_x\M}\eta(|w|)\big(\rho(x)+\epsilon \langle\nabla \rho(x),
w\rangle +\mathcal{O}(\epsilon^2)\big)
\big(1+\mathcal{O}(\epsilon^2)\big) \dw \\ & = \rho(x) + \mathcal{O}(\epsilon^2).
\end{split}
\end{equation}

\begin{proposition}\label{prop:covering}
For each $h>0$ there exists a finite set $\M_h\subset \M$ such that,
with a constant $C>0$ depending only on $\M$, one has:
\begin{equation*}
\#\M_h \leq Ch^{-m}\quad \mbox{ and } \quad \sup_{x\in \M}\min_{y\in \M_h}d_\M(x,y)\leq h.
\end{equation*}
\end{proposition}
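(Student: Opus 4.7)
The plan is the classical volume-packing argument: construct $\M_h$ as a maximal $h$-separated subset of $\M$, verify $h$-density from maximality, and then bound the cardinality using the volume estimate \eqref{eq:vol1}.

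First, I would use Zorn's lemma (or equivalently, a greedy construction leveraging the compactness of $\M$) to produce a set $\M_h \subset \M$ that is maximal with respect to the property $d_\M(y, y') \geq h$ for all distinct $y, y' \in \M_h$. Compactness of $\M$ together with maximality guarantees that $\M_h$ is finite. The covering property is then automatic: if some $x \in \M$ satisfied $d_\M(x, y) > h$ for every $y \in \M_h$, then $\M_h \cup \{x\}$ would still be $h$-separated, contradicting maximality. This yields $\sup_{x \in \M} \min_{y \in \M_h} d_\M(x,y) \leq h$.

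For the cardinality bound, observe that the geodesic balls $\{B_\M(y, h/2)\}_{y \in \M_h}$ are pairwise disjoint by the triangle inequality and $h$-separation. Consequently,
\begin{equation*}
\sum_{y \in \M_h} Vol_\M\bigl(B_\M(y, h/2)\bigr) \leq Vol_\M(\M).
\end{equation*}
When $h \ll 1$, estimate \eqref{eq:vol1} gives $Vol_\M(B_\M(y, h/2)) \geq c h^m$ for each $y \in \M_h$ with a constant $c>0$ depending only on $\M$, so $\#\M_h \leq (Vol_\M(\M)/c) h^{-m}$. For $h$ bounded below by the threshold where \eqref{eq:vol1} ceases to give a useful lower bound, we can instead take $\M_h$ to be any single point of $\M$, and the bound $\#\M_h \leq C h^{-m}$ holds trivially upon choosing $C$ large enough in terms of the diameter of $\M$. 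Combining both regimes gives the stated bound with a single constant $C$ depending only on $\M$.

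No step here is a real obstacle: the only thing to be careful about is to make the $h$ ranges match so that the constant $C$ can be chosen uniformly, and to justify that the maximal separated set is finite (which follows from compactness plus the pairwise disjointness of small balls around its points).
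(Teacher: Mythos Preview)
Your packing argument is correct and takes a different route from the paper. The paper localizes: it covers $\M$ by finitely many normal-coordinate charts and in each uses a portion of the cubical grid $h\mathbb{Z}^m\subset\R^m$. Your approach via a maximal $h$-separated set together with the volume comparison \eqref{eq:vol1} is equally standard and arguably cleaner, since it avoids bookkeeping of chart overlaps and of the distortion between Euclidean and geodesic distances under the charts.

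One small slip in your large-$h$ regime: taking $\M_h$ to be a single point yields the covering property only once $h$ exceeds the radius of $\M$, so there is a gap for $h$ between your threshold $h_0$ (below which \eqref{eq:vol1} gives the lower volume bound) and that radius. The fix is immediate: for all $h\geq h_0$ set $\M_h\doteq \M_{h_0}$, which covers with radius $h_0\leq h$ and has cardinality $N_0$ depending only on $\M$; since $h^{-m}\geq (\mathrm{diam}\,\M)^{-m}$ for $h\leq \mathrm{diam}\,\M$ (and the inequality $\#\M_h\leq Ch^{-m}$ cannot hold for a nonempty $\M_h$ once $h^{-m}<1/C$ anyway), one obtains $\#\M_h\leq N_0\,(\mathrm{diam}\,\M)^m\, h^{-m}$.
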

\begin{proof}
The proof follows by localizing and reducing to the flat case in which
$\M$ is replaced by a ball $B(0,\delta)\subset\R^m$. In this case,
a subset of the cubical grid $h\mathbb{Z}^m\subset \R^m$ fulfills the desired properties.
\end{proof}

\begin{lemma}\label{lem:CM1}
There exist constants $C,c>0$ depending only on $\M$, $\rho$ and $\eta$, such
that for all $\epsilon\ll 1$ and all $t$ satisfying $\epsilon^2 \leq t \leq 1$, there holds:
$$\P_n\Big(\sup_{x\in\M} \big|d_{\epsilon, \X_n}(x) - d_\epsilon(x)\big|
\leq Ct\Big)\geq 1-C(t\epsilon)^{-m}\exp\left( -ct^2 n\epsilon^m \right).$$
\end{lemma}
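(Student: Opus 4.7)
The plan is the standard two-step scheme: establish the bound pointwise in $x$ via Bernstein-type concentration, then pass to a uniform bound through a covering argument, with particular care at the possible jump discontinuity of $\eta$ at $1$.

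First, for each fixed $x\in\M$, I would apply Lemma \ref{lem:ch} to the bounded test function $\psi_x(y)\doteq \epsilon^{-m}\eta(|x-y|/\epsilon)$, which is supported in $\{|x-y|\leq \epsilon\}$ and satisfies $\|\psi_x\|_{L^\infty}\leq C\epsilon^{-m}$. The lemma yields
\[
\P_n\Big(\big|d_{\epsilon,\X_n}(x)-\tfrac{1}{\epsilon^m}\int_{B_\M(x,\epsilon)}\eta(|x-y|/\epsilon)\rho(y)\dVol(y)\big|\leq Ct\Big)\geq 1-2\exp(-ct^2 n\epsilon^m).
\]
I then replace the integrand by $\eta(d_\M(x,y)/\epsilon)$ to recover $d_\epsilon(x)$: using (\ref{eq:d1}) to bound $\big||x-y|-d_\M(x,y)\big|\leq C\epsilon^3$, the Lipschitz continuity of $\eta$ on $[0,1]$, and the $O(\epsilon^{m+2})$ volume estimate from Proposition \ref{prop:annulus}(i) for the thin annulus where the two distances straddle $\epsilon$, this swap costs only an additional deterministic $O(\epsilon^2)\leq Ct$.

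Next, I would invoke Proposition \ref{prop:covering} with mesh $h\doteq t\epsilon$ to produce a net $\M_h\subset \M$ of cardinality at most $C(t\epsilon)^{-m}$, and take a union bound of the pointwise estimate over $\M_h$ to obtain the inequality at every net point with failure probability at most $C(t\epsilon)^{-m}\exp(-ct^2 n\epsilon^m)$. To extend from the net to an arbitrary $x\in\M$, I pick $z\in\M_h$ with $d_\M(x,z)\leq h$ and separately estimate $|d_\epsilon(x)-d_\epsilon(z)|$ and $|d_{\epsilon,\X_n}(x)-d_{\epsilon,\X_n}(z)|$. The continuum term is Lipschitz in $x$ with constant independent of $\epsilon$, by differentiating under the integral and using $\rho\in\mathcal{C}^2$ together with the expansion (\ref{dep1}), which yields $Ch\leq Ct$.

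The main obstacle is the discrete oscillation $|d_{\epsilon,\X_n}(x)-d_{\epsilon,\X_n}(z)|$, since $\eta$ need only be Lipschitz on $[0,1]$ and may jump at $1$. My plan is to split the data points $x_i$ into a bulk set, where both $|x-x_i|$ and $|z-x_i|$ lie in $[0,(1-t)\epsilon]$ so that $\eta$ operates in its Lipschitz regime (each such point contributing a change of order $t\epsilon^{-m}$, with $O(n\epsilon^m)$ such points by Corollary \ref{prop:ball}, giving total $O(t)$); and a boundary set of points lying in the annulus $\{y:|y-z|\in[(1-t)\epsilon,(1+t)\epsilon]\}$, where the jump of $\eta$ at $1$ may produce per-point changes as large as $\|\eta\|_{L^\infty}\epsilon^{-m}$. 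Proposition \ref{prop:annulus}(ii), applied at each $z\in\M_h$ with parameter $t$ and combined with a union bound, bounds the cardinality of this boundary set by $Ctn\epsilon^m$ simultaneously for all $z\in\M_h$ with probability at least $1-C(t\epsilon)^{-m}\exp(-ctn\epsilon^m)$, so that its contribution is also $O(t)$. Combining the three probability bounds and noting that $\exp(-ctn\epsilon^m)\leq \exp(-ct^2n\epsilon^m)$ for $t\leq 1$ produces the estimate stated in the lemma.
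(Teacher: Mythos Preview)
Your proposal is correct and follows essentially the same approach as the paper: pointwise concentration via Lemma~\ref{lem:ch}, swap of Euclidean and geodesic distances at cost $O(\epsilon^2)\leq Ct$, union bound over a $(t\epsilon)$-net from Proposition~\ref{prop:covering}, and extension to all $x$ by splitting the discrete oscillation into a bulk part (Lipschitz regime of $\eta$, controlled by Corollary~\ref{prop:ball}) and a boundary part (annulus, controlled by Proposition~\ref{prop:annulus}(ii)). One small imprecision: your ``bulk'' should be characterized by $|z-x_i|\leq(1-t)\epsilon$, which only forces $|x-x_i|\leq\epsilon$ (not $(1-t)\epsilon$), but that is exactly what you need for the Lipschitz bound, so the argument goes through.
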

\begin{proof}
{\bf 1.} Applying Lemma \ref{lem:ch} to the functions $\psi(y) = \eta\big(\frac{|y-x|}{\epsilon}\big)$, it follows that:
\begin{equation}\label{m2.6}
\P_n\Big(\Big|d_{\epsilon, \X_n}(x) -
\int_{B_\M(x,\epsilon)}\eta_\epsilon(|y-x|)\rho(y)\dVol (y)\Big|
\leq Ct\Big)\geq 1- 2\exp\left( -ct^2 n\epsilon^m \right),
\end{equation}
for all $x\in\M$, with constants $C$ and $c$ independent of $x$. 
By monotonicity of $\eta$ and its Lipschitz continuity on $[0,1]$, for every $y\in B_\M(x,\epsilon)$ we have:
\begin{equation}\label{m2.7}
\eta_\epsilon(d_\M(x,y))\leq \eta_\epsilon(|y-x|) \leq
\eta_\epsilon\big(d_\M(x,y)-C\epsilon^3\big) \leq
\eta_\epsilon(d_\M(x,y)) +C\epsilon^{2-m}\cdot \mbox{Lip}_\eta,
\end{equation}
in view of (\ref{eq:d1}). Thus, for every $x\in\M$ there holds:
$$\Big|d_\epsilon(x) - \int_{B_\M(x,\epsilon)}
\eta_\epsilon (|y-x|)\rho(y)\dVol (y)\Big|\leq
C \mbox{Lip}_\eta \cdot \epsilon^{2-m}
\|\rho\|_{L^\infty(\M)}Vol_\M(x,\epsilon) \leq C\epsilon^2\leq Ct,$$
and we see that the estimate in (\ref{m2.6}) is still valid after
replacing the integral term by
the degree $d_\epsilon(x)$, and possibly changing the uniform
constant $C$.

Recalling Corollary \ref{prop:ball} and
Proposition \ref{prop:annulus}, we further conclude that each of the following
events (where $\epsilon, t,x$ are fixed):
\begin{equation}\label{m3}
|d_{\epsilon, \X_n}(x) - d_\epsilon(x) |\leq Ct, \quad \sum_{i=1}^n
\mathds{1}_{|x_i-x|\leq \epsilon}\leq Cn\epsilon^m, \quad 
\sum_{i=1}^n \mathds{1}_{(1-t)\epsilon\leq |x_i-x|\leq (1+t)\epsilon}\leq Ctn\epsilon^m,
\end{equation}
hold with probability $\P_n$ at least $1- 2\exp\left( -ct^2 n\epsilon^m \right)$, in view of $t\leq 1$. 
Let $\M_{t\epsilon}\subset\M$ be the discrete set provided by
Proposition \ref{prop:covering}, By the union bound, the event that
all conditions in (\ref{m3}) hold for all $x\in\M_{t\epsilon}$, has
probability at least $1- C(t\epsilon)^m\exp\left( -ct^2 n\epsilon^m \right)$.
For the rest of the proof we assume this event holds.

\smallskip

{\bf 2.} Let $x\in\M$ and $y\in\M_{t\epsilon}$ with $d_\M(x,y)\leq t\epsilon$. We write:
\begin{equation}\label{m4}
|d_{\epsilon, \X_n}(x) - d_\epsilon(x)|\leq |d_{\epsilon, \X_n} (x) -
d_{\epsilon,\X_n}(y)| + |d_{\epsilon, \X_n}(y) - d_\epsilon(y)| + |d_{\epsilon}(y) - d_\epsilon(x)|.
\end{equation}
We will show that each term in the right hand side above is bounded by
$Ct$. This is true for the second term, directly by (\ref{m3}), because
$y\in\M_{t\epsilon}$. The third term bound follows by (\ref{dep}):
$$|d_\epsilon(x)-d_\epsilon(y)|=|\rho(x)-\rho(y)|+\mathcal{O}(\epsilon^2)
\leq \mbox{Lip}_\rho \cdot d_\M(x,y) + \mathcal{O}(\epsilon^2)\leq Ct.$$

To treat the first term in (\ref{m4}), denote sets of indices $I_x=\{i;~ |x_i-x|\leq \epsilon\}$ and 
$I_y= \{i;~ |x_i-y|\leq \epsilon\}$. Then:
\begin{equation}\label{m45}
\begin{split}
& \epsilon < |x_i-y|\leq |x_i-x| + |x-y|\leq \epsilon + d_\M(x,y) \leq
(1+t)\epsilon \qquad\mbox{for all } i\in I_x\setminus I_y, \\ &
\epsilon \geq |x_i-y|\geq |x_i-x| - |x-y|> \epsilon - d_\M(x,y) \geq
(1-t)\epsilon \qquad\mbox{for all } i\in I_y\setminus I_x.
\end{split}
\end{equation}
In conclusion:
$$(1-t)\epsilon\leq |x_i-y|\leq (1+t)\epsilon \qquad\mbox{for all }\; i\in I_x{\triangle} I_y.$$
It thus follows that:
\begin{equation*}
\begin{split}
\big|d_{\epsilon, \X_n}(x) &- d_{\epsilon, \X_n}(y)\big|  \leq \frac{1}{n}\sum_{i=1}^n
\big|\eta_\epsilon(|x-x_i|) -\eta_\epsilon(|y-x_i|)\big| \\ & \leq
\frac{1}{n}\sum_{i\in I_x\cap I_y}
\big|\eta_\epsilon(|x-x_i|) - \eta_\epsilon(|y-x_i|) \big|
+ \frac{\|\eta\|_{L^\infty}}{n\epsilon^m}\sum_{i=1}^n
\mathds{1}_{\{(1-t)\epsilon\leq|x_i-y|\leq (1+t)\epsilon\}} \\ & \leq
\frac{\mbox{Lip}_\eta}{n\epsilon^m}\sum_{i\in I_y}
\Big|\frac{|x-x_i|}{\epsilon} - \frac{|y-x_i|}{\epsilon} \Big| + Ct
\leq \frac{C}{n\epsilon^{m+1}}\sum_{i\in I_y}|x-y| + Ct \\ &
\leq \frac{C}{n\epsilon^{m+1}}d_\M(x,y) \cdot \sum_{i=1}^n
\mathds{1}_{\{|x_i-y|\leq \epsilon\}} +Ct\leq Ct,
\end{split}
\end{equation*}
where we applied conditions guaranteed in (\ref{m3}). Thus, (\ref{m4}) becomes: $|d_{\epsilon,
  X_n}(x) - d_\epsilon(x)|\leq Ct$ for all $x\in\M$ on the event
indicated in (\ref{m3}). This ends the proof.
\end{proof}

From Lemma \ref{lem:CM1} and in view of (\ref{dep1}), the operator $\Ec_{\epsilon, \X_n}$ is well
defined with high probability:

\begin{corollary}\label{good_def}
There exist constants $C,c>0$ depending on $\M$, $\rho$ and $\eta$,
such that for all $\epsilon\ll 1$ and all $t$ satisfying $\epsilon^2\leq
t\leq 1$, there holds:
$$\P_n\Big(\sup_{x\in\M}\big|d_{\epsilon, \X_n}(x) - \rho(x)\big|\leq
Ct\Big) \geq 1-C(t\epsilon)^{-m}\exp\big(-ct^2 n \epsilon^m\big).$$
In particular, taking $t\ll 1$, we get:
$$\P_n\Big(d_{\epsilon, \X_n}(x) \in\big[\frac{1}{2}\min\rho,
2\max\rho\big] \quad \text{for all } x\in\M
\Big) \geq 1-C\epsilon^{-m}\exp\big(-c n \epsilon^m\big).$$
\end{corollary}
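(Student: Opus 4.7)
The plan is to combine Lemma \ref{lem:CM1} with the deterministic asymptotic expansion \eqref{dep1} via the triangle inequality. Lemma \ref{lem:CM1} provides the probabilistic estimate $\sup_{x\in\M}|d_{\epsilon,\X_n}(x) - d_\epsilon(x)| \leq Ct$ with probability at least $1 - C(t\epsilon)^{-m}\exp(-ct^2n\epsilon^m)$, while the Taylor expansion \eqref{dep1} in normal coordinates yields the uniform deterministic bound $|d_\epsilon(x) - \rho(x)| \leq C\epsilon^2$ for all $x\in\M$. Since the hypothesis gives $\epsilon^2 \leq t$, we may absorb this into $Ct$, and a single application of the triangle inequality on the event of Lemma \ref{lem:CM1} produces $\sup_{x\in\M}|d_{\epsilon,\X_n}(x) - \rho(x)| \leq Ct$ after redefining the constant $C$.

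For the second, qualitative statement, I would choose $t$ as a fixed positive constant (depending on $\M,\rho,\eta$ only) small enough so that $Ct < \min\{\tfrac{1}{2}\min_\M \rho,\ \max_\M \rho\}$; this is possible because $\rho$ is bounded away from zero by assumption (H1)(ii) and $C$ is independent of $t$ and $\epsilon$. On the good event, we then have the pointwise containment
\[
d_{\epsilon,\X_n}(x) \in [\rho(x) - Ct,\ \rho(x) + Ct] \subset \bigl[\tfrac{1}{2}\min_\M\rho,\ 2\max_\M\rho\bigr]
\]
for every $x\in\M$. With $t$ now a fixed constant, the prefactor $(t\epsilon)^{-m}$ becomes $C\epsilon^{-m}$ and the exponent $ct^2n\epsilon^m$ becomes $cn\epsilon^m$, after incorporating the fixed power of $t$ into the constants $C,c$; this gives the second displayed probability bound.

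There is essentially no obstacle here, as Lemma \ref{lem:CM1} already encapsulates the hard probabilistic work (Bernstein's inequality together with the cubical-grid covering from Proposition \ref{prop:covering} and the annulus estimate from Proposition \ref{prop:annulus}). The only point to be careful about is the legitimacy of the smallness choice for $t$ in the second claim, which is valid precisely because $\min_\M \rho > 0$ is a fixed structural constant and $C$ is independent of both $t$ and $\epsilon$.
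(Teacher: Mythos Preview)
Your proposal is correct and matches the paper's approach exactly: the paper states the corollary as an immediate consequence of Lemma~\ref{lem:CM1} combined with the expansion \eqref{dep1}, which is precisely the triangle-inequality argument you describe, and the second claim follows by fixing $t$ small in terms of $\min_\M\rho$.
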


\begin{remark}\label{m47}
Applying the second claim of Corollary
\ref{good_def} to $\eta=\one_{[0,1]}$, results in the following improvement on Corollary
\ref{prop:ball}, with constants $C,C',c>0$ depending only on $\M$ and $\rho$:
\begin{equation*}
\P_n\Big( C' n \epsilon^m \leq \sum_{i=1}^n \mathds{1}_{\{|x_i-x|\leq
  \epsilon\}} \leq C n \epsilon^m \quad \text{for all } x\in\M
\Big) \geq 1-C\epsilon^{-m}\exp\big(-c n \epsilon^m\big).
\end{equation*}
\end{remark}

\section{The almost-interpolation operator \texorpdfstring{$\Ec_{\epsilon, \X_n}$}{Ec} and a
  proof of Theorem \ref{thm:DiscreteToNonlocal}} \label{sec_interpol}

In this section we work under hypothesis (\ref{H1}). Let us first give
a heuristic explanation of the ideas behind the new 
interpolation technique. When extending a function $f\in L^2(\X_n)$ to the
manifold $\M$, the function $f$ may \emph{a priori} have no
regularity. Even though $f$ is defined on a point cloud, and
could be extended to a smooth function on $\M$, there is no way to do
this while uniformly controlling any of the derivatives of the
interpolation. Thus, when extending from $\X_n$
to $\M$, it is important to keep in mind that we must
make no assumptions about the regularity of $f$, and our probabilistic estimates must be applied to the
graph $\X_n$ directly and be independent of the function we are extending. 

In this section, we prove Theorem
\ref{thm:DiscreteToNonlocal} which allows to control the nonlocal 
Laplacian $\Delta_\epsilon$ of the almost-interpolation $\Ec_{\epsilon, \X_n} f$, in terms of  $f$ and its
graph Laplacian $\Delta_{\epsilon, \X_n}$. 

\subsection{The double convolution and a heuristic idea of proof}

Let us first indicate a heuristic idea behind the proof of Theorem
\ref{thm:DiscreteToNonlocal}, via the introduced interpolation technique. 

Given $\X_n\in\M^n$ and $f:\X_n\to\R$, denote $F = f - \Ec_{\epsilon, \X_n} f$
and express the operator $\Ec_{\epsilon, \X_n} $ as:
\begin{equation}\label{eq:conv}
\begin{split}
\Ec&_{\epsilon, \X_n}  f(x)= \frac{1}{n d_{\epsilon, \X_n}(x)}\sum_{j=1}^n \eta_\epsilon(|x-x_j|)f(x_j)\\
&=\frac{1}{n d_{\epsilon, \X_n}(x)}\sum_{j=1}^n \eta_\epsilon(|x-x_j|)\big(\Ec_{\epsilon, \X_n}  f(x_j) + F(x_j)\big)\\
&=\frac{1}{n d_{\epsilon, \X_n}(x)}\sum_{k,j=1}^n
\frac{\eta_\epsilon(|x-x_j|)}{n d_{\epsilon, \X_n}(x_j)}
\eta_\epsilon(|x_j-x_k|)f(x_k) + \frac{1}{n d_{\epsilon, \X_n}(x)}\sum_{j=1}^n
\eta_\epsilon(|x-x_j|)F(x_j)\\ 
&=\frac{1}{n d_{\epsilon, \X_n}(x)}\sum_{k=1}^n \left[\sum_{j=1}^n
  \frac{\eta_\epsilon(|x_j-x_k|)}{n d_{\epsilon,  \X_n}(x_j)}\eta_\epsilon(|x-x_j|)\right] f(x_k) +
\Ec_{\epsilon, \X_n}  F(x). 
\end{split}
\end{equation}
The term in square brackets above depends only on the random point cloud
$\X_n$ and the choice of $\epsilon$. In particular, it is independent of
the function $f$ and of any of its regularity properties. 
The asymptotics of this expression can be
easily controlled with concentration of measure inequalities; this
is done in Lemma \ref{lem:CM2} below, similarly to Lemma
\ref{lem:CM1}. Consequently, the operator $\Ec_{\epsilon, \X_n}$ is
proved to be compatible
with the continuum averaging operator $\A_\epsilon$ in (\ref{Aepi}), and this is the core
idea behind the proof of Theorem \ref{thm:DiscreteToNonlocal}. 

\subsection{Asymptotics of the double convolution kernel in (\ref{eq:conv})}

We have the following:
\begin{lemma}\label{lem:CM2}
There exists $C,c>0$ depending on $\M$, $\rho$ and $\eta$, such that
for all $\epsilon\ll 1$ and all $t$ satisfying $\epsilon^2 \leq t \leq 1$, there holds:
\begin{equation*}
\begin{split}
\P_n&\Big(\sup_{x,y\in \M}\Big|\frac{1}{n}\sum_{i=1}^n
\frac{\eta_\epsilon(|x_i-x|)}{\rho(x_i)}\eta_\epsilon(|x_i-y|) -
\int_{B_\M(x,\epsilon)}\eta_\epsilon(|z-x|)\eta_\epsilon(|z-y|)
\dVol (z) \Big| \leq \frac{Ct}{\epsilon^m} \Big)
\\ & \geq 1-C(t\epsilon)^{-2m}\exp\left( -ct^2 n\epsilon^m \right).
\end{split}
\end{equation*}
\end{lemma}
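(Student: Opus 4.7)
The plan is to follow the same three-step template used for Lemma \ref{lem:CM1}: pointwise concentration at fixed $(x,y)$, a union bound over a fine grid in $\M\times\M$, and a perturbation estimate that allows one to pass from grid points to arbitrary points while paying only an acceptable error.

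For the pointwise step, I would apply Lemma \ref{lem:ch} with the test function
\[
\psi(z) = \frac{\eta_\epsilon(|z-x|)\,\eta_\epsilon(|z-y|)}{\rho(z)},
\]
which is bounded by $C\epsilon^{-2m}$ (using that $\rho$ is bounded below) and supported in $B(x,\epsilon)\cap\M\subset B_\M(x,2\epsilon)$. Consequently $\sum_{i}\mathds 1_{|x_i-x|\le\epsilon}\psi(x_i)=\sum_i\psi(x_i)$ and the first moment equals $n\int_{B_\M(x,\epsilon)}\eta_\epsilon(|z-x|)\eta_\epsilon(|z-y|)\dVol(z)$, because the $\rho(z)$ in the denominator is absorbed by the factor $\rho(z)$ in $\dd \mu$. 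Lemma \ref{lem:ch} then yields, at fixed $x,y$, a deviation of at most $Ct/\epsilon^m$ with probability at least $1-2\exp(-ct^2n\epsilon^m)$.

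For the uniformization step, I would take a $(t\epsilon)$-net $\M_{t\epsilon}\subset\M$ of cardinality at most $C(t\epsilon)^{-m}$ from Proposition \ref{prop:covering}, apply the pointwise bound over all $C(t\epsilon)^{-2m}$ pairs in $\M_{t\epsilon}\times\M_{t\epsilon}$, and simultaneously intersect with the good events from Remark \ref{m47} (uniform upper bound $\sum_i\mathds 1_{|x_i-z|\le\epsilon}\le Cn\epsilon^m$ for every $z\in\M$) and from Proposition \ref{prop:annulus}(ii) (uniform annular count $\sum_i\mathds 1_{(1-t)\epsilon\le|x_i-z|\le(1+t)\epsilon}\le Ctn\epsilon^m$ for every $z\in\M_{t\epsilon}$, via the same covering argument used in Lemma \ref{lem:CM1}). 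The union bound gives a joint probability at least $1-C(t\epsilon)^{-2m}\exp(-ct^2n\epsilon^m)$.

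For the interpolation step, given arbitrary $x,y\in\M$, I select $x',y'\in\M_{t\epsilon}$ with $d_\M(x,x'),d_\M(y,y')\le t\epsilon$ and telescope the difference $K_n(x,y)-K(x,y)$ through $K_n(x',y')$ and $K(x',y')$. The middle term is already controlled by Step~1--2. The empirical perturbation $|K_n(x,y)-K_n(x',y')|$ is handled by splitting the index set $\{1,\dots,n\}$ as in the proof of Lemma \ref{lem:CM1}: on $I_x\cap I_{x'}$ use that $\eta_\epsilon$ is Lipschitz with constant $\mathrm{Lip}_\eta/\epsilon^{m+1}$ together with $|x-x'|\le Ct\epsilon$, multiply by $\|\eta_\epsilon\|_\infty/\min\rho\le C/\epsilon^m$, and use Remark \ref{m47} on the $y$-ball to keep the effective count at $Cn\epsilon^m$; on the symmetric difference $I_x\triangle I_{x'}$ the indices live in an $(1\pm t)\epsilon$-annulus around $x$ and are thus bounded by $Ctn\epsilon^m$ via Proposition \ref{prop:annulus}(ii). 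Each block contributes $Ct/\epsilon^m$; the same steps applied to the $y$-variable add another $Ct/\epsilon^m$. The continuum perturbation $|K(x,y)-K(x',y')|$ is bounded by exactly the same dichotomy, replacing the annular count by the annular volume estimate of Proposition \ref{prop:annulus}(i). Combining all three terms yields the claimed uniform bound.

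The main obstacle is precisely this perturbation estimate in $(x,y)$: because $\eta$ is allowed to be discontinuous at $1$, one cannot treat $\eta_\epsilon(|\cdot-x|)$ as a globally Lipschitz function of $x$. As in Lemma \ref{lem:CM1}, the fix is to isolate the boundary layer near $\{|z-x|=\epsilon\}$ using Proposition \ref{prop:annulus}; this is what forces the net scale $h=t\epsilon$ and, upon squaring the net size, produces the factor $(t\epsilon)^{-2m}$ in the final probability.
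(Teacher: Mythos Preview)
Your proposal is correct and follows essentially the same approach as the paper: pointwise concentration via Lemma~\ref{lem:ch} with $\psi(z)=\eta_\epsilon(|z-x|)\eta_\epsilon(|z-y|)/\rho(z)$, a union bound over pairs from a $(t\epsilon)$-net in $\M\times\M$ (together with the ball and annulus counts), and then a perturbation from net points to arbitrary $(x,y)$ using Lipschitz continuity of $\eta$ on $[0,1]$ for the bulk and the annular count/volume from Proposition~\ref{prop:annulus} for the boundary layer. The only cosmetic difference is that the paper perturbs both coordinates $(x,y)\to(\hat x,\hat y)$ simultaneously rather than one at a time, and your annulus in the symmetric-difference step should be centered at the net point $x'$ rather than at $x$ (as in Lemma~\ref{lem:CM1}); neither point affects the argument.
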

\begin{proof}
{\bf 1.} For convenience, define the expressions:
\begin{equation*}
g_{n,\epsilon}(x,y) = \frac{1}{n}\sum_{i=1}^n \frac{\eta_\epsilon(|x_i-x|)}{\rho(x_i)}\eta_\epsilon(|x_i-y|),\quad
g(x,y) = \int_{B_\M(x,\epsilon)}\eta_\epsilon(|z-x|)\eta_\epsilon(|z-y|)\dVol (z). 
\end{equation*}
Applying Lemma \ref{lem:ch} to functions $\psi(z) =
\frac{\eta_\epsilon(|z-x|)\eta_\epsilon(|z-y|)}{n\rho(z)}$ for each $x,y\in\M$,
so that $\Psi^{\epsilon, x}=g_{n, \epsilon}(x,y)$, it follows that:
$$\P_n\Big(|g_{n,\epsilon}(x,y) - g(x,y)|\geq
Ct\epsilon^m\|\eta_\epsilon\|^2_{L^\infty(\M)}\Big)\leq 2\exp\big(-ct^2 n\epsilon^m\big).$$
By Corollary \ref{prop:ball} and
Proposition \ref{prop:annulus} (ii), we conclude that each of the
three types of events:
\begin{equation}\label{m5}
|g_{n,\epsilon}(x,y) - g(x,y) |\leq \frac{Ct}{\epsilon^m}, ~~~~ \sum_{i=1}^n
\mathds{1}_{|x_i-x|\leq \epsilon}\leq Cn\epsilon^m, ~~~~
\sum_{i=1}^n \mathds{1}_{(1-t)\epsilon\leq |x_i-x|\leq (1+t)\epsilon}\leq Ctn\epsilon^m
\end{equation} 
where $x,y,\epsilon, t$ are fixed, hold with probability $\P_n$ at
least $1-2\exp\big(-ct^2 n \epsilon^m\big)$. As in the proof of Lemma
\ref{lem:CM1}, let now $\M_{tn}\subset\M$ be the discrete set provided by
Proposition \ref{prop:covering}. Apply the union bound to obtain that
the event in which all conditions listed in (\ref{m5}) holds for all
$x,y\in\M_{t\epsilon}$, has probability at least
$1-C(t\epsilon)^{-2m}\exp\big(-ct^2 n \epsilon^m\big)$. For the rest
of the proof we assume this event holds.

\smallskip

{\bf 2.} Let $x,y\in\M$ and $\hat x, \hat y\in\M_{t\epsilon}$ with
$d_\M(x,\hat x)\leq t\epsilon$ and $d_\M(y,\hat y)\leq t\epsilon$.
Then we have:
\begin{equation}\label{m6}
\big|g_{n,\epsilon}(x,y) - g(x,y)\big| \leq \big|g_{n,\epsilon}(x,y) -
g_{n,\epsilon}(\hat x,\hat y)\big| + \big|g_{n,\epsilon}(\hat x,\hat
y) - g(\hat x,\hat y)\big| + \big|g(\hat x,\hat y) - g(x,y)\big|.
\end{equation}

We will estimate each term in the right hand side above. Firstly, by
the assumed (\ref{m5}), there holds: $\big|g_{n,\epsilon}(\hat x,\hat y) - g(\hat x,\hat
y)\big|\leq\frac{Ct}{\epsilon^m}$. For the third term, we observe that:
\begin{equation*}
\begin{split}
\big|g&(\hat x,\hat y) - g(x,y)\big|  \\ & = \Big|\int_{B_\M(\hat x,
  \epsilon)}\eta_\epsilon(|z-\hat x|) \eta_\epsilon(|z-\hat y|)\dVol (z) - 
\int_{B_\M(x, \epsilon)}\eta_\epsilon(|z-x|) \eta_\epsilon(|z-y|)\dVol (z)\Big|
\\ & \leq \int_{\bigcap_{q\in \{\hat x, \hat y, x, y\} \cap\M} B(q,
  \epsilon)} \Big|\eta_\epsilon(|z-\hat x|) \eta_\epsilon(|z-\hat y|) - 
\eta_\epsilon(|z-x|) \eta_\epsilon(|z-y|) \Big|\dVol (z) \\ & \quad 
+\int_{\big((B(\hat x, \epsilon) \cap B(\hat y, \epsilon))\triangle 
(B(x, \epsilon) \cap B(y, \epsilon))\big)\cap\M
} \|\eta_\epsilon\|_{L^\infty(\M)}^2.
\end{split}
\end{equation*}
The second integration domain above is included in:
$$ \Big(\big(B(\hat x, \epsilon)\triangle B(x,\epsilon)\big)\cup \big(B(\hat y,
\epsilon)\triangle B(y, \epsilon)\big)\Big)\cap\M \subset A_{t,\hat x} \cup A_{t,\hat y}$$
where we define $A_{t,\hat x}$ below (and $A_{t,\hat y}$ is defined analogously):
\begin{equation*}
\begin{split}
A_{t,\hat x}\doteq \big\{z\in\M;~ (1-t)\epsilon\leq |z-\hat x|\leq
(1+t)\epsilon\big\} 
\end{split}
\end{equation*}
The indicated inclusion follows from the simple set-theoretical fact that:
$(B_1\cap B_2)\triangle (B_3\cap B_4)\subset (B_1\triangle B_3)\cup
(B_2\triangle B_4)$, valid for any four sets $\{B_i\}_{i=1}^4$.

Estimating volumes of the annuli $A_{t,\hat x}$ and $A_{t,\hat y}$ by
Proposition \ref{prop:annulus} (i), we further obtain:
\begin{equation*}
\begin{split}
\big|g(\hat x,\hat y) - g(x,y)\big| &\leq
\mbox{Lip}_{\eta_\epsilon}\cdot \|\eta_\epsilon\|_{L^\infty(\M)}
\big(|x-\hat x| + |y-\hat y|\big) \cdot Vol_\M(B_\M(x,\epsilon)) \\ &
\quad + \|\eta_\epsilon\|_{L^\infty(\M)}^2\cdot \big(Vol_\M (A_{t,\hat x}) + Vol_\M (A_{t,\hat y})\big)
\\ & \leq \frac{Ct}{\epsilon^{2m+1}}\cdot t\epsilon\cdot\epsilon^m +
\frac{Ct}{\epsilon^m}\leq \frac{Ct}{\epsilon^m}.
\end{split}
\end{equation*}

Finally, we now bound the first term in the right hand side of
(\ref{m6}). Define the set of indices $I_x\doteq \{i;~ |x_i-x|\leq \epsilon\}$ and
the corresponding sets $I_{\hat x}$, $I_{y}$ and $I_{\hat
  y}$. Similarly to (\ref{m45}), we note:
\begin{equation*}
\begin{split}
& (1-t)\epsilon \leq |x_i-\hat x|\leq (1+t)\epsilon \qquad\mbox{for all }\; i\in I_x\triangle I_{\hat x},
\\ & (1-t)\epsilon \leq |x_i-\hat y|\leq (1+t)\epsilon \qquad\mbox{for all }\; i\in I_y\triangle I_{\hat y}.
\end{split}
\end{equation*}
This implies, in virtue of (\ref{m5}) and since, as above, $(I_x\cap
I_y)\triangle (I_{\hat x}\cap I_{\hat y})\subset (I_x\triangle I_{\hat x})\cup (I_y\triangle I_{\hat y})$:
\begin{equation*}
\begin{split}
\big| g_{n,\epsilon}&(x,y) - g_{n,\epsilon}(\hat x, \hat y)\big|  \leq
\frac{C}{n}\sum_{i=1}^n \Big|\eta_\epsilon(|x_i-x|) \eta_\epsilon(|x_i- y|) - 
\eta_\epsilon(|x_i-\hat x|) \eta_\epsilon(|x_i-\hat y|) \Big|\\ &
\leq \frac{C}{n}\sum_{i\in  I_x\cap  I_y\cap I_{\hat x}\cap I_{\hat y}
} \Big|\eta_\epsilon(|x_i-x|) \eta_\epsilon(|x_i- y|) - 
\eta_\epsilon(|x_i-\hat x|) \eta_\epsilon(|x_i-\hat y|) \Big| 
\\ & \quad 
+ \frac{C}{n}\sum_{i\in (I_x\triangle I_{\hat x})\cup (I_y\triangle I_{\hat y})
}\|\eta_\epsilon\|^2_{L^\infty(\M)}\\ & \leq
\frac{C}{n}\mbox{Lip}_{\eta_\epsilon}\cdot
\|\eta_\epsilon\|_{L^\infty}\sum_{i\in I_{\hat x}}\big(|x-\hat x| + |y-\hat y|\big)
 \\ & \quad + \frac{C}{n\epsilon^m} \Big(\sum_{i=1}^n\mathds{1}_{\{(1-t)\epsilon\leq
   |x_i-\hat x|\leq (1+t)\epsilon\}} + \sum_{i=1}^n\mathds{1}_{\{(1-t)\epsilon\leq
   |x_i-\hat y|\leq (1+t)\epsilon\}}\Big)
\\ & \leq
\frac{C}{\epsilon^{2m+1}}\cdot t\epsilon \sum_{i=1}^n\mathds{1}_{\{
   |x_i-\hat x|\leq \epsilon\}} + \frac{Ct}{\epsilon^m} \leq \frac{Ct}{\epsilon^m}.
\end{split}
\end{equation*}
As a result, (\ref{m6}) becomes: $|g_{n,\epsilon}(x,y) - g(x,y)|\leq\frac{Ct}{\epsilon^m}$. The proof is done.
\end{proof}

\subsection{Concluding steps of the proof}\label{subsec4.3}

We are now equipped to prove Theorem \ref{thm:DiscreteToNonlocal}. We
first state an alternative form of the theorem that may be of independent interest.  

Given a bounded, Borel function $g:\M\to\R$, define the averaging operator:
\begin{equation}\label{Aepi}
\A_\epsilon g(x) =
\frac{1}{d_\epsilon(x)}\int_{B_\M(x,\epsilon)}\frac{1}{\epsilon^m}\eta\Big(\frac{ d_\M (x,y)}{\epsilon}\Big)
g(y)\rho(y)\;\dVol(y),
\end{equation}
where the local scaled degree $d_\epsilon(x)$ has been introduced in (\ref{degi}).

\begin{theorem}\label{thm:DiscreteToNonlocal0}
There exists constants $C,c>0$ depending on $\M$, $\rho$, $\eta$, such that for
all $\epsilon, t\ll 1$ which satisfy $\epsilon^2 \leq t $, there holds:
\begin{equation*}
\begin{split}
&\P_n\Big( \Big|\mathcal{I}_{\epsilon, \X_n} f(x) - \A_{\epsilon}(\mathcal{I}_{\epsilon, \X_n} f)(x)\Big| \leq C\big(
  t \|f\|_{L^\infty(\X_n\cap B(x,2\epsilon))} + \|f - \mathcal{I}_{\epsilon, \X_n}
  f\|_{L^\infty(\X_n\cap B(x,\epsilon))} \big) \\ & 
  \mbox{ \hspace{8.5cm} for all } x\in\M \; \mbox{ and all } f:\X_n\to\R \Big)
\\ & \geq 1 - C(t \epsilon)^{-2m}\exp\left( -c t^2 n\epsilon^m\right).
\end{split}
\end{equation*} 
\end{theorem}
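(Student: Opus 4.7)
The plan is to express both $\mathcal{I}_{\epsilon,\X_n}f(x)$ and $\mathcal{A}_\epsilon(\mathcal{I}_{\epsilon,\X_n}f)(x)$ in terms of a common deterministic double-convolution quantity, and control the deviations by Lemma \ref{lem:CM2} and Corollary \ref{good_def}. Setting $u=\mathcal{I}_{\epsilon,\X_n}f$ and $F=f-u$, the identity \eqref{eq:conv} rewrites
\[
u(x)=\frac{1}{nd_{\epsilon,\X_n}(x)}\sum_{k=1}^n K_n(x,x_k)\,f(x_k)+\mathcal{I}_{\epsilon,\X_n}F(x),\quad K_n(x,z)\doteq\frac{1}{n}\sum_{j=1}^n\frac{\eta_\epsilon(|x-x_j|)\eta_\epsilon(|x_j-z|)}{d_{\epsilon,\X_n}(x_j)},
\]
where $\mathcal{I}_{\epsilon,\X_n}F(x)$ is a convex combination of $\{F(x_i):|x-x_i|\leq\epsilon\}$, hence bounded by $\|F\|_{L^\infty(\X_n\cap B(x,\epsilon))}$ --- this gives the second term in the statement. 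Substituting $u$ into \eqref{Aepi} and interchanging summation and integration yields
\[
\mathcal{A}_\epsilon u(x)=\frac{1}{nd_\epsilon(x)}\sum_{k=1}^n L(x,x_k)\,f(x_k),\quad L(x,z)\doteq\int_{B_\M(x,\epsilon)}\frac{\eta_\epsilon(d_\M(x,y))\,\eta_\epsilon(|y-z|)\,\rho(y)}{d_{\epsilon,\X_n}(y)}\dVol(y).
\]

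\textbf{Comparing $K_n$ and $L$ to a common kernel.} Introduce the deterministic kernel $g(x,z)\doteq\int_{B_\M(x,\epsilon)}\eta_\epsilon(|y-x|)\eta_\epsilon(|y-z|)\dVol(y)$. On the intersection of the events from Lemma \ref{lem:CM2} and Corollary \ref{good_def}, which has probability at least $1-C(t\epsilon)^{-2m}\exp(-ct^2n\epsilon^m)$, I will prove $K_n(x,z)=g(x,z)+O(t/\epsilon^m)$ and $L(x,z)=g(x,z)+O(t/\epsilon^m)$, uniformly in $x,z\in\M$. For $K_n$: use Corollary \ref{good_def} to replace $d_{\epsilon,\X_n}(x_j)$ by $\rho(x_j)$ at multiplicative error $O(t)$ (absorbed using $\rho\geq c_0>0$ and $\|\eta_\epsilon\|_\infty^2=O(\epsilon^{-2m})$), and then invoke Lemma \ref{lem:CM2} for the concentration step. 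For $L$: perform the same replacement $d_{\epsilon,\X_n}(y)\leftrightarrow\rho(y)$ inside the integral, together with the bound $|\eta_\epsilon(d_\M(x,y))-\eta_\epsilon(|x-y|)|\leq C\epsilon^{2-m}$ from \eqref{m2.7}, which rests on the Euclidean--geodesic comparison \eqref{eq:d1}.

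\textbf{Assembly and main obstacle.} Both $K_n(x,x_k)$ and $L(x,x_k)$ vanish for $x_k\notin B(x,2\epsilon)$ (being two-fold $\epsilon$-convolutions), and on the event of Remark \ref{m47} the cardinality of $\X_n\cap B(x,2\epsilon)$ is at most $Cn\epsilon^m$. Hence each of the discrete sums carries only $\sim n\epsilon^m$ nontrivial terms, so multiplying the pointwise kernel error $O(t/\epsilon^m)$ by $n\epsilon^m$ active summands, each of magnitude at most $\|f\|_{L^\infty(\X_n\cap B(x,2\epsilon))}$, and dividing by $n\min(d_\epsilon(x),d_{\epsilon,\X_n}(x))\gtrsim n$ yields an aggregate error of $Ct\,\|f\|_{L^\infty(\X_n\cap B(x,2\epsilon))}$; the $O(t)$ discrepancy between $d_\epsilon(x)^{-1}$ and $d_{\epsilon,\X_n}(x)^{-1}$ coming from \eqref{dep1} and Corollary \ref{good_def} contributes an error of the same order. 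Adding the contribution of $\mathcal{I}_{\epsilon,\X_n}F(x)$ gives the theorem, and the uniform-in-$(x,y)$ character of Lemma \ref{lem:CM2} avoids any extra union bound, delivering exactly the probability stated. The principal difficulty is keeping the error bookkeeping tight: each individual kernel approximation carries an $\epsilon^{-m}$ factor, and one must ensure that these factors cancel against the $\sim n\epsilon^m$ effective summands and the $1/n$ prefactor so that the final estimate scales as $t\|f\|_\infty+\|F\|_\infty$ rather than anything worse.
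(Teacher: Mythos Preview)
Your proposal is correct and follows essentially the same approach as the paper's proof. The paper organizes the argument as a telescoping chain through intermediate quantities $A_1, A_2, A_3$ (replacing $\rho$ by $d_{\epsilon,\X_n}$, geodesic distance by Euclidean, then integral by discrete sum), whereas you compare both $K_n$ and $L$ directly to the common deterministic kernel $g$; these are the same computations in a slightly different order, relying on exactly the same ingredients (Lemma~\ref{lem:CM2}, Corollary~\ref{good_def}, the distance comparison~\eqref{m2.7}, Remark~\ref{m47}, and the identity~\eqref{eq:conv}).
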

\begin{proof}
{\bf 1.} Let $\epsilon^2\leq t\leq 1$ with $\epsilon\ll 1$. 
By Remark \ref{m47}, Lemma \ref{lem:CM1} and Lemma \ref{lem:CM2} it follows that the event:
\begin{equation}\label{m7}
\begin{split}
& \; \sup_{x\in\M} \sum_{i=1}^n \mathds{1}_{\{|x_i-x|\leq 2\epsilon\}}\leq
Cn\epsilon^m \quad \mbox{ and }\quad  \sup_{x\in\M}\big|d_{\epsilon,
  \X_n}(x) - d_\epsilon(x)\big|\leq Ct, 
\\ & \sup_{x,y\in \M}\Big|\frac{1}{n}\sum_{i=1}^n \frac{\eta_\epsilon(|x_i-x|)}{\rho(x_i)}\eta_\epsilon(|x_i-y|) -
\int_{B_\M(x,\epsilon)}\eta_\epsilon(|z-x|)\eta_\epsilon(|z-y|) \dVol (z) \Big| \leq \frac{Ct}{\epsilon^m} 
\end{split}
\end{equation}
holds with probability at least $1-C(t\epsilon)^{-2m}\exp\big(-ct^2
n\epsilon^m\big)$. For the rest of the proof, we assume that this event occurs.
As in Corollary \ref{good_def}, the above imply
for $t\ll 1$, that both $d_{\epsilon, \X_n}$ and $d_\epsilon$ are
bounded away from $0$ by $\frac{1}{2}\min\rho$, and also:
\begin{equation}\label{m8}
\sup_{x\in\M} \big|d_{\epsilon, \X_n}(x)-\rho(x)\big|\leq Ct.
\end{equation}

\smallskip

{\bf 2.} To estimate $\big|\mathcal{I}_{\epsilon, \X_n} f(x) - \A_{\epsilon}(\mathcal{I}_{\epsilon, \X_n} f)(x)\big|$,
we introduce an intermediate quantity:
$$A_1\doteq \frac{1}{d_\epsilon(x)}\int_{B_\M(x,\epsilon)}
\eta_\epsilon(d_\M(x,z)) d_{\epsilon, \X_n}(z) \cdot \mathcal{I}_{\epsilon, \X_n} f(z)\dVol (z).$$ 
By the first and second conditions in (\ref{m7}) we get:
\begin{equation}\label{m9}
\begin{split}
\Big|\A_{\epsilon}(\mathcal{I}_{\epsilon, \X_n} f)(x) - A_1\Big| & \leq C\int_{B_\M(x,\epsilon)}\eta_\epsilon(d_\M(x,z))
\Big|d_{\epsilon, \X_n}(z)\rho(x)\Big| \cdot \big|\mathcal{I}_{\epsilon, \X_n} f(z)\big|\dVol (z) \\ & \leq
Ct\|\mathcal{I}_{\epsilon, \X_n} f\|_{L^\infty(B_\M(x,\epsilon))} \cdot\int_{B_\M(x,\epsilon)}\eta_\epsilon(d_\M(x,z))\dVol
(z) \\ & \leq Ct \|\mathcal{I}_{\epsilon, \X_n} f\|_{L^\infty(B_\M(x,\epsilon))}.
\end{split}
\end{equation}
Further, we may replace the quantity $A_1$ with:
\begin{equation*}
\begin{split} 
A_2 & \doteq \frac{1}{d_\epsilon(x)} \int_{B_\M(x,\epsilon)}\eta_\epsilon(|x-z|)d_{\epsilon, \X_n}(z)
\mathcal{I}_{\epsilon, \X_n} f(z) \dVol (z) \\ & =
\frac{1}{n d_\epsilon(x)} \sum_{k=1}^n
f(x_k)\int_{B_\M(x,\epsilon)}\eta_\epsilon(|x-z|) \eta_\epsilon(|x_k-z|)\dVol (z), 
\end{split}
\end{equation*}
at the expense of the following error, controlled with the help of (\ref{m2.7}):
\begin{equation}\label{m10}
\begin{split}
\big|A_1 - A_2\big| & \leq C\int_{B_\M(x,\epsilon)}\Big|\eta_\epsilon(|x-z|) -
\eta_\epsilon(d_\M(x,z))\Big| \dVol (z)\cdot \|\mathcal{I}_{\epsilon, \X_n} f\|_{L^\infty(B_\M(x,\epsilon))}
\\ & \leq \frac{C}{\epsilon^m} \mbox{Lip}_\eta\cdot \epsilon^2
Vol_\M(x,\epsilon) \cdot \|\mathcal{I}_{\epsilon, \X_n} f\|_{L^\infty(B_\M(x,\epsilon))}
\leq Ct \|\mathcal{I}_{\epsilon, \X_n} f\|_{L^\infty(B_\M(x,\epsilon))}.
\end{split}
\end{equation}
Next, we replace $A_2$ by the following new quantity:
$$A_3\doteq \frac{1}{n^2d_\epsilon(x)}
\sum_{k,j=1}^n\frac{\eta_\epsilon(|x-x_j|)}{\rho(x_j)}\eta_\epsilon(|x_k-x_j|) f(x_k),$$
using the first property in (\ref{m7}):
\begin{equation}\label{m11}
\big|A_2 - A_3\big| \leq \frac{Ct}{n\epsilon^m}
\sum_{k=1}^n|f(z_k)|\cdot \mathds{1}_{|x_k-x|\leq 2\epsilon}\leq Ct \|f\|_{L^\infty(\X_n\cap B(x,2\epsilon))}.
\end{equation}

\smallskip

{\bf 3.} Finally, the error between the quantity $A_3$ and $\mathcal{I}_{\epsilon, \X_n} f(x)$ is estimated by (\ref{eq:conv}):
\begin{equation*}
\begin{split}
\Big|\mathcal{I}_{\epsilon, \X_n} f(x) - A_3\Big| & \leq \frac{1}{n^2}
\sum_{k,j=1}^n|f(x_k)|\eta_\epsilon(|x-x_j|)\eta_\epsilon(|x_k-x_j|)\cdot
\Big|\frac{1}{d_{\epsilon, \X_n}(x) d_{\epsilon,\X_n}(x_j)} - \frac{1}{d_{\epsilon}(x) d_{\epsilon}(x_j)} \Big|
\\ & \quad + \big|\mathcal{I}_{\epsilon, \X_n} \big(f - \mathcal{I}_{\epsilon, \X_n} f\big)(x)\big|
\\ & \leq \frac{Ct}{n^2\epsilon^{2m}}\|f\|_{L^\infty(\X_n\cap
  B(x,2\epsilon))}\cdot \Big(\sum_{i=1}^n \mathds{1}_{|x_i-x|\leq
  2\epsilon}\Big)^2 + \big|\mathcal{I}_{\epsilon, \X_n} \big(f - \mathcal{I}_{\epsilon, \X_n} f\big)(x)\big|
\\ &  \leq Ct \| f\|_{L^\infty(\X_n\cap B(x, 2\epsilon))} +
\big|\mathcal{I}_{\epsilon, \X_n} \big(f - \mathcal{I}_{\epsilon, \X_n} f\big)(x)\big|, 
\end{split}
\end{equation*}
where the difference of the inverses of products of degrees is bounded
by $Ct$, in virtue of (\ref{m7}) and (\ref{m8}).
Together with (\ref{m9}), (\ref{m10}) and (\ref{m11}), the above estimate yields:
\begin{equation*}
\begin{split}
\Big|\mathcal{I}_{\epsilon, \X_n} f(x) - \A_{\epsilon}(\mathcal{I}_{\epsilon, \X_n} f)(x)\Big| & \leq 
Ct\Big( \|\mathcal{I}_{\epsilon, \X_n} f\|_{L^\infty(B_\M(x,\epsilon))} + \|f\|_{L^\infty(\X_n\cap B(x,2\epsilon))}\Big)
\\ & \quad + \big|\mathcal{I}_{\epsilon, \X_n} \big(f - \mathcal{I}_{\epsilon, \X_n} f\big)(x)\big|.
\end{split}
\end{equation*}
Noting the following two estimates, similar to the bounds obtained above:
\begin{equation*}
\begin{split}
& \|\mathcal{I}_{\epsilon, \X_n} f\|_{L^\infty(B_\M(x,\epsilon))} \leq\frac{1}{n\epsilon^m}
\|f\|_{L^\infty(\X_n\cap B(x,2\epsilon))}\cdot \sum_{i=1}^n \mathds{1}_{|x_i-x|\leq 2\epsilon}
\leq C \|f\|_{L^\infty(\X_n\cap B(x,2\epsilon))}, \\ &
\big|\mathcal{I}_{\epsilon, \X_n} \big(f - \mathcal{I}_{\epsilon, \X_n} f\big)(x)\big|\leq
C \|f - \mathcal{I}_{\epsilon, \X_n} f\|_{L^\infty(\X_n\cap B(x,\epsilon))},
\end{split}
\end{equation*}
the proof is done.
\end{proof}

\medskip

\noindent {\bf A proof of Theorem \ref{thm:DiscreteToNonlocal}.}

One directly checks that:
\begin{equation}\label{m12}
f(x_i) - \Ext f(x_i) = \frac{\epsilon^{2}}{\Deg(x_i)}\GL f(x_i)\qquad\mbox{for all } i=1,\ldots, n,
\end{equation}
and also:
$$\Ext f(x) - \A_\epsilon(\Ext f)(x)= \frac{\epsilon^{2}}{d_\epsilon(x)}\NL \big(\Ext f\big)(x)
\qquad\mbox{for all } \; x\in\M.$$
Assume that $\X_n$ is an element of the event with probability
estimated in Theorem \ref{thm:DiscreteToNonlocal0}, where we set
$t=\epsilon^2$. Given $f:\X_n\to\R$, we let $a\in\R$ and apply
Theorem \ref{thm:DiscreteToNonlocal0} to $f-a$, so that:
$$\left|\NL (\Ext f)(x)\right| \leq C\big( \|f-a\|_{L^\infty(\X_n\cap
    B(x,2\epsilon))}+ \|\GL f\|_{L^\infty(\X_n\cap
    B(x,\epsilon))}\big) \quad\mbox{ for all }\; x\in\M.$$ 
The first term in the right hand side above may be bounded by
$\osc_{\X_n\cap B(x,2\epsilon)}f$, upon choosing $a=u(x_j)$ for some $x_j\in B(x,2\epsilon)$.
Since the said event occurs with probability at least $1 - C(\epsilon^3)^{-2m}
\exp\big(-c\epsilon^{m+2}\big)$, the proof is done.
\endproof

\medskip

\begin{center}
{\bf \large PART 2}
\end{center}

\section{The Levi-Civit\`a quadrilateral}\label{LCQ_sec}

In this section, $(\M,g)$ is a smooth, compact, boundaryless, connected
and orientable manifold of dimension $m$. Towards further
applications, we derive a curvature-driven error estimate on geodesic distances in the
Levi-Civit\`a quadrilateral (see Figure \ref{fig_quadri}), named so in connection with the
Levi-Civit\`a parallelogram.

Recall that $d_\M$ denotes the geodesic distance, $Exp_x$ is
the exponential map and $\iota>0$
is the radius of injectivity (see section \ref{sec2.2}).
Given $x\neq y\in\M$ with $d_\M (x,y)<\frac{\iota}{3}$, and two
tangent vectors $v\in T_x\M, w\in T_y\M$ satisfying $|v|_x, |w|_y<\frac{\iota}{3}$, we consider the quantity:
$$L(s) =  d_\M \big(Exp_x(sv), Exp_y(sw)\big)^2\quad \mbox{ for all } s\in [0,1], $$
which keeps track of the squared distance between points along two geodesics emanating from the points $x$ and $y$ with directions $v$ and $w$ respectively--see Figure \ref{fig_quadri} below. 
Denote $t_0={dist}_\M(x,y)$ and define the flow of geodesics
$\gamma:[0,1]\times [0,t_0]\to\M$ as follows. Namely,
we set: $\gamma(s,0)=Exp_x(sv)$, $\gamma(s, t_0)=Exp_y(sw)$ and
request that $[0,t_0]\ni t\mapsto \gamma(s,t)$ is a
geodesic, for every $s\in [0,1]$. This implies that 
$\nabla_{\frac{d}{dt}\gamma}\frac{d}{dt}\gamma=0$ and further:
\begin{equation}\label{uno}
\Big|\frac{d}{dt}\gamma(s,t)\Big|_{\gamma(s,t)} =
\frac{1}{t_0} d_\M (\gamma(s,0), \gamma(s,t_0)\big) \quad \mbox{ for all } (s,t)\in [0,1]\times [0,t_0].
\end{equation}

\vspace{-3mm}

\begin{figure}[htbp]
\centering
\includegraphics[scale=0.5]{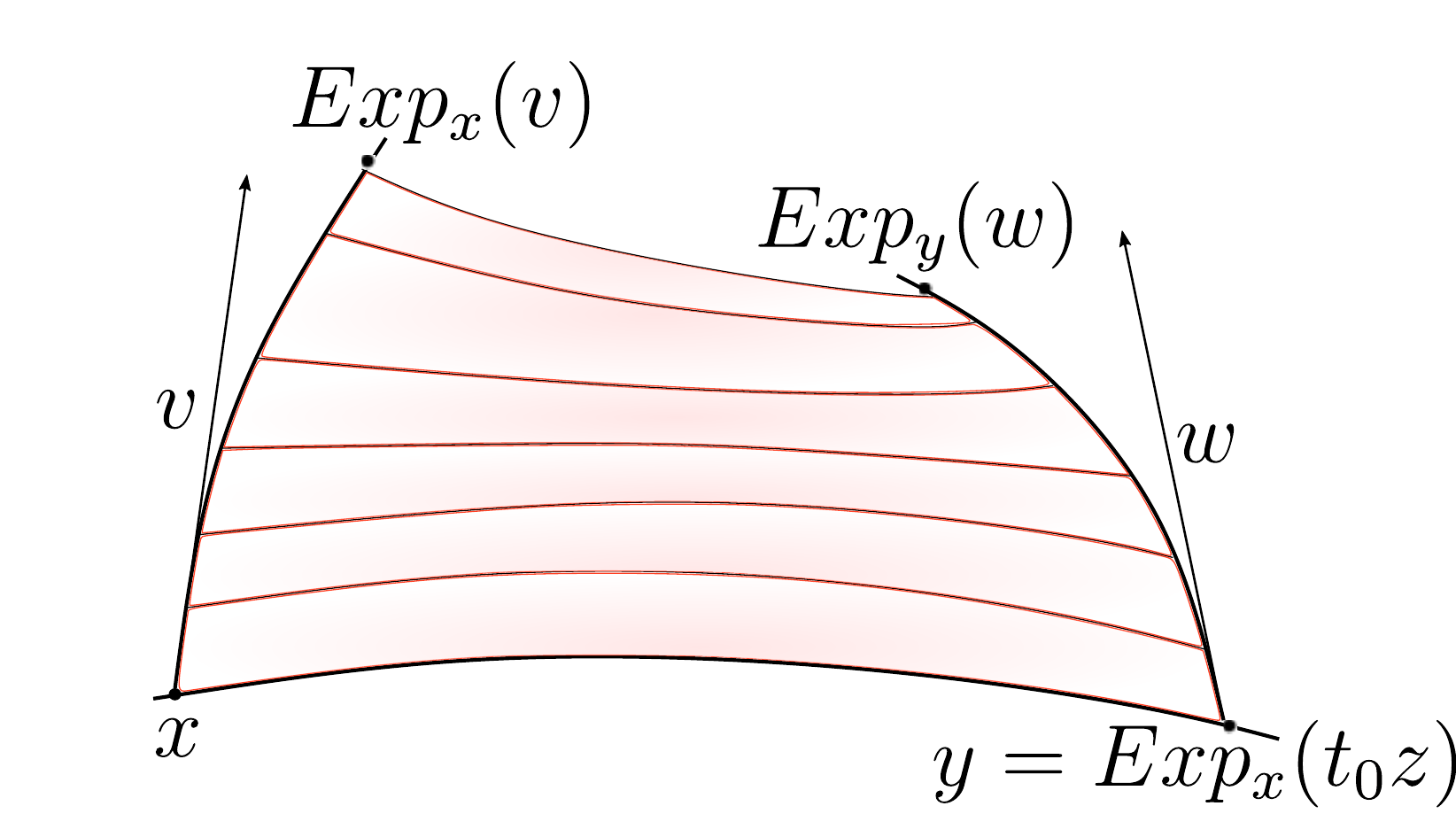}
\caption{{The quadrilateral in Lemma \ref{LCQ}, defined by its two
    vertices $x,y$ and the corresponding tangent vectors $v,w$.}}
\label{fig_quadri}
\end{figure}

In the following result we verify that the geodesic distance between
$Exp_x(v)$ and $Exp_y(w)$, which may be interpreted as the length of
one of the sides in a curvlinear quadrilateral on $\M$ (with other two
vertices $x$ and $y$, see Figure \ref{fig_quadri}), deviates from the corresponding length of the
side in a quadrilateral in $T_x\M$, only by controlled quadratic error terms:

\begin{lemma}\label{LCQ}
In the above context, for every $s\in[0,1]$ there holds:
\begin{equation}\label{due}
\begin{split}
& L'(s) = 2t_0 \Big(\Big\langle \frac{d}{ds}\gamma(s,t_0),
\frac{d}{dt}\gamma(s, t_0)\Big\rangle_{\gamma(s,t_0)} - \Big\langle \frac{d}{ds}\gamma(s,0),
\frac{d}{dt}\gamma(s, 0)\Big\rangle_{\gamma(s,0)}\Big)\\ 
& L''(s) = 2\Big|P_{\gamma(s,0),\gamma(s,t_0)} P_{x,\gamma(s,0)}v -
P_{y,\gamma(s,t_0)}w\Big|^2 + \mathcal{O}\big(t_0^2+s^2(|v|_x^2+|w|_y^2)\big),
\end{split}
\end{equation}
where $P_{a,b}$ denotes the parallel transport along the unique
geodesic connecting points $a,b\in\M$ with $ d_\M (a,b)<\iota$. In
particular, writing $y=Exp_x(t_0z)$ for some $z\in T_x\M$ with
$|z|_x=1$, we have the following expansion, valid for $s\in [0,1]$:
\begin{equation}\label{tre}
\begin{split}
L(s) & = t_0^2+2t_0s\langle P_{y,x}w-v,z\rangle_x +
s^2|P_{y,x}w-v|_x^2+\mathcal{O}\big(s^2t_0^2+s^4(|v|_x^2+|w|_y^2)
\big) \\ & = \big| t_0z + s(P_{y,x}w - v)\big|_x^2 + \mathcal{O}\big(s^2t_0^2+s^4(|v|_x^2+|w|_y^2) \big). 
\end{split}
\end{equation}
The Landau symbol $\mathcal{O}$ above is uniform with respect to $x,y, v,w$
and depends only on $(\M,g)$.
\end{lemma}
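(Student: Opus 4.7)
The approach is to recognise $\gamma$ as a \emph{geodesic variation} with two geodesic boundary curves, and to apply the classical first and second variation formulas for arc-length squared together with the Jacobi equation. Introduce the notation $T(s,t) \doteq \frac{d}{dt}\gamma(s,t)$ and $V(s,t) \doteq \frac{d}{ds}\gamma(s,t)$. Two facts are used throughout: first, since $t\mapsto\gamma(s,t)$ is a geodesic, $\nabla_{\partial_t} T = 0$ and $|T(s,t)|_{\gamma(s,t)}$ is independent of $t$, so by \eqref{uno} one has $L(s) = t_0^2 |T(s,t)|^2 = t_0 \int_0^{t_0} |T|^2 \, \mathrm{d}t$. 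Second, the boundary curves $s\mapsto\gamma(s,0) = Exp_x(sv)$ and $s\mapsto\gamma(s,t_0) = Exp_y(sw)$ are themselves geodesics, so their velocities coincide with $V(s,0) = P_{x,\gamma(s,0)} v$ and $V(s,t_0) = P_{y,\gamma(s,t_0)} w$, with $|V(s,0)|= |v|_x$, $|V(s,t_0)|=|w|_y$, and $\nabla_{\partial_s} V = 0$ at $t\in\{0,t_0\}$.

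Differentiating $L(s) = t_0 \int_0^{t_0} |T|^2 \, \mathrm{d}t$, using the symmetry $\nabla_{\partial_s} T = \nabla_{\partial_t} V$ of the Levi-Civit\`a connection, an integration by parts in $t$, and $\nabla_{\partial_t} T=0$ immediately gives the first identity $L'(s) = 2t_0 \bigl[\langle V,T\rangle\bigr]_{t=0}^{t_0}$. Differentiating once more, the boundary contribution $\partial_s\langle V(s,t_j), T(s,t_j)\rangle$ at the endpoints $t_j\in\{0,t_0\}$ reduces to $\langle V,\nabla_{\partial_s}T\rangle = \langle V,\nabla_{\partial_t} V\rangle$, thanks to $\nabla_{\partial_s} V = 0$ at the boundary. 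Rewriting as a $t$-integral and inserting the Jacobi equation $\nabla_{\partial_t}^2 V = -R(V,T) T$ (valid since $V$ is a Jacobi field along each $t$-geodesic) produces the second variation formula
\[
L''(s) = 2t_0 \int_0^{t_0} \bigl(|\nabla_{\partial_t} V|^2 - \langle R(V,T)T, V\rangle\bigr)\, \mathrm{d}t.
\]

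To isolate the leading term, parallel-transport $V(s,t)$ along the $t$-geodesic back to the fibre $T_{\gamma(s,0)}\M$ and denote the result $W(s,t)$. In this parallel frame the Jacobi equation becomes a linear ODE $\partial_t^2 W = -\tilde R\, W$ with $|\tilde R|\lesssim |T|^2$, so $W$ is affine in $t$ modulo $\mathcal{O}(t_0^2 |V| |T|^2)$; then $\partial_t W(s,0) = (W(s,t_0) - W(s,0))/t_0 + \mathcal{O}(t_0 |V||T|^2)$ and a short computation converts $2t_0\int |\nabla_{\partial_t} V|^2\, \mathrm{d}t$ into $2|W(s,t_0) - W(s,0)|^2$ plus errors. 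Since parallel transport is an isometry, $|W(s,t_0) - W(s,0)| = |P_{\gamma(s,0),\gamma(s,t_0)} V(s,0) - V(s,t_0)|$, which after substituting the boundary values of $V$ yields the expression in the statement. Finally, the curvature integral is bounded by $t_0^2 |V|^2 |T|^2$, and using the uniform bounds $|V| = \mathcal{O}(|v|+|w|)$ together with $|T|^2 = L(s)/t_0^2 = \mathcal{O}(1 + s^2(|v|^2+|w|^2)/t_0^2)$ (the latter from the triangle inequality $d_\M(\gamma(s,0),\gamma(s,t_0)) \leq t_0 + s(|v|+|w|)$), all error contributions fit within the claimed $\mathcal{O}(t_0^2 + s^2(|v|_x^2+|w|_y^2))$.

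The expansion \eqref{tre} then follows by Taylor's theorem $L(s) = L(0) + s L'(0) + \int_0^s (s-\tau) L''(\tau)\, \mathrm{d}\tau$: insert $L(0) = t_0^2$ and $L'(0) = 2t_0\langle P_{y,x} w - v, z\rangle_x$ (obtained from the first identity at $s=0$ via parallel-transport isometry), and replace $L''(\tau)$ by its $\tau=0$ value $2|P_{y,x}w - v|^2$ plus the established error, absorbing the discrepancy into the stated bound. The main obstacle is the parallel-transport/Jacobi-field step in the third paragraph, where the affine approximation of $W$ must be accompanied by error estimates sharp enough to produce the $s$-dependent piece of the error term while remaining uniform in $x,y,v,w$ and in the curvature of $(\M,g)$.
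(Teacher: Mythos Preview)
Your argument is correct and follows essentially the same route as the paper: first variation via the symmetry lemma and the geodesic equation, second variation via $\nabla_{\partial_s}V=0$ at the boundary together with the Jacobi equation, then an affine approximation of the Jacobi field in a parallel frame to extract the leading term, with the curvature piece absorbed into the error using $|T|^2=L(s)/t_0^2$ and the triangle inequality bound on $d_\M(\gamma(s,0),\gamma(s,t_0))$.

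The only organisational difference is in how the affine approximation is implemented. The paper constructs the affine interpolant
\[
V_{\mathrm{aux}}(s,t)=\Big(1-\frac{t}{t_0}\Big)P_{\gamma(s,0),\gamma(s,t)}P_{x,\gamma(s,0)}v+\frac{t}{t_0}P_{\gamma(s,t_0),\gamma(s,t)}P_{y,\gamma(s,t_0)}w
\]
as an explicit auxiliary field with $\nabla_{\partial_t}V_{\mathrm{aux}}$ equal to the (constant) parallel-transported difference quotient; substituting $V_{\mathrm{aux}}$ for $V$ at the endpoints (where they agree) and integrating by parts once more then yields the leading term $2|P_{\gamma(s,0),\gamma(s,t_0)}P_{x,\gamma(s,0)}v-P_{y,\gamma(s,t_0)}w|^2$ \emph{exactly}, with the curvature remainder carrying all of the error. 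Your version instead reaches the index form $L''(s)=2t_0\int_0^{t_0}(|\nabla_{\partial_t}V|^2-\langle R(V,T)T,V\rangle)\,\mathrm{d}t$ and then approximates $\nabla_{\partial_t}V$ itself via the ODE $\partial_t^2W=-\tilde R\,W$ in the parallel frame. This is equivalent but forces you to square an approximate quantity and track the resulting cross-terms; the paper's auxiliary-field trick sidesteps that bookkeeping. For \eqref{tre} the paper uses the Lagrange remainder $L(s)=L(0)+sL'(0)+\tfrac{s^2}{2}L''(\bar s)$ and a holonomy estimate to replace the leading term of $L''(\bar s)$ by its value at $\bar s=0$, which is the same content as your integral-remainder argument. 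Both routes require the uniform $\mathcal{O}(1)$ bound on $|V|$ (the paper isolates this as a separate proposition), and both fold factors of $|v|_x+|w|_y\leq C$ into the constants to reach the stated form of the error.
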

\begin{proof}
{\bf 1.} Note first that by (\ref{uno}), we have:
$$L(s) =  d_\M \big(\gamma(s,0),\gamma(s,t_0)\big) \cdot
\int_0^{t_0}\Big|\frac{d}{dt}\gamma(s,t)\Big|\dt = t_0 \int_0^{t_0}\Big|\frac{d}{dt}\gamma(s,t)\Big|^2\dt.$$
We now differentiate in $s$, use the symmetry lemma in
$\nabla_{\frac{d}{d t}\gamma}\frac{d}{d s}\gamma 
= \nabla_{\frac{d}{d s}\gamma}\frac{d}{d t}\gamma$, and the equation of geodesic
$\nabla_{\frac{d}{d t}\gamma}\frac{d}{d t}\gamma  =0$, to find that:
\begin{equation*}
\begin{split}
L'(s) & =  2t_0 \int_0^{t_0}\Big\langle
\frac{d}{dt}\gamma,\nabla_{\frac{d}{ds}\gamma}\frac{d}{dt}\gamma\Big\rangle\dt
= 2t_0 \int_0^{t_0}\Big\langle
\frac{d}{dt}\gamma,\nabla_{\frac{d}{dt}\gamma}\frac{d}{ds}\gamma\Big\rangle\dt\\
& = 2t_0 \int_0^{t_0}\frac{d}{dt}\Big\langle
\frac{d}{dt}\gamma,\frac{d}{ds}\gamma\Big\rangle\dt = 2t_0 \Big\langle \frac{d}{dt}\gamma(s,t),
\frac{d}{ds}\gamma(s, t)\Big\rangle\Big|_{t=0}^{t=t_0},
\end{split}
\end{equation*}
as claimed in (\ref{due}). To compute the second derivative of $L$, we observe that
$\nabla_{\frac{d}{d s}\gamma}\frac{d}{ds}\gamma(s,0)  =0$
and $\nabla_{\frac{d}{d s}\gamma}\frac{d}{ds}\gamma(s,t_0)  =0$, so that:
$$L''(s) = 2t_0 \Big\langle \nabla_{\frac{d}{ds}\gamma}\frac{d}{dt}\gamma(s,t),
\frac{d}{ds}\gamma(s,t)\Big\rangle\Big|_{t=0}^{t=t_0} = 
2t_0 \Big\langle \nabla_{\frac{d}{dt}\gamma}\frac{d}{ds}\gamma(s,t),
\frac{d}{ds}\gamma(s,t)\Big\rangle\Big|_{t=0}^{t=t_0}, $$
again in view of the symmetry lemma. Define the auxiliary vector field:
$$V(s,t) = \Big(1-\frac{t}{t_0}\Big)P_{\gamma(s,0),\gamma(s,t)}P_{x,\gamma(s,0)}v
+ \frac{t}{t_0} P_{\gamma(s,t_0),\gamma(s,t)}P_{y,\gamma(s,t_0)}w\in
T_{\gamma(s,t)}\M $$
for all $(s,t)\in [0,1]\times [0,t_0].$ Properties of parallel transport yield:
$$\nabla_{\frac{d}{dt}\gamma}V(s,t) = \frac{1}{t_0}\Big(P_{\gamma(s,t_0),\gamma(s,t)}P_{y,\gamma(s,t_0)}w
- P_{\gamma(s,0),\gamma(s,t)}P_{x,\gamma(s,0)}v\Big),$$
and since $V(s,0) = P_{x,\gamma(s,0)}v = \frac{d}{ds}\gamma(s,0)$ and
$V(s,t_0) = P_{y,\gamma(s,t_0)}w = \frac{d}{ds}\gamma(s,t_0)$, we obtain: 
\begin{equation}\label{quattro}
\begin{split}
L''(s) & =  2t_0 \int_0^{t_0}\frac{d}{dt}\Big\langle
\nabla_{\frac{d}{dt}\gamma}\frac{d}{ds}\gamma, V(s,t)\Big\rangle\dt \\ &
= 2 \int_0^{t_0}\Big\langle \nabla_{\frac{d}{dt}\gamma}\frac{d}{ds}\gamma, 
P_{\gamma(s,t_0),\gamma(s,t)}P_{y,\gamma(s,t_0)}w - P_{\gamma(s,0),\gamma(s,t)}P_{x,\gamma(s,0)}v
\Big\rangle\dt \\ & \quad\qquad
+ 2t_0 \int_0^{t_0}\Big\langle \Big(\nabla_{\frac{d}{dt}\gamma}\Big)^2\frac{d}{ds}\gamma, V(s,t)\Big\rangle\dt
\\ & = 2 \int_0^{t_0}\frac{d}{dt}\Big\langle \frac{d}{ds}\gamma, 
P_{\gamma(s,t_0),\gamma(s,t)}P_{y,\gamma(s,t_0)}w - P_{\gamma(s,0),\gamma(s,t)}P_{x,\gamma(s,0)}v
\Big\rangle\dt \\ & \quad\qquad + 2t_0 \int_0^{t_0}\Big\langle
R\Big(\frac{d}{dt}\gamma,\frac{d}{ds}\gamma\Big)\frac{d}{dt}\gamma, V(s,t)\Big\rangle\dt
\\ & \doteq 2A+2B.
\end{split}
\end{equation}
In the last step of (\ref{quattro}), we used the fact that
$\frac{d}{ds}\gamma(s,t)$ is a Jacobi field along the geodesic
$t\mapsto \gamma(s,t)$. Below, we estimate separately the two terms $A$ and $B$.

\smallskip

{\bf 2.} For the first term in the right hand side of (\ref{quattro}), we write:
\begin{equation*}
\begin{split}
A & =  \Big\langle
\frac{d}{ds}\gamma(s, t_0), P_{y,\gamma(s,t_0)}w - P_{\gamma(s,0),\gamma(s,t_0)}P_{x,\gamma(s,0)}v
\Big\rangle \\ & \quad\qquad - \Big\langle \frac{d}{ds}\gamma(s,0), 
P_{\gamma(s,t_0),\gamma(s,t)}P_{y,\gamma(s,t_0)}w - P_{x,\gamma(s,0)}v
\Big\rangle \\ & = \Big\langle
P_{y,\gamma(s, t_0)}w, P_{y,\gamma(s,t_0)}w - P_{\gamma(s,0),\gamma(s,t_0)}P_{x,\gamma(s,0)}v
\Big\rangle \\ & \quad\qquad - \Big\langle  P_{\gamma(s,0),\gamma(s,t_0)}P_{x,\gamma(s,0)}v,
P_{y,\gamma(s,t_0)}w - P_{\gamma(s,0),\gamma(s, t_0)}P_{x,\gamma(s,0)}v
\Big\rangle \\ & = \Big|P_{y,\gamma(s,t_0)}w - P_{\gamma(s,0),\gamma(s, t_0)}P_{x,\gamma(s,0)}v\Big|^2.
\end{split}
\end{equation*}
For the second term, we observe that:
\begin{equation*}
\begin{split}
|B| & \leq Ct_0^2\sup_{t\in [0,t_0]}  \Big| R\Big(\frac{d}{dt}\gamma, \frac{d}{ds}\gamma\Big)
\frac{d}{dt}\gamma(s,t)\Big|\cdot |V(s,t)| \\ & \leq Ct_0^2
\sup_{t\in [0,t_0]}  \Big(\Big| \frac{d}{dt}\gamma(s,t)\Big|^2\cdot
\Big|\frac{d}{ds}\gamma(s,t)\Big|\cdot |V(s,t)| \Big) \\ & \leq
C d_\M \big(\gamma(s,0), \gamma(s,t_0)\big)^2\cdot \big(|v|+|w|\big)\cdot 
\sup_{t\in [0,t_0], s\in [0.1]} \Big| \frac{d}{ds}\gamma(s,t)\Big|
\\ & \leq C t_0^2\big(|v|+|w|\big)+Cs^2\big(|v|+|w|\big)^3,
\end{split}
\end{equation*}
because: 
$$ d_\M \big(\gamma(s,0), \gamma(s,t_0)\big)\leq
 d_\M (\gamma(s,0),x)+ d_\M (x,y)+ d_\M (y,\gamma(s,t_0)) = t_0+s\big(|v|_x+|w|_y\big),$$ 
and because $\big|\frac{d}{ds}\gamma(s,
t)\big|\leq C$ for all $(s,t)\in [0,1]\times [0,t_0]$. This last
assertion, following from general properties of the exponential
map, will be justified in Proposition \ref{ddsbounded}. The proof of (\ref{due}) is complete.

\smallskip

{\bf 3.} To show (\ref{tre}), we Taylor expand $L$ at $0$ to get:
$$L(s) = L(0) + sL'(0) + \frac{s^2}{2}L''(\bar s) \qquad \mbox{ for
  some } \bar s\in [0,s],$$
where we noted that $L(0) = t_0$ and $L'(0) = 2t_0 \big(\langle
w,P_{x,y}z\rangle -\langle v,z\rangle\big) = 2t_0 \langle P_{y,x}w -
v,z\rangle$ by (\ref{due}). On the other hand:
\begin{equation}\label{cinque}
\begin{split}
P_{y,\gamma(\bar s,t_0)}&w - P_{\gamma(\bar s,0), \gamma(\bar s,t_0)}
P_{x,\gamma(\bar s,0)}v \\ & = 
P_{y,\gamma(\bar s,t_0)}\big(w - P_{x,y}v\big) + \Big( P_{y,\gamma(\bar s,t_0)} P_{x,y}v - 
P_{\gamma(\bar s,0), \gamma(\bar s,t_0)} P_{x,\gamma(\bar s,0)}v \Big).
\end{split}
\end{equation}
The first vector in the right hand side above has length $|w-P_{x,y}v|_y
= |P_{y,x}w-v|_x$. The second vector compares the parallel transport
of $v$ from $x$ to $\gamma(\bar s,t_0)$ by two different routes, and has
length equal to:
$$\big| v - P_{y,x} P_{\gamma(\bar s,t_0),y} P_{\gamma(\bar s,0), \gamma(\bar s,t_0)} P_{x,\gamma(\bar s,0)}v \big|_x
\leq C\big(t_0^2 + \bar s^2(|v|^2_x + |w|_y^2)\big),$$
as easily seen, for example, from equations of parallel transport
written in normal coordinates centered at $x$ (see section
\ref{sec2.3}). By (\ref{due}) and (\ref{cinque}), 
for every $\bar s\in [0,s]$ we thus get: 
\begin{equation*}
\begin{split}
\frac{1}{2}L''(\bar s) & = \Big|P_{y,\gamma(\bar s,t_0)}w -
P_{\gamma(\bar s,0), \gamma(\bar s,t_0)} P_{x, \gamma(\bar s,0)}
v\Big|^2 +\mathcal{O}\big(t_0^2 + s^2(|v|^2_x + |w|_y^2)\big) \\ & =
|P_{y,x}w - v|^2 + \mathcal{O}\big(t_0^2 + s^2(|v|^2_x + |w|_y^2)\big),
\end{split}
\end{equation*}
which ends the proof of (\ref{tre}) and of the Lemma.
\end{proof}

In the argument above we used the following observation:

\begin{proposition}\label{ddsbounded}
In the above context, there holds:
$$\Big|\frac{d}{ds}\gamma(s, t)\Big| = \mathcal{O}(1)\qquad\mbox{ for all }
(s,t)\in [0,1]\times [0,t_0], $$
where the bound $\mathcal{O}$ depends only on $(\M, g)$.
\end{proposition}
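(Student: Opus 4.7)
The plan is to recognize $J(t) := \frac{d}{ds}\gamma(s,t)$ as a Jacobi field along the geodesic $t \mapsto \gamma(s,t)$, since by construction $\gamma$ is a one-parameter variation through geodesics. Its boundary values are obtained by differentiating $s \mapsto Exp_x(sv)$ and $s \mapsto Exp_y(sw)$, which are themselves geodesics of constant speeds $|v|_x$ and $|w|_y$; the velocity of a geodesic being parallel along itself then yields $J(0) = P_{x,\gamma(s,0)}\, v$ of norm $|v|_x \leq \iota/3$, and $J(t_0) = P_{y,\gamma(s,t_0)}\, w$ of norm $|w|_y \leq \iota/3$. In other words, $J$ is a Jacobi field with uniformly bounded boundary data.

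The next step is to propagate this endpoint bound into the interior. The main obstacle is that the parametrization speed $\bigl|\frac{d}{dt}\gamma(s,t)\bigr| = d_\M(\gamma(s,0),\gamma(s,t_0))/t_0$ can be as large as $(t_0 + |v|_x + |w|_y)/t_0$, which blows up as $t_0 \to 0$, so one cannot naively apply a Gr\"onwall estimate to the Jacobi equation on $[0,t_0]$. I would resolve this by reparametrizing to the unit interval: set $\tilde\gamma(s,\tau) = \gamma(s, t_0\tau)$ for $\tau \in [0,1]$, so that $\tilde\gamma(s,\cdot)$ is a geodesic of bounded speed $d_\M(\gamma(s,0),\gamma(s,t_0)) \leq t_0 + |v|_x + |w|_y \leq \iota$ (using $s\leq 1$), and $\tilde J(\tau) := J(t_0\tau)$ is a Jacobi field on $[0,1]$ with the same bounded boundary data.

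In this rescaled picture the Jacobi equation $\ddot{\tilde J} + R(\tilde J, \dot{\tilde\gamma})\dot{\tilde\gamma} = 0$ has coefficients bounded uniformly in terms of the curvature of $\M$ and $\iota$. Because $d_\M(\gamma(s,0),\gamma(s,t_0)) < \iota$ and the conjugate radius of a Riemannian manifold is no smaller than the injectivity radius, there are no conjugate points along $\tilde\gamma(s,\cdot)$, so the boundary value problem for $\tilde J$ is non-degenerate. A standard linear-ODE estimate (or equivalently the Rauch comparison theorem) then yields $\sup_\tau |\tilde J(\tau)| \leq C(|\tilde J(0)| + |\tilde J(1)|)$ with $C$ depending only on $(\M,g)$, which upon undoing the reparametrization gives $\sup_t |J(t)| \leq C(|v|_x + |w|_y) = \mathcal{O}(1)$, as claimed.
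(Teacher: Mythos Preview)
Your proof is correct and takes a genuinely different route from the paper. The paper argues by expressing $\gamma(s,t)$ as a composition of smooth maps: writing
\[
\gamma(s,t) = Exp_{\gamma(s,0)}\Big(\tfrac{t}{t_0}\, Exp^{-1}_{\gamma(s,0)}\big(\gamma(s,t_0)\big)\Big)
\]
and observing that $(x,v)\mapsto (x, Exp_x(v))$ is a smooth diffeomorphism on a neighbourhood of the zero section in $T\M$, one obtains $\gamma(s,t)$ as a smooth function of $(x,y,v,w,s, t/t_0)$ with the last argument ranging over the compact interval $[0,1]$; boundedness of $\frac{d}{ds}\gamma$ then follows immediately from smoothness and compactness. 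Your approach instead exploits the Jacobi-field structure directly: you identify $\frac{d}{ds}\gamma(s,\cdot)$ as a Jacobi field with boundary data of norm $|v|_x$ and $|w|_y$, reparametrize to the unit interval so that the underlying geodesic has speed at most $\iota$, and then use that the absence of conjugate points (guaranteed since the length stays below the injectivity radius) makes the two-point boundary value problem uniformly well-posed. The paper's argument is shorter and relies only on abstract smoothness, while yours is more geometric and makes transparent that the constant depends only on sectional curvature bounds and $\iota$; in particular your route would more readily yield an explicit quantitative constant if one were needed.
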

\begin{proof}
Recall (see section \ref{sec2.2}) that given $x\in\M$ and $v\in T_x\M$ with $|v|_x<\iota$, we have
$\psi(x,v) \doteq \big(x, Exp_x(v)\big)\in\M\times \M$, and that the
mapping $\psi$ is a smooth diffeomorphism onto its image. For
$x,y,v,w$ as in the definition of the Levi-Civit\`a quadrilateral, and
for all $s\in [0,1]$, $t\in [0, d_\M (x,y)]$, we observe that the following
composite mapping:
\begin{equation*}
\begin{split}
(x,y,v,w,s,&\; t)  \mapsto \Big(Exp_x(sv), Exp_y(sw), \frac{t}{ d_\M (x,y)} \Big) \\ & ~
{\overset{(\psi^{-1}, id)}\mapsto} \Big(Exp_x(sv), Exp^{-1}_{Exp_x(sv)}\big(Exp_y(sw)\big), \frac{t}{ d_\M (x,y)} \Big)
\\ & \quad \mapsto \Big(Exp_x(sv), \frac{t}{ d_\M (x,y)}
Exp^{-1}_{Exp_x(sv)}\big(Exp_y(sw)\big) \Big) \\ &
~ \mbox{  } \;  \mbox{ }
{\overset{\psi}\mapsto} \Big(Exp_x(sv), Exp_{Exp_x(sv)}\Big(\frac{t}{ d_\M (x,y)}  Exp^{-1}_{Exp_x(sv)}\big(Exp_y(sw)\big) \Big)\Big)
\\ & \quad = \big(Exp_x(sv), \gamma(s,t)\big)
\end{split}
\end{equation*}
is smooth in $s$, independently of $x,y,v,w$ and of the scaled
parameter $\frac{t}{ d_\M (x,y)}\in [0,1]$. In particular,
$\frac{d}{ds}\gamma(s,t)$ is uniformly bounded, as claimed.
\end{proof}

\section{The averaging operators
  \texorpdfstring{${\mathcal{A}}_\epsilon$}{Abare}, \texorpdfstring{
    ${\bar{\mathcal{A}}}_\epsilon$}{Ae}  and their mean value expansions} 
\label{secAver}

In this section, we work under the following hypothesis, that are less
restrictive than (\ref{H1}):

\vspace{-2mm}

\begin{equation*}\label{H2}\tag{{\bf H2}}
\left[\quad \;\; \mbox{\begin{minipage}{13.1cm}\vspace{1mm}
\begin{itemize}[leftmargin=*]
\item[(i)] $(\M,g)$ is a smooth, compact, boundaryless, connected
and orientable  manifold of dimension $m$,\vspace{1mm}
\item[(ii)] $\rho\in\mathcal{C}^2(\M)$ is a positive scalar field, \vspace{1mm}
\item[(iii)] $\eta:[0,1]\to\R$ is Borel regular, nonnegative,
  satisfying 
$\int_{B(0,1)\subset \R^m}\eta(|w|)\dw = 1$. It yields the positive coefficient:
$\sigma_\eta = \int_{B(0,1)}\langle w, e_1\rangle^2\eta(|w|) \dw.$
\vspace{1mm}
\end{itemize}
\end{minipage}}\;\;\, \right]
\end{equation*}

We consider the differential operator (see section \ref{sec2.3}):
$$Af \doteq \frac{1}{\rho^2}Div \big(\rho^2\nabla^*f) = \Delta f + 2\langle
\nabla^*f,\nabla^*(\log\rho)\rangle_x,$$
where $\Delta$ is the unweighted Laplace-Beltrami operator. In turn,
$A$ is a scaled version of $\Delta_\M$ as defined in
\eqref{eqn:LaplaceBeltrami}.  
The purpose of this section is to discuss two families of ``averaging''
operators, relative to the chosen $\eta$,  that approximate $A$ when
the domains of averaging shrink to a point.

For a bounded, Borel function $f:\M\to\R$ and $\epsilon\ll 1$, the first averaging operator (already
encountered in section \ref{subsec4.3}) is given via integrating on the geodesic
ball $B_\M(x,\epsilon)$:
\begin{equation*}
\begin{split}
\A_\epsilon f(x) & =
\frac{1}{d_\epsilon(x)}\int_{B_\M(x,\epsilon)}\frac{1}{\epsilon^m}\eta\Big(\frac{ d_\M (x,y)}{\epsilon}\Big)
f(y)\rho(y)\;\dVol(y)\\  
& \mbox{ where } ~~ d_\epsilon(x) = \int_{B_\M(x,\epsilon)}\frac{1}{\epsilon^m}\eta\Big(\frac{ d_\M (x,y)}{\epsilon}\Big)
\rho(y)\;\dVol(y) \quad\mbox{ for all } x\in\M.
\end{split}
\end{equation*}
The second operator involves integrating on the unit ball
$B(0,1)\subset T_x\M \simeq \R^m$ and interpreting the integrated
quantities in the normal coordinates centered at a given point $x$:
\begin{equation*}
\bar\A_\epsilon f(x) =
\int_{B(0,1)}\eta(|w|)\Big(1+ \epsilon\big\langle w, \nabla
(\log\rho)(0)\big\rangle\Big) f(\epsilon w)\dw \quad\mbox{ for all } x\in\M.
\end{equation*}
It is not hard to observe that both functions $\A_\epsilon f, \bar\A_\epsilon
f:\M\to\R$ are continuous and bounded by $C\sup_{x\in\M}|f(x)|$
where the constant $C$ depends only on $\rho$. 
The relation of $\A_\epsilon, \bar\A_\epsilon$ to $A$ is revealed in the next result, which may be also deduced
from the proof of \cite[Lemma 3.5]{calder2019improved}.

\begin{theorem}\label{consistency}
For every $f\in\mathcal{C}^3(\M)$ there holds:
$$ \A_\epsilon f(x), ~\bar\A_\epsilon f(x) = f(x) +
\frac{1}{2}\sigma_\eta\epsilon^2 Af(x) + \mathcal{O}(\epsilon^3)\|\nabla
f\|_{\mathcal{C}^2(\M)} \qquad\mbox{ for all } x\in\M,$$
where the Landau symbol $\mathcal{O}$ depends only on $(\M, g)$ and
$\rho$, and where $\sigma_\eta$ is as in (\ref{H2}).
\end{theorem}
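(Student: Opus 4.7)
The plan is to compute both averages via Taylor expansion in normal coordinates centered at $x$. Under the parametrization $y = \exp_x(\epsilon w)$ with $w$ in the unit ball of $T_x\M \simeq \R^m$, the geodesic ball $B_\M(x,\epsilon)$ corresponds to $B(0,1)$, $d_\M(x,y) = \epsilon|w|$, and the volume form equals $J(\epsilon w)\epsilon^m \dw$, where $J(\epsilon w) = 1 + \epsilon^2 j_2(w) + \mathcal{O}(\epsilon^3)$ for a quadratic form $j_2$ determined by the Ricci tensor at $x$; the absence of an $\mathcal{O}(\epsilon)$ term uses $\partial_k g_{ij}(0) = 0$ in normal coordinates. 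This reduces the problem to the standard Taylor expansions of $f\circ\exp_x$ to third order and $\rho\circ\exp_x$ to second order, whose remainders are controlled respectively by $\|\nabla f\|_{\mathcal{C}^2(\M)}$ and the $\mathcal{C}^2$ norm of $\rho$.

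For $\bar\A_\epsilon f(x)$, I would substitute the expansions directly, kill all odd-in-$w$ integrals by the radial symmetry of $\eta(|w|)$, and use the quadratic-moment identity $\int_{B(0,1)}\eta(|w|)w^i w^j\dw = \sigma_\eta\delta_{ij}$. The $\mathcal{O}(\epsilon)$ contributions vanish and the $\mathcal{O}(\epsilon^2)$ terms combine to give $\sigma_\eta\bigl[\tfrac12 \Delta f + \langle\nabla f,\nabla\log\rho\rangle\bigr] = \tfrac{\sigma_\eta}{2}Af(x)$, with the cubic remainder bounded by $\mathcal{O}(\epsilon^3)\|\nabla f\|_{\mathcal{C}^2(\M)}$.

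For $\A_\epsilon f(x) = N(x,\epsilon)/D(x,\epsilon)$, the analogous expansion of $N = \int \eta(|w|)(f\rho J)(\exp_x \epsilon w)\dw$ and $D = \int \eta(|w|)(\rho J)(\exp_x \epsilon w)\dw$ yields, after discarding odd moments,
\[ N = f\rho + \epsilon^2 \sigma_\eta\bigl[\tfrac{\rho}{2}\Delta f + \tfrac{f}{2}\Delta\rho + \langle\nabla f,\nabla\rho\rangle\bigr] + \epsilon^2 f\rho J_2 + \mathcal{O}(\epsilon^3), \]
\[ D = \rho + \tfrac{\epsilon^2 \sigma_\eta}{2}\Delta\rho + \epsilon^2 \rho J_2 + \mathcal{O}(\epsilon^3), \]
where $J_2(x) = \int \eta(|w|) j_2(w)\dw$ depends only on the local geometry at $x$. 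The main---and only really delicate---step is the algebraic bookkeeping in the quotient expansion $N/D = f + (N - f\rho)/\rho - f(D-\rho)/\rho + \mathcal{O}(\epsilon^4)$: both the $\Delta\rho$ contributions and the unknown curvature constant $J_2$ enter $N$ and $D$ in precisely the proportions needed to cancel, leaving $\A_\epsilon f(x) = f(x) + \tfrac{\epsilon^2\sigma_\eta}{2}\bigl[\Delta f + 2\langle\nabla f,\nabla\log\rho\rangle\bigr] + \mathcal{O}(\epsilon^3) = f(x) + \tfrac{\sigma_\eta \epsilon^2}{2} Af(x) + \mathcal{O}(\epsilon^3)\|\nabla f\|_{\mathcal{C}^2(\M)}$, with the constant depending only on $(\M,g)$ and $\rho$.
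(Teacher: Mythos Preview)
Your proposal is correct and follows essentially the same approach as the paper: pass to normal coordinates, Taylor expand $f$, $\rho$, and the volume Jacobian, kill odd moments by radial symmetry of $\eta$, and identify the $\epsilon^2$ coefficient via $\int \eta(|w|)w^iw^j\dw = \sigma_\eta\delta_{ij}$. The only organizational difference is that for $\A_\epsilon$ the paper subtracts $f(x)$ \emph{before} expanding---writing $\A_\epsilon f(x)-f(x)=\tfrac{1}{d_\epsilon(x)}\int\eta(|w|)\big(f(\epsilon w)-f(0)\big)\rho(\epsilon w)\sqrt{\det g(\epsilon w)}\,dw$---so that the factor $f(\epsilon w)-f(0)=\mathcal{O}(\epsilon)\|\nabla f\|_{\mathcal{C}^0}$ makes the $\Delta\rho$ and curvature contributions land directly in the $\mathcal{O}(\epsilon^3)\|\nabla f\|_{\mathcal{C}^2}$ remainder without ever being named; your route tracks $J_2$ and $\Delta\rho$ explicitly and then cancels them in the quotient, which is the same computation since $(N-f\rho)/\rho - f(D-\rho)/\rho = (N-fD)/\rho$.
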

\begin{proof}
{\bf 1.} We fix $x\in\M$ and calculate in normal coordinates
centered at $x$ (recall that in these coordinates
$Exp_x(\epsilon w)\in\M$ corresponds to $\epsilon w\in
B(0,\epsilon)\in T_x\M  \simeq\R^m)$:
\begin{equation*}
\begin{split}
\A_\epsilon f(x) - f(x) & = \frac{1}{d_\epsilon(x)} \int_{B_\M(x,\epsilon)}
\frac{1}{\epsilon^m}\eta\Big(\frac{ d_\M (x,y)}{\epsilon}\Big)\big(f(y)
- f(x)\big)\rho(y) \dVol (y) \\ & =
\frac{1}{d_\epsilon(x)}\int_{B(0,1)}\eta(|w|)\big(f(\epsilon
w)-f(0)\big)\rho(\epsilon w) \sqrt{\det[g_{ij}(\epsilon w)]}\dw,
\end{split}
\end{equation*}
where the last integration is on the Euclidean ball $B(0,1)\subset
\R^m$. Similarly, we have: $d_\epsilon(x) = \int_{B(0,1)}\eta(|w|)
\rho(\epsilon w) \sqrt{\det[g_{ij}(\epsilon w)]}\dw$. Since in normal
coordinates $g_{ij}(\epsilon w) =
\delta_{ij}+\mathcal{O}(\epsilon^2)$, it immediately follows
that: $\sqrt{\det[g_{ij}(\epsilon w)]} = 1+
\mathcal{O}(\epsilon^2)$. Taylor expanding $\rho\circ Exp_x$, we now get:
\begin{equation}\label{dep}
\begin{split}
d_\epsilon (x) = \int_{B(0,1)}\eta(|w|)\big(\rho(0)+\epsilon \langle\nabla \rho,
w\rangle +\mathcal{O}(\epsilon^2)\big)
\big(1+\mathcal{O}(\epsilon^2)\big) \dw = \rho(0) + \mathcal{O}(\epsilon^2),
\end{split}
\end{equation}
and thus $\frac{1}{d_\epsilon (x)} = \frac{1}{\rho(0)} +
\mathcal{O}(\epsilon^2)$. Similarly, Taylor expanding $f\circ Exp_x$ we obtain:
\begin{equation*}
\begin{split}
d_\epsilon(x)&\Big(\A_\epsilon f(x) - f(x)\Big)  =
\int_{B(0,1)}\eta(|w|)\Big(\epsilon\langle\nabla f(0),w\rangle
+\frac{\epsilon^2}{2}\langle \nabla^2 f(0) : w^{\otimes 2}\rangle +
\mathcal{O}(\epsilon^3)\|\nabla^3f\|_{\mathcal{C}^0(\M)} \Big)\\ & \qquad \qquad \qquad
\qquad \cdot\big(\rho(0) + \epsilon\langle\nabla\rho(0),w\rangle +
\mathcal{O}(\epsilon^2)\big)\cdot (1+\mathcal{O}(\epsilon^2))\dw  \\ &
= \epsilon \rho(0)\int_{B(0,1)}\eta(|w|)\langle \nabla f(0), w\rangle \dw 
\\ & \quad + \frac{\epsilon^2}{2}\rho(0) \int_{B(0,1)}\eta(|w|)\Big(\langle
\nabla^2 f(0): w^{\otimes 2}\rangle + 2\langle\nabla f(0),w\rangle\cdot\Big\langle
\frac{\nabla\rho (0)}{\rho(0)},w\Big\rangle\Big) \dw \\ & \quad +
\mathcal{O}(\epsilon^3)\big(\|\nabla f\|_{\mathcal{C}^0(\M)} +
\|\nabla^2f\|_{\mathcal{C}^0(\M)}  + \|\nabla^3f\|_{\mathcal{C}^0(\M)} \big) 
\\ & =  \epsilon^2 \rho(0)\Big\langle \nabla^2 f(0) + 2\nabla
f(0)\otimes \nabla(\log\rho)(0): \frac{1}{2}\int_{B(0,1)}\eta(|w|)
w^{\otimes 2}\dw\Big\rangle + \mathcal{O}(\epsilon^3)\|\nabla f\|_{\mathcal{C}^2(\M)}.
\end{split}
\end{equation*}
Since $\int_{B(0,1)}\eta(|w|)w^{\otimes 2}\dw=\sigma_\eta
Id_m$, it follows that:
$$\A_\epsilon f(x) - f(x) = \frac{1}{2}\epsilon^2\sigma_\eta \Big(\Delta f + 2\langle
\nabla f(0), \nabla(\log\rho) (0)\rangle\Big) +
\mathcal{O}(\epsilon^3)\|\nabla f\|_{\mathcal{C}^2(\M)}, $$
proving the claim.

\smallskip

{\bf 2.} For the averaging operator $\bar\A_\epsilon$, we likewise 
expand and calculate in normal coordinates: 
\begin{equation*}
\begin{split}
& \bar\A_\epsilon f(x) - f(x)  =
\int_{B(0,1)}\eta(|w|)\Big(1+\epsilon\langle w, \nabla(\log\rho)(0)\rangle\Big) 
\\ & \qquad \qquad \qquad \qquad \qquad \cdot
\Big(\epsilon\langle\nabla f(0),w\rangle
+\frac{\epsilon^2}{2}\langle \nabla^2 f(0) : w^{\otimes 2}\rangle +
\mathcal{O}(\epsilon^3)\|\nabla^3f\|_{\mathcal{C}^0(\M)} \Big) \dw \\ &
\quad = \frac{\epsilon^2}{2}\rho(0) \int_{B(0,1)}\eta(|w|)\Big(\langle
\nabla^2 f(0): w^{\otimes 2}\rangle + 2\langle\nabla f(0),w\rangle\cdot\langle
\nabla(\log\rho) (0),w\rangle\Big) \dw +
\mathcal{O}(\epsilon^3)\|\nabla f\|_{\mathcal{C}^1} 
\\ & \quad =  \frac{1}{2}\epsilon^2 \sigma_\eta \Big(\Delta f + 2\langle
\nabla f(0), \nabla(\log\rho) (0)\rangle\Big) +
\mathcal{O}(\epsilon^3)\|\nabla^2 f\|_{\mathcal{C}^1(\M)}. 
\end{split}
\end{equation*}
The proof is complete.
\end{proof}

\begin{remark}\label{rem_explA}
(i) When $\eta\equiv \frac{1}{|B(0,1)|}$, then $\sigma_\eta=\fint_{B(0,1)}\langle w, e_1\rangle^2\dw =
\frac{1}{2(m+2)}$, so that:
$$\frac{1}{\int_{B_\M(x,\epsilon)}\rho(y)\dVol
  (y)}\cdot\int_{B_\M(x,\epsilon)}f(y)\rho(y)\dVol (y) = f(x) +
\frac{\epsilon^2}{2(m+2)}\Delta f(x) +
\mathcal{O}(\epsilon^3)\quad\mbox{ as }\epsilon\to 0.$$
This observation  is consistent with the mean-value
expansion, valid for $\rho\equiv C$ and $A=\Delta$:
$$\fint_{B_\M(x,\epsilon)}f(y)\dVol (y) = f(x) +
\frac{\epsilon^2}{2(m+2)}\Delta f(x) + \mathcal{O}(\epsilon^3).$$
The above expansion is, in turn, the quantitative version of one of the
equivalent properties of harmonic functions defined on $\mathcal U\subset
\R^m$, namely: $f(x) = \fint_{B(x,r)}f(y)\dy$ for all $\bar
B(x,r)\subset\mathcal{U}$.

(ii) The relation of the averaging operator $\mathcal{A}_\epsilon$ to
$\Delta_\epsilon$ defined in (\ref{eq:NL}) is: 
$$\Delta_\epsilon f(x) = - d_\epsilon(x)\cdot \frac{
  \mathcal{A}_\epsilon f(x)-f(x)}{\epsilon^2}.$$
\end{remark}

\smallskip

We may directly estimate the difference of the two studied averages:

\begin{lemma}\label{AepAepbar}
For every Borel, bounded function $f:\M\to\R$ there holds:
$$\big|\A_\epsilon f(x) - \bar\A_\epsilon f(x) \big| =
\mathcal{O}(\epsilon^2) \| f\|_{L^\infty(B_\M(x,\epsilon))} \qquad\mbox{ for all } x\in\M.$$ 
When $f$ is continuous, with the modulus of continuity
$\omega(f,\cdot)$, then:
$$\big|\A_\epsilon f(x) - \bar\A_\epsilon f(x) \big| =
\mathcal{O}(\epsilon^2) \cdot \omega(f,\epsilon)\qquad\mbox{ for all } x\in\M.$$ 
In both bounds, the quantity $\mathcal{O}$ depends only on $(\M,g)$ and $\rho$.
\end{lemma}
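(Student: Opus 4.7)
The plan is to express $\mathcal{A}_\epsilon f(x)$ in normal coordinates centered at $x$ and then compare, pointwise in the integration variable $w\in B(0,1)\subset T_x\M\simeq\R^m$, the weight against $f(\epsilon w)$ with that appearing in $\bar{\mathcal{A}}_\epsilon f(x)$. In normal coordinates, geodesic balls correspond to Euclidean balls in $T_x\M$, so $d_\M(x,Exp_x(\epsilon w))=\epsilon|w|$ and the change of variables formula gives
\begin{equation*}
\mathcal{A}_\epsilon f(x) = \frac{1}{d_\epsilon(x)}\int_{B(0,1)} \eta(|w|)\, f(\epsilon w)\, \rho(\epsilon w)\, \sqrt{\det[g_{ij}(\epsilon w)]}\,\dw.
\end{equation*}
Recalling that in normal coordinates $g_{ij}(\epsilon w) = \delta_{ij} + \mathcal{O}(\epsilon^2)$ (so $\sqrt{\det[g_{ij}(\epsilon w)]} = 1 + \mathcal{O}(\epsilon^2)$), that Taylor expansion of $\rho$ gives $\rho(\epsilon w) = \rho(0) + \epsilon\langle\nabla\rho(0),w\rangle + \mathcal{O}(\epsilon^2)$, and that by (\ref{dep}) we have $d_\epsilon(x) = \rho(0) + \mathcal{O}(\epsilon^2)$, a direct computation of the ratio yields
\begin{equation*}
\frac{\rho(\epsilon w)\sqrt{\det[g_{ij}(\epsilon w)]}}{d_\epsilon(x)} = 1 + \epsilon\langle w, \nabla(\log\rho)(0)\rangle + \mathcal{O}(\epsilon^2),
\end{equation*}
uniformly in $w\in B(0,1)$ and $x\in\M$, where the $\mathcal{O}(\epsilon^2)$ depends only on $(\M,g)$ and $\rho$.

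Subtracting the corresponding expression for $\bar{\mathcal{A}}_\epsilon f(x)$ from $\mathcal{A}_\epsilon f(x)$ and bounding the integrand, I obtain
\begin{equation*}
\big|\mathcal{A}_\epsilon f(x) - \bar{\mathcal{A}}_\epsilon f(x)\big| \leq C\epsilon^2 \int_{B(0,1)} \eta(|w|)\, |f(\epsilon w)|\,\dw \leq C\epsilon^2 \|f\|_{L^\infty(B_\M(x,\epsilon))},
\end{equation*}
which is the first claim.

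For the second claim, I use the fact that both averaging operators reproduce constants. Indeed, $\mathcal{A}_\epsilon 1 = 1$ by definition of $d_\epsilon(x)$, while $\bar{\mathcal{A}}_\epsilon 1(x) = 1$ because $\int_{B(0,1)}\eta(|w|)\langle w,v\rangle \dw = 0$ for every $v\in\R^m$ by radial symmetry of $\eta(|\cdot|)$. Applying the first part to the function $f - f(x)$ in place of $f$ then gives
\begin{equation*}
\big|\mathcal{A}_\epsilon f(x) - \bar{\mathcal{A}}_\epsilon f(x)\big| = \big|\mathcal{A}_\epsilon(f - f(x))(x) - \bar{\mathcal{A}}_\epsilon(f - f(x))(x)\big| \leq C\epsilon^2 \|f - f(x)\|_{L^\infty(B_\M(x,\epsilon))} \leq C\epsilon^2\, \omega(f,\epsilon),
\end{equation*}
which is the second claim. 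I do not anticipate a significant obstacle here: the only delicate point is ensuring the $\mathcal{O}(\epsilon^2)$ in the ratio expansion is uniform in $x$, which follows from compactness of $\M$ and the $\mathcal{C}^2$-regularity of $\rho$.
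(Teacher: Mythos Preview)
Your proof is correct. For the first claim, your argument and the paper's are essentially identical: both express $\A_\epsilon f(x)$ in normal coordinates and show that the weight $\rho(\epsilon w)\sqrt{\det[g_{ij}(\epsilon w)]}/d_\epsilon(x)$ differs from $1+\epsilon\langle w,\nabla(\log\rho)(0)\rangle$ by $\mathcal{O}(\epsilon^2)$.

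For the second claim, your route is shorter than the paper's. The paper computes the $\epsilon^2$-coefficient of the weight difference explicitly (involving $\nabla^2\rho$, the Ricci tensor, and the scalar curvature), then observes that this coefficient integrates to zero against $\eta(|w|)$, which justifies replacing $f(\epsilon w)$ by $f(\epsilon w)-f(0)$ in the integral. You instead note directly that $\A_\epsilon 1=\bar\A_\epsilon 1=1$ and apply the first bound to $f-f(x)$; this is exactly the statement that the full weight difference (not just its $\epsilon^2$-part) integrates to zero, and it bypasses the curvature computation entirely. The paper's detailed expansion does yield the refined formula \eqref{dep2} for $d_\epsilon(x)$ along the way, but that formula is not used elsewhere, so nothing is lost by your shortcut.
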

\begin{proof}
{\bf 1.} For a fixed $x\in\M$, we compute in normal coordinates centered at $x$:
\begin{equation}\label{difAA}
\begin{split}
&\A_\epsilon f(x) - \bar\A_\epsilon f(x) \\ & = \frac{1}{d_\epsilon(x)}
\int_{B(0,1)} \eta(|w|) f(\epsilon w) \Big(\rho(\epsilon w) \sqrt{\det [g_{ij}(\epsilon w)]}
- d_\epsilon(x) \big(1+\epsilon\langle w, \nabla (\log \rho)(0)\rangle\big)\Big)\dw.
\end{split}
\end{equation}
We recall that $[g_{ij}]_{i,j=1\ldots m}$ denotes the Grammian matrix in normal
coordinates and that, as before, $\veps w$ is identified with $Exp_x(\veps w)$.
Observe that by (\ref{Tayg}) we get: 
\begin{equation*}
\begin{split}
\det[g_{ij}(\epsilon w)] & =\det\big[\delta_{ij} -
\frac{\epsilon^2}{3}R^i_{kjs}(0)w^kw^s +\mathcal{O}(\epsilon^3)\big]_{i,j=1\ldots m}
\\ & = 1 - \frac{\epsilon^2}{3}R_{kis}^i(0)w^kw^s + \mathcal{O}(\epsilon^3)
= 1 - \frac{\epsilon^2}{3}\big\langle [Ric_{ij}(0)]:w^{\otimes
  2}\big\rangle + \mathcal{O}(\epsilon^3),
\end{split}
\end{equation*}
where $Ric$ denotes the Ricci curvature tensor, here computed at $x$. Consequently:
$$ \sqrt{\det [g_{ij}(\epsilon w)]} = 1 - \frac{\epsilon^2}{6} \big\langle [Ric_{ij}(0)]:w^{\otimes
  2}\big\rangle + \mathcal{O}(\epsilon^3),$$
allowing to improve  (\ref{dep}) to a more precise expansion:
\begin{equation}\label{dep2}
\begin{split}
d_\epsilon (x) & = \int_{B(0,1)}\eta(|w|)\Big(\rho(0)+\epsilon \langle\nabla \rho,
w\rangle + \frac{\epsilon^2}{2}\langle \nabla^2\rho(0) : w^{\otimes 2}
\rangle + o(\epsilon^2) \Big) \\ & \qquad \qquad \cdot
\Big(1-\frac{\epsilon^2}{6} \langle [Ric_{ij}(0)]:w^{\otimes
  2}\big\rangle + \mathcal{O}(\epsilon^3)\Big)\dw \\ & = \rho(0) +
\epsilon^2 \Big\langle \frac{1}{2} \int_{B(0,1)} \eta(|w|) w^{\otimes
  2}\dw : \nabla^2\rho(0) - \frac{1}{3}\rho(0) [Ric_{ij}(0)]
\Big\rangle + o(\epsilon^2) \\ & = \rho(0) + \frac{1}{2}\sigma_\eta\epsilon^2
\Big(\Delta\rho (0) - \frac{1}{3}\rho(0) R(0)\Big) + o(\epsilon^2).
\end{split}
\end{equation}
Here, $R(0)$ denotes the scalar curvature at $x$ and $\Delta$ is the
unweighted Laplacian.

\smallskip

{\bf 2.} In conclusion, the integration factor in (\ref{difAA}) is:
\begin{equation*}
\begin{split}
\rho(\epsilon w)&\sqrt{\det [g_{ij}(\epsilon w)]} - d_\epsilon (x)
\big(1 +\epsilon\langle w, \nabla (\log\rho)(0)\rangle\big)  \\ & = 
\Big(\rho(0)+\epsilon \langle\nabla \rho(0),
w\rangle + \frac{\epsilon^2}{2}\langle \nabla^2\rho(0) : w^{\otimes 2}
\rangle + o(\epsilon^2) \Big) \cdot
\Big(1-\frac{\epsilon^2}{6} \langle [Ric_{ij}(0)]:w^{\otimes
  2}\big\rangle + \mathcal{O}(\epsilon^3)\Big)  \\ & \quad - \Big(\rho(0)
+\kappa_\eta\epsilon^2\big(\Delta\rho(0) -
\frac{1}{3}\rho(0)R(0)\big)+o(\epsilon^2)\Big)\cdot \big(1+\epsilon
\langle w, \nabla (\log\rho)(0)\rangle\big) \\ & =
\frac{\epsilon^2}{2}\Big\langle \nabla^2\rho(0) -
\frac{1}{3}\rho(0)[Ric_{ij}(0)] : w^{\otimes 2}\Big\rangle -
\frac{1}{2}\sigma_\eta \epsilon^2 \big(\Delta\rho(0) - \frac{1}{3}\rho(0)R(0)\big) + o(\epsilon^2).
\end{split}
\end{equation*}
Since the $\epsilon^2$-order term above integrates to $0$, against
$\eta(|w|)$ on $B(0,1)$, we see that for continuous $f$, the
difference in (\ref{difAA}) becomes:
\begin{equation*}
\begin{split}
\A_\epsilon &f(x) - \bar\A_\epsilon f(x)  \\ & = \frac{\epsilon^2}{d_\epsilon(x)}
\int_{B(0,1)} \eta(|w|)\big( f(\epsilon w) - f(0)\big) \\ &
\qquad\qquad \cdot\Big(\frac{1}{2}\Big\langle \nabla^2\rho(0) -
\frac{1}{3}\rho(0)[Ric_{ij}(0)] : w^{\otimes 2}\Big\rangle -
\frac{1}{2}\sigma_\eta\Delta\rho(0) - \frac{1}{6}\sigma_\eta\rho(0)R(0) + o(1)\Big)\dw
\\ & = \mathcal{O}(\epsilon^2) \cdot \omega(f,\epsilon).
\end{split}
\end{equation*}
The estimate for bounded $f$ is a consequence of the first equality above and of (\ref{dep2}).
\end{proof}

\section{A biased random walk modeled on the averaging operator
  \texorpdfstring{$\bar{\mathcal{A}}_\epsilon$}{Abare}} \label{sec_77}

In this section, we work under hypothesis (\ref{H2}).
We introduce a discrete, $\M$-valued
process whose dynamic programming principle reflects the averaging
operation in $\bar\A_\epsilon$. Ultimately, this process
$\{X_n^{\epsilon, x_0}\}_{n=0}^\infty$ will serve for tracking the
values of a solution $f$ in function of its average. 

\subsection{The local isometry field \texorpdfstring{$Q$}{Q}}\label{sec4.1}

We start by the following easy observation:

\begin{lemma}\label{withQ}
For a given $x\in\M$, let $Q$ be a linear isometry of $T_x\M$, such that for some $e\in
T_x\M$ with $|e|_x=1$ there holds:
\begin{equation}\label{Qprop}
|\nabla (\log \rho)(x)|_xQe = \nabla (\log \rho)(x).
\end{equation}
Then, for every bounded, Borel $f:\M\to\R$ we have:
\begin{equation*}
\begin{split}
\bar\A_\epsilon f(x) & = \big(1-\epsilon |\nabla (\log
\rho)(0)|\big)\int_{B(0,1)}\eta(|w|) f(\epsilon w)\dw \\ & \quad + \epsilon
|\nabla (\log \rho)(0)|\int_{B(0,1)}\eta(|w|) (1+\langle w, e\rangle)
f(\epsilon Qw)\dw,
\end{split}
\end{equation*}
where the formula above is written in the normal coordinates centered
at $x$, and the integration is on $B(0,1)\subset\R^m\simeq T_x\M$.
\end{lemma}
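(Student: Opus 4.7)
The claim is a purely algebraic rewriting of the definition of $\bar{\mathcal{A}}_\epsilon f(x)$, so the plan is to expand the right–hand side and match it to the defining integral using only (i) that $Q$ is an orthogonal map of $T_x\M\simeq\R^m$ and (ii) the compatibility condition (\ref{Qprop}).

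Working in normal coordinates centered at $x$, abbreviate $a:=|\nabla(\log\rho)(0)|$ and $v:=\nabla(\log\rho)(0)$, so that (\ref{Qprop}) reads $aQe=v$. The first step is to note that since $Q$ is a linear isometry of $\R^m$, the change of variables $w\mapsto Qw$ preserves both $B(0,1)$ and the factor $\eta(|w|)$. Hence
\begin{equation*}
\int_{B(0,1)}\eta(|w|)\,f(\epsilon Qw)\dw \;=\; \int_{B(0,1)}\eta(|w|)\,f(\epsilon w)\dw,
\end{equation*}
and consequently the two $f(\epsilon w)$-type integrals in the right–hand side of the claimed identity combine to give $(1-\epsilon a)\int \eta(|w|)f(\epsilon w)\dw + \epsilon a\int \eta(|w|)f(\epsilon w)\dw = \int \eta(|w|)f(\epsilon w)\dw$.

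Next, the remaining cross-term on the right–hand side is $\epsilon a\int_{B(0,1)}\eta(|w|)\langle w,e\rangle f(\epsilon Qw)\dw$. Again performing the change of variables $u=Qw$ and using $\langle Q^{-1}u,e\rangle=\langle u,Qe\rangle$ (orthogonality of $Q$), this equals
\begin{equation*}
\epsilon a\int_{B(0,1)}\eta(|u|)\,\langle u,Qe\rangle\, f(\epsilon u)\du \;=\; \epsilon\int_{B(0,1)}\eta(|u|)\,\langle u,aQe\rangle\, f(\epsilon u)\du \;=\; \epsilon\int_{B(0,1)}\eta(|u|)\,\langle u,v\rangle\, f(\epsilon u)\du,
\end{equation*}
where the last step uses the defining property $aQe=v$. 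Adding the contributions gives exactly $\int_{B(0,1)}\eta(|w|)\big(1+\epsilon\langle w,\nabla(\log\rho)(0)\rangle\big)f(\epsilon w)\dw=\bar{\mathcal{A}}_\epsilon f(x)$, which completes the proof.

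There is no real obstacle: the only subtle point is the degenerate case $\nabla(\log\rho)(x)=0$, in which (\ref{Qprop}) holds vacuously for any isometry $Q$ and unit vector $e$, and the claimed identity collapses to the trivial one $\bar{\mathcal{A}}_\epsilon f(x)=\int\eta(|w|)f(\epsilon w)\dw$; this case is automatically covered by the computation above. Note that the probabilistic interpretation (coefficients $1-\epsilon a$ and $\epsilon a$ being probabilities, and $\eta(|w|)(1+\langle w,e\rangle)\ge 0$ on $B(0,1)$ by Cauchy–Schwarz) holds for all $\epsilon\le 1/\|\nabla(\log\rho)\|_{L^\infty(\M)}$, which is the range in which the lemma will later be used to build the biased random walk.
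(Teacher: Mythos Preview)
Your proof is correct and follows essentially the same approach as the paper: both arguments reduce to the change of variable $w\mapsto Qw$ (using that $Q$ is an isometry preserving $B(0,1)$ and $\eta(|\cdot|)$) together with the identity $aQe=v$ from (\ref{Qprop}). Your presentation is slightly more explicit in separating the constant and cross terms before combining them, but the substance is identical.
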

\begin{proof}
We change the variable in the second term, to obtain:
\begin{equation*}
\begin{split}
\epsilon |&\nabla (\log \rho)(0)|\int_{B(0,1)}\eta(|w|) (1+\langle w, e\rangle) f(\epsilon Qw)\dw
\\ & = \epsilon |\nabla (\log \rho)(0)|\int_{B(0,1)}\eta(|w|) (1+\langle w, Qe\rangle) f(\epsilon w)\dw
\\ & = \epsilon |\nabla (\log \rho)(0)|\int_{B(0,1)}\eta(|w|) f(\epsilon Qw)\dw
+ \epsilon \int_{B(0,1)}\eta(|w|) \langle w, \nabla(\log\rho)(0)\rangle) f(\epsilon w)\dw.
\end{split}
\end{equation*}
Summing with $\big(1-\epsilon|\nabla (\log\rho)(0)|\big)
\int_{B(0,1)}\eta(|w|) f(\epsilon w)\dw$, we get $\bar\A_\epsilon
f(x)$, as claimed.
\end{proof}

\smallskip

In what follows, we will define a specific field of isometries $Q$ on
a small geodesic ball $B_\M(x_0, \iota)$. The field $Q$ will be Borel-regular and will satisfy
(\ref{Qprop}) for the chosen unit vectors $e_1(x)= P_{x_0, x}e_1 \in T_x\M$. More
precisely, given the normal coordinates on $B_\M(x_0, \iota)$ centered
at $x_0\in\M$, we write $\big\{e_i=\frac{\partial}{\partial
  x^i}_{\mid x_0}\big\}_{i=1}^m$ for the orthonormal frame that spans
$T_{x_0}\M \simeq\R^m$. Then, at each $x\in B_\M(x_0, \iota)$ we consider the
parallel transported orthonormal frame $\big\{e_i(x) \doteq P_{x_0,
  x}e_i\big\}_{i=1}^m$ in $T_x\M \simeq\R^m$.

Define the spherical cups (interpreted as subsets of $T_{x_0}\M$):
\begin{equation*}
\begin{split} 
S_0^\pm=\Big\{w\in&\R^m;~ |w|=1 \mbox{ and } \langle w, \pm
e_1\rangle > \cos \frac{8\pi}{9}\Big\} \\ & \supset  
S_1^\pm=\Big\{w\in\R^m;~ |w|=1 \mbox{ and } \langle w, \pm
e_1\rangle > \cos \frac{5\pi}{9}\Big\},
\end{split}
\end{equation*}
and fix one rotation $Q_0\in SO(m)$ satisfying:
$$Q_0(S_j^-) = S_j^+ \quad\mbox{ for } j=0,1 \qquad \mbox{with } Q_0(-e_1)=e_1.$$
Let $\bar Q:S_0^+\to SO(m)$ be a smooth map such that:
$$\bar Q(e)e_1 = e \qquad\mbox{ for all } e\in S_0^+.$$
For $x\in B_\M(x_0,\iota)$, using the notation $\xi(x) = \frac{\nabla(\log\rho
  (x)}{|\nabla(\log\rho (x)|_x}$ whenever defined, we set $Q(x) \in SO(T_x\M)$ as follows:
\begin{equation}\label{Qfield}
Q(x) =\left\{\begin{array}{ll} P_{x_0, x}\circ \bar Q(P_{x,
      x_0}\xi(x))\circ P_{x, x_0} & \mbox{if } P_{x, x_0}\xi(x)\in  S_1^+\\
P_{x_0,x} \circ Q_0^T\bar Q\big(Q_0\cdot P_{x, x_0}\xi(x)\big) \circ
P_{x, x_0}  & \mbox{if } P_{x, x_0}\xi(x)\in  S_1^-\setminus S_1^+\\  Id_{T_x\M} &\mbox{otherwise.}
\end{array}\right.
\end{equation}
Clearly, property (\ref{Qprop}) holds for each $x$, with $e=e_1(x) = P_{x_0, x}e_1$.


\subsection{The probability spaces and the discrete process}\label{sec4.2}

We denote the following probability spaces on the collection of Borel subsets of $B(0,1)\subset \R^m$:
$$\Omega_1 = \Big(B(0,1), \mbox{Borel}, \eta(|w|)\dw\Big),\qquad 
\bar\Omega_1 = \Big(B(0,1), \mbox{Borel}, \big(1+\langle \bar w,
e_1\rangle\big)\eta(|\bar w|)\dbw\Big),$$
together with the probability space $\big((0,1), \mbox{Borel},
\db\big)$. The product measure on $\Omega_1\times
\bar\Omega_1\times (0,1)$ will be denoted $\nu_1$.
We also consider the infinite product measure space $(\Omega,\mathcal{F},\nu)$:
\begin{equation*}
\begin{split}
\Omega & = \big(\Omega_1\times\bar\Omega_1\times (0,1)\big)^{\mathbb{N}} \\ & =
\Big\{\omega = (w_i, \bar w_i, b_i)_{i=1}^\infty \mbox{ with } w_i,
\bar w_i\in B(0,1) \mbox{ and } b_i\in (0,1) \mbox{ for all }
i=1\ldots \infty\Big\}.
\end{split}
\end{equation*}
The product $\sigma$-algebra $\F$ has the natural filtration by
$\{\F_n\}_{n=0}^\infty$, each generated by finite products (identified as
subsets of $\F$) of $n$ Borel subsets of $B(0,1)^2\times (0,1)$. We
also write $\F_0=\{\emptyset, \Omega\}$.

Equivalently, $\{w_i\}_{i=1}^\infty$, $\{\bar w_i\}_{i=1}^\infty$ are
collections of independent random variables sampled from the
distributions on $T_{x_0}\M\simeq \R^m$ with respective densities: $\big(\eta
\one_{[0,1]}\big)(|w|)$ and $(1+\langle \bar w, e_1\rangle)\big(\eta
\one_{[0,1]}\big)(|w|)$, while $\{b_i\}_{i=1}^\infty$ are
$\iid$ samples uniformly distributed on the interval $(0,1)$.

\medskip

Fix $x_0\in\M$, $\epsilon\in (0,\frac{\iota}{2})$, and some sufficiently small radius
$r<\frac{\iota}{2}$. We introduce the sequence of random
variables $\{X_n^{\epsilon, x_0}:\Omega\to\M\}_{n=0}^\infty$, via the recursive formula:
\begin{equation}\label{processX}
\begin{split}
& X_0\equiv x_0\\
& X_{n+1}=\left\{\begin{array}{ll} X_n & \mbox{if } X_n\not\in B_\M(x_0, r)\\
Exp_{X_n}(\epsilon P_{x_0, X_n} w_{n+1}) &
    \mbox{otherwise and if } b_{n+1}\geq \epsilon|\nabla  (\log\rho)(X_n)|_{X_n} \\
Exp_{X_n}(\epsilon Q(X_n)\cdot P_{x_0, X_n}\bar w_{n+1}) &
\mbox{otherwise and if } b_{n+1}< \epsilon|\nabla (\log\rho)(X_n)|_{X_n}
\end{array}\right.
\end{split}
\end{equation}
where the isometry field $Q$ is defined in (\ref{Qfield}).
As explained in section \ref{sec4.1}, we identify each $w_{n+1}\in B(0,\epsilon)\subset\R^m $ with:
$\sum_{i=1}^m\langle w_{n+1}, e_i\rangle e_i(x)\in T_{x_0}\M$.
Consequently, whenever $X_n\in B_\M(x_0, r)$ and $b_{n+1}< \epsilon|\nabla (\log\rho)(X_n)|_{X_n}$,
then the updated process position $X_{n+1}$ equals the exponential map $Exp_{X_n}$ applied on the
following tangent vector $v\in T_{X_n}\M$:
\begin{equation*}
v=\left\{\begin{array}{ll} 
\epsilon P_{x_0, X_n} \bar Q\big( P_{X_n, x_0}\xi(X_n)\big) \bar w_{n+1} &
    \mbox{if } P_{X_n, x_0}\xi(X_n)\in S_1^+ \\
\epsilon P_{x_0, X_n} Q_0^T \bar Q\big(Q_0\cdot P_{X_n, x_0}\xi(X_n)\big) \bar w_{n+1} &
    \mbox{if } P_{X_n, x_0}\xi(X_n)\in S_1^-\setminus S_1^+ \\
0 & \mbox{otherwise.}
\end{array}\right.
\end{equation*}
By a further restriction on the smallness of the parameter $\epsilon$, so that
$\epsilon |\nabla (\log\rho)(x)|_{x}<\frac{1}{2}$ for all $x\in
B_\M(x_0, \iota/2)$, we ensure that the
update $X_{n+1}=Exp_{X_n}(\epsilon P_{x_0, X_n} w_{n+1})$ occurs with probability
at least $\frac{1}{2}$ at each step before exiting $B_\M(x_0, r)$. More precisely, defining the transition probability: 
$$\nu_1\doteq \eta(|w|)\big(1+\langle w, e_1\rangle\big)\eta(|\bar w|)\dw\dbw\db,$$ 
there holds: $\nu_1\big(X_{n+1}=Exp_{X_n}(\epsilon P_{x_0, X_n}
w_{n+1})\big)>\frac{1}{2}$ whenever $X_n\in B_\M(x_0, r)$.

\medskip

Above, as in (\ref{processX}) and in the sequel, we omit the
superscripts $^{\epsilon, x_0}$ and write $X_n$ instead of
$X_n^{\epsilon, x_0}$ when no ambiguity arises. 
By Lemma \ref{withQ} and a change of variable we directly obtain:
\begin{proposition}\label{process_good}
Given a bounded, Borel function $f:\M\to\R$, there holds for all $n\geq 0$:
$$\E \big(f\circ X_{n+1}\mid \F_n \big) = \left\{\begin{array}{ll}
   {\bar{\mathcal{A}}}_\epsilon f(X_n) & \mbox{if } X_n\in B_\M(x_0, r)\\
f(X_n) & \mbox{if } X_n\not\in B_\M(x_0, r). \end{array}\right.$$
\end{proposition}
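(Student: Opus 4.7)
The plan is to unwind the recursive construction \eqref{processX}, use independence of the increments from $\F_n$, and identify the resulting conditional expectation with $\bar{\mathcal{A}}_\epsilon f(X_n)$ via Lemma \ref{withQ}. The case $X_n\not\in B_\M(x_0,r)$ is immediate, since then $X_{n+1}=X_n$ pathwise and $\E(f\circ X_{n+1}\mid\F_n)=f(X_n)$. So I concentrate on the $\F_n$-measurable event $E=\{X_n\in B_\M(x_0,r)\}$, on which the random triple $(w_{n+1},\bar w_{n+1},b_{n+1})$ is independent of $\F_n$ and distributed with density $\eta(|w|)\,\eta(|\bar w|)(1+\langle\bar w,e_1\rangle)\,dw\,d\bar w\,db$ on $B(0,1)^2\times(0,1)$.

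On $E$, integrating out $b_{n+1}$ in the threshold $\epsilon|\nabla(\log\rho)(X_n)|$ that selects between the two update rules yields
\begin{equation*}
\begin{split}
\E\bigl(f(X_{n+1})\mid\F_n\bigr)
&= \bigl(1-\epsilon|\nabla(\log\rho)(X_n)|\bigr)\int_{B(0,1)}\eta(|w|)\,f\bigl(Exp_{X_n}(\epsilon P_{x_0,X_n}w)\bigr)\,dw\\
&\quad+\epsilon|\nabla(\log\rho)(X_n)|\int_{B(0,1)}(1+\langle\bar w,e_1\rangle)\eta(|\bar w|)\,f\bigl(Exp_{X_n}(\epsilon Q(X_n)P_{x_0,X_n}\bar w)\bigr)\,d\bar w.
\end{split}
\end{equation*}
This is just the tower property applied to the two-branch recursion, combined with Fubini for the independent $w_{n+1}$ and $\bar w_{n+1}$ marginals.

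To match the right-hand side with $\bar{\mathcal{A}}_\epsilon f(X_n)$, I read both integrals in normal coordinates centered at $X_n$, using the orthonormal frame $\{e_i(X_n)=P_{x_0,X_n}e_i\}_{i=1}^m$ to identify $T_{X_n}\M\simeq\R^m$. Under the isometric substitutions $v=P_{x_0,X_n}w$ and $\bar v=P_{x_0,X_n}\bar w$, the radial densities $\eta(|\cdot|)$ are preserved and $\langle\bar w,e_1\rangle$ transforms into $\langle\bar v,e_1(X_n)\rangle$, while $f(Exp_{X_n}(\epsilon v))$ reads simply as $f(\epsilon v)$. The resulting expression matches verbatim the formula in Lemma \ref{withQ} at $x=X_n$, with the distinguished unit vector $e=e_1(X_n)$ and the isometry $Q(X_n)$.

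The one point requiring verification is that $Q(X_n)$ satisfies the hypothesis \eqref{Qprop} of Lemma \ref{withQ} with $e=e_1(X_n)$, namely $Q(X_n)e_1(X_n)=\xi(X_n)$. This is built into \eqref{Qfield}: in the first branch, using $\bar Q(e)e_1=e$, one gets $Q(X_n)e_1(X_n)=P_{x_0,X_n}\bar Q(P_{X_n,x_0}\xi(X_n))e_1=P_{x_0,X_n}P_{X_n,x_0}\xi(X_n)=\xi(X_n)$; in the second branch, since $Q_0\cdot P_{X_n,x_0}\xi(X_n)\in S_1^+$, the same identity $\bar Q(\cdot)e_1=\cdot$ and $Q_0^TQ_0=Id$ yield the same conclusion. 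I expect the only mild obstacle to be handling the third ``otherwise'' branch of \eqref{Qfield}; however, by continuity of $\xi$ together with the assumed smallness of $r$, $P_{X_n,x_0}\xi(X_n)$ stays close to $\xi(x_0)$ and hence remains in $S_1^+\cup S_1^-$ for $X_n\in B_\M(x_0,r)$, so that branch is never triggered on $E$. With \eqref{Qprop} verified, Lemma \ref{withQ} identifies the displayed right-hand side with $\bar{\mathcal{A}}_\epsilon f(X_n)$, completing the proof.
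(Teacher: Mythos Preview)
Your approach is correct and matches the paper's (very brief) proof, which simply cites Lemma \ref{withQ} and a change of variable. The computation of the conditional expectation by integrating out $b_{n+1}$ and then applying the isometric substitution via $P_{x_0,X_n}$ is exactly the intended argument.

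However, your handling of the third branch of \eqref{Qfield} is wrong, though harmlessly so. Your continuity argument for $\xi$ fails: there is no assumption that $\nabla(\log\rho)$ is nonvanishing, so $\xi$ need not be defined (let alone continuous) on all of $B_\M(x_0,r)$, and the third branch \emph{can} be triggered. The correct observation is that $S_1^+\cup S_1^-$ covers the entire unit sphere (since $\cos(5\pi/9)<0$), so the ``otherwise'' case occurs precisely when $\nabla(\log\rho)(X_n)=0$; in that case \eqref{Qprop} reads $0\cdot Q(X_n)e_1(X_n)=0$ and is trivially satisfied by $Q(X_n)=Id_{T_{X_n}\M}$. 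This is also why the paper simply asserts, immediately after \eqref{Qfield}, that \eqref{Qprop} holds for every $x$, so your re-verification was unnecessary in the first place.
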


\smallskip

We close this section by noting a bound related to stopping times. The
specific stopping time that we use will be chosen in the next section.

\begin{lemma}\label{exitfromr}
In the above context, let $\tau$ be a stopping time with respect to
the filtration $\{\F_n\}_{n=0}^\infty$. Assume that:
$$\tau\leq \min\{n\geq 0; ~ X_n\not\in B_\M(x_0, r)\}.$$ 
Then, if only $r\ll 1$ is sufficiently small (depending on $(\M,g)$
and $\rho, \eta$), and $\epsilon\ll r$, it follows that:
$$\E\big[ d_\M (X_\tau, x_0)^2\big] \leq \frac{3}{2}m\sigma_\eta\epsilon^2\E[\tau].$$
\end{lemma}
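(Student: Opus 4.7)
The plan is to apply the local expansion from Theorem \ref{consistency} to the function $f(x) = d_\M(x, x_0)^2$, and then exploit the resulting supermartingale structure via optional stopping.

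First, I would observe that $f$ is smooth on $B_\M(x_0, \iota)$, so on the closed ball $B_\M(x_0, r+\epsilon)$ (which for $r \ll 1$ and $\epsilon \ll r$ lies well inside the injectivity radius) it is a $\mathcal{C}^3$ function with $\|\nabla f\|_{\mathcal{C}^2}$ bounded by a constant depending only on $(\M,g)$. Working in normal coordinates $w$ centered at $x_0$, one has $f(Exp_{x_0}(w)) = |w|^2$, and using $g_{ij}(w) = \delta_{ij}+\mathcal{O}(|w|^2)$ from section \ref{sec2.3} together with the formula for $\Delta$ in local coordinates, a direct calculation gives $\Delta f(x) = 2m + \mathcal{O}(r)$ for $x \in B_\M(x_0, r)$. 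Since $\nabla^* f$ has norm $\mathcal{O}(r)$ on the same ball, the weighted drift term $2\langle \nabla^* f,\nabla^*(\log \rho)\rangle$ is also $\mathcal{O}(r)$, so $Af(x) \leq 2m + Cr$ on $B_\M(x_0, r)$ with $C$ depending on $(\M,g)$ and $\rho$.

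Next, I would feed this into Theorem \ref{consistency}: for every $x \in B_\M(x_0, r)$,
$$\bar{\mathcal{A}}_\epsilon f(x) - f(x) \leq \tfrac{1}{2}\sigma_\eta \epsilon^2\,(2m + Cr) + C'\epsilon^3,$$
and for $r \ll 1$ and $\epsilon \ll r$ the error terms are absorbed to yield
$$\bar{\mathcal{A}}_\epsilon f(x) - f(x) \leq \tfrac{3}{2}\,m\,\sigma_\eta\,\epsilon^2.$$
Combining with Proposition \ref{process_good} and the fact that $\{n < \tau\}\subset \{X_n \in B_\M(x_0, r)\}$, this gives
$$\E\bigl[f(X_{n+1}) - f(X_n)\,\big|\,\F_n\bigr]\,\mathbf{1}_{\{n<\tau\}} \leq \tfrac{3}{2}\,m\,\sigma_\eta\,\epsilon^2\,\mathbf{1}_{\{n<\tau\}}.$$
Equivalently, $M_n \doteq f(X_{n\wedge \tau}) - \tfrac{3}{2}m\sigma_\eta \epsilon^2 (n\wedge \tau)$ is an $\F_n$-supermartingale starting from $M_0 = 0$, hence $\E[f(X_{n\wedge\tau})] \leq \tfrac{3}{2} m \sigma_\eta \epsilon^2 \,\E[n\wedge\tau]$ for every $n$.

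Finally, I would pass $n \to \infty$. On $\{\tau < \infty\}$, bounded convergence applies since $d_\M(X_\tau, x_0) \leq r + \epsilon$, while on $\{\tau = \infty\}$ monotone convergence gives $\E[n\wedge\tau] \uparrow \E[\tau] = \infty$, so in either case the target inequality $\E[d_\M(X_\tau, x_0)^2] \leq \tfrac{3}{2}m\sigma_\eta\epsilon^2\,\E[\tau]$ follows. The main obstacle is the first step: controlling $Af$ with constant $2m$ to leading order and an $\mathcal{O}(r)$ (rather than $\mathcal{O}(1)$) remainder, so that after multiplying by $\tfrac{1}{2}\sigma_\eta \epsilon^2$ the constant $m\sigma_\eta$ appears with a margin that absorbs both the curvature error $Cr$ and the density-drift contribution $2\langle \nabla^* f, \nabla^*(\log\rho)\rangle$; this rests on the fact that in normal coordinates $f$ is exactly Euclidean-quadratic while the metric perturbation enters only at order $|w|^2$.
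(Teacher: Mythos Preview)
Your proof is correct and shares the same supermartingale architecture as the paper's: bound the conditional increment of $d_\M(X_n,x_0)^2$ by $\tfrac{3}{2}m\sigma_\eta\epsilon^2$, form $M_n = d_\M(X_{\tau\wedge n},x_0)^2 - \tfrac{3}{2}m\sigma_\eta\epsilon^2(\tau\wedge n)$, and apply optional stopping. The difference lies in how the increment is expanded. You treat $f(x)=d_\M(x,x_0)^2$ as a generic $\mathcal{C}^3$ function, compute $Af = 2m + \mathcal{O}(r)$ in normal coordinates centered at $x_0$, and then appeal to the black-box mean value expansion of Theorem~\ref{consistency}. The paper instead invokes the Levi-Civit\`a quadrilateral Lemma~\ref{LCQ} with $x=x_0$, $y=X_n$, $v=0$ to obtain directly
\[
d_\M\bigl(Exp_{X_n}(\epsilon w),x_0\bigr)^2 - d_\M(X_n,x_0)^2
= 2\epsilon\bigl\langle P_{X_n,x_0}w,\,Exp_{x_0}^{-1}(X_n)\bigr\rangle_{x_0} + \epsilon^2|w|^2 + \mathcal{O}(\epsilon^2 r^2+\epsilon^4),
\]
and then integrates against the transition density. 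Your route is slightly more economical in that it recycles Theorem~\ref{consistency}; the paper's route is more explicit and foreshadows the repeated use of Lemma~\ref{LCQ} in the coupling estimates of section~\ref{sec:NonLocalLipschitz}. One small technicality: Theorem~\ref{consistency} is stated for $f\in\mathcal{C}^3(\M)$, whereas the squared distance is smooth only inside the injectivity ball; since $\bar{\mathcal A}_\epsilon f(x)$ depends only on $f_{|B_\M(x,\epsilon)}$ this is harmless after a smooth extension, and you implicitly acknowledge this. Your limiting argument handling $\E[\tau]=\infty$ is in fact cleaner than the paper's direct appeal to optional stopping.
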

\begin{proof}
Fix $n\geq 0$. On the event $\{\tau>n\}$, Proposition \ref{process_good} yields:
\begin{equation*}
\begin{split}
\E\Big( d_\M &(X_{n+1}, x_0)^2 -  d_\M (X_n, x_0)^2\mid\F_n\Big) \\ & =
\int_{B(0,1)}\eta(|w|)\big(1+\epsilon\big\langle w,
\nabla(\log\rho)(0)\big\rangle\big)\Big( d_\M (Exp_{X_n}(\epsilon w), X_n)^2
-  d_\M (X_n, x_0)^2\Big)\dw,
\end{split}
\end{equation*}
where the integration on $B(0,1)\subset T_{X_n}\M$ is written in the
normal coordinates centered at $X_n$. We now apply 
Lemma \ref{LCQ} with $s=\epsilon$, $x=x_0$, $y=X_n$,
$v=0$ and the vector $w$ appropriately scaled, to get by (\ref{tre}):
\begin{equation*}
\begin{split}
 d_\M (Exp_{X_n}(\epsilon w), X_n)^2 = & \,  d_\M (X_n, x_0)^2
+ 2\epsilon\big\langle P_{X_n, x_0}w, Exp^{-1}_{x_0}(X_n)\big\rangle_{x_0}
\\ & +\epsilon^2|w|^2 +\mathcal{O}(\epsilon^2r^2 + \epsilon^4).
\end{split}
\end{equation*}
Observe that $\int_{B(0,1)}\eta(|w|)\langle w, e\rangle\dw=0$
for any fixed vector $e$, so consequently:
\begin{equation*}
\begin{split}
\E\Big( d_\M &(X_{n+1}, x_0)^2 -  d_\M (X_n, x_0)^2\mid\F_n\Big)
\\ & =\epsilon^2\int_{B(0,1)}\eta(|w|)|w|^2\dw +
2\epsilon^2\int_{B(0,1)}\eta(|w|) \big\langle w, \nabla
(\log\rho)(0)\big\rangle \mathcal{O}(r)\dw
+\mathcal{O}(\epsilon^3+\epsilon^2r^2) \\ & = m\sigma_\eta \epsilon^2 +\mathcal{O}(\epsilon^3+\epsilon^2r^2),
\end{split}
\end{equation*}
where the symbol $\mathcal{O}$ depends only on $(\M, g)$ and $\rho$.

It thus follows that for $r$ sufficiently small, the above quantity is
bounded from above by $3m\kappa_\eta$, implying that the sequence of random
variables:
$$\Big\{M_n \doteq  d_\M (X_{\tau\wedge n}, x_0)^2-\frac{3}{2}m\sigma_\eta
\epsilon^2 (\tau\wedge n)\Big\}_{n=0}^\infty$$ 
is a supermartingale with respect to the filtration $\{\F_n\}_{n=0}^\infty$. By Doob's
optional stopping theorem and in view of the stopping time $\tau$ being integrable, we obtain:
$$0=\E[M_0]\geq \E[M_\tau] = \E\big[ d_\M (X_\tau, x_0)^2\big] - \frac{3}{2}m\sigma_\eta\epsilon^2\E[\tau],$$
as claimed.
\end{proof}

\section{The coupling argument and the first approximate Lipschitz estimate}
\label{sec:NonLocalLipschitz} 

In this section, we work under hypothesis (\ref{H2}).
We derive a weaker estimate than that announced in
Theorem \ref{th_main1}, valid for the pairs of points whose distance
is bounded away from $0$ at the scale $\epsilon$. This 
restriction will be removed by the complementary estimate in section
\ref{sec_second}, whereas the main bounds in Theorems \ref{th_main1}
and \ref{th_main2}, will be closed in section \ref{sec_closebound}.
For the proofs, we use the probabilistic interpretation of the averaging operator
$\bar\A_\epsilon$ developed in section \ref{sec4.2}; we trace the value
of $f$ along the biased random walk $\{X_n^{\epsilon, x_0}\}_{n=0}^\infty$
started at $x_0$ and its coupled walk $\{Y_n\}_{n=0}^\infty$ started at a given $y_0$.
We choose a stopping time when the two processes almost coalesce (at
the scale $\epsilon$) or drift apart (at the scale $r$).

\begin{theorem}\label{Thestimate1}
Let $\epsilon\ll r\ll 1$. Then, for every bounded, Borel function
$f:\M\to\R$ and every $x_0, y_0\in\M$ satisfying $ d_\M (x_0, y_0)\in (3\epsilon, r)$, there holds:
\begin{equation*}
\begin{split}
|f(x_0) - f(y_0)|\leq ~& \sup \Big\{|f(x) - f(y)|;~ x\in B_\M(x_0, r+\epsilon), ~
 d_\M (x,y)\leq 3\epsilon\Big\} \\ &
 +\bigg(\frac{6(m+1)}{r}\|f\|_{L^\infty(B_\M(x_0, 2r+3\epsilon))} +
\frac{4r}{\sigma_\eta} \frac{\|\bar\A_\epsilon f -
  f\|_{L^\infty(B_\M(x_0, 2r))}}{\epsilon^2}\bigg) d_\M (x_0, y_0).
\end{split}
\end{equation*}
\end{theorem}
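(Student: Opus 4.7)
My plan is to run two coupled biased random walks $\{X_n\}$ and $\{Y_n\}$ of the kind constructed in Section \ref{sec_77}, with $X_0=x_0$, $Y_0=y_0$, each marginally driven by the averaging operator $\bar\A_\eps$ as in Proposition \ref{process_good}. The coupling is a discrete reflection coupling: when both walks take a symmetric step, the $Y$-increment is obtained from the $X$-increment by parallel-transporting it to $T_{Y_n}\M$ and reflecting across the hyperplane orthogonal to the initial tangent of the minimizing geodesic from $Y_n$ to $X_n$; when both take a drift step I use parallel coupling; the $\mathcal{O}(\eps)$-probability event of mismatched step types contributes only a lower-order error. Both processes are frozen upon exiting $B_\M(x_0,r)$ and $B_\M(y_0,r)$ respectively, and I introduce the stopping time
\begin{equation*}
\tau \doteq \inf\bigl\{n\geq 0\,:\; d_\M(X_n,Y_n)\leq 3\eps \text{ or } X_n\notin B_\M(x_0,r) \text{ or } Y_n\notin B_\M(y_0,r)\bigr\}.
\end{equation*}

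Applying Proposition \ref{process_good} and optional stopping to the two martingales $n\mapsto f(X_{n\wedge\tau}) + \sum_{k<n\wedge\tau}(f-\bar\A_\eps f)(X_k)$ and its $Y$-analogue, and subtracting, yields the identity
\begin{equation*}
f(x_0) - f(y_0) = \E\bigl[f(X_\tau) - f(Y_\tau)\bigr] + \E\Big[\sum_{k=0}^{\tau-1}\bigl((f-\bar\A_\eps f)(X_k) - (f-\bar\A_\eps f)(Y_k)\bigr)\Big].
\end{equation*}
The second term is bounded by $2\|\bar\A_\eps f - f\|_{L^\infty(B_\M(x_0,2r))}\cdot\E[\tau]$. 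For the first, I split along the coalescence event $C=\{d_\M(X_\tau,Y_\tau)\leq 3\eps\}$: on $C$, the contribution is dominated by the sup term in the theorem (since $X_\tau\in B_\M(x_0,r+\eps)$ and $Y_\tau\in B_\M(x_0,2r+3\eps)$), while on $C^c$ it is at most $2\|f\|_{L^\infty(B_\M(x_0,2r+3\eps))}\cdot\Prob[C^c]$. Matching the advertised coefficients then reduces to proving $\Prob[C^c]\lesssim (m+1)\,d_\M(x_0,y_0)/r$ and $\E[\tau]\lesssim r\,d_\M(x_0,y_0)/(\sigma_\eta\eps^2)$.

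Both probabilistic estimates follow from Lemma \ref{LCQ}. Setting $D_n = d_\M(X_{n\wedge\tau},Y_{n\wedge\tau})$, taking $v$ as the increment for $X_n$ and $w$ as its reflection realizing the coupling for $Y_n$, the choice of reflection axis forces $P_{y,x}w - v = -2\langle v,z\rangle_x\, z$, so in the expansion \eqref{tre} the first-order term averages to zero under the symmetric density $\eta(|w|)$ while the quadratic contribution $|P_{y,x}w - v|_x^2 = 4\langle v,z\rangle_x^2$ integrates to $4\sigma_\eta$. It follows that $D_n$ is a supermartingale up to the $\mathcal{O}(\eps^2)$ curvature error tracked in \eqref{tre}, and that $D_n^2 - c\sigma_\eta\eps^2 n$ is approximately a submartingale. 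Optional stopping on $D_n^2$, combined with $\E[D_\tau]\leq d_\M(x_0,y_0)$ and $D_\tau\leq 2r+3\eps$, yields the expected-time estimate; the probability estimate follows by applying optional stopping to a gambler's-ruin-type potential built from the one-dimensional projection of $D_n$ along the reflection axis. The main obstacle is precisely this latter step: reflection coupling by itself only makes the processes weakly correlated, so a careful argument (analogous to a three-barrier gambler's ruin with absorbing states at $\{D\leq 3\eps\}$ and at the exit of the ambient balls) is needed to extract conditional concentration of $D_\tau$ of order $r$ on $C^c$ and yield the claimed factor $d_\M(x_0,y_0)/r$ together with the dimensional factor $(m+1)$, all while keeping the curvature errors from \eqref{tre} and the drift-step mismatch strictly subleading.
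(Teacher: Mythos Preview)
Your overall architecture matches the paper's: two coupled walks driven by $\bar\A_\eps$, reflection coupling on the symmetric steps, a stopping time recording coalescence versus escape, and the submartingale $|f(X_{n\wedge\tau})-f(Y_{n\wedge\tau})|+2\|\bar\A_\eps f-f\|_\infty(n\wedge\tau)$ leading to the splitting you wrote. The paper's stopping time differs only cosmetically (it uses $d_\M(X_n,Y_n)\geq r$ rather than $Y_n\notin B_\M(y_0,r)$).

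There is, however, a genuine gap in your probabilistic estimates. Your claim that ``$D_n$ is a supermartingale up to the $\mathcal{O}(\eps^2)$ curvature error'' and hence $\E[D_\tau]\leq d_\M(x_0,y_0)$ does not hold. The drift-step contribution is \emph{not} merely a curvature error: when $|\nabla(\log\rho)|$ is small but nonzero the drift direction $\xi=\nabla(\log\rho)/|\nabla(\log\rho)|$ can rotate arbitrarily fast between $X_n$ and $Y_n$, so the parallel-coupled drift step can only be bounded crudely, yielding
\[
\E\big[D_{n+1}-D_n\,\big|\,\F_n\big]\ \leq\ \tfrac{1}{2}\sigma_\eta\eps^2 + \mathcal{O}(\eps^2 r+\eps^3),
\]
which is the \emph{same order} as the useful quadratic-variation term $4\sigma_\eta\eps^2$ for $D_n^2$. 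With this positive drift you cannot conclude $\E[D_\tau]\leq d_\M(x_0,y_0)$ directly, and your subsequent expected-time bound and the unspecified ``gambler's-ruin potential'' are left hanging on it.

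The paper resolves this with a single concave Lyapunov function. One checks that $\phi(d)=d-\tfrac{1}{2r}d^2$ satisfies, on $\{\tau>n\}$,
\[
\E\big[\phi(D_{n+1})-\phi(D_n)\,\big|\,\F_n\big]\ \leq\ \tfrac{1}{2}\sigma_\eta\eps^2 - \tfrac{1}{2r}\cdot 3\sigma_\eta\eps^2 + \mathcal{O}(\eps^2 r)\ \leq\ -\tfrac{\sigma_\eta\eps^2}{2r},
\]
so $\phi(D_{\tau\wedge n})+\tfrac{\sigma_\eta\eps^2}{2r}(\tau\wedge n)$ is a supermartingale. Optional stopping then gives, in one line, both $\E[\tau]\leq \tfrac{2r}{\sigma_\eta\eps^2}d_\M(x_0,y_0)$ and $\E[D_\tau]\leq 3\,d_\M(x_0,y_0)$ (using $\phi(d)\geq d/3$ for $d\leq \tfrac{4r}{3}$). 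No gambler's-ruin construction is needed: Markov's inequality on $\E[D_\tau]$ controls $\nu(D_\tau\geq r)$, and Markov on $\E[d_\M(X_\tau,x_0)^2]$ together with Lemma~\ref{exitfromr} controls $\nu(X_\tau\notin B_\M(x_0,r))$, producing exactly the factor $3(m+1)/r$.
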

\begin{proof}
{\bf 1.} Fix $x_0, y_0, \epsilon$ as in the statement and recall the
process $\{X_n^{\epsilon, x_0}\}_{n=0}^\infty$ introduced in section
\ref{sec4.2}, relative to $x_0, \epsilon, r$. We now define a coupled process $\{Y_n:\M\to
\R\}_{n=0}^\infty$ by the following recursive construction:\vspace{1mm}
\begin{equation}\label{processY}
\begin{split}
& Y_0\equiv y_0\\
& Y_{n+1}=\left\{\begin{array}{ll} Y_n & \hspace{-3.33cm} \mbox{or }  d_\M (X_n, Y_n)\leq 3\epsilon
\mbox{ and }  d_\M (X_n, Y_n)\geq r \\ & \mbox{or } X_n\not\in B_\M(x_0, r)\\
Exp_{Y_n}\big(\epsilon P_{X_n, Y_n} \mbox{Refl}_n (P_{x_0, X_n} w_{n+1}) \big) &
    \hspace{-2mm} \mbox{otherwise}\\ &  \hspace{-2mm} \mbox{and if } b_{n+1}\geq \epsilon|\nabla  (\log\rho)(Y_n)|_{Y_n} \\
Exp_{Y_n}\big(\epsilon Q^{X_n}(Y_n)\cdot P_{X_n, Y_n} P_{x_0, X_n}\bar w_{n+1}\big) &
\mbox{otherwise}\\ &  \mbox{and if } b_{n+1}< \epsilon|\nabla (\log\rho)(Y_n)|_{Y_n}
\end{array}\right.
\end{split}
\end{equation}
Here, the linear map $\mbox{Refl}_n:T_{X_n}\M\to T_{X_n}\M$ denotes
the Householder projection, namely the reflection
across the hyperplane in $T_{X_n}\M$ which is perpendicular to the
tangent vector $Exp_{X_n}^{-1}(Y_n)$. Note that this vector is
well-defined and nonzero in the indicated sub-case of
(\ref{processY}). More precisely, we put:
$$ \mbox{Refl}_n(w) = w - \frac{2\big\langle w, Exp_{X_n}^{-1}(Y_n)\big\rangle}{ d_\M (X_n, Y_n)^2} Exp_{X_n}^{-1}(Y_n)
\qquad\mbox{for all } w\in T_{X_n}\M.$$
The rotation field $Q^{X_n}$ is defined for every $y\in B_\M(x_0, \iota)$ similarly to (\ref{Qfield}),
by means of $Q_0$ and $Q$ that have been introduced in section \ref{sec4.1}:
\begin{equation}\label{QfieldY}
Q^{X_n}(y) =\left\{\begin{array}{ll} P_{X_n, Y_n} P_{x_0, X_n}\circ
    \bar Q\big(P_{X_n, x_0} P_{y, X_n}\xi(y)\big)\circ P_{X_n x_0}
    P_{y, X_n} & \\ &  \hspace{-8.5cm} \mbox{if: } P_{X_n, x_0}\xi(X_n)\in  S_1^+ \mbox{ and }
    P_{X_n, x_0}P_{Y_n, X_n}\xi(y)\in S_0^+\\
& \hspace{-8.5cm} \mbox{or if: } P_{X_n, x_0}\xi(x)\in  S_1^-\setminus S_1^+ \mbox{ and }
    P_{X_n, x_0}P_{Y_n, X_n}\xi(y)\in S_0^+\setminus S_0^-\\
& \hspace{-8.5cm} \mbox{or if: } P_{X_n, x_0}\xi(x)  \mbox{ not defined ~ and }
    P_{X_n, x_0}P_{Y_n, X_n}\xi(y)\in S_1^+\\
P_{X_n, Y_n} P_{x_0, X_n} \circ Q_0^T\bar Q\big(Q_0\cdot P_{X_n, x_0} P_{y, X_n}\xi(y)\big) \circ
P_{X_n x_0} P_{y, X_n} & \\ &  \hspace{-8.5cm} 
\mbox{if: } P_{X_n, x_0}\xi(X_n)\in  S_1^-\setminus S_1^+ \mbox{ and }
    P_{X_n, x_0}P_{Y_n, X_n}\xi(y)\in S_0^-\\
& \hspace{-8.5cm} \mbox{or if: } P_{X_n, x_0}\xi(x)\in S_1^+ \mbox{ and }
    P_{X_n, x_0}P_{Y_n, X_n}\xi(y)\in S_0^-\setminus S_0^+\\
& \hspace{-8.5cm} \mbox{or if: } P_{X_n, x_0}\xi(x)  \mbox{ not defined ~ and }
    P_{X_n, x_0}P_{Y_n, X_n}\xi(y)\in S_1^-\setminus S_1^+\\
Id_{T_y\M} & \hspace{-8cm} \mbox{if } P_{X_n, x_0}P_{Y_n, X_n}\xi(y) \mbox{ not defined.}
\end{array}\right.
\end{equation}
We observe that $Q^{X_n}$ is Borel-regular and that each
$Q^{X_n}(y)\in SO(T_y\M)$ satisfies the property (\ref{Qprop}) with $e=P_{X_n, y}P_{x_0, X_n}e_1$.

\smallskip

Define now the random variable:
$$\tau^{\epsilon, x_0, y_0} \doteq \min\big\{ d_\M (X_n, Y_n)\leq 3\epsilon
~\mbox{ or } ~ d_\M (X_n, Y_n)\geq r ~\mbox{ or } ~X_n\not\in B_\M(x_0, r)\big\},$$
which is a stopping time relative to the filtration
$\{\F_n\}_{n=0}^\infty$. In particular $\nu(\tau<\infty)=1$, as in view of
the definitions (\ref{processX}), (\ref{processY}) and the assumed smallness
$\epsilon$ it follows that:
\begin{equation*}
\begin{split}
\nu_1\big(&X_{n+1}= Exp_{X_n}\big(\epsilon P_{x_0, X_n} w_{n+1}
\big)\big)>\frac{1}{2} \\ & \mbox{ and } ~\nu_1\big(Y_{n+1}=Exp_{Y_{n}}
\big(\epsilon P_{X_n, Y_n} \mbox{Refl}_n (P_{x_0, X_n} w_{n+1}) \big)
\big)>\frac{1}{2} \qquad \mbox{for } n<\tau.
\end{split}
\end{equation*}
Similarly to Proposition \ref{process_good}, we apply the
change of variable $P_{X_n, x_0}P_{Y_n, X_n}$ to conclude:
\begin{equation}\label{sei}
\E \big(f\circ Y_{n+1}\mid \F_n \big) = \left\{\begin{array}{ll}
   {\bar{\mathcal{A}}}_\epsilon f(Y_n) & \mbox{if } n<\tau\\
f(Y_n) & \mbox{if } n\geq\tau. \end{array}\right.
\end{equation}

\smallskip

{\bf 2.} We now observe that the sequence of random variables:
$$\big\{M_n\doteq |f\circ X_{\tau\wedge n} - f\circ Y_{\tau\wedge n} |
+ 2\|\bar\A_\epsilon f - f\|_{L^\infty(B_\M(x_0, 2r))}\cdot (\tau\wedge n)\big\}_{n=0}^\infty$$
is a submartingale with respect to the filtration
$\{\F_n\}_{n=0}^\infty$, in virtue of the following bound:
\begin{equation*}
\begin{split}
\E\big(|f\circ &X_{n+1} - f\circ Y_{n+1} | - |f\circ X_{n} - f\circ
Y_{n} | ~ \mid \F_n\big)  \\ & \geq \big|\E\big(f\circ X_{n+1} -
f\circ Y_{n+1} \mid \F_n\big)\big| - |f\circ X_{n} - f\circ Y_{n} | \\ & =  
\big|\bar\A_\epsilon f(X_{n}) - \bar\A_\epsilon f(Y_{n}) \big| - |f\circ X_{n} - f\circ Y_{n} | \\ & 
\geq -\big( |\bar\A_\epsilon f(X_{n}) - f(X_n)| + |\bar\A_\epsilon f(Y_{n}) - f(Y_n)|\big)
\\ & \geq -2 \|\bar\A_\epsilon f - f\|_{L^\infty(B_\M(x_0, 2r))},
\end{split}
\end{equation*}
valid on the event $\{\tau>n\}$ by Proposition \ref{process_good} and
(\ref{sei}). On the other hand, when $\{n\geq \tau\}$, it trivially
follows that: $\E\big(|f\circ X_{\tau\wedge(n+1)} - f\circ
Y_{\tau\wedge(n+1)} | - |f\circ X_{\tau\wedge n} - f\circ
Y_{\tau\wedge n} | ~ \mid \F_n\big)  = 0 \geq -2 \|\bar\A_\epsilon f - f\|_{L^\infty(B_\M(x_0, 2r))}$.
Consequently:
\begin{equation}\label{sette}
\begin{split}
|f(&x_0) - f(y_0)|  = \E[M_0]\leq \E[M_\tau] = \E\big[|f\circ X_\tau -
f\circ Y_\tau|\big] + 2\|\bar\A_\epsilon f - f\|_{L^\infty (B_\M(x_0, 2r))}\cdot
\E[\tau] \\ & \leq \int_{\{ d_\M  (X_\tau, Y_\tau)\leq 3\epsilon\}} |f(X_\tau) -
f(Y_\tau)|\;\mbox{d}\nu + 2\|f\|_{L^\infty (B_\M(x_0, 2r+3\epsilon))}\cdot
\nu\big( d_\M (X_\tau,Y_\tau)>3\epsilon\big) \\ & \quad + 2\|\bar\A_\epsilon f - f\|_{L^\infty (B_\M(x_0, 2r))}\cdot \E[\tau],
\end{split}
\end{equation}
by Doob's optional stopping theorem. We further write:
\begin{equation}\label{otto}
\nu\big( d_\M (X_\tau,Y_\tau)>3\epsilon\big) \leq \nu\big( d_\M (X_\tau, Y_\tau)\geq r\big) 
+ \nu\big( d_\M (X_\tau, x_0)\geq r\big).
\end{equation}
In the remaining part of the proof, we will estimate the three
quantities:
\begin{equation}\label{otto2}
\nu\big( d_\M (X_\tau, Y_\tau)\geq r\big), \qquad \nu\big( d_\M (X_\tau, x_0)\geq r\big),
\qquad \E[\tau],
\end{equation}
and derive the almost Lipschitz estimate claimed in the Theorem, from (\ref{sette}).

\smallskip

{\bf 3.} We start by estimating the following conditional expectation,
on the event $\{\tau>n\}$:
\begin{equation}\label{nove}
\begin{split}
\E\big(& d_\M (X_{n+1}, Y_{n+1})^2 -  d_\M (X_n, Y_n)^2\mid\F_n\big)  
\\  = & ~\big(1 -\epsilon\max\big\{|\nabla (\log\rho)(X_n)|_{X_n}, |\nabla (\log\rho)(Y_n)|_{Y_n}\big\}\big)\cdot
\\ & \quad \cdot \int_{B(0,1)}\eta(|w|) \Big( d_\M \big(Exp_{X_n}(\epsilon w),
Exp_{Y_n}(\epsilon P_{X_n, Y_n}\mbox{Refl}_n w)\big)^2 -  d_\M (X_n,
Y_n)^2\Big)\dw \\ & + \epsilon\|\nabla (\log
\rho)\|_{L^\infty(\M)}\cdot \int_{B(0,1)^2}\big|  d_\M (Exp_{X_n}(\epsilon
q_1),  Exp_{Y_n}(\epsilon q_2))^2 -   d_\M (X_n, Y_n)^2\big|\;\mbox{d}\nu_1
\end{split}
\end{equation}
where both integrations on $B(0,1)\subset T_{X_n}\M$ are written
in normal coordinates centered at $X_n$. In the second integrand
above, vectors $q_1\in T_{X_n}\M$, $q_2\in T_{Y_n}\M$
satisfy $|q_1|_{X_n}, |q_2|_{Y_n}\leq 1$. Thus, it follows by (\ref{tre}) that the error quantity in (\ref{nove}) is bounded by:
$$\|\nabla (\log\rho)\|_{L^\infty (B_\M(x_0, 2r))}\cdot \mathcal{O}(\epsilon^2
 d_\M (X_n, Y_n) + \epsilon^3) \leq
 \|\nabla(\log\rho)\|_{L^\infty(\M)}\cdot \mathcal{O}(\epsilon^2 r + \epsilon^3).$$
The first term in the right hand side of
(\ref{nove}) may be estimated through
applying Lemma \ref{LCQ} to $x=X_n$, $y=Y_n$, $s=\epsilon$, and the
corresponding variation vectors $w\in T_{X_n}\M$ and $P_{X_n, Y_n}(\mbox{Refl}_nw)\in T_{Y_n}\M$. We obtain:
\begin{equation*}
\begin{split}
 d_\M \big(&Exp_{X_n}(\epsilon w),
Exp_{Y_n}(\epsilon P_{X_n, Y_n}\mbox{Refl}_n w)\big)^2 -  d_\M (X_n,
Y_n)^2 \\ & = 2\epsilon \big\langle \mbox{Refl}_n w - w, Exp_{X_n}^{-1}(Y_n)\big\rangle_{X_n}
+\epsilon^2 |\mbox{Refl}_n w - w|^2_{X_n} + \mathcal{O}
(\epsilon^2 r^2 + \epsilon^4) \\ & = -4\epsilon \big\langle w, Exp_{X_n}^{-1}(Y_n)\big\rangle_{X_n}
+ 4\epsilon^2 \Big\langle w, \frac{Exp_{X_n}^{-1}(Y_n)}{|Exp_{X_n}^{-1}(Y_n)|}\Big\rangle_{X_n}^2 + \mathcal{O}
(\epsilon^2 r^2 + \epsilon^4),
\end{split}
\end{equation*}
and thus (\ref{nove}) becomes, provided that $r$ and $\epsilon$ are sufficiently small:
\begin{equation}\label{dieci}
\begin{split} 
&\E\big( d_\M (X_{n+1}, Y_{n+1})^2 -  d_\M (X_n, Y_n)^2\mid\F_n\big)  
\\ & = (1-\mathcal{O}(\epsilon)) \cdot
\Big(4\epsilon^2\int_{B(0,1)}\eta(|w|) \Big\langle w,
\frac{Exp_{X_n}^{-1}(Y_n)}{|Exp_{X_n}^{-1}(Y_n)|}\Big\rangle^2\dw + \mathcal{O} (\epsilon^2 r^2 +
\epsilon^4)\Big) \\ & \quad + \mathcal{O}(\epsilon^2 r + \epsilon^3) \\ &
= 4\epsilon^2\sigma_\eta + \mathcal{O} (\epsilon^2 r^2 + \epsilon^3)
\geq 3\epsilon^2\sigma_\eta.
\end{split}
\end{equation}

\smallskip

{\bf 4.} We continue by estimating another conditional expectation on the event $\{\tau>n\}$:
\begin{equation}\label{undici}
\begin{split}
\E\big(& d_\M (X_{n+1}, Y_{n+1}) -  d_\M (X_n, Y_n)\mid\F_n\big)  
\\  \leq & ~\big(1 -\epsilon\max\big\{|\nabla (\log\rho)(X_n)|_{X_n}, |\nabla (\log\rho)(Y_n)|_{Y_n}\big\}\big)\cdot
\\ & \quad \cdot \int_{B(0,1)}\eta(|w|) \Big|  d_\M \big(Exp_{X_n}(\epsilon w),
Exp_{Y_n}(\epsilon P_{X_n, Y_n}\mbox{Refl}_n w)\big) -  d_\M (X_n, Y_n)\Big|\dw \\ & + \Big| 
|\nabla (\log\rho)(X_n)|_{X_n} - |\nabla (\log\rho)(Y_n)|_{Y_n}\Big|
\cdot \mathcal{O}(\epsilon^2)
\\ & + \epsilon\min\big\{|\nabla (\log\rho)(X_n)|_{X_n}, |\nabla (\log\rho)(Y_n)|_{Y_n}\big\}\cdot 
\\ & \quad \cdot \int_{B(0,1)} \Big| d_\M \big(Exp_{X_n}(\epsilon Q(X_n)\bar
w) , Exp_{Y_n}(\epsilon Q^{X_n}(Y_n) P_{X_n, Y_n}\bar w)\big) -
 d_\M (X_n, Y_n)\Big|\mbox{d}\nu_1(\bar w) \\ & \doteq A+ \mathcal{O}(\epsilon^2 r) + B,
\end{split}
\end{equation}
where the integration on $B(0,1)\subset T_{X_n}\M$ is
written in the normal coordinates centered at $X_n$. 

To estimate the term $A$, we first rewrite the expansion (\ref{tre}) in the following form:
$$L(\epsilon) = |t_0 z + \epsilon(P_{y,x} w -v)|^2 + \mathcal{O}(\epsilon^2t_0^2+\epsilon^4).$$
When $t_0>3\epsilon$ and $|v|_x, |w|_y\leq 1$, which imply: 
$|t_0z + \epsilon(P_{y,x}w - v)|\geq t_0-2\epsilon \geq \frac{t_0}{3}$, Taylor
expanding the square root function yields:
\begin{equation*}
\begin{split}
L(\epsilon)^{1/2} & = |t_0z + \epsilon(P_{y,x}w - v)| + \frac{1}{2 |t_0z
  + \epsilon(P_{y,x}w - v)|}\mathcal{O}(\epsilon^2 t_0^2 + \epsilon^4)
\\ & = |t_0z + \epsilon(P_{y,x}w - v)| + \mathcal{O}(\epsilon^2 t_0 + \epsilon^3).
\end{split}
\end{equation*}
Hence, for all $w\in B(0,1)\subset T_{X_n}\M$ we obtain:
\begin{equation*}
\begin{split}
 d_\M &\big(Exp_{X_n}(\epsilon w),
 Exp_{Y_n}(\epsilon P_{X_n, Y_n}\mbox{Refl}_n w)\big) -  d_\M (X_n, Y_n) \\ & = L(\epsilon)^{1/2} - |t_0z| 
\\ & = \Big| Exp^{-1}_{X_n}(Y_n) - \frac{2\epsilon\langle w,
  Exp^{-1}_{X_n}(Y_n) \rangle}{ d_\M (X_n, Y_n)^2} \cdot
  Exp^{-1}_{X_n}(Y_n) \Big| - |Exp^{-1}_{X_n}(Y_n)| 
+ \mathcal{O}(\epsilon^2 r + \epsilon^3).
\end{split}
\end{equation*}
Using once more the fact that:
$\epsilon|\mbox{Refl}_n w - w|\leq 2\epsilon |w|<2\epsilon < |Exp^{-1}_{X_n}(Y_n)|, $
we conclude that the the difference of the first two terms in the
right hand side above is symmetric in $w$, and hence integrates to $0$ on $B(0,1)$. This
yields the bound on $A$:
\begin{equation}\label{dodici}
A\leq \mathcal{O}(\epsilon^2 r + \epsilon^3).
\end{equation}

\smallskip

{\bf 5.} Similarly, we now estimate the quantity $B$ in (\ref{undici}) by:
\begin{equation*}
\begin{split}
B  \leq  &\;\epsilon\min\big\{|\nabla (\log\rho)(X_n)|_{X_n}, |\nabla (\log\rho)(Y_n)|_{Y_n}\big\}\cdot 
\\ & \quad \cdot \int_{B(0,1)} \Big| \big|Exp_{X_n}^{-1}(Y_n) +
\epsilon \big(P_{Y_n, X_n} Q^{X_n}(Y_n) P_{X_n, Y_n}\bar w - Q(X_n)\bar w\big)\big| 
\\ & \qquad\qquad\qquad \qquad\qquad\qquad \qquad\qquad\qquad
\qquad\qquad\qquad - |Exp_{X_n}^{-1}(Y_n)|\Big| 
+\mathcal{O}(\epsilon^2)\;\mbox{d}\nu_1 (\bar w) \\  \leq & \;\epsilon^2
\min\big\{|\nabla (\log\rho)(X_n)|_{X_n}, |\nabla (\log\rho)(Y_n)|_{Y_n}\big\}\cdot 
\\ & \quad \cdot \sup_{\bar w\in B(0,1)\subset T_{X_n}\M} \Big|
P_{Y_n, X_n} Q^{X_n}(Y_n) P_{X_n, Y_n}\bar w - Q(X_n)\bar w\Big|_{X_n} + \mathcal{O}(\epsilon^3).
\end{split}
\end{equation*}
Clearly, the supremum in the above expression is bounded by $2$. This implies:
$$B\leq \frac{1}{2}\epsilon^2\sigma_\eta + \mathcal{O}(\epsilon^3) \qquad
\mbox{when } \min\big\{|\nabla (\log\rho)(X_n)|_{X_n}, |\nabla
(\log\rho)(Y_n)|_{Y_n}\big\}\leq \frac{\sigma_\eta}{4}.$$

In the opposite case, when $ \min\big\{|\nabla (\log\rho)(X_n)|_{X_n}, |\nabla
(\log\rho)(Y_n)|_{Y_n}\big\}> \frac{\sigma_\eta}{4}$, we get:
\begin{equation}\label{dodici2}
\begin{split}
\big|P_{Y_n, X_n}\xi(Y_n) - \xi(X_n)\big| & \leq 2\cdot\frac{\big|P_{Y_n,
    X_n}\nabla (\log\rho)(Y_n) - \nabla (\log\rho)(X_n)\big|_{X_n}}{ \min\big\{|\nabla (\log\rho)(X_n)|_{X_n}, |\nabla
(\log\rho)(Y_n)|_{Y_n}\big\}} \\ & \leq \frac{2\cdot \mbox{Lip}_{\nabla(\log\rho)}}{ \min\big\{|\nabla (\log\rho)(X_n)|_{X_n}, |\nabla
(\log\rho)(Y_n)|_{Y_n}\big\}} d_\M (X_n, Y_n),
\end{split}
\end{equation}
where $\mbox{Lip}_{\nabla(\log\rho)}$ denotes the Lipschitz constant of $\nabla(\log\rho)$ on $\M$.
In particular, for $r$ sufficiently small there holds: $\big|P_{Y_n, X_n}\xi(Y_n) - \xi(X_n)\big| \leq
\frac{8\cdot \mbox{Lip}_{\nabla(\log\rho)}}{\sigma_\eta} r\ll 1$. In
conclusion, either $P_{X_n, x_0}\xi(X_n)\in S_1^+$ and $P_{X_n,
  x_0}P_{Y_n, X_n}\xi(Y_n)\in S_0^+$ so that:
\begin{equation*}
\begin{split} 
P_{Y_n, X_n}Q^{X_n}(Y_n)&P_{X_n, Y_n}\bar w - Q(X_n)\bar w \\ & = P_{X_0,
  X_n}\Big( \bar Q\big(P_{X_n, x_0}P_{Y_n, X_n}\xi(Y_n)\big) - \bar
Q\big(P_{X_n, x_0}\xi(X_n)\big)\Big) P_{X_n, x_0} \bar w,
\end{split}
\end{equation*}
or else $P_{X_n, x_0}\xi(X_n)\in S_1^-\setminus S_1^+$ and $P_{X_n,
  x_0}P_{Y_n, X_n}\xi(Y_n)\in S_0^-$, so that:
\begin{equation*}
\begin{split}
P_{Y_n, X_n}Q^{X_n}(Y_n)&P_{X_n, Y_n}\bar w - Q(X_n)\bar w \\ & = P_{X_0,
  X_n}\circ Q_0^T\Big( \bar Q\big(Q_0\cdot P_{X_n, x_0}P_{Y_n, X_n}\xi(Y_n)\big) - \bar
Q\big(Q_0\cdot P_{X_n, x_0}\xi(X_n)\big)\Big) P_{X_n, x_0} \bar w.
\end{split}
\end{equation*}
In both cases, we obtain by (\ref{dodici2}):
\begin{equation*}
\begin{split}
\sup_{\bar w\in B(0,1)\subset T_{X_n}\M}\big|&P_{Y_n,
  X_n}Q^{X_n}(Y_n)P_{X_n, Y_n}\bar w - Q(X_n)\bar w\big|  \leq
\mbox{Lip}_{\bar Q} \cdot \big|P_{Y_n, X_n}\xi(Y_n) -
\xi(X_n)\big|_{X_n}\\ &\qquad\qquad\qquad \leq \frac{2\cdot
  \mbox{Lip}_{\nabla(\log\rho)}\cdot\mbox{Lip}_{\bar Q}}{ \min\big\{|\nabla (\log\rho)(X_n)|_{X_n}, |\nabla
(\log\rho)(Y_n)|_{Y_n}\big\}}r,
\end{split}
\end{equation*}
which implies:
$$B\leq \mathcal{O}(\epsilon^2 r + \epsilon^3) \qquad
\mbox{when } \min\big\{|\nabla (\log\rho)(X_n)|_{X_n}, |\nabla
(\log\rho)(Y_n)|_{Y_n}\big\}> \frac{\sigma_\eta}{4}.$$

In conclusion, it follows that:
\begin{equation}\label{tredici}
B\leq \frac{1}{2}\epsilon^2\sigma_\eta + \mathcal{O}(\epsilon^2 r +\epsilon^3).
\end{equation}

\smallskip

{\bf 6.} The estimates (\ref{undici}), (\ref{dodici}), (\ref{tredici}) imply that on the event $\{\tau>n\}$:
\begin{equation} \label{tredici2}
\E\big( d_\M (X_{n+1}, Y_{n+1}) -  d_\M (X_n, Y_n)\mid\F_n\big) \leq
\frac{1}{2}\epsilon^2\sigma_\eta + \mathcal{O}(\epsilon^2 r +\epsilon^3).
\end{equation}
We are now ready to estimate the quantities in (\ref{otto2}). Firstly,
in virtue of (\ref{dieci}) and (\ref{tredici2}), we obtain the
following estimate, valid on $\{\tau>n\}$:
\begin{equation*}
\begin{split}
\E\Big( d_\M (X_{n+1}, &Y_{n+1}) - \frac{1}{2r}  d_\M (X_{n+1},
Y_{n+1})^2\mid\F_n\Big) \\ & \leq  d_\M (X_{n}, Y_{n}) - \frac{1}{2r}  d_\M (X_{n}, Y_{n})^2 -
\frac{\epsilon^2\sigma_\eta}{2r},
\end{split}
\end{equation*}
because $\frac{1}{2}\epsilon^2\sigma_\eta +\mathcal{O}(\epsilon^2 r) -
\frac{3\sigma_\eta\epsilon^2}{2r}\leq -\frac{\epsilon^2\sigma_\eta}{2r}$
if only $r$ is sufficiently small. Consequently, the following
sequence of random variables:
$$\Big\{\bar M_n\doteq  d_\M (X_{\tau\wedge n}, Y_{\tau\wedge n}) -
\frac{1}{r} d_\M (X_{\tau\wedge n}, Y_{\tau\wedge n})^2 +
\frac{\sigma_\eta\epsilon^2}{2r}(\tau\wedge n)\Big\}_{n=0}^\infty$$
is a supermartingale with respect to the filtration
$\{\F_n\}_{n=0}^\infty$. By an application of Doob's optional stopping theorem, we infer that:
\begin{equation*}
\begin{split}
\E\Big[ d_\M (X_\tau, &Y_\tau) - \frac{1}{2r} d_\M (X_\tau, Y_\tau)^2\Big]
+ \frac{\sigma_\eta\epsilon^2}{2r}\E[\tau] \\ & \quad = \E[\bar M_\tau] \leq
\E[\bar M_0] =  d_\M (x_0, y_0) - \frac{1}{2r} d_\M (x_0, y_0)^2\leq 
 d_\M (x_0, y_0).
\end{split}
\end{equation*} 
Since for $\epsilon$ sufficiently small there holds $ d_\M (X_\tau,
Y_\tau) \leq\frac{4r}{3}$, it further follows that: $ d_\M (X_\tau,
Y_\tau) - \frac{1}{2r} d_\M (X_\tau, Y_\tau)^2\geq
\frac{1}{3} d_\M (X_\tau, Y_\tau)$, hence the above displayed inequality yields:
\begin{equation}\label{quattordici}
\frac{1}{3}\E\big[ d_\M (X_\tau, Y_\tau)\big] +
\frac{\sigma_\eta\epsilon^2}{2r}\E[\tau]\leq  d_\M (x_0, y_0).
\end{equation}

In particular, we get:
\begin{equation}\label{quindici}
\E[\tau]\leq \frac{2r}{\sigma_\eta\epsilon^2}  d_\M (x_0, y_0)
\end{equation}
and also, via Markov's inequality:
\begin{equation*}
\nu\big( d_\M (X_\tau, Y_\tau)\geq r\big)\leq \frac{\E[ d_\M (X_\tau,
  Y_\tau)]}{r}\leq \frac{3}{r}  d_\M (x_0, y_0).
\end{equation*}

Finally, Lemma \ref{exitfromr} and (\ref{quindici}) give:
\begin{equation*}
\nu\big( d_\M (X_\tau, x_0)\geq r\big)\leq \frac{\E[ d_\M (X_\tau,
  x_0)^2]}{r^2}\leq \frac{3m}{r}  d_\M (x_0, y_0).
\end{equation*}
Recalling (\ref{sette}) and (\ref{otto}), we conclude:
\begin{equation*}
\begin{split}
|f(x_0) - f(y_0)|\leq ~& \sup \Big\{|f(x) - f(y)|;~ x\in B_\M(x_0, r+\epsilon) ~
 d_\M (x,y)\leq 3\epsilon\Big\} \\ & + \frac{6(m+1)}{r}\|f\|_{L^\infty(B_\M(x_0, 2r+3\epsilon))}
 d_\M (x_0, y_0) \\ & + \frac{4r}{\sigma_\eta} \cdot \frac{\|\bar\A_\epsilon f -
  f\|_{L^\infty(B_\M(x_0, 2r))}}{\epsilon^2} d_\M (x_0, y_0),
\end{split}
\end{equation*} 
as claimed.
\end{proof}

\section{The second approximate Lipschitz estimate}\label{sec_second}

In this section, we work under hypothesis (\ref{H2}) and develop a complementary estimate to
Theorem \ref{Thestimate1}.
Additionally, we assume there exists an open set $S\subset B(0,1)$
and $c>0$ with:
\begin{equation}\label{asuS}
\eta(|w|)\geq c \qquad \mbox{for all } \;w\in S.
\end{equation}
Clearly, $S$ can be taken to be rotationally invariant, for example:
$S=B(0,\delta_1)$ or $S=B(0,\delta_2)\setminus \bar B(0, \delta_1)$ where $0<\delta_1<\delta_2<1$. The condition \eqref{asuS} is a very natural assumption for applications and thus sufficient for our main goal in this paper. However, we remark that it can be relaxed as we discuss in Remark \ref{rem:91}.

\begin{theorem}\label{th2}
There exists a constant $\theta\in (0,1)$ depending only on $\eta$,
such that the following holds if only $\epsilon\ll 1$. Let $f:\M\to\R$ be a bounded, Borel
function. Then for every $x_0, y_0\in\M$ satisfying $ d_\M (x_0,
y_0)\leq 3\epsilon$ there holds:
\begin{equation*}
\begin{split}
|f(x_0) - f(y_0)|\leq ~& \theta \cdot \sup \Big\{|f(x) - f(y)|;~ x\in B_\M(x_0, \bar N\epsilon), ~
 d_\M (x,y)\leq 5\epsilon\Big\} \\ & + \bar N\epsilon \|\nabla
(\log\rho)\|_{L^\infty(\M)}\cdot \|f\|_{L^\infty(B_\M(x_0, \bar N\epsilon))} + C\epsilon^2\|f\|_{L^\infty(B_\M(x_0, \bar N\epsilon))} 
\\ & + \bar N\|\bar\A_\epsilon f -  f\|_{L^\infty(B_\M(x_0, \bar N\epsilon))}.
\end{split}
\end{equation*} 
The constant $C$ above depends on $(\M, g)$ and $\eta$, whereas $\bar
N$ depends only on $\eta$.
\end{theorem}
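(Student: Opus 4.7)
The plan is to prove Theorem \ref{th2} by a probabilistic coupling argument tailored to the short-range regime $d_\M(x_0,y_0)\leq 3\epsilon$, in the spirit of the proof of Theorem \ref{Thestimate1} but now designed to extract a contraction factor $\theta<1$. At this range, the one-step supports $B_\M(x_0,\epsilon)$ and $B_\M(y_0,\epsilon)$ need not overlap (their centers may be at distance $>2\epsilon$), so no coalescence of the two walks can occur in a single step via reflection coupling alone. Instead, I would run coupled walks for a bounded number $\bar N$ of steps (with $\bar N$ depending only on $\eta$) and exploit the lower bound $\eta\geq c$ on $S$ from (\ref{asuS}) to guarantee a uniformly positive probability of \emph{exact} coincidence before time $\bar N$.

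I would construct the coupled pair $(X_n,Y_n)$ starting at $(x_0,y_0)$ via the biased-walk formalism of Section \ref{sec_77}, taking the freezing radius $r=\bar N\epsilon$, so that both walks remain in $B_\M(x_0,\bar N\epsilon)$ throughout (each step has size at most $\epsilon$). On the symmetric part of the step I would employ a total-variation coupling: as soon as $d_\M(X_n,Y_n)\leq 2\epsilon$ (which happens with uniformly positive probability after an initial reflection-coupling phase of a few steps, by the same $L''(s)>0$ computation as in the proof of Theorem \ref{Thestimate1}), the two transition kernels $w\mapsto \mathrm{Exp}_{X_n}(\epsilon P_{x_0,X_n}w)$ and $w\mapsto \mathrm{Exp}_{Y_n}(\epsilon P_{x_0,Y_n}w)$ have overlapping ranges, and the common mass of their densities is bounded below by a positive constant $p_0>0$ determined by $\eta$ via the set $S$ where $\eta\geq c$. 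This common mass can be realized as an event $\{X_{n+1}=Y_{n+1}\}$ without disturbing the marginal laws. On the drift part of the step I would use parallel coupling with the rotation field $Q^{X_n}$ of Section \ref{sec:NonLocalLipschitz}. With these choices, the probability of no coincidence by time $\bar N$ decays geometrically in $\bar N$, and can be driven below any prescribed threshold depending only on $\eta$.

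Next, I would apply the submartingale argument of the proof of Theorem \ref{Thestimate1}, based on Proposition \ref{process_good}, to the stopping time $\tau=\min\{n\geq 0:X_n=Y_n\}\wedge \bar N$. This yields
\begin{equation*}
|f(x_0)-f(y_0)|\leq \E\big[|f(X_\tau)-f(Y_\tau)|\big]+2\bar N\,\|\bar{\A}_\epsilon f-f\|_{L^\infty(B_\M(x_0,\bar N\epsilon))}+\mathrm{Err},
\end{equation*}
where $\mathrm{Err}$ collects two contributions: the imperfect coupling error on the drift step, where only parallel transport is used and which activates with probability $\epsilon|\nabla\log\rho|$, contributing $\bar N\epsilon\|\nabla\log\rho\|_{L^\infty}\|f\|_{L^\infty}$; and the Levi-Civit\`a curvature correction $C\epsilon^2\|f\|_{L^\infty}$ from Lemma \ref{LCQ} applied to the quadrilaterals at each step. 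On the event $\{X_\tau=Y_\tau\}$ the integrand vanishes. On its complement, both $X_\tau,Y_\tau\in B_\M(x_0,\bar N\epsilon)$, so a chain of at most $\bar N$ intermediate points in $B_\M(x_0,\bar N\epsilon)$ at pairwise distance $\leq 5\epsilon$ connects $X_\tau$ to $Y_\tau$, yielding $|f(X_\tau)-f(Y_\tau)|\leq \bar N\cdot\sup\{|f(x)-f(y)|:x\in B_\M(x_0,\bar N\epsilon),\,d_\M(x,y)\leq 5\epsilon\}$. Taking expectation and choosing $\bar N$ large enough that $\bar N\cdot(\text{no-coincidence probability})<1$ produces the required $\theta\in(0,1)$.

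The main obstacle I expect is the coupling construction in the first step: producing a joint law on the product of sample spaces of Section \ref{sec4.2} that simultaneously (a) reproduces the correct one-step marginals of each biased walk, (b) places positive uniform mass on the coincidence event once the walks are within distance $2\epsilon$, and (c) remains compatible with the reflection/drift split from Section \ref{sec:NonLocalLipschitz} so that the submartingale estimate still applies with a manageable error. The structural assumption (\ref{asuS}) is used precisely to lower-bound the overlap of the single-step kernels by a positive constant $p_0$ independent of $x_0$, $y_0$, $\epsilon$, which is what secures the contraction factor $\theta$ depending only on $\eta$.
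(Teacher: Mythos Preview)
Your proposal takes a genuinely different route from the paper. The paper's proof is a direct, one-step analytic argument rather than a multi-step random walk: it writes $|f(x_0)-f(y_0)|\le |\bar\A_\epsilon f(x_0)-\bar\A_\epsilon f(y_0)|+2\|\bar\A_\epsilon f-f\|$, strips off the drift (this is where the $\epsilon\|\nabla\log\rho\|\|f\|$ term appears), and is left with the integral $\int_{B(0,1)}\eta(|w|)\big(f(\mathrm{Exp}_{x_0}(\epsilon w))-f(\mathrm{Exp}_{y_0}(\epsilon P_{x_0,y_0}\mathrm{Refl}\,w))\big)\,dw$. When $d_\M(x_0,y_0)\le\epsilon/N$, Proposition~\ref{S_prop} shows that on the overlap set $S\cap\phi_\epsilon(S)$ the two integrands differ only by a change of variables with $O(\epsilon^2)$ Jacobian error, producing the $C\epsilon^2\|f\|$ term; the remaining mass is at most $1-\tfrac{c|S|}{2}$, giving the contraction. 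When $\epsilon/N<d_\M(x_0,y_0)\le 3\epsilon$, a slab argument shows that with weight $\alpha>0$ the distance decreases by $\epsilon/(4N)$; iterating at most $4(3N-1)$ times reaches the first case. Combining yields $\theta=1-\alpha^{4(3N-1)}\tfrac{c|S|}{2}$.

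Your probabilistic strategy has a real gap in the geometric-decay claim. A maximal coupling preserves marginals and puts mass $p_0$ on $\{X_{n+1}=Y_{n+1}\}$ when $d_\M(X_n,Y_n)\le 2\epsilon$, but conditional on \emph{non}-coincidence the residual laws are supported on disjoint sets and nothing prevents $d_\M(X_{n+1},Y_{n+1})$ from jumping to $4\epsilon$, where the next TV attempt has zero overlap. You cannot run TV coupling (for coincidence) and reflection coupling (for distance control) on the same step, so the assertion that non-coincidence probability is $\le(1-p_0)^{\bar N}$ is unsupported. This matters because on non-coincidence you chain $|f(X_\tau)-f(Y_\tau)|$ through $O(\bar N)$ hops of length $\le 5\epsilon$, so you need $P(\text{non-coinc})\cdot O(\bar N)<1$; merely $P(\text{non-coinc})<1$ is not enough. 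Separately, the error bookkeeping is off: the submartingale bound $|f(x_0)-f(y_0)|\le \E|f(X_\tau)-f(Y_\tau)|+2\bar N\|\bar\A_\epsilon f-f\|$ is \emph{exact} for any marginal-preserving coupling, so the $\bar N\epsilon\|\nabla\log\rho\|\|f\|$ and $C\epsilon^2\|f\|$ terms cannot arise as corrections to it; they must come from the coincidence analysis itself (drift steps preventing exact coincidence; curvature perturbing the kernel overlap), and your proposal does not trace this. The paper sidesteps both issues by never seeking exact coincidence: the kernel overlap is exploited in a single averaging step, and the iteration is purely analytic.
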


The proof pursues the argument in the local (normal) coordinates.
Given $x_0, y_0\in\M$ such that $ d_\M (x_0, y_0)\leq 3\epsilon\ll 1$,
consider the smooth diffeomorphism $\phi_{\epsilon, x_0, y_0}$ of
$B(0,1)\subset T_{x_0}\M$ onto its image in $T_{x_0}\M$:
\begin{equation}\label{fe}
\phi_{\epsilon, x_0, y_0} (w) =
\frac{1}{\epsilon}Exp_{x_0}^{-1}Exp_{y_0}\big(\epsilon P_{x_0, y_0}
w\big) \quad \mbox{ for all } w\in B(0,1).
\end{equation}

We start by the following observation:

\begin{proposition}\label{S_prop}
Let $S\subset B(0,1)\subset \R^m$ be a rotationally symmetric, open set of the
form: $S = B(0,\delta_2)\setminus \bar B(0,\delta_1)$ for some $0<\delta_1
< \delta_2$, and let $c>0$ satisfy (\ref{asuS}). Then there
exist a constant $\alpha>0$ and an integer $N>1$, such that:
\begin{itemize}
\item[(i)] $\displaystyle \alpha\doteq  \int_{\{\langle w, e_1\rangle \in
    (\frac{1}{4N}, \frac{1}{2N})\}}\eta(|w|)\dw \in (0,1)$,\vspace{1mm}
\item[(ii)] $|S\cap \phi_{\epsilon, x_0, y_0}(S)|> \frac{1}{2}|S|$,
  whenever $ d_\M (x_0, y_0)\leq\frac{\epsilon}{N}$ and $\epsilon\ll 1$.
\end{itemize}
\end{proposition}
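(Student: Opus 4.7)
I would decouple the two parts, first choosing $N$ large enough for (ii) and then checking that the resulting $\alpha$ automatically lies in $(0,1)$.

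For part (ii), the main idea is to derive a first-order expansion of the diffeomorphism $\phi=\phi_{\epsilon,x_0,y_0}$ around the ``diagonal'' $x_0=y_0$. Writing $y_0 = Exp_{x_0}(u)$ with $|u|_{x_0} = d_\M(x_0,y_0)\leq \epsilon/N$, I would Taylor expand the composition $Exp_{x_0}^{-1}\circ Exp_{y_0}(\epsilon P_{x_0,y_0}\,\cdot)$ in normal coordinates at $x_0$, using tools analogous to those in Lemma \ref{LCQ}. The claim is that, uniformly in $w\in B(0,1)$, $x_0,y_0$ with $d_\M(x_0,y_0)\leq\epsilon/N$, and $\epsilon\ll 1$,
\begin{equation*}
\phi_{\epsilon,x_0,y_0}(w) \;=\; w + \tfrac{u}{\epsilon} + \mathcal{O}(\epsilon),
\qquad \text{with } |u/\epsilon|_{x_0}\leq 1/N,
\end{equation*}
where $\mathcal{O}$ depends only on $(\M,g)$ thanks to compactness. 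Note that when $x_0=y_0$ the map is exactly the identity, so the shift term is essentially produced by the ``translation'' of the starting point along $u$, while the Jacobian remains $1+\mathcal{O}(\epsilon)$.

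The expansion above shows that $\phi_{\epsilon,x_0,y_0}(S)$ lies within $\mathcal{O}(\epsilon)$ (in the symmetric difference sense, using the bounded Jacobian and the smoothness of $\partial S$) of the translate $S + u/\epsilon$ of the rotationally symmetric annulus $S=B(0,\delta_2)\setminus \bar B(0,\delta_1)$. An elementary computation gives, for any vector $a\in\R^m$ with $|a|$ smaller than $(\delta_2-\delta_1)/2$,
\begin{equation*}
\bigl|S \triangle (S+a)\bigr| \;\leq\; C|a|\cdot \mathcal{H}^{m-1}(\partial S),
\end{equation*}
so that $|S\cap(S+a)|\geq |S|-C|a|$. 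Taking $a=u/\epsilon$ with $|a|\leq 1/N$, and then choosing $N$ sufficiently large so that $C/N<|S|/4$, and finally $\epsilon$ small enough so that the diffeomorphism-versus-translation error contributes less than $|S|/4$, yields $|S\cap \phi_{\epsilon,x_0,y_0}(S)|>|S|/2$.

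With such an $N$ fixed, part (i) is straightforward. The slab $T_N=\{w\in B(0,1):\langle w,e_1\rangle\in(1/(4N),1/(2N))\}$ is open and nonempty, and its intersection with $S$ has positive Lebesgue measure (since $S$ is an open annulus hitting any thin slab near the $e_1$-axis). Since $\eta\geq c>0$ on $S$ by \eqref{asuS}, we get $\alpha\geq c|T_N\cap S|>0$. On the other hand $\alpha\leq \int_{B(0,1)}\eta=1$, and the inequality is strict because $S\setminus T_N$ has positive measure and $\eta\geq c$ there, contributing positive mass outside $T_N$. So $\alpha\in(0,1)$.

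The main technical obstacle is the uniform first-order expansion of $\phi_{\epsilon,x_0,y_0}$. This requires a careful Taylor expansion of the combined exponential-and-parallel-transport structure; the variational field computations underlying Lemma \ref{LCQ} should provide precisely what is needed, and compactness of $\M$ upgrades the pointwise expansion to a uniform one. Once this expansion is in hand, the overlap estimate for near-translates of the annulus is elementary, and the choice of $N$ is driven solely by the ratio $C/|S|$.
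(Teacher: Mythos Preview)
Your proposal is correct and follows the same strategy as the paper: derive the expansion $\phi_{\epsilon,x_0,y_0}(w) = w + z + o(1)$ with $z=\frac{1}{\epsilon}Exp_{x_0}^{-1}(y_0)$ in normal coordinates (the paper writes out the parallel-transport and geodesic ODEs explicitly and obtains the sharper error $\mathcal{O}(\epsilon^2)$ together with $\nabla\phi=Id_m+\mathcal{O}(\epsilon^2)$, while your claimed $\mathcal{O}(\epsilon)$ already suffices), and then bound $|S\triangle\phi(S)|$ to conclude. The one minor difference in execution is that the paper controls this symmetric difference by approximating the indicator of $S$ with a smooth compactly supported $\psi$ and changing variables, whereas you use the direct perimeter estimate $|S\triangle(S+a)|\leq C|a|$ for the annulus plus a separate diffeomorphism-versus-translation error; both are valid for the smooth-boundary set $S$ at hand.
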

\begin{proof}
Condition (i) is evident in view of (\ref{asuS}), for any
$N>\frac{1}{\delta_2}$. To prove (ii), we write
$y_0=Exp_{x_0}^{-1}(\epsilon z)$ where $z\in B(0,1)\subset T_{x_0}\M$
and note that $\phi_{\epsilon, x_0, y_0}(w)=\gamma(1)$, where one solves:
\begin{equation*}
\left\{\begin{array}{ll}
\dot w^i(t) = -\epsilon\Gamma_{jk}^i(\epsilon t z) w^j(t) z^k, & t\in [0,1]\\
w(0) = w. &
\end{array}\right.
\quad \left\{\begin{array}{ll}
\ddot \gamma^i(s) = -\epsilon\Gamma_{jk}^i(\epsilon \gamma(s))
\dot\gamma^j(s)\dot\gamma^k(s), & s\in [0,1]\\
\dot\gamma(0) = w(1), \quad \gamma(0) = z. &
\end{array}\right.
\end{equation*}
Since the map:
$$\big(z\in B(0,1), w\in B(0,1), |\epsilon|\ll 1, s\in [0,1]\big)\mapsto \gamma(s)$$
is smooth, with all its derivatives bounded, and since $\Gamma_{jk}^i(0) = 0$, 
standard arguments in the theory of systems of ODEs imply that:
\begin{equation}\label{pomoc}
\phi_{\epsilon, x_0, y_0}(w)=z+w + \mathcal{O}(\epsilon^2),\qquad
\nabla \phi_{\epsilon, x_0, y_0}(w)= Id_m + \mathcal{O}(\epsilon^2).
\end{equation}
Here, the Landau symbol $\mathcal{O}$ above refers to any quantity that is
bounded together with its derivatives, with a bound depending only on $(\M, g)$.

For a given $\delta>0$, let $\psi\in\mathcal{C}_0^\infty(S)$ be such
that $\int |\mathbbm{1}_S - \psi|<\delta$. We then estimate the
symmetric difference of the sets $S$ and $\phi_{\epsilon, x_0,
  y_0}(S)$ by changing variable in view of (\ref{pomoc}):
\begin{equation}\label{pomoc2}
\begin{split}
|S \triangle &\phi_{\epsilon, x_0, y_0}(S)|  = \int |\mathbbm{1}_S - \mathbbm{1}_{\phi_{\epsilon, x_0, y_0}(S)}|\\
& \leq \int |\mathbbm{1}_S - \psi| + \int |\psi -
\psi\circ\phi_{\epsilon, x_0, y_0}^{-1}| + \int |\psi\circ\phi_\epsilon^{-1} - \mathbbm{1}_{\phi_{\epsilon, x_0, y_0}(S)}|
\\ & = \int |\mathbbm{1}_S - \psi| + \int |\psi \circ\phi_{\epsilon,
  x_0, y_0} - \psi|\cdot|\det\nabla \phi_{\epsilon, x_0, y_0}| 
+ \int |\mathbbm{1}_S - \psi|\cdot|\det\nabla \phi_{\epsilon, x_0, y_0}| \\ & 
\leq \delta + 2  \int |\psi \circ\phi_{\epsilon,
  x_0, y_0} - \psi| + 2\delta.
\end{split}
\end{equation}
By choosing an appropriate approximating function $\psi$, the integral
in the right hand side above may be bounded by:
$\frac{C}{\delta} \big(|z| + \mathcal{O}(\epsilon^2)\big)^2$. Taking
$3\delta< \frac{|S|}{4}$ and $|z|<\frac{1}{N}$ to the
effect that also $\frac{C}{\delta} \big(|z| +
\mathcal{O}(\epsilon^2)\big)^2< \frac{|S|}{4}$ if only $N$ is large enough
and $\epsilon\ll 1$, we conclude that $|S \; \triangle \; \phi_{\epsilon,
  x_0, y_0}(S)|<\frac{|S|}{2}$. Consequently, there follows (ii) because:
$$|S\cap \phi_{\epsilon, x_0, y_0}(S)| = |S| - |S\setminus \phi_{\epsilon, x_0,
  y_0}(S)| \geq |S| -  |S \; \triangle \; \phi_{\epsilon, x_0, y_0}(S)| \geq \frac{1}{2}|S|,$$
and because $ d_\M (x_0, y_0)=\epsilon|z|$.
\end{proof}

\medskip

\noindent {\bf A proof of Theorem \ref{th2}.}

{\bf 1.} Given $x_0, y_0\in \M$ such that $0< d_\M (x_0, y_0)\leq 3\epsilon$, we write:
\begin{equation}\label{bd1}
\begin{split}
\big|f(x_0)-f(y_0)\big| & \leq |\bar\A_\epsilon f(x_0) - f(x_0)| +
|\bar\A_\epsilon f(y_0) - f(y_0)| +  |\bar\A_\epsilon f(x_0) - \bar\A_\epsilon f(y_0)| \\ & \leq 
2  \|\bar\A_\epsilon f  - f\|_{L^\infty(B_\M(x_0, 4\epsilon))} + 2\epsilon \|\nabla
(\log\rho)\|_{L^\infty(\M)}\cdot\|f\|_{L^\infty(B_\M(x_0, 5\epsilon))}
\\ & \quad + \Big|\int_{B(0,1)} \eta(|w|)
\Big(f(Exp_{x_0}(\epsilon w)) - f(Exp_{y_0}(\epsilon P_{x_0, y_0} w))\Big)\dw\Big|.
\end{split}
\end{equation}
As usual, the integration on $B(0,1)\subset T_{x_0}\M  \simeq \R^m$ is written
in the normal coordinates centered at $x_0$.  We denote $\epsilon
z = Exp_{x_0}^{-1}(y_0)$ and express the integral in (\ref{bd1}) as:
\begin{equation}\label{bd1.5}
\int_{B(0,1)} \eta(|w|) \Big(f(Exp_{x_0}(\epsilon w)) - f(Exp_{y_0}(\epsilon P_{x_0, y_0}
\mbox{Refl} (w)))\Big)\dw,
\end{equation}
where $\mbox{Refl}:T_{x_0}\M\to T_{x_0}\M$ is the reflection
across the hyperplane perpendicular to $z$:
$$\mbox{Refl}(w) = w - \frac{2\langle w,
  Exp_{x_0}^{-1}(y_0)\rangle}{ d_\M (x_0, y_0)}
Exp_{x_0}^{-1}(y_0)\qquad \mbox{for } w\in T_{x_0}\M.$$
Our goal is now to estimate the quantity in (\ref{bd1.5}). Below, we consider
two cases towards closing the bound in (\ref{bd1}). Recall that $N, s,
\alpha$ are as in Proposition \ref{S_prop}. 

\smallskip

{\bf 2.} We first analyze the case $ d_\M (x_0, y_0)\leq\frac{\epsilon}{N}$.
We split the integral in (\ref{bd1.5}) on the following two
integration sets, recalling that $\phi_\epsilon=\phi_{\epsilon, x_0, y_0}$ is defined in (\ref{fe}):
\begin{equation}\label{bd2}
\begin{split}
& \int_{S\cap \phi_{\epsilon, x_0, y_0}(S)} \eta(|w|)
\Big(f(Exp_{x_0}(\epsilon w)) - f(Exp_{x_0}(\epsilon \phi_{\epsilon,
  x_0, y_0} \mbox{Refl} (w)))\Big)\dw \\
& + \int_{B(0,1)\setminus (S\cap \phi_{\epsilon, x_0, y_0}(S) )}\eta(|w|)
\Big(f(Exp_{x_0}(\epsilon w)) - f(Exp_{y_0}(\epsilon P_{x_0, y_0} \mbox{Refl} (w)))\Big)\dw.
\end{split}
\end{equation}
To deal with the first integral, we change the variable through the
piecewise $\mathcal{C}^1$ diffeomorphism $w\mapsto \phi_\epsilon
\mbox{Refl}\;w$, and recall (\ref{pomoc}) to compute:
\begin{equation*}
\begin{split}
& \int_{S\cap \phi_{\epsilon}(S)} f(Exp_{x_0}(\epsilon w)) - f(Exp_{x_0}(\epsilon \phi_{\epsilon} \mbox{Refl} (w)))\dw \\
& = \int_{S\cap \phi_{\epsilon}(S)} f(Exp_{x_0}(\epsilon w)) \dw -
\int_{\phi_{\epsilon}(S)\cap  \phi_{\epsilon}\mbox{\scriptsize
    Refl}\,\phi_\epsilon(S)} f(Exp_{x_0}(\epsilon w)) \cdot |\det\nabla
\phi_\epsilon|^{-1}\dw \\  
& \leq \int_{S\cap \phi_{\epsilon}(S)} f(Exp_{x_0}(\epsilon w)) \dw -
\int_{\phi_{\epsilon}(S)\cap \phi_{\epsilon}\mbox{\scriptsize
    Refl}\,\phi_\epsilon(S)} f(Exp_{x_0}(\epsilon w)) \dw 
+ \mathcal{O}(\epsilon^2) \|f\|_{L^\infty(B_\M(x_0, 2\epsilon))}\\  
&\leq \|f\|_{L^\infty(B_\M(x_0, 2\epsilon))}\Big(\mathcal{O}(\epsilon^2) +
\big|\big(S\cap\phi_\epsilon (S)\big) \triangle
\big(\phi_\epsilon (S)\cap \phi_{\epsilon}
\mbox{Refl}\,\phi_\epsilon(S)\big)\big| \Big) \\  
&\leq \|f\|_{L^\infty(B_\M(x_0, 2\epsilon))}\Big(\mathcal{O}(\epsilon^2)+ \big|S \triangle \phi_{\epsilon}
\mbox{Refl}\,\phi_\epsilon(S)\big| \Big)\leq \mathcal{O}(\epsilon^2)
\cdot \|f\|_{L^\infty(B_\M(x_0, 2\epsilon))}. 
\end{split}
\end{equation*}
The final bound above follows by using (\ref{pomoc}) in:
\begin{equation*}
\begin{split}
\phi_{\epsilon} \mbox{Refl} \,\phi_\epsilon(w) & = \frac{1}{\epsilon}
Exp_{x_0}^{-1}(y_0) + \mbox{Refl}\, \phi_\epsilon(w) +
\mathcal{O}(\epsilon^2) \\ & = \frac{1}{\epsilon}
Exp_{x_0}^{-1}(y_0) + \mbox{Refl}\Big(\frac{1}{\epsilon}
Exp_{x_0}^{-1}(y_0) + w\Big) + \mathcal{O}(\epsilon^2) = \mbox{Refl}\,
w + \mathcal{O}(\epsilon^2)
\end{split}
\end{equation*}
and applying the same argument as in (\ref{pomoc2}) with $\delta = \epsilon^2$.

\smallskip

In conclusion, we may use the splitting in (\ref{bd2}) to estimate:
\begin{equation*}
\begin{split}
\Big|\int_{B(0,1)} &\eta(|w|) \Big(f(Exp_{x_0}(\epsilon w)) -
f(Exp_{y_0}(\epsilon P_{x_0, y_0}\mbox{Refl} (w)))\Big)\dw \Big| \\ & \leq 
\Big|\int_{S\cap \phi_{\epsilon, x_0, y_0}(S)} (\eta(|w|)-c)\cdot
\Big(f(Exp_{x_0}(\epsilon w)) - f(Exp_{x_0}(\epsilon \phi_{\epsilon,
  x_0, y_0} \mbox{Refl} (w)))\Big)\dw \Big| \\ & \quad + 
\Big| \int_{B(0,1)\setminus (S\cap \phi_{\epsilon, x_0, y_0}(S) )}\eta(|w|)
\Big(f(Exp_{x_0}(\epsilon w)) - f(Exp_{y_0}(\epsilon P_{x_0, y_0}
\mbox{Refl} (w)))\Big)\dw\Big| \\ & \quad + \mathcal{O}(\epsilon^2) \cdot \|f\|_{L^\infty(B_\M(x_0, 2\epsilon))},
\end{split}
\end{equation*}
and further:
\begin{equation*}
\begin{split}
\Big|\int_{B(0,1)} &\eta(|w|) \Big(f(Exp_{x_0}(\epsilon w)) -
f(Exp_{y_0}(\epsilon P_{x_0, y_0}\mbox{Refl} (w)))\Big)\dw \Big| 
\\ & \leq \mathcal{O}(\epsilon^2) \|f\|_{L^\infty(B_\M(x_0, 2\epsilon))} + \Big(\int_{S\cap \phi_{\epsilon}(S)}
(\eta(|w|)-c)\dw +  \int_{B(0,1)\setminus (S\cap \phi_{\epsilon}(S)
  )}\eta(|w|)\dw\Big) \\ & \qquad \qquad\qquad \qquad \quad \cdot
\sup \Big\{|f(x) - f(y)|;~ x\in B_\M(x_0, \epsilon), ~  d_\M (x,y)\leq 3\epsilon\Big\}
\\ & \leq \mathcal{O}(\epsilon^2)\|f\|_{L^\infty(B_\M(x_0, 2\epsilon))} \\ & \quad + \big(1- c|S\cap \phi_{\epsilon}(S)|\big) \cdot
\sup \Big\{|f(x) - f(y)|;~ x\in B_\M(x_0, \epsilon), ~  d_\M (x,y)\leq 3\epsilon\Big\}
\\ & \leq \mathcal{O}(\epsilon^2) \|f\|_{L^\infty(B_\M(x_0,
  2\epsilon))} \\ & \quad +  \Big(1- \frac{c|S|}{2}\Big) \cdot
\sup \Big\{|f(x) - f(y)|;~ x\in B_\M(x_0, \epsilon), ~  d_\M (x,y)\leq 3\epsilon\Big\}
\end{split}
\end{equation*}
by Proposition \ref{S_prop} (ii) since $ d_\M (x_0,
y_0)\leq\frac{\epsilon}{N}$. Inserting the above estimate in (\ref{bd1}) yields:
\begin{equation}\label{bd5}
\begin{split}
\big|f(x_0)-f(y_0)\big| & \leq 
2  \|\bar\A_\epsilon f  - f\|_{L^\infty(B_\M(x_0, 4\epsilon))} +
2\epsilon \|\nabla (\log\rho)\|_{L^\infty(\M)}\cdot\|f\|_{L^\infty(B_\M(x_0, 5\epsilon))} 
\\ & \quad + \mathcal{O}(\epsilon^2) \|f\|_{L^\infty(B_\M(x_0, 2\epsilon))}  \\ & \quad + \Big(1- \frac{c|S|}{2}\Big) \cdot
\sup \Big\{|f(x) - f(y)|;~ x\in B_\M(x_0, \epsilon), ~  d_\M (x,y)\leq 3\epsilon\Big\}.
\end{split}
\end{equation}

\smallskip

{\bf 3.} In the remaining case $ d_\M (x_0, y_0)>\frac{\epsilon}{N}$, we
split the integral (\ref{bd1.5}) on the two sets:
\begin{equation}\label{bd3}
\begin{split}
& \Big|\int_{\{\langle w, \frac{z}{|z|}\rangle\in (\frac{1}{4N}, \frac{1}{2N})\}} \eta(|w|)
\Big(f(Exp_{x_0}(\epsilon w)) -  f(Exp_{y_0}(\epsilon P_{x_0, y_0} \mbox{Refl} (w)))\Big)\dw \Big| \\
& + \Big|\int_{B(0,1)\setminus \{\langle w, \frac{z}{|z|}\rangle\in (\frac{1}{4N}, \frac{1}{2N})\}}\eta(|w|)
\Big(f(Exp_{x_0}(\epsilon w)) - f(Exp_{y_0}(\epsilon P_{x_0, y_0}
\mbox{Refl} (w)))\Big)\dw\Big|\\ & \qquad\qquad  \leq \alpha \cdot 
\sup \Big\{|f(x) - f(y)|;~ x\in B_\M(x_0, \epsilon), ~  d_\M (x,y)\leq  d_\M (x_0,y_0) - \frac{\epsilon}{4N}\Big\} 
\\ & \qquad\qquad\quad + (1- \alpha) \cdot 
\sup \Big\{|f(x) - f(y)|;~ x\in B_\M(x_0, \epsilon), ~  d_\M (x,y)\leq 5\epsilon\Big\}.
\end{split}
\end{equation}
The last estimate above is a consequence of (\ref{tre}) as
follows. For any $w\in B(0,1)$ satisfying: $\langle w, \frac{z}{|z|}\rangle\in
(\frac{1}{4N}, \frac{1}{2N})$, there holds: 
$|z-\frac{2\langle w, z\rangle}{|z|^2}z| = \big| |z| - \frac{2\langle
  w, z\rangle}{|z|}\big| = \frac{ d_\M (x_0, y_0)}{\epsilon} -
2\langle w, \frac{z}{|z|}\rangle > 0$, because
$ d_\M (x_0, y_0) = |\epsilon z|$. Thus, we may
apply square root to the expansion in (\ref{tre}) and control the
error terms, for all $\epsilon$ sufficiently small:
\begin{equation*}
\begin{split}
 d_\M \big(Exp_{x_0}&(\epsilon w), Exp_{y_0}(\epsilon P_{x_0, y_0}\mbox{Refl}\, w)\big)
= \big|\epsilon z - \frac{2\epsilon \langle w, \epsilon z\rangle}{|\epsilon
  z|^2}\epsilon z\big| + \mathcal{O}(\epsilon^2) \\ & \leq \epsilon \Big| |z| - 
\frac{1}{2N} + \mathcal{O}(\epsilon^2) \leq \epsilon|z| - \frac{\epsilon}{4N}.
\end{split}
\end{equation*}
This proves (\ref{bd3}). Combining with (\ref{bd1}), we obtain:
\begin{equation}\label{eqn:aux4.4}
\begin{split}
\big|f(x_0)-&f(y_0)\big|  \leq  \alpha \cdot 
\max\big\{\kappa_1, \tilde\kappa_1\big\}
\\ & + (1- \alpha) \cdot 
\sup \Big\{|f(x) - f(y)|;~ x\in B_\M(x_0, \epsilon), ~  d_\M (x,y)\leq 5\epsilon\Big\}
\\ & + 2  \|\bar\A_\epsilon f  - f\|_{L^\infty(B_\M(x_0, 4\epsilon))}
+ 2\epsilon \|\nabla (\log\rho)\|_{L^\infty(\M)} \cdot\|f\|_{L^\infty(B_\M(x_0, 5\epsilon))},
\end{split}
\end{equation}
where for all  $i=1\ldots 4(3N-1)$ we have denoted:
\begin{equation*}
\begin{split}
& \kappa_i\doteq\sup \Big\{|f(x) - f(y)|;~ x\in B_\M(x_0, i\epsilon), ~
\frac{\epsilon}{N} < d_\M(x,y)\leq  d_\M (x_0,y_0) - \frac{i\epsilon}{4N}\Big\}, \\ 
& \tilde\kappa_i\doteq\sup \Big\{|f(x) - f(y)|;~ x\in B_\M(x_0, i\epsilon), ~
d_\M(x,y)\leq  \frac{\epsilon}{N}\Big\}. 
\end{split}
\end{equation*}
Observe that since $d_\M(x_0, y_0)\leq 3\epsilon$, there exists $i$ as
above for which $\tilde\kappa_i\geq \kappa_i$ (indeed, the set over
which the supremum is taken in the definition of $\kappa_{4(3N-1)}$ is
empty). Iterating $i$ times the expression (\ref{eqn:aux4.4}), we
arrive at:
\begin{equation}\label{bd4}
\begin{split}
\big|f(x_0)-f(y_0)\big|  \leq & \, \alpha^i \cdot 
\sup \Big\{|f(x) - f(y)|;~ x\in B_\M(x_0, i\epsilon), ~  d_\M (x,y)\leq \frac{\epsilon}{N}\Big\} 
\\ & + (1- \alpha^i) \cdot 
\sup \Big\{|f(x) - f(y)|;~ x\in B_\M(x_0, i\epsilon),  ~  d_\M (x,y)\leq 5\epsilon\Big\}
\\ & + \frac{2(1-\alpha^i)}{1-\alpha}  \|\bar\A_\epsilon f  - f\|_{L^\infty(B_\M(x_0, 4i\epsilon))} 
\\ & + \frac{2\epsilon (1-\alpha^i)}{1-\alpha} \|\nabla
(\log\rho)\|_{L^\infty (\M)}\cdot\|f\|_{L^\infty(B_\M(x_0, 5i\epsilon))}.
\end{split}
\end{equation}

\smallskip

{\bf 4.} Finally, we combine (\ref{bd5}) and (\ref{bd4}) to conclude the
estimate in the Theorem, in which the term: $\sup \Big\{|f(x) -
f(y)|;~ x\in B_\M(x_0, 12 N\epsilon), ~  d_\M (x,y)\leq 5\epsilon\Big\}$
is multiplied by the factor: 
$$\alpha^i \big(1-\frac{c|S|}{2}\big) + (1-\alpha^i) =
1- \frac{\alpha^i c |S|}{2} \leq 1- \frac{\alpha^{4(3N-1)} c |S|}{2} \doteq\theta.$$
Namely, we get:
\begin{equation*}
\begin{split}
|f(x_0) - f(y_0)|\leq ~& \theta \cdot \sup \Big\{|f(x) - f(y)|;~ x\in B_\M(x_0, 12N\epsilon), ~
 d_\M (x,y)\leq 5\epsilon\Big\} \\ & + 24 N\|\bar\A_\epsilon f -
 f\|_{L^\infty(B_\M(x_0, 48 N\epsilon))} \\ & + 24 N\epsilon \|\nabla
(\log\rho)\|_{L^\infty(\M)}\cdot \|f\|_{L^\infty(B_\M(x_0, 60
  N\epsilon))} + \mathcal{O}(\epsilon^2)\|f\|_{L^\infty(B_\M(x_0, 12 N\epsilon))}. 
\end{split}
\end{equation*}
The proof is complete.
\endproof

\section{Closing the bounds and proofs of Theorems \ref{th_main1} and
  \ref{th_main2}}\label{sec_closebound} 

In this section, we work under hypothesis (\ref{H2}) and the
additional property \ref{asuS}.
We will conclude the proofs of the local and global
approximate Lipschitz estimates in the continuum setting. The proof of
the global estimate is less involved, as it can directly utilize the contraction
property in Theorem \ref{th2}. For the local estimate, we needan extra
iteration argument.

\subsection{The global bound and a proof of Theorem \ref{th_main1}}

We first deduce a version of the approximate Lipschitz estimate in
Theorem \ref{th_main1}, involving the auxiliary averaging operator
$\bar\A_\epsilon$ rather than $\A_\epsilon$. This allows for a more
precise estimate, where the factors $C$ below depend only on the manifold $(\M, g)$ and
the radial weight function $\eta$, while the dependence on the drift field
$\rho$, occurring through $\|\nabla (\log \rho)\|_{L^\infty}$, is present
in just one specific error term. This observation is consistent with
the analysis of the pure Laplace-Beltrami operator case $\rho\equiv const$.

\begin{theorem}\label{Thlipschitz3}
Let $\epsilon\ll 1$. Then, for every bounded, Borel function $f:\M\to\R$ and
every $x_0, y_0\in\M$ we have the following estimate, where constants
$C$ depend only on $(\M, g)$ and $\eta$:
\begin{equation*}
\begin{split}
|f(x_0) - f(y_0)|\leq & \; C\Big(\|f\|_{L^\infty(\M)} +
\frac{\|\bar\A_\epsilon f- f\|_{L^\infty(\M)}}{\epsilon^2}\Big)\cdot  d_\M (x_0, y_0) \\ & + 
C\epsilon \Big((\|\nabla (\log \rho)\|_{L^\infty(\M)} + 1)\cdot
\|f\|_{L^\infty(\M)} + \frac{\|\bar\A_\epsilon f- f\|_{L^\infty(\M)}}{\epsilon^2}\Big).
\end{split}
\end{equation*}
\end{theorem}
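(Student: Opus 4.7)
}
The strategy is a fixed-point/contraction argument using the two complementary estimates of Theorems \ref{Thestimate1} and \ref{th2}. Introduce
\[
\Phi \doteq \sup_{x,y\in\M,\; d_\M(x,y)\leq 5\epsilon} |f(x)-f(y)|,
\qquad
E \doteq \big(\|\nabla(\log\rho)\|_{L^\infty(\M)}+1\big)\|f\|_{L^\infty(\M)} + \frac{\|\bar\A_\epsilon f - f\|_{L^\infty(\M)}}{\epsilon^2},
\]
and fix once and for all a constant $r_0\ll 1$, depending only on $(\M,g)$, $\eta$, $\rho$, satisfying the smallness requirement $\epsilon\ll r_0$ of Theorem \ref{Thestimate1}. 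The goal of the first phase is to prove the a priori bound $\Phi \leq C\epsilon E$, after which the general estimate follows by reducing any pair to either the near-diagonal regime or a distant pair treated trivially.

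For the contraction, first consider $x_0, y_0$ with $d_\M(x_0, y_0)\leq 3\epsilon$. Theorem \ref{th2} applied to this pair, after bounding every local $L^\infty$ norm by the global one and noting $\bar N\|\bar\A_\epsilon f - f\|_{L^\infty(\M)} = \bar N\epsilon^2 \cdot \frac{\|\bar\A_\epsilon f - f\|_{L^\infty(\M)}}{\epsilon^2}$, gives $|f(x_0)-f(y_0)| \leq \theta\,\Phi + C\epsilon E$. Next, for $d_\M(x_0, y_0)\in (3\epsilon, 5\epsilon]$, Theorem \ref{Thestimate1} applied with $r=r_0$ gives
\[
|f(x_0)-f(y_0)| \leq \sup_{d_\M(x,y)\leq 3\epsilon}|f(x)-f(y)| + \Big(\tfrac{6(m+1)}{r_0}\|f\|_{L^\infty(\M)} + \tfrac{4r_0}{\sigma_\eta}\tfrac{\|\bar\A_\epsilon f - f\|_{L^\infty(\M)}}{\epsilon^2}\Big) \cdot 5\epsilon.
\]
The first term on the right is itself $\leq \theta\Phi + C\epsilon E$ by the previous step, while the second is $\leq C\epsilon E$ since $r_0$ is fixed. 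Taking the supremum over all pairs with $d_\M(x_0,y_0)\leq 5\epsilon$ yields $\Phi \leq \theta\,\Phi + C\epsilon E$, and since $\theta\in(0,1)$ depends only on $\eta$, this rearranges to $\Phi \leq \frac{C}{1-\theta}\,\epsilon E$.

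Finally, to obtain the claimed bound for arbitrary $x_0, y_0\in\M$, split by distance. If $d_\M(x_0,y_0)\leq 5\epsilon$, the bound $|f(x_0)-f(y_0)|\leq \Phi \leq C\epsilon E$ is exactly the second (purely $\epsilon$-scale) error term in the target inequality. If $d_\M(x_0,y_0)\in (5\epsilon, r_0]$, apply Theorem \ref{Thestimate1} once more with $r=r_0$: the supremum term is again dominated by $\Phi \leq C\epsilon E$, and the linear-in-$d_\M(x_0,y_0)$ coefficient is exactly of the form $C(\|f\|_{L^\infty(\M)} + \epsilon^{-2}\|\bar\A_\epsilon f - f\|_{L^\infty(\M)})$ required. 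For $d_\M(x_0,y_0)>r_0$, use the trivial bound $|f(x_0)-f(y_0)|\leq 2\|f\|_{L^\infty(\M)} \leq \frac{2}{r_0}\|f\|_{L^\infty(\M)}\cdot d_\M(x_0,y_0)$, which is absorbed into the linear-in-$d_\M$ term of the conclusion. Assembling the three regimes gives the theorem.

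The main obstacle is the second paragraph: one must verify that \emph{all} the heterogeneous error contributions coming out of Theorems \ref{Thestimate1} and \ref{th2}---in particular the term $\bar N\|\bar\A_\epsilon f - f\|_{L^\infty}$ in \ref{th2}, which at first glance has the wrong scale---regroup into a single $C\epsilon E$ remainder, so that the inequality $\Phi \leq \theta\Phi + C\epsilon E$ actually closes with a factor strictly less than $1$ in front of $\Phi$. That the contraction constant $\theta$ from Theorem \ref{th2} depends only on $\eta$ (and in particular is independent of $\rho$ and of $\epsilon$) is what makes the fixed-point step possible.
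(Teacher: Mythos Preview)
Your proposal is correct and follows essentially the same route as the paper's proof: combine Theorems \ref{Thestimate1} and \ref{th2} to obtain a contraction inequality for the near-diagonal oscillation $\Phi$, solve $\Phi \leq \theta\Phi + C\epsilon E$ using $\theta<1$, then feed the resulting bound on $\Phi$ back into Theorem \ref{Thestimate1} for the Lipschitz estimate. Your case split $\{d_\M\leq 3\epsilon\}$, $\{3\epsilon < d_\M \leq 5\epsilon\}$, $\{5\epsilon < d_\M \leq r_0\}$, $\{d_\M > r_0\}$ is a slightly more explicit unpacking of what the paper compresses into the single combined inequality (\ref{bd6}) and the phrase ``by compactness of $\M$'', and your observation that $\bar N\|\bar\A_\epsilon f - f\|_{L^\infty} = \bar N\epsilon^2\cdot\epsilon^{-2}\|\bar\A_\epsilon f - f\|_{L^\infty}\leq C\epsilon E$ is exactly the rescaling needed.
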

\begin{proof}
By Theorems \ref{Thestimate1} and \ref{th2} we obtain:
\begin{equation}\label{bd6}
\begin{split}
|f(x_0) - f(&y_0)|\leq \theta\cdot \sup \Big\{|f(x) - f(y)|;~ x,y\in\M, ~  d_\M (x,y)\leq 5\epsilon\Big\}
\\ & + C\Big(\|f\|_{L^\infty(\M)} +
\frac{\|\bar\A_\epsilon f- f\|_{L^\infty}}{\epsilon^2}\Big)\cdot  d_\M (x_0, y_0) \\ & + 
C\Big(\epsilon\|\nabla (\log \rho)\|_{L^\infty(\M)}\cdot
\|f\|_{L^\infty(\M)} + \epsilon^2 \|f\|_{L^\infty(\M)}  + \|\bar\A_\epsilon f- f\|_{L^\infty(\M)}\Big),
\end{split}
\end{equation}
valid whenever $ d_\M (x_0, y_0)<r$ and for $\epsilon\ll 1$ small enough.
Taking the supremum over the set $\{x,y\in\M, ~  d_\M (x,y)\leq
5\epsilon\}$ in the left hand side above, and recalling that $\theta<1$, we arrive at:
\begin{equation*}
\begin{split}
\sup \Big\{|f(x) & - f(y)|;~ x,y\in\M, ~  d_\M (x,y)\leq 5\epsilon\Big\}
\\  \leq \, & C\epsilon \big(\|\nabla (\log \rho)\|_{L^\infty(\M)}+1\big)
\|f\|_{L^\infty(\M)} + C\frac{\|\bar\A_\epsilon f- f\|_{L^\infty(\M)}}{\epsilon}.
\end{split}
\end{equation*}
Inserting the above bound into (\ref{bd6}) achieves the proof when
$ d_\M (x_0, y_0)<r$. The general case and the global estimate follow by compactness of $\M$.
\end{proof}

\medskip

\noindent {\bf A proof of Theorem \ref{th_main1}.} The result follows
from Theorem \ref{Thlipschitz3}, upon replacing $\bar\A_\epsilon
f$ by $\A_\epsilon f$, invoking Lemma \ref{AepAepbar}, and recalling
that $\A_\epsilon f(x) - f(x) = -(\rho(x) +
\mathcal{O}(\epsilon^2))\epsilon^2 \Delta_\epsilon f(x)$.
\endproof

\subsection{The local bound and a proof of Theorem \ref{th_main2}}
We now present the interior counterpart of the previously established estimates.

\begin{theorem}\label{Thlipschitz3_2}
Let $\epsilon\ll r\ll 1$. Then, for every bounded, Borel function $f:\M\to\R$ and
every $x_0, y_0\in\M$ with $d_\M(x_0, y_0)<r$, there holds:
\begin{equation*}
\begin{split}
|f(x_0) - f(y_0)|\leq & \; C\Big(\frac{d_\M(x_0, y_0)}{r} +
\frac{\epsilon |\log\epsilon|}{r} \Big)\cdot \|f\|_{L^\infty(B_\M(x_0,
  3r))} \\ & + C\Big(r d_\M(x_0, y_0) + \frac{\epsilon r}{|\log\epsilon|}\Big)\cdot 
\frac{\|\bar\A_\epsilon f- f\|_{L^\infty(B_\M(x_0, 3r))}}{\epsilon^2}.
\end{split}
\end{equation*}
The constant $C$ depends on $(\M, g)$, $\eta$ and $\|\nabla (\log\rho)\|_{L^\infty(\M)}$.
\end{theorem}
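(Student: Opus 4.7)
The plan is to reduce Theorem \ref{Thlipschitz3_2} to bounding a localized oscillation
$\omega(R) \doteq \sup\{|f(x)-f(y)|; x,y \in B_\M(x_0, R), d_\M(x,y) \leq 5\epsilon\}$
for $R = r+\epsilon$, after which the theorem follows by combining with Theorem \ref{Thestimate1}. Indeed, for $d_\M(x_0,y_0)\in(3\epsilon, r)$, Theorem \ref{Thestimate1} directly produces the Lipschitz-in-$d_\M(x_0,y_0)$ contributions $\tfrac{C}{r}\|f\|d_\M(x_0,y_0)$ and $\tfrac{Cr}{\epsilon^2}\|\bar\A_\epsilon f-f\|d_\M(x_0,y_0)$ matching the target form, while its remaining term $\omega_{3\epsilon}^{B_\M(x_0,r+\epsilon)}\leq \omega(r+\epsilon)$ needs to be estimated. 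For $d_\M(x_0,y_0)\leq 3\epsilon$ we trivially have $|f(x_0)-f(y_0)|\leq \omega(r+\epsilon)$, since both target Lipschitz terms are nonnegative. Hence the whole problem reduces to obtaining the two remainder-term estimates on $\omega(r+\epsilon)$.

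The heart of the argument is an iterated one-step contraction for $\omega$, built from Theorems \ref{th2} and \ref{Thestimate1} applied at intermediate scales. Introduce an auxiliary radius $r^*\in(5\epsilon, r)$ to be chosen, and take any pair $(x,y)\in B_\M(x_0, R)$ with $d_\M(x,y)\leq 5\epsilon$, splitting into subcases. If $d_\M(x,y)\leq 3\epsilon$, Theorem \ref{th2} yields $|f(x)-f(y)|\leq \theta\,\omega(R+(\bar N+5)\epsilon) + \mathrm{err}_1(R)$, where $\mathrm{err}_1(R) \doteq C\epsilon\|f\|_{B_\M(x_0, R+\bar N\epsilon)} + C\|\bar\A_\epsilon f-f\|_{B_\M(x_0, R+\bar N\epsilon)}$. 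If instead $d_\M(x,y)\in(3\epsilon,5\epsilon]$, Theorem \ref{Thestimate1} applied with local radius $r^*$ produces a bound involving $\omega_{3\epsilon}^{B_\M(x,r^*+\epsilon)}$, which I in turn bound by one more application of Theorem \ref{th2}. Taking suprema over $(x,y)$ and using $B_\M(x,\cdot)\subset B_\M(x_0, R+\cdot)$ gives
\begin{equation*}
\omega(R) \leq \theta\,\omega(R+r^*+(\bar N+6)\epsilon) + \frac{C\epsilon}{r^*}\|f\|_{B_\M(x_0, R+2r^*+3\epsilon)} + \frac{Cr^*}{\epsilon}\|\bar\A_\epsilon f-f\|_{B_\M(x_0, R+2r^*)} + \mathrm{err}_1(R+r^*+\epsilon).
\end{equation*}

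Iterate this inequality $K$ times with the choices $K = \lceil |\log\epsilon|/|\log(1/\theta)|\rceil$ (so $\theta^K\leq \epsilon$) and $r^* = r/(3K)$, which in the regime $\epsilon|\log\epsilon|\ll r$ (implied by $\epsilon\ll r$ for small enough $\epsilon$) satisfies $r^*>5\epsilon$ and $K(r^*+(\bar N+6)\epsilon)\leq r$, so all enlarged balls stay inside $B_\M(x_0,3r)$. The geometric sum $\sum_{j=0}^{K-1}\theta^j\leq (1-\theta)^{-1}$ absorbs every per-step error and, bounding the residual $\omega$ by $2\|f\|$, gives
\begin{equation*}
\omega(r+\epsilon)\leq 2\theta^K\|f\|_{B_\M(x_0,3r)} + C\Bigl(\tfrac{\epsilon}{r^*}\|f\|_{B_\M(x_0,3r)} + \tfrac{r^*}{\epsilon}\|\bar\A_\epsilon f-f\|_{B_\M(x_0,3r)} + \epsilon\|f\|_{B_\M(x_0,3r)} + \|\bar\A_\epsilon f-f\|_{B_\M(x_0,3r)}\Bigr).
\end{equation*}
Substituting $r^*\sim r/|\log\epsilon|$ and absorbing the two lower-order terms (using $\epsilon\leq \epsilon|\log\epsilon|/r$, valid since $r\ll 1\leq |\log\epsilon|$, and $1\leq r/(\epsilon|\log\epsilon|)$, valid in the $\epsilon|\log\epsilon|\ll r$ regime) yields
$\omega(r+\epsilon)\leq C\tfrac{\epsilon|\log\epsilon|}{r}\|f\|_{B_\M(x_0,3r)} + C\tfrac{r}{\epsilon|\log\epsilon|}\|\bar\A_\epsilon f-f\|_{B_\M(x_0,3r)}$, which together with Theorem \ref{Thestimate1} yields exactly the target bound in both distance regimes.

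The main obstacle is the careful selection of $r^* = r/(3K)$, which must simultaneously (i) balance the per-step error rates $\epsilon/r^*$ and $r^*/\epsilon$ to produce the symmetric logarithmic factors $\epsilon|\log\epsilon|/r$ and $\epsilon r/|\log\epsilon|$ in the target, (ii) keep all $L^\infty$-norm balls inside $B_\M(x_0, 3r)$ through the $K\sim|\log\epsilon|$ iterations, and (iii) remain above the threshold $5\epsilon$ required for Theorem \ref{Thestimate1} to apply at that intermediate scale. Finally, Theorem \ref{th_main2} is obtained from Theorem \ref{Thlipschitz3_2} using Lemma \ref{AepAepbar} and the identity $\A_\epsilon f - f = -\epsilon^2(\rho+\mathcal{O}(\epsilon^2))^{-1}\Delta_\epsilon f$ to convert $\bar\A_\epsilon f - f$ into $\epsilon^2\Delta_\epsilon f$.
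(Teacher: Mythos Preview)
Your proposal is correct and follows essentially the same approach as the paper: both combine Theorems \ref{Thestimate1} and \ref{th2} into a single contraction step at an intermediate scale $r^*\sim r/|\log\epsilon|$, iterate $K\sim|\log\epsilon|/|\log\theta|$ times so that $\theta^K\leq\epsilon$, and verify that the accumulated ball radii stay within $B_\M(x_0,3r)$. The paper packages the combined step into a single inequality (\ref{bd7}) before iterating, while you keep the case split $d_\M(x,y)\leq 3\epsilon$ versus $d_\M(x,y)\in(3\epsilon,5\epsilon]$ explicit, but the substance and the choice of scales are identical.
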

\begin{proof}
Let $d_\M(x_0, y_0)<r$. Combining Theorems \ref{Thestimate1} and
\ref{th2}, we obtain:
\begin{equation}\label{bd7}
\begin{split}
|f(x_0) - f(y_0)|\leq & \; \theta\cdot \sup\Big\{|f(x) - f(y)|;~ x\in
B_\M(x_0, r+\bar N\epsilon), ~ d_\M(x,y)\leq 5\epsilon\Big\}\\ &
+ C\Big(\epsilon + \frac{d_\M(x_0, y_0)}{r} \Big)\cdot \|f\|_{L^\infty(B_\M(x_0,
  2r+ \bar N\epsilon))} \\ & + C\Big(\epsilon^2 + r d_\M(x_0, y_0) \Big)\cdot 
\frac{\|\bar\A_\epsilon f- f\|_{L^\infty(B_\M(x_0, 2r+\bar N\epsilon))}}{\epsilon^2},
\end{split}
\end{equation}
where $C$ depends on $(\M, g)$ and $\eta$, while both $\theta\in
(0,1)$ and $\bar N$ depend only on $\eta$. 

Next, we bound the first term in the right hand side above. Let
$x,y\in \M$ satisfy $d_\M(x,y)\leq 5\epsilon$. Using (\ref{bd7}) with $r$ replaced by a new radius
$R=\frac{\log\theta}{\log\epsilon}\cdot\frac{r}{2}$, we deduce:
\begin{equation*}
\begin{split}
|f&(x) - f(y)|\leq \; \theta\cdot \sup\Big\{|f(\bar x) - f(\bar y)|;~
\bar x\in B_\M(x, R+\bar N\epsilon), ~ d_\M(\bar x,\bar y)\leq 5\epsilon\Big\}\\ &
+ C \frac{\epsilon |\log\epsilon|}{r}\cdot \|f\|_{L^\infty(B_\M(x,
  2R+ \bar N\epsilon))} + C\frac{\epsilon r}{ |\log\epsilon|} \cdot 
\frac{\|\bar\A_\epsilon f- f\|_{L^\infty(B_\M(x, 2R+\bar N\epsilon))}}{\epsilon^2}.
\end{split}
\end{equation*}
We now iterate the above inequality for a total of $k=\lceil
\frac{\log\epsilon}{\log \theta}\rceil$ times, to get:
\begin{equation*}
\begin{split}
|f&(x) - f(y)|\leq \; \theta^k\cdot \sup\Big\{|f(\bar x) - f(\bar y)|;~
\bar x\in B_\M(x_0, k(R+\bar N\epsilon)), ~ d_\M(\bar x,\bar y)\leq 5\epsilon\Big\}\\ &
+ C\Big(\sum_{i=0}^{k-1}\theta^i\Big)\cdot\Big( \frac{\epsilon |\log\epsilon|}{r}\|f\|_{L^\infty(B_\M(x,
  k(2R+ \bar N\epsilon)))} + \frac{\epsilon r}{ |\log\epsilon|}
\frac{\|\bar\A_\epsilon f- f\|_{L^\infty(B_\M(x, k(2R+\bar N\epsilon)))}}{\epsilon^2}\Big).
\end{split}
\end{equation*}
Observe that $\theta^k<\theta\cdot \theta^{\log \epsilon /\log\theta}
= \theta\cdot\epsilon <\epsilon$, and also:
$$k(R+\bar N\epsilon) < k(2R+\bar N\epsilon)
<\Big(1+\frac{\log\epsilon}{\log\theta}\Big)\Big(\frac{\log
  \theta}{\log\epsilon} +1\Big) <\frac{3}{2}r$$ 
if only $\epsilon\ll 1$. Thus, the previous estimate may be rewritten as:
\begin{equation*}
\begin{split}
|f(x) - f(y)|\leq & \; 2\epsilon \|f\|_{L^\infty(B_\M(x, \frac{3}{2}r))}\\ &
+ \frac{C}{1-\theta} \Big( \frac{\epsilon
  |\log\epsilon|}{r}\|f\|_{L^\infty(B_\M(x, \frac{3}{2}r)} + \frac{\epsilon r}{ |\log\epsilon|}
\frac{\|\bar\A_\epsilon f- f\|_{L^\infty(B_\M(x, \frac{3}{2}r))}}{\epsilon^2}\Big)\\
& \leq C \frac{\epsilon
  |\log\epsilon|}{r}\|f\|_{L^\infty(B_\M(x, \frac{3}{2}r)} + C\frac{\epsilon r}{ |\log\epsilon|}
\frac{\|\bar\A_\epsilon f- f\|_{L^\infty(B_\M(x, \frac{3}{2}r))}}{\epsilon^2}\Big).
\end{split}
\end{equation*}
In view of (\ref{bd7}), this implies:
\begin{equation}\label{bd8}
\begin{split}
|f(x_0) - f(y_0)|\leq & \; C\Big(\frac{d_\M(x_0, y_0)}{r} +
\frac{\epsilon |\log\epsilon|}{r} \Big)\cdot \|f\|_{L^\infty(B_\M(x_0,
  \frac{8}{3}r))} \\ & + C\Big(r d_\M(x_0, y_0) + \frac{\epsilon r}{|\log\epsilon|}\Big)\cdot 
\frac{\|\bar\A_\epsilon f- f\|_{L^\infty(B_\M(x_0, \frac{8}{3}r))}}{\epsilon^2}.
\end{split}
\end{equation}
which proves the claim.
\end{proof}

\medskip

\noindent {\bf A proof of Theorem \ref{th_main2}.}

We first replace $\bar\A_\epsilon f(x)$ by $\A_\epsilon f(x)$ in
(\ref{bd8}) and invoke Lemma \ref{AepAepbar} to absorb the error term
$\big(r d_\M(x_0, y_0) + \frac{\epsilon r}{|\log\epsilon|}\big)
\mathcal{O}(\epsilon^2)\|f\|_{L^\infty(B_\M(x_0, 3r)}/ \epsilon^2$
in the term $\big(\frac{d_\M(x_0, y_0)}{r} + \frac{\epsilon |\log\epsilon|}{r}\big)\|f\|_{L^\infty(B_\M(x_0, 3r)}$,
provided that $\epsilon$ is small enough. 

Further, recall Remark \ref{rem_explA} (ii) to replace $\A_\epsilon f - f$ by
$C\epsilon^2\Delta_\epsilon f$. Finally, we use the obtained bound
with $2r$ instead of $r$ for $x,y\in B_\M(x_0, r)$ satisfying $d_\M(x,y)<2r$.
\endproof

\begin{remark}\label{rem_reduction}
The estimates in Theorems \ref{th_main1} and \ref{th_main2} with any
given probability density $\rho$ may be deduced from the same
statements with $\rho_0\equiv 1$. Indeed, write:
$$ |f(x) -f(x)| \leq \frac{1}{\min \rho} \Big(\big| (\rho
f)(x) - (\rho f)(y)\big| + \big|\rho(x) - \rho(y)\big| |f(y)|\Big), $$
and note the following formula for the non-local Laplacian $\Delta_\epsilon^0$ with
respect to $\rho_0$:
$$\Delta_\epsilon^0\big(\rho f\big)(x) = \Delta_\epsilon f(x)
+ \frac{f(x)}{\epsilon^{m+2}}\int_\M\eta\Big(\frac{d_\M(x,y)}{\epsilon}\Big)
(\rho(x) - \rho(y))\dVol(y).$$
The usual calculation in normal coordinates at $x$ yields: 
$\int_\M\eta\big(\frac{d_\M(x,y)}{\epsilon}\big) (\rho(x) - \rho(y))\dVol(y) =
\epsilon^{m}\int_{B(0,1)\subset T_x\M}\eta(|w|)
\big(\epsilon \langle \nabla \rho(x),w\rangle + \mathcal{O}(\epsilon^2)\big)
(1+\mathcal{O}(\epsilon^2))\dw = \mathcal{O}(\epsilon^{m+2})$, for
$\epsilon\ll 1$ when the integration variable $y$ is
close to $x\in \M$. Consequently, applying Theorems \ref{th_main1} and
\ref{th_main2} to the density $\rho_0$ and to the function $\rho f$,
yields the claimed bounds for $|f(x) - f(y)|$ relative to $\rho$.

While the above argument can be used to reduce our regularity
estimates to the case of constant $\rho$, we want to highlight that
the partial estimates which obtained along the way in our proofs (Theorems \ref{Thestimate1},
\ref{Thlipschitz3} and \ref{Thlipschitz3_2}) carry some precise information about the
dependence of constants, which we feel are of independent interest. Likewise, the construction of the biased
random walk modeled on the averaging operator
$\bar{\mathcal{A}}_\epsilon$ which admits completely arbitrary $\rho$, is
new in this context and worthy of presentation.
\end{remark}

\begin{remark}\label{rem:91}
We note that condition (\ref{asuS}) used in Theorem \ref{th2}, can be
relaxed to a more general assumption that $\eta$ is only bounded and
measureable. 
First, we write:
\[f - \A_\epsilon^2 f = (f - \A_\epsilon f) + \A_\epsilon (f - \A_\epsilon f).\]
By Remark \ref{rem_explA} (ii) and the expansion $d_\epsilon = \rho + \mathcal{O}(\epsilon^2)$ we deduce: 
\[|f - \A_\epsilon^2 f| \leq C\epsilon^2 |\Delta_\epsilon f|.\]
This allows us to replace the averaging operator $\A_\epsilon$ by
$\A_\epsilon^2$ in the proof of Theorem \ref{th2} (more correctly, we
replace $\overline{\A}_\epsilon$ by $\overline{\A}_\epsilon^2$, but
the proof may proceed with either due to Lemma \ref{AepAepbar}).  We
now note, again using the expansion $d_\epsilon = \rho +
\O(\epsilon^2)$, that we can write: 
\[\A_\epsilon^2 f(x) = \frac{1}{d_\epsilon(x)} \int_{\M} \frac{1}{\epsilon^m}\phi(x,z)f(z)\rho(z) {\rm dVol}_\M(z) + \mathcal{O}(\|f\|_\infty \epsilon^2),\]
where:
\begin{equation}\label{eq:newkernel}
\phi(x,z) = \int_{\M} \frac{1}{\epsilon^m}\eta\left(
  \frac{d_\M(x,y)}{\epsilon}\right) \eta\left(
  \frac{d_\M(y,z)}{\epsilon}\right){\rm dVol}_\M(y). 
\end{equation}
Using the change of variables $y = {\rm Exp}_x(\epsilon \tilde{y})$ in \eqref{eq:newkernel}, one can show that: 
\[\phi(x,z) = \int_{T_x\M} \eta\left( |\tilde{y}|\right)
\eta\left(|\tilde{z} - \tilde{y}|\right)d\tilde{y} +
\mathcal{O}(\epsilon^2).\] 
Hence, by working with $\A^2_\epsilon$ in Theorem \ref{th2} we can
essentially replace the kernel $\eta$ with the convolution
$\zeta=\eta*\eta$ in all arguments. When $\eta$ is bounded and
measureable, the convolution $\zeta$ is continuous, and so
(\ref{asuS}) holds for $\zeta$ without any additional assumptions on
$\eta$. The proof of Theorem \ref{th2} now proceeds by replacing
$\A_\epsilon$ with $\A_\epsilon^2$. 
\end{remark}

\bigskip

\begin{center}
{\bf \large PART 3}
\end{center}

\section{Applications to Lipschitz regularity in graph-based learning} \label{sec:Spectrum}

In this section, we work under hypothesis (\ref{H1}). Note that the
additional assumption \ref{asuS} holds automatically, in view
of continuity of $\eta$ on $[0,1]$.
Below, we prove our main results, concerning Lipschitz regularity for solutions of graph PDEs.

\subsection{Graph Poisson equations: proofs of Theorem
  \ref{thm:globalLip}, Theorem \ref{thm:interiorLip} and Corollary \ref{cor:ImpDiscreteToNonlocal} }
We deduce the three announced global-local Lipschitz regularity estimates.

\medskip

\noindent {\bf A proof of Theorem \ref{thm:globalLip}.}

We assume that the events indicated in Theorem \ref{thm:DiscreteToNonlocal} and
in Lemma \ref{lem:CM1} with $t=\eps^2$, hold. Let $f:
\X_n\to\R$. Since $\osc_{\X_n}f \leq 2\|f\|_{L^\infty(\X_n)}$, we obtain:
\[\|\NL (\Ext f)\|_{L^\infty(\M)} \leq C\left( \|\GL f\|_{L^\infty(\X_n)} + \|f\|_{L^\infty(\X_n)}\right).\]
Inserting the above into the conclusion of Theorem \ref{th_main1} yields, for all $x,y\in\M$:
\begin{equation}\label{eq:estone}
\begin{split}
|\Ext f(x) - \Ext f(y)|\leq & \; C\big( \|f\|_{L^\infty(\X_n)}+ \|\GL f\|_{L^\infty(\X_n)}\big)\cdot (d_\M(x, y)+\eps),
\end{split}
\end{equation}
where we used that $\|\Ext f\|_{L^\infty(\M)}\leq \|f\|_{L^\infty(\X_n)}$. Since
by Corollary \ref{good_def} we have $\Deg\geq c$,  recalling
(\ref{m12}) we get:
\[|f(x_i) - \Ext f(x_i)| \leq \frac{\eps^{2}}{\Deg(x_i)}|\GL f(x_i)|\leq C\eps^2\|\GL f\|_{L^\infty(\M)}
\qquad \mbox{for all }\; i=1,\ldots, n. \]
Therefore:
\begin{align*}
|f(x_i) - f(x_j)| &\leq |f(x_i) - \Ext f(x_i)| + |\Ext f(x_i) - \Ext f(x_j)| + |\Ext f(x_j) - f(x_j)|\\
&\leq C\eps^2\|\GL f\|_{L^\infty(\X_n)} + |\Ext f(x_i) - \Ext f(x_j)|.
\end{align*}
Combining the above with the estimate \eqref{eq:estone} completes the proof.
\endproof

\medskip

\noindent {\bf A proof of Corollary \ref{cor:ImpDiscreteToNonlocal}.}

Assume that both events indicated in Theorems \ref{thm:globalLip}
and \ref{thm:DiscreteToNonlocal} hold. Given $f:\X_n\to\R$, the
conclusion of Theorem  \ref{thm:globalLip} implies that: 
\[\osc_{\X_n\cap B(x,2\eps)} f \leq C\big(\|f\|_{L^\infty(\M)} + \|\GL
f\|_{L^\infty(\M)}\big)\eps\qquad\mbox{ for all }\; x\in\M.\]
The result follows then by invoking Theorem \ref{thm:DiscreteToNonlocal}.
\endproof

\medskip

\noindent {\bf A proof of Theorem \ref{thm:interiorLip}.}

The proof follows the same steps as in the proof of Theorem
\ref{thm:globalLip}, except that now we use the local estimates from
Theorem \ref{th_main2} instead of the global estimates from Theorem \ref{th_main1}.
\endproof

\subsection{Graph Laplacian eigenvectors:  proofs of Theorem
  \ref{thm:eigenLip} and Corollary \ref{cor:eigenLip}}

We apply Theorem \ref{thm:globalLip} to deduce both results, as follows.

\medskip

\noindent {\bf A proof of Theorem \ref{thm:eigenLip} and Corollary \ref{cor:eigenLip}.}

Assume that the event indicated in Theorem \ref{thm:globalLip} holds. Let $f:\X_n\to\R$ be a non-zero function
such that $\lambda_f\leq\Lambda$, where $\lambda_f = 
\frac{\|\Delta_{\epsilon, \X_n} f\|_{L^\infty(\X_n)}}{\|f\|_{L^\infty(\X_n)}}$.
Fix $\epsilon\leq \frac{c}{\lambda_f+1}$ where $c>0$ is, as usual,
a sufficiently small constant depending only on $\M$, $\rho$, $\eta$
that will be specified later. 

Assume further that the event in Corollary \ref{prop:ball} holds for
all $x\in \X_n$ and with $\epsilon$ replaced by a sufficiently small
radius $\frac{r}{2}$, depending on $\Lambda$ and also specified later.
In particular, noting that by (\ref{eq:d}) we have: $B_\M(x,r)\supset B\big(x,\frac{r}{2}\big)$,
the result in Corollary \ref{prop:ball} yields:
\begin{equation}\label{m13}
\P_n\Big(\sum_{j=1}^n \one_{\{d_\M(x_j, x_i)\leq r\}}\geq C'nr^m, \quad
\mbox{for all } x_i\in\X_n\Big) \geq 1-2n\exp\big(-cnr^m\big)
\end{equation}

Let now $x_i\in\X_n$ be such that $\|f\|_{L^\infty(\X_n)}=|f(x_i)|$. By
Theorem \ref{thm:globalLip} it follows that:
\begin{equation}\label{eqn:aux1}
\begin{split}
 |f(x_i) - f(x_j)|& \leq C(\lambda_f + 1)\lVert f \rVert_{L^\infty(\X_n)} \big(d_\M(x_i,x_j) + \eps\big)
\\ & = C (\lambda_f + 1) \cdot |f(x_i)| \big(d_\M(x_i,x_j) + \eps \big)\qquad \mbox{ for all } x_j\in\X_n,
\end{split}
\end{equation} 
which implies, provided that $r+\eps \leq \frac{1}{2C(\Lambda +1)}\leq\frac{1}{2C(\lambda_f+1)}$:
\begin{equation*}
\begin{split}
|f(x_j)| & \geq |f(x_i)| - \big|f(x_i)-f(x_j)\big|  \geq 
|f(x_i)|-C(\lambda_f+1) |f(x_i)| (r+\epsilon) 
\\ & = \big(1-C(\lambda_f+1)(r+\epsilon)\big) |f(x_i)| \geq
\frac{1}{2}\|f\|_{L^\infty(\X_n)} \qquad \quad \mbox{ for all } x_j\in B_\M(x_i, r)
\end{split}
\end{equation*}
In conclusion:
\begin{equation*}
\begin{split}
\|f\|_{L^\infty(\X_n)}\leq & \; \frac{2}{N_i}\sum_{j=1}^n\one_{\{d_\M(x_i,
  x_j)\leq r\}} f(x_j)\leq \frac{2n}{N_i}\|f\|_{L^1(\X_n)}
\\ & \mbox{where }\; N_i= \sum_{j=1}^n\one_{\{d_\M(x_i, x_j)\leq r\}}.
\end{split}
\end{equation*}

Choose $r\ll 1$ which satisfies: $r\in\big[\frac{c}{\Lambda+1}, \frac{1}{4C(\Lambda+1)}\big]$.
It then follows by (\ref{m13}) that $N_i\geq
\frac{cn}{(\Lambda+1)^m}$ and, consequently, the formula displayed above becomes
the bound in Corollary \ref{cor:eigenLip}:
\begin{equation*}
\|f\|_{L^\infty(\X_n)}\leq C(\Lambda+1)^m\|f\|_{L^1(\X_n)}.
\end{equation*}
Inserting this into the first estimate of \eqref{eqn:aux1} yields, in turn, the
bound in Theorem \ref{thm:eigenLip}. This completes
the argument, since we also easily observe that the 
probability of the two assumed events is bounded from below by: 
$$1 - C\epsilon^{-6m}
\exp(-cn\epsilon^{m+4}) - 2n\exp(-cnr^m)\geq 1 - C\epsilon^{-6m}
\exp(-cn\epsilon^{m+4}) - 2n\exp\big(-cn(\Lambda + 1)^{-m}\big),$$
as claimed.
\endproof

\subsection{\texorpdfstring{$\mathcal{C}^{0,1}$}{C} convergence rates for graph Laplacian
  eigenvectors in the large data limit: a proof Theorem \ref{thm:eigenrate} } 

We apply Theorem \ref{thm:globalLip} to conclude our final result.

\medskip

\noindent {\bf A proof of Theorem \ref{thm:eigenrate}.}

Fix $\epsilon\ll 1$ and let $f$ be a normalised eigenvector of $\GL$
with eigenvalue $\lambda$, i.e.:
$$ \GL f = \lambda f\quad \mbox{ and } \quad \|f\|_{L^2(\X_n)}=1.$$
Assume that $\X_n$ is an element of the intersection of events defined
in Theorem \ref{thm:globalLip} and Corollary \ref{cor:eigenLip}.
According to \cite[Theorem 2.6]{calder2019improved}, with
probability at least $1-Cn\exp\left( -cn\eps^{m+4} \right)$ there
exist a normalized eigenfunction $\tilde{f}$ of $\Delta_\M$ with
eigenvalue $\widetilde{\lambda}$, so that:
$$ \Delta_\M \tilde f = \tilde \lambda f \quad \mbox{ and } \quad \|\tilde f\|_{L^2(\M)}=1,$$
for which there holds: 
\[ |\lambda - \tilde{\lambda}| + \lVert  f - \tilde{f} \rVert_{L^2(\X_n)}\leq C\veps. \]
Since $\M$ is smooth, compact and boundaryless and $\tilde{f}$ is
smooth, then the pointwise consistency result in \cite[Theorem 
3.3]{calder2019improved} yields that  with probability
at least $1-2n\exp\left( -c n\eps^{m+4} \right)$ we have: 
\[ \| \Delta_\M \tilde{f} - \GL \tilde{f} \|_{L^\infty(\X_n)} \leq C\eps.  \]

Here, and in the rest of the proof, $C$ and the Landau symbol
$\mathcal{O}$ depend on $\lambda$. Denote $g = f - \tilde{f}$.
We may, without loss of generality (since otherwise the claimed result
is trivially true) assume that $g\not\equiv 0$, so that $\lambda_g
=\frac{\|\GL g\|_{L^\infty(\X_n)}}{\|g\|_{L^\infty(\X_n)}}$ is well defined. By a direct computation:
\begin{equation*}
\begin{split}
\GL g &= \big(\GL f- \Delta_\M \tilde{f}\big) + \big(\Delta_\M \tilde{f} - \GL \tilde{f}\big) 
= \lambda f - \tilde{\lambda} \tilde{f} + \O(\eps)\\
& = \lambda(f-\tilde{f}) + (\lambda - \tilde{\lambda})\tilde{f} +\O(\eps).
\end{split}
\end{equation*}
Consequently: $\|\GL g\|_{L^\infty(\X_n)}\leq \lambda
\|g\|_{L^\infty(\X_n)}+C\epsilon(1+\|\tilde f\|_{L^\infty(\M)})\leq
C\epsilon$, since $\tilde f$ is a normalised eigenvalue of $\Delta_\M$.
Hence: \[\lambda_g \leq \lambda + \frac{C\epsilon}{\|g\|_{L^\infty(\X_n)}},\]
which, in case $\lambda_g\geq \lambda+1$, clearly implies:
$\|g\|_{L^\infty(\X_n)}\leq C\epsilon$. 
On the other hand, when $\lambda_g\leq \lambda+1$, 
Corollary \ref{cor:eigenLip} yields, in view of $\|\cdot\|_{L^1(\X_n)}\leq\|\cdot\|_{L^2(\X_n)}$:
\[\|g\|_{L^\infty(\X_n)} \leq C(\lambda_g +1)^{m+1} \|g\|_{L^1(\X_n)}
\leq C(\lambda_g+1)^{m+1}\eps\leq C(\lambda+2)^{m+1}\epsilon=C\epsilon.\]

In either case, we see that there holds: 
$\|g\|_{L^\infty(\X_n)} \leq C\epsilon$.
We now invoke Theorem \ref{thm:globalLip} to get:
\begin{align*}
|g(x_i) - g(x_j)| & \leq  C\big( \|g\|_{L^\infty(\X_n)}+ \|\GL g\|_{L^\infty(\X_n)}\big)\cdot \big(d_\M(x_i, x_j)+\eps\big) \\
& \leq C\eps\left(d_\M(x_i, x_j) + \eps\right) \qquad \mbox{ for all } x_i, x_j\in\X_n.
\end{align*}
This completes the proof by union bounding on the indicated events.
\endproof

\appendix

\section{Riemannian geometry notation} \label{appendixA}

In this section we review the basic notions from differential geometry. 

\subsection{Riemannian geometry and parallel transport}\label{A1}

Let $\M$ be a smooth, compact, boundaryless, connected
and orientable manifold of dimension $m$, equipped with a smooth Riemannian metric
$g$. For $x\in\M$, we write $T_x\M$ for the tangent space at $x$
and $T\M$ for the tangent bundle of $\M$. The scalar product of any
two tangent vectors $v,w\in T_x\M$, given by the quadratic form $g(x)$
evaluated on $(v,w)$, is denoted by $\langle v, w\rangle_x$ while the length of $v$ is
$|v|_x=\langle v, v\rangle_x^{1/2}$. We will usually omit the
subscript $x$ if no ambiguity arises.
A smooth assignment of tangent vectors: $\M\ni x\mapsto v(x)\in
T_x\M$ is called a vector field.

Associated to $g$ is the Levi-Civit\`a connection $\nabla$, which is
the unique torsion-free connection that is metric $g$-compatible. Given two
vector fields $v$ and $w$, the connection allows to differentiate $v$
in the direction $w$, returning a new vector field $\nabla_w v$. The
value of  $\nabla_w v$ at $x$ depends only on the values of $w$ along a
curve $\gamma$ satisfying $\gamma(0)=x$ and $\dot\gamma(0)=v$. Keeping
this in mind, we can differentiate vector fields $v$ that are defined
only along a given smooth curve $\gamma:[a,b]\to\M$ (rather than on
the whole $\M$), in the direction $\dot\gamma$. If
$\nabla_{\dot\gamma(t)}v(\gamma(t))=0$ for all $t\in [a,b]$, then $v$
is called {\em parallel} along $\gamma$. 

For every $\bar v\in T_{\gamma(a)}\M$ there always exists the unique
vector field $v$ parallel along $\gamma$, such that
$v(\gamma(a))=\bar v$. This construction gives raise to the linear isometry map:
$$T_{\gamma(a)} \M\ni\bar v\mapsto P^\gamma_{\gamma(a),
  \gamma(b)}\bar v\doteq v(\gamma(b))\in T_{\gamma(b)}\M,$$ 
called the {\em parallel transport} along $\gamma$. We omit the reference to the curve
$\gamma$ in $P^\gamma$ if there is no ambiguity. In particular, we write $P_{x,y}$
for parallel transport along the unique geodesic connecting nearby
points $x,y\in\M$.

\subsection{Geodesics and normal coordinates}\label{sec2.2}

A smooth curve $\gamma:[a,b]\to\M$ such that its tangent vector field
$\frac{d}{dt}\gamma (t)\doteq \dot\gamma(t)$ is parallel along $\gamma$,
i.e. $\nabla_{\dot\gamma(t)}\dot\gamma(t)=0$ for all $t\in [a,b]$, is
called a {\em geodesic}. Here, $\nabla$ is used to denote the
Levi-Civit\`a connection as in \ref{A1}. For every $x\in\M$ and $v\in T_x\M$ there
exists a unique geodesic $\gamma^v: (-\infty, \infty)\to\M$ satisfying:
\begin{equation}\label{geodv}
\gamma^v(0)=x \qquad \mbox{and} \qquad \dot\gamma^v(0)=v.
\end{equation}
We will often consider a flow of
geodesics $\gamma:[-\epsilon, \epsilon]\times [0,t_0]\to \M$, where
$\nabla_{\frac{d}{dt}\gamma(s,t)}\frac{d}{dt}\gamma(s,t) =0$ for all
$(s,t)\in [-\epsilon, \epsilon]\times [0,t_0]$. Then the variation
field $J(t) = \frac{d}{ds}\gamma(0,t)$, called the {\em Jacobi field} along
the curve $\gamma(0,\cdot)$, satisfies the second order ODE:
\begin{equation}\label{Jaco}
\big(\nabla_{\frac{d}{dt}\gamma(0,t)}\big)^2J(t) + R\Big(J(t),
\frac{d}{dt}\gamma(0,t)\Big) \frac{d}{dt}\gamma(0,t) = 0\quad\mbox{ for all } t\in [0,t_0].
\end{equation}
Here, $R$ stands for the Riemann curvature form, which for three
vector fields $u,v,w$ returns the following vector field: 
$$R(u,v)w = \nabla_u\nabla_vw- \nabla_v\nabla_uw- \nabla_{[u,v]}w,$$ 
where $[u,v]=\nabla_uv-\nabla_vu$ is the commutator of $u$ and $v$.
We also recall the {\em symmetry lemma} (independent of the
geodesic property for the flow of curves $\gamma$):
$$\nabla_{\frac{d}{dt}\gamma(s,t)} \frac{d}{ds}\gamma(s,t) =
\nabla_{\frac{d}{ds}\gamma(s,t)} \frac{d}{dt}\gamma(s,t)\qquad
\mbox{for all } (s,t)\in [-\epsilon, \epsilon]\times [0,t_0].$$ 

The length of a smooth curve $\gamma:[a,b]\to\M$ is computed as: $length(\gamma) =
\int_{a}^b|\dot\gamma(t)|_{\gamma(t)}\dt$. For every $x,y\in\M$ this
gives raise to the well-defined metric distance:
\begin{equation}\label{dist}
 d_\M (x,y) = \min\big\{length(\gamma);~ \gamma(a)=x, ~\gamma(b)=y\big\}.
\end{equation}
The open ball in this metric is denoted by $B_\M (x,r) = \{y\in\M;~
 d_\M (x,y)<r\}$. When $ d_\M (x,y)<\iota$ for a sufficiently small radius
$\iota>0$, depending (in view of compactness) only on $(\M, g)$, then the
minimization in (\ref{dist}) is realised by the unique, up to
re-parametrisation, curve $\gamma:[0,1]\to\M$ which is a
geodesic. Automatically, one has: $|\dot\gamma(t)|_{\gamma(t)}\equiv
length(\gamma) =  d_\M (x,y)$.

For every $x\in\M$ one considers
the {\em exponential mapping} $T_x\M\supset B(0,\iota)\ni v \mapsto \gamma^v(1)\in\M$,
where $\gamma^v$ is the geodesic as in (\ref{geodv}). This
mapping is usually denoted by
$Exp_x(v)\doteq\gamma^v(1)$, and it is a smooth diffeomorphism onto its image.
By compactness of $\M$, also the mapping:
\begin{equation*}
T\M\supset\mathcal{U}\ni \big(x\in\M, v\in B(0,\iota)\subset
T_x\M\big)\mapsto \psi(x,v) \doteq \big(x, Exp_x(v)\big)\in\M\times \M,
\end{equation*}
is well defined on an open neighbourhood $\mathcal{U}$ of a
zero-section in $\M$, and it is a smooth diffeomorphism onto its
image. Any orthonormal basis for $T_x\M$ gives an isomorphism $T_x\M\backsimeq
\R^m$. Then, the inverse of the exponential map:
\begin{equation}\label{geod}
B_\M(x,\iota)\ni y \mapsto Exp_x^{-1}(y)=v \in T_x\M \backsimeq\R^m
\end{equation}
is called the {\em normal coordinate chart} centered at $x$. 

\subsection{Formulas in coordinates}\label{sec2.3}

We now recall that in a given local coordinate chart:
$$\mathcal{U}\ni x\mapsto (x^1,\ldots, x^m)\in\R^m$$ 
(not necessarily normal as in (\ref{geod})) on an
open subset $\mathcal{U}\subset\M$, the tangent space $T_x\M$ is
spanned by coordinate vectors $\big\{\frac{\partial}{\partial
  x^i}\big\}_{i=1}^m$, each corresponding to the smooth curve
$\gamma(t) = x+t \frac{\partial}{\partial x^i}$ on $\M$ passing
through $\gamma(0) = x$. The metric $g$ is represented as the symmetric
matrix field $[g_{ij}(x)]_{i,j=1\ldots m}$ on $\mathcal U$,
so that $\langle v, w\rangle_x=g_{ij}(x)v^iw^j$ for all $v= v^i\frac{\partial}{\partial
  x^i}$, $w= w^i\frac{\partial}{\partial x^i}\in T_x\M$. Here and
below, we use the Einstein summation convention on repeated lower and upper indices. 

For a vector field  $v= v^i\frac{\partial}{\partial x^i}$ on $\mathcal
U$, the corresponding coordinates of $\nabla_{\frac{\partial}{\partial
  x^j}} v = (\nabla_jv^i) \frac{\partial}{\partial x^i}$ are:
$\nabla_jv^i = \frac{\partial v^i}{\partial x^j} + \Gamma_{jk}^iv^k$,
given through the Christoffel symbols:
$$\Gamma_{jk}^i = \frac{1}{2}g^{is}\Big(\frac{\partial g_{sk}}{\partial x^j} + 
\frac{\partial g_{sj}}{\partial x^k} - \frac{\partial g_{jk}}{\partial x^s}\Big).$$ 
As customary, $[g^{ij}(x)]_{i,j=1\dots m}$ is the inverse matrix of
$[g_{ij}(x)]_{i,j=1\dots m}$, so that $g^{is}g_{sj} =\delta_j^i$ equaling
$1$ for $i=j$ and $0$ otherwise.

For a smooth curve $\gamma:[a,b]\to\M$ written in coordinates:
$\gamma(t) = \big(\gamma^1(t),\ldots, \gamma^m(t)\big)$, a vector field $[a,b]\ni
t\mapsto v(t) = v^i(t)\frac{\partial}{\partial x^i}\in
T_{\gamma(t)}\M$ is parallel along $\gamma$ if it satisfies the ODE system:
$$\dot v^i(t) +\Gamma_{jk}^i(\gamma(t)) v^j(t)\dot\gamma^k(t) =
0\qquad \mbox{for all } i=1\ldots m \quad \mbox{and all } ~t\in [a,b].$$ 
Consequently, equations of geodesic are:
$$\ddot \gamma^i + \Gamma_{jk}^i\dot\gamma^j\dot\gamma^k = 0 \qquad \mbox{for all } i=1\ldots m.$$
We also have: $R(v,w)z = -R^k_{ijs}v^iw^jz^s \frac{\partial}{\partial x^k}$, where:
$$R_{ijs}^k = \frac{\partial\Gamma^k_{is}}{\partial x^j} - \frac{\partial\Gamma^k_{js}}{\partial x^i} +
\Gamma_{jl}^k\Gamma_{is}^l - \Gamma_{il}^k\Gamma_{js}^l$$ 
are the components of the Riemann curvature tensor.
Denoting: $\nabla_{\frac{\partial}{\partial x^i}}
\nabla_{\frac{\partial}{\partial x^j}}v = \big(\nabla_i\nabla_j
v^k\big) \frac{\partial}{\partial x^k}$, it follows that:
$\nabla_i\nabla_j v^k - \nabla_j\nabla_i v^k = - R_{ijs}^kv^s$.

In the normal coordinates (\ref{geod}), centered at a point $x\in\M$, 
corresponding to $0\in \R^m$, we have the useful identities: 
$$[g_{ij}(0)]_{i,j=1\ldots m}=Id_m, \quad \Gamma_{ij}^k(0)=0,\quad
\frac{\partial g_{ij}}{\partial x^k}(0)=0, \quad \frac{\partial^2
  g_{ij}}{\partial x^k\partial x^s}(0)= -\frac{2}{3}R^i_{kjs}(0),$$
valid for all $i,j,k,s=1,\ldots, m$. Consequently, we obtain the Taylor expansion:
\begin{equation} \label{Tayg}
g_{ij}(y) = \delta_{ij}-\frac{1}{3}R_{kjs}^i(0) y^ky^s +
\mathcal{O}(|y|^3)\quad \mbox{ for all } i,j=1,\ldots, m \mbox{ and
  all } y\in B(0,\iota)\subset\R^m.
\end{equation}

The contravariant derivative (the gradient) of a scalar field
$f:\M\to\R$ is the vector field
$\nabla^*f=(\nabla^if)\frac{\partial}{\partial x^i}$ on $\M$, whose
coordinates are: $\nabla^if=g^{ik}\frac{\partial f}{\partial
  x^k}$. The divergence of $\nabla^*f$, called the Laplace-Beltrami
operator of $f$ is given by:
$$\Delta f \doteq Div(\nabla^*f) = \nabla_i\nabla^if =
g^{ik}\frac{\partial^2f}{\partial x^i\partial x^k} +
\Big(\frac{\partial g^{ij}}{\partial x^i} + \Gamma_{ik}^ig^{jk}\Big)
\frac{\partial f}{\partial x^j} =
g^{ik}\Big(\frac{\partial^2f}{\partial x^i\partial x^k} -
\Gamma_{ik}^j\frac{\partial f}{\partial x^j}\Big).$$
We will also  consider the following weighted Laplace-Beltrami
operator with respect to a positive scalar field $\rho:\M\to \R$:
\begin{equation*}
\begin{split}
\frac{1}{\rho^2} Div(\rho^2\nabla^*f) & = \Delta f + 2
g^{ik}\frac{\partial (\log \rho)}{\partial x^i} \frac{\partial f}{\partial x^k} =
\Delta f + 2\nabla^i f\cdot \nabla^s(\log\rho)g_{is} \\ & = \Delta f
+\langle \nabla^*f, \nabla^*(\log\rho)\rangle_x.
\end{split}
\end{equation*}
We remark that in normal coordinates centered at $x$, the
Laplace-Beltrami operator computed at $x$, coincides with the
usual Laplacian $\sum_{i=1}^m\frac{\partial^2f}{(\partial x^i)^2}(0)$
in those coordinates, and similarly:
$$\frac{1}{\rho^2} Div(\rho^2\nabla^*f) =
\sum_{i=1}^m\frac{\partial^2f}{(\partial x^i)^2}(0) +
2\sum_{i=1}^m\frac{\partial f}{\partial x^i} \frac{\partial (\log\rho)}{\partial x^i}(0).$$

\subsection{The embedded manifold}\label{sec2.4}

When $\M$ is embedded in the ambient space $\R^d$, it inherits its
Euclidean structure.  There are two related facts that will be
frequently used in the sequel. First, it follows from the Rauch
Comparison Theorem \cite{BIK,calder2019improved,carmo1992riemannian}  that there exists
$C>0$ depending on $\M$ (more precisely, on the upper bound of the
sectional curvatures) such that for all $r\ll 1$:
\begin{equation}\label{eq:vol}
\big|{Vol}_\M(B_\M(x,r))- |B(0,1)| r^m\big|\leq Cr^{m+2} \qquad\mbox{ for all } x\in\M.
\end{equation}
Here, $|B(0,1)|$ denotes the volume of the unit ball in $\R^m$. The
volume ${Vol}_\M(B_\M(x,r))$ of the indicated geodesic ball,
is related to the Riemannian volume form $\dVol$ in:
$${Vol}_\M(B_\M(x,r)) = \int_{B_\M(x,r)} 1\dVol.$$
In local coordinates $\dVol$ is given by: $\dVol(y) = \big(\det [g_{ij}]_{i,j=1\ldots
  m}\big)^{1/2}\dy^1\wedge\ldots \wedge \dy^m$.

The second fact \cite[Proposition 2]{trillos2018spectral} states that for all
$x,y\in\M$ whose distance in $\R^d$ satisfies $|x-y|\leq
\frac{R}{2}$ with a sufficiently small $R$ (more precisely, $R$ is the reach
of $\M$), there holds:
\begin{equation}\label{eq:d}
|x-y|\leq  d_\M (x,y)\leq |x-y|+ \frac{8}{R^2}|x-y|^3.
\end{equation}

\bibliography{ref1}

\begin{thebibliography}{10}

\bibitem{Ando}
R.~K. Ando and T.~Zhang.
\newblock Learning on graph with {L}aplacian regularization.
\newblock In {\em Proceedings of the 19th International Conference on Neural
  Information Processing Systems}, NIPS'06, pages 25--32, Cambridge, MA, USA,
  2006. MIT Press.

\bibitem{arroyo2016tug}
{\'A}.~Arroyo, J.~Heino, and M.~Parviainen.
\newblock Tug-of-war games with varying probabilities and the normalized $ p
  (x) $-{L}aplacian.
\newblock {\em arXiv preprint:1608.03701}, 2016.

\bibitem{arroyo2019asymptotic}
{\'A}.~Arroyo, H.~Luiro, M.~Parviainen, and E.~Ruosteenoja.
\newblock Asymptotic {L}ipschitz regularity for tug-of-war games with varying
  probabilities.
\newblock {\em Potential Analysis}, pages 1--25, 2019.

\bibitem{belkin2002laplacian}
M.~Belkin and P.~Niyogi.
\newblock Laplacian eigenmaps and spectral techniques for embedding and
  clustering.
\newblock In {\em Advances in neural information processing systems}, pages
  585--591, 2002.

\bibitem{belkin2005towards}
M.~Belkin and P.~Niyogi.
\newblock Towards a theoretical foundation for {L}aplacian-based manifold
  methods.
\newblock In {\em International Conference on Computational Learning Theory},
  pages 486--500. Springer, 2005.

\bibitem{Belkin-Niyogi-Sindhwani}
M.~Belkin, P.~Niyogi, and V.~Sindhwani.
\newblock Manifold regularization: A geometric framework for learning from
  labeled and unlabeled examples.
\newblock {\em Journal of machine learning research}, 7(Nov):2399--2434, 2006.

\bibitem{BertoStuart}
A.~Bertozzi, X.~Luo, A.~Stuart, and K.~Zygalakis.
\newblock Uncertainty quantification in graph-based classification of high
  dimensional data.
\newblock {\em SIAM/ASA Journal on Uncertainty Quantification}, 6(2):568--595,
  2018.

\bibitem{boucheron2013concentration}
S.~Boucheron, G.~Lugosi, and P.~Massart.
\newblock {\em Concentration inequalities: A nonasymptotic theory of
  independence}.
\newblock Oxford university press, 2013.

\bibitem{BIK}
D.~Burago, S.~Ivanov, and Y.~Kurylev.
\newblock A graph discretization of the {L}aplace-{B}eltrami operator.
\newblock {\em J. Spectr. Theory}, 4(4):675--714, 2014.

\bibitem{calder2018game}
J.~Calder.
\newblock The game theoretic p-{L}aplacian and semi-supervised learning with
  few labels.
\newblock {\em Nonlinearity}, 32(1), 2018.

\bibitem{calder18AAA}
J.~Calder.
\newblock {Consistency of Lipschitz learning with infinite unlabeled data and
  finite labeled data}.
\newblock {\em SIAM Journal on Mathematics of Data Science}, 1(4):780--812,
  2019.

\bibitem{calder2020Calculus}
J.~Calder.
\newblock The calculus of variations.
\newblock 2020.
\newblock Online Lecture Notes:
  \url{http://www-users.math.umn.edu/~jwcalder/CalculusOfVariations.pdf}.

\bibitem{calder2019improved}
J.~Calder and N.~Garc\'ia~Trillos.
\newblock Improved spectral convergence rates for graph {Laplacians} on
  $\varepsilon$-graphs and k-{NN} graphs.
\newblock {\em arXiv preprint:1910.13476}, 2019.

\bibitem{calder2020rates}
J.~Calder, D.~Slep{\v{c}}ev, and M.~Thorpe.
\newblock Rates of convergence for laplacian semi-supervised learning with low
  labeling rates.
\newblock {\em arXiv preprint arXiv:2006.02765}, 2020.

\bibitem{calder18bAAA}
J.~Calder and D.~Slep\v{c}ev.
\newblock {Properly-weighted graph {L}aplacian for semi-supervised learning}.
\newblock {\em Applied Mathematics and Optimization: Special Issue on
  Optimization in Data Science}, 2019.

\bibitem{carmo1992riemannian}
M.~P.~d. Carmo.
\newblock {\em Riemannian geometry}.
\newblock Birkh{\"a}user, 1992.

\bibitem{ssl}
O.~Chapelle, B.~Scholkopf, and A.~Zien.
\newblock {\em Semi-supervised learning}.
\newblock MIT, 2006.

\bibitem{Coifman7426}
R.~R. Coifman, S.~Lafon, A.~B. Lee, M.~Maggioni, B.~Nadler, F.~Warner, and
  S.~W. Zucker.
\newblock Geometric diffusions as a tool for harmonic analysis and structure
  definition of data: Diffusion maps.
\newblock {\em Proceedings of the National Academy of Sciences},
  102(21):7426--7431, 2005.

\bibitem{cranston1991gradient}
M.~Cranston.
\newblock Gradient estimates on manifolds using coupling.
\newblock {\em Journal of functional analysis}, 99(1):110--124, 1991.

\bibitem{dunlop2019large}
M.~M. Dunlop, D.~Slep{\v{c}}ev, A.~M. Stuart, and M.~Thorpe.
\newblock Large data and zero noise limits of graph-based semi-supervised
  learning algorithms.
\newblock {\em Applied and Computational Harmonic Analysis}, 2019.

\bibitem{dunson2019diffusion}
D.~B. Dunson, H.-T. Wu, and N.~Wu.
\newblock Diffusion based gaussian process regression via heat kernel
  reconstruction.
\newblock {\em arXiv preprint arXiv:1912.05680}, 2019.

\bibitem{flores2019algorithms}
M.~Flores, J.~Calder, and G.~Lerman.
\newblock {Algorithms for Lp-based semi-supervised learning on graphs}.
\newblock {\em arXiv:1901.05031}, 2019.

\bibitem{trillos2018spectral}
N.~Garc\'{i}a~Trillos, M.~Gerlach, M.~Hein, and D.~{Slep\v cev}.
\newblock Spectral convergence of the graph {L}aplacian on random geometric
  graphs towards the {L}aplace {B}eltrami operator.
\newblock {\em To appear in FOCM}, 2019.

\bibitem{garciatrillos17cAAA}
N.~Garc\'{i}a~Trillos, Z.~Kaplan, T.~Samakhoana, and D.~Sanz-Alonso.
\newblock On the consistency of graph-based {B}ayesian learning and the
  scalability of sampling algorithms.
\newblock {\em arXiv preprint arXiv:1710.07702}, 2017.

\bibitem{RyanNicolas2019}
N.~Garc\'ia~Trillos and R.~Murray.
\newblock A maximum principle argument for the uniform convergence of graph
  {L}aplacian regressors.
\newblock arxiv.org/abs/1901.10089, 2019.

\bibitem{garciatrillos18a}
N.~Garc\'{i}a~Trillos and D.~Sanz-Alonso.
\newblock Continuum limit of posteriors in graph {B}ayesian inverse problems.
\newblock {\em SIAM Journal on Mathematical Analysis}, 2018.

\bibitem{trillos2018variational}
N.~Garc\'{i}a~Trillos and D.~Slep{\v{c}}ev.
\newblock A variational approach to the consistency of spectral clustering.
\newblock {\em Applied and Computational Harmonic Analysis}, 45(2):239--281,
  2018.

\bibitem{trillos2016consistency}
N.~Garc\'{i}a~Trillos, D.~Slep{\v{c}}ev, J.~Von~Brecht, T.~Laurent, and
  X.~Bresson.
\newblock Consistency of cheeger and ratio graph cuts.
\newblock {\em The Journal of Machine Learning Research}, 17(1):6268--6313,
  2016.

\bibitem{garciatrillos16}
N.~Garc\'{i}a~Trillos and D.~Slep\v{c}ev.
\newblock Continuum limit of {T}otal {V}ariation on point clouds.
\newblock {\em Archive for Rational Mechanics and Analysis}, 220(1):193--241,
  2016.

\bibitem{garciatrillos18}
N.~Garc\'{i}a~Trillos and D.~Slep\v{c}ev.
\newblock A variational approach to the consistency of spectral clustering.
\newblock {\em Applied and Computational Harmonic Analysis}, 45(2):239--381,
  2018.

\bibitem{garciatrillos17}
N.~Garc\'{i}a~Trillos, D.~Slep\v{c}ev, and J.~von Brecht.
\newblock Estimating perimeter using graph cuts.
\newblock {\em Advances in Applied Probability}, 49(4):1067--1090, 2017.

\bibitem{garciatrillos16a}
N.~Garc\'{i}a~Trillos, D.~Slep\v{c}ev, J.~von Brecht, T.~Laurent, and
  X.~Bresson.
\newblock Consistency of cheeger and ratio graph cuts.
\newblock {\em Journal of Machine Learning Research}, 17(1):6268--6313, 2016.

\bibitem{GK}
E.~Gin{\'e} and V.~Koltchinskii.
\newblock Empirical graph {L}aplacian approximation of {L}aplace-{B}eltrami
  operators: large sample results.
\newblock In {\em High dimensional probability}, volume~51 of {\em IMS Lecture
  Notes Monogr. Ser.}, pages 238--259. Inst. Math. Statist., Beachwood, OH,
  2006.

\bibitem{han2018local}
J.~Han.
\newblock Local {L}ipschitz regularity for functions satisfying a
  time-dependent dynamic programming principle.
\newblock {\em arXiv preprint:1812.00646}, 2018.

\bibitem{hein2007graph}
M.~Hein, J.-Y. Audibert, and U.~v. Luxburg.
\newblock Graph {L}aplacians and their convergence on random neighborhood
  graphs.
\newblock {\em Journal of Machine Learning Research}, 8(Jun):1325--1368, 2007.

\bibitem{hein2005graphs}
M.~Hein, J.-Y. Audibert, and U.~Von~Luxburg.
\newblock From graphs to manifolds--weak and strong pointwise consistency of
  graph {L}aplacians.
\newblock In {\em International Conference on Computational Learning Theory},
  pages 470--485. Springer, 2005.

\bibitem{ishii1990viscosity}
H.~Ishii and P.-L. Lions.
\newblock Viscosity solutions of fully nonlinear second-order elliptic partial
  differential equations.
\newblock {\em Journal of Differential equations}, 83(1):26--78, 1990.

\bibitem{kirichenko2017}
A.~Kirichenko and H.~van Zanten.
\newblock Estimating a smooth function on a large graph by {B}ayesian
  {L}aplacian regularisation.
\newblock {\em Electron. J. Statist.}, 11(1):891--915, 2017.

\bibitem{kusuoka2015holder}
S.~Kusuoka.
\newblock H{\"o}lder continuity and bounds for fundamental solutions to
  nondivergence form parabolic equations.
\newblock {\em Analysis \& PDE}, 8(1):1--32, 2015.

\bibitem{kusuoka2017holder}
S.~Kusuoka.
\newblock H{\"o}lder and {L}ipschitz continuity of the solutions to parabolic
  equations of the non-divergence type.
\newblock {\em Journal of Evolution Equations}, 17(3):1063--1088, 2017.

\bibitem{lewicka2019robin}
M.~Lewicka and Y.~Peres.
\newblock The {R}obin mean value equation i: A random walk approach to the
  third boundary value problem.
\newblock {\em preprint}, 2019.

\bibitem{lewicka2019robin2}
M.~Lewicka and Y.~Peres.
\newblock The {R}obin mean value equation ii: Asymptotic holder regularity.
\newblock {\em preprint}, 2019.

\bibitem{lindvall1986coupling}
T.~Lindvall, L.~C.~G. Rogers, et~al.
\newblock Coupling of multidimensional diffusions by reflection.
\newblock {\em The Annals of Probability}, 14(3):860--872, 1986.

\bibitem{luiro2018regularity}
H.~Luiro and M.~Parviainen.
\newblock Regularity for nonlinear stochastic games.
\newblock In {\em Annales de l'Institut Henri Poincar{\'e} C, Analyse non
  lin{\'e}aire}, volume~35, pages 1435--1456. Elsevier, 2018.

\bibitem{ng2002spectral}
A.~Y. Ng, M.~I. Jordan, and Y.~Weiss.
\newblock On spectral clustering: Analysis and an algorithm.
\newblock In {\em Advances in neural information processing systems}, pages
  849--856, 2002.

\bibitem{osting17}
B.~Osting and T.~Reeb.
\newblock Consistency of {D}irichlet partitions.
\newblock {\em SIAM Journal on Mathematical Analysis}, 49(5):4251--4274, 2017.

\bibitem{parviainen2016local}
M.~Parviainen and E.~Ruosteenoja.
\newblock Local regularity for time-dependent tug-of-war games with varying
  probabilities.
\newblock {\em Journal of Differential Equations}, 261(2):1357--1398, 2016.

\bibitem{porretta2013global}
A.~Porretta and E.~Priola.
\newblock Global {L}ipschitz regularizing effects for linear and nonlinear
  parabolic equations.
\newblock {\em Journal de Math{\'e}matiques pures et appliqu{\'e}es},
  100(5):633--686, 2013.

\bibitem{Portegies}
J.~Portegies.
\newblock Embeddings of {R}iemannian manifolds with heat kernels and
  eigenfunctions.
\newblock {\em Communications on Pure and Applied Mathematics}, 69, 11 2013.

\bibitem{priola2006gradient}
E.~Priola and F.-Y. Wang.
\newblock Gradient estimates for diffusion semigroups with singular
  coefficients.
\newblock {\em Journal of Functional Analysis}, 236(1):244--264, 2006.

\bibitem{rosasco2013nonparametric}
L.~Rosasco, S.~Villa, S.~Mosci, M.~Santoro, and A.~Verri.
\newblock Nonparametric sparsity and regularization.
\newblock {\em The Journal of Machine Learning Research}, 14(1):1665--1714,
  2013.

\bibitem{Shi2015}
Z.~Shi.
\newblock Convergence of {L}aplacian spectra from random samples.
\newblock arXiv preprint arXiv:1507.00151, 2015.

\bibitem{shi18AAA}
Z.~Shi, B.~Wang, and S.~J. Osher.
\newblock Error estimation of weighted nonlocal {L}aplacian on random point
  cloud.
\newblock {\em arXiv preprint arXiv:1809.08622}, 2018.

\bibitem{singer2006graph}
A.~Singer.
\newblock From graph to manifold {L}aplacian: The convergence rate.
\newblock {\em Applied and Computational Harmonic Analysis}, 21(1):128--134,
  2006.

\bibitem{SinWu13}
A.~Singer and H.-T. Wu.
\newblock Spectral convergence of the connection {L}aplacian from random
  samples.
\newblock {\em Information and Inference: A Journal of the IMA}, 6(1):58--123,
  2017.

\bibitem{slepcev19}
D.~Slep\v{c}ev and M.~Thorpe.
\newblock Analysis of $p$-{L}aplacian regularization in semi-supervised
  learning.
\newblock {\em SIAM Journal on Mathematical Analysis}, 51(3):2085--2120, 2019.

\bibitem{smola2003kernels}
A.~J. Smola and R.~Kondor.
\newblock Kernels and regularization on graphs.
\newblock In {\em Learning theory and kernel machines}, pages 144--158.
  Springer, 2003.

\bibitem{thorpe19}
M.~Thorpe and F.~Theil.
\newblock Asymptotic analysis of the {G}inzburg-{L}andau functional on point
  clouds.
\newblock {\em Proceedings of the Royal Society of Edinburgh Section A:
  Mathematics, arXiv preprint arXiv:1604.04930}, 149(2):387--427, 2019.

\bibitem{tibshirani2014adaptive}
R.~J. Tibshirani.
\newblock Adaptive piecewise polynomial estimation via trend filtering.
\newblock {\em The Annals of Statistics}, 42(1):285--323, 2014.

\bibitem{ting2010analysis}
D.~Ting, L.~Huang, and M.~Jordan.
\newblock An analysis of the convergence of graph {L}aplacians.
\newblock {\em International Conference on Machine Learning (ICML)}, 2010.

\bibitem{trillos2019maximum}
N.~G. Trillos and R.~Murray.
\newblock A maximum principle argument for the uniform convergence of graph
  laplacian regressors.
\newblock {\em arXiv preprint arXiv:1901.10089}, 2019.

\bibitem{vonLux_tutorial}
U.~von Luxburg.
\newblock A tutorial on spectral clustering.
\newblock {\em Statistics and computing}, 17(4):395--416, 2007.

\bibitem{vLBeBo08}
U.~von Luxburg, M.~Belkin, and O.~Bousquet.
\newblock Consistency of spectral clustering.
\newblock {\em Ann. Statist.}, 36(2):555--586, 2008.

\bibitem{wang1994gradient}
F.-Y. Wang.
\newblock Gradient estimates on {Rd}.
\newblock {\em Canadian Mathematical Bulletin}, 37(4):560--570, 1994.

\bibitem{wang2004gradient}
F.-Y. Wang.
\newblock Gradient estimates of {D}irichlet heat semigroups and application to
  isoperimetric inequalities.
\newblock {\em The Annals of Probability}, 32(1A):424--440, 2004.

\bibitem{wang2016trend}
Y.-X. Wang, J.~Sharpnack, A.~J. Smola, and R.~J. Tibshirani.
\newblock Trend filtering on graphs.
\newblock {\em The Journal of Machine Learning Research}, 17(1):3651--3691,
  2016.

\bibitem{WormellReich}
C.~L. Wormell and S.~Reich.
\newblock Spectral convergence of diffusion maps: Improved error bounds and an
  alternative normalization.
\newblock {\em SIAM Journal on Numerical Analysis}, 59(3):1687--1734, 2021.

\bibitem{yuan2020continuum}
A.~Yuan, J.~Calder, and B.~Osting.
\newblock {A continuum limit for the {PageRank} algorithm}.
\newblock {\em arXiv:2001.08973}, 2020.

\bibitem{zhu2003semi}
X.~Zhu, Z.~Ghahramani, and J.~D. Lafferty.
\newblock Semi-supervised learning using {G}aussian fields and harmonic
  functions.
\newblock In {\em Proceedings of the 20th International conference on Machine
  learning (ICML-03)}, pages 912--919, 2003.

\end{thebibliography}
\bibliographystyle{abbrv}

\end{document}